\numberwithin{equation}{section}
\newcommand{\ssp}{\hspace{1pt}}
\newtheorem{proposition}{Proposition}[section]
\newtheorem{lemma}[proposition]{Lemma}
\newtheorem{theorem}[proposition]{Theorem}
\newtheorem*{theorem*}{Theorem}
\theoremstyle{definition}
\newtheorem{definition}[proposition]{Definition}
\newtheorem{remark}[proposition]{Remark}
\newcommand{\pcu}{\begin{tikzpicture}[baseline=(current bounding box.center)] 
\draw (0.6,1)--(1,1)--(1,1.5);
\draw (1,0.6)--(1,1.5);
\draw (1,1.5)--(1,1.9);
\draw (0.6,1.5)--(1.4,1.5);
\draw[ultra thick] (0.6,1)--(1.4,1);
\end{tikzpicture}}
\newcommand{\pau}{\begin{tikzpicture}[baseline=(current bounding box.center)] 
\draw (0.6,1)--(1,1)--(1,1.5);
\draw (1,1.5)--(1,1.9);
\draw (0.6,1.5)--(1.4,1.5);
\draw[ultra thick] (1,0.6)--(1,1)--(1.4,1);
\end{tikzpicture} }
\newcommand{\pbu}{ \begin{tikzpicture}[baseline=(current bounding box.center)] 
\draw (0.6,1)--(1,1)--(1,1.5);
\draw (1,0.6)--(1,1.5);
\draw (1,1.5)--(1,1.9);
\draw (0.6,1.5)--(1.4,1.5);
\draw[ultra thick] (0.6,1.5)--(1,1.5)--(1,1.9);
\draw[ultra thick] (0.6,1)--(1.4,1);
\end{tikzpicture}}
\newcommand{\pciu}{\begin{tikzpicture}[baseline=(current bounding box.center)] 
\draw (0.6,1)--(1,1)--(1,1.5);
\draw (1,0.6)--(1,1.5);
\draw (1,1.5)--(1,1.9);
\draw (0.6,1.5)--(1.4,1.5);
\draw[ultra thick] (0.6,1.5)--(1,1.5)--(1,1.9);
\draw[ultra thick] (1,0.6)--(1,1)--(1.4,1);
\end{tikzpicture}}
\newcommand{\pcd}{ \begin{tikzpicture}[baseline=(current bounding box.center)] 
\draw (0.6,1)--(1.4,1);
\draw (1,0.6)--(1,1.9);
\draw (0.6,1.5)--(1.4,1.5);
\draw[ultra thick] (0.6,1.5)--(1,1.5)--(1,1.9);
\draw[ultra thick] (1,0.6)--(1,1.5)--(1.4,1.5);
\end{tikzpicture} }
\newcommand{\pad}{ \begin{tikzpicture}[baseline=(current bounding box.center)] 
\draw (0.6,1)--(1.4,1);
\draw (1,0.6)--(1,1.9);
\draw (0.6,1.5)--(1.4,1.5);
\draw[ultra thick] (0.6,1.5)--(1,1.5)--(1,1.9);
\draw[ultra thick] (0.6,1)--(1,1)--(1,1.5)--(1.4,1.5);
\end{tikzpicture} }
\newcommand{\pbd}{\begin{tikzpicture}[baseline=(current bounding box.center)] 
\draw (0.6,1)--(1.4,1);
\draw (1,0.6)--(1,1.9);
\draw (0.6,1.5)--(1.4,1.5);
\draw[ultra thick] (1,0.6)--(1,1)--(1,1.5)--(1.4,1.5);
\end{tikzpicture} }
\newcommand{\pcid}{\begin{tikzpicture}[baseline=(current bounding box.center)] 
\draw (0.6,1)--(1.4,1);
\draw (1,0.6)--(1,1.9);
\draw (0.6,1.5)--(1.4,1.5);
\draw[ultra thick] (0.6,1)--(1,1)--(1,1.5)--(1.4,1.5);
\end{tikzpicture}}
\newcommand{\ru}{
	\begin{tikzpicture}
		[scale=.7,very thick]
		\def\dd{.3}
		\draw (0,0)--++(\dd,0)--++(0,\dd);
\end{tikzpicture}\hspace{.8pt}}
\begin{document}
\title{Irreversible Markov Dynamics and
Hydrodynamics for KPZ States in the Stochastic Six Vertex Model}


\author{Matthew Nicoletti and Leonid Petrov}

\date{}

\maketitle

\begin{abstract}
	We introduce a family of Markov growth 
	processes 
	on discrete height functions
	defined on the 2-dimensional square lattice. 
	Each height
	function corresponds to a configuration of the
	six vertex
	model on the infinite square lattice.
	We focus on the stochastic six vertex model corresponding to a particular
	two-parameter family of weights within the ferroelectric regime.
	It is believed (and partially proven, see Aggarwal
	\cite{aggarwal2020nonexistence})
	that the stochastic six vertex model
	displays nontrivial pure (i.e., translation invariant and
	ergodic) Gibbs states of two types,
	KPZ and liquid. These phases have
	very different long-range correlation structure.
	The Markov processes we construct 
	preserve the KPZ pure states in the full plane.
	We also show that the same processes put on the torus
	preserve
	arbitrary Gibbs measures 
	for generic six vertex weights (not necessarily in the ferroelectric
	regime).

	Our
	dynamics arise naturally from the Yang--Baxter equation
	for the six vertex model via its bijectivisation, a technique first used in
	Bufetov--Petrov \cite{BufetovPetrovYB2017}.
	The dynamics we construct
	are irreversible; in particular, the 
	height function has a nonzero average drift.
	In each KPZ pure state, we explicitly compute the 
	average drift (also known as the current)
	as a function of the slope. 
	We use this to analyze the
	hydrodynamics 
	of a non-stationary
	version of our process acting on quarter 
	plane stochastic six vertex
	configurations. The fixed-time limit shapes in the quarter plane
	model
	were obtained in 
	Borodin--Corwin--Gorin \cite{BCG6V}.
\end{abstract}

\setcounter{tocdepth}{1}
\tableofcontents
\setcounter{tocdepth}{3}

\section{Introduction}
\label{sec:introduction}

\subsection{Overview}
\label{sub:KPZ}

The study of models for surface growth is pervasive in many
areas of science and engineering. Some physical examples
include front propagation in combustion and crystal growth
\cite{damron2016random}.

A large collection of surface growth models falls under the
umbrella of the \emph{KPZ universality class}, 
named after the Kardar--Parisi--Zhang stochastic
partial differential equation
\cite{KPZ1986}. 
Models in
the KPZ class 
are formulated as randomly growing 
height functions $h_t(x)$,
where $x\in \mathbb{R}^d$ is the space variable, 
and $t\in \mathbb{R}_{\ge0}$ is time.
The models in the KPZ class
have the following three characteristic properties 
\cite{CorwinKPZ}, \cite{halpin2015kpzCocktail} \cite{QuastelSpohnKPZ2015}:
\begin{itemize}
	\item \textbf{Smoothing}. The dynamics tends to force
		large fluctuations in the height function back towards
		the mean.
	\item \textbf{Slope dependent growth speed}. The average
		velocity of growth of the height function at a point only
		depends on its average slope around that point.
	\item \textbf{Space-time uncorrelated noise}. 
		The randomness in the model comes from a random environment
		which is space-time uncorrelated, such as the white noise
		in the KPZ equation.
\end{itemize}
For models in the KPZ class, and for growth models in general,
it is often intractable to obtain exact expressions for 
various observables.
However, there exist \emph{integrable} models in which algebraic
structure allows for a precise analysis of observables. In
this work, we construct a new integrable random growth model in two space
dimensions,
based on the six vertex model. The latter is well-studied in
statistical mechanics by techniques 
of quantum integrability (in particular, Bethe Ansatz). 
We refer to the book 
\cite{baxter2007exactly} for an introduction, and also to
\cite{reshetikhin2010lectures} for a more recent survey of the six 
vertex model.

There are two 
basic questions in the study of a growth model:
the identification of the \emph{translation invariant stationary
measures} (that is, measures
which are invariant both under space translation
and under the stochastic dynamics), and the computation of the 
\emph{current} $J(\nabla h)$.
The
current is the velocity of the height
function growth as a function of a particular interface
\emph{slope} $\nabla h_t(x)$, where $\nabla$ is the 
gradient in space.

Further crucial questions include the identification of
the scaling exponents $\alpha, \beta$ such that 
in the translation invariant stationary situation,
the size of the fluctuations of $h_t(x) - h_t(y)$
is proportional to $|x - y|^\alpha$ for large $|x-y|$ (and fixed $t$), 
and the standard
deviation of $h_t(x) - h_0(x)$ is proportional to
$t^{\beta}$ for large $t$. 
Scaling
exponents 
$\alpha=\frac{1}{2}$, 
$\beta=\frac{1}{3}$
for 
models in the \emph{(1+1)-dimensional KPZ class} (corresponding
to space dimension $d=1$) are
well-understood at a heuristic level \cite{KPZ1986},
\cite{Spohn2012}, and have been verified rigorously in several
concrete situations, see \cite{BenassiFouque1987},
\cite{GwaSpohn1992},
\cite{bertiniGiacomin1997stochastic},
\cite{imamura2004fluctuations},
\cite{ferrari2006scaling},
\cite{ImamuraSasamoto2011current}, 
\cite{BorodinCorwinFerrariVeto2013},
\cite{Amol2016Stationary}, \cite{imamura2017fluctuations}.

\medskip

In the case of higher dimensions,
the mathematical study of KPZ growth models is much more limited
(for example, see
\cite{halpin2015kpzCocktail}
for a discussion of applicable
numerical and renormalization group methods).
In the (2+1)-dimensional situation (the case $d=2$),
the sign of the Hessian of the current 
distinguishes 
between two different subclasses, 
\emph{isotropic} (positive Hessian) and \emph{anisotropic}
(negative Hessian), of
the KPZ class, with very different large scale behavior.
While in the isotropic case there exist only numerical
estimates of the (nonzero) scaling exponents,
for anisotropic KPZ models (AKPZ, for short)
it is expected that $\alpha = \beta = 0$.
In particular, 
the average fluctuation of the height function
grows at a rate slower than any polynomial in $t$.
See also the recent results 
\cite{cannizzaro2020stationary}, \cite{cannizzaro2021weak}
on scaling exponents and renormalization in the anisotropic KPZ
partial differential equation.

The first (2+1)-dimensional anisotropic KPZ model
which was studied rigorously 
is related to the dimer model on the hexagonal lattice
(we refer to \cite{Kenyon2007Lecture} for details on this dimer model).
This Markov dynamics was introduced in 
\cite{BorFerr2008DF} in the non-stationary regime (that is, for a particular densely packed
initial condition).
After that, Toninelli
\cite{Toninelli2015-Gibbs}
showed that the Markov dynamics
on Gibbs measures on the full plane is well-defined
(that is, almost surely it
does not make infinitely many jumps in finite time in a finite region) and
preserves pure Gibbs states of an arbitrary slope $(\mathsf{s},\mathsf{t})$.
A~pure state is, by definition, a translation invariant
and ergodic Gibbs measure, and $\mathsf{s}$ and $\mathsf{t}$
may be interpreted as average densities of dimers
of any two orientations (out of three orientations on the 
hexagonal lattice).
The existence
of the dynamics and the preservation of pure Gibbs states
follows from a coupling of the dynamics on a large torus
$\mathbb{T}_L$ with that on $\mathbb{Z}^2$, and
requires nontrivial probabilistic arguments. 

An explicit formula for the current
$J(\mathsf{s},\mathsf{t})$ for this dynamics on $\mathbb{Z}^2$
was conjectured in \cite{BorFerr2008DF}, \cite{Toninelli2015-Gibbs}
based on particular cases, and then proven in
\cite{chhita2017combinatorial}.
The 
negative Hessian 
of
$J(\mathsf{s},\mathsf{t})$
puts the model into the 
anisotropic KPZ class.
Under this dynamics, 
the 
height
function fluctuations should grow at most as
$\sqrt{\log t}$ as $t\to+\infty$. This was shown in
\cite{BorFerr2008DF} for a particular initial condition, and in 
\cite{Toninelli2015-Gibbs} under some conditions on the slopes.
In a subsequent work
on the AKPZ class \cite{CFT2019}, an analogous dynamics on
dimer coverings of the square lattice is studied. The current
for this process is computed explicitly, and it has negative Hessian.
Moreover, \cite{CFT2019} presented a
new simplified argument for the $\sqrt{\log t}$
fluctuations for both
square and hexagonal cases, which in
particular removes the technical assumption of
\cite{Toninelli2015-Gibbs}.

It is known 
that the pure Gibbs states (with $(\mathsf{s},\mathsf{t})$
away from a finite number of points in the polygon of
allowed slopes) for the hexagonal dimer model
are all \emph{liquid} \cite{KOS2006}, that is, the 
variance of the height function difference 
$h_t(x)-h_t(y)$ 
grows logarithmically
with distance $|x-y|$ (with time $t$ fixed and $x,y \in \mathbb{R}^2$). Moreover,
the limiting fluctuation field is the conformally invariant
(massless) Gaussian Free Field \cite{Kenyon2004Height}, \cite{Petrov2012GFF}.
In other words, the Markov dynamics on 
the hexagonal dimer model indeed displays scaling exponents $\alpha=\beta=0$.

We remark 
that the celebrated
domino shuffling of 
\cite{elkies1992alternating},
\cite{propp2003generalized}
also fits into the framework of \cite{BorFerr2008DF}
and thus belongs to the anisotropic KPZ class.
We refer to 
\cite{borodin2015random},
\cite{chhita2019}, and
\cite{borodin2018two} for details.

\medskip

The stochastic six vertex model 
introduced in \cite{GwaSpohn1992}
may be viewed as a certain model of interacting dimers.
The presence of interaction introduces
a new type of pure Gibbs states, the \emph{KPZ pure states} 
$\pi({\mathsf{s}})$,
arising for a special one-parameter family of slopes
$(\mathsf{s},\mathsf{t})$ with
$\mathsf{t}=\varphi(\mathsf{s})$, where 
\begin{equation}
	\label{eq:intro_phi_definition_new_with_u}
	\varphi(\mathsf{s}) = \varphi(\mathsf{s}\mid u)
	\coloneqq
	\frac{\mathsf{s}}{\mathsf{s}+u-\mathsf{s}u}
	.
\end{equation}
Here $u\in(0,1)$
is a parameter of the six vertex
weights (i.e., it parametrizes the Gibbs property
which gives rise to various pure states)
which we will usually omit in the notation.
The KPZ pure states exhibit scaling exponent $\alpha=\frac{1}{3}$
along a single direction in the plane, and $\frac{1}{2}$
along all other directions \cite{Amol2016Stationary}. 
In particular, the limiting fluctuations cannot be described by a
conformally invariant field.
The name ``KPZ state'' comes from the fact that this stochastic
six vertex model configuration in the plane may be
viewed as a trajectory of a stationary
Markov chain on particle configurations on~$\mathbb{Z}$
(coming from the vertex model's
transfer matrix which is a stochastic matrix) belonging to the 
(1+1)-dimensional KPZ universality class \cite{BCG6V},
\cite{corwin2020stochastic}.

It is believed (but not yet proven)
that the stochastic six vertex model also displays 
liquid pure Gibbs states
arising for generic slopes $(\mathsf{s},\mathsf{t})$ outside a certain region 
$\mathfrak{H}$
in the two-dimensional space of slopes (see
\Cref{fig:lens} for an illustration). 
The slopes on the boundary of $\mathfrak{H}$ correspond to KPZ pure states,
and there do not exist pure states with slopes inside $\mathfrak{H}$. This was conjectured in
\cite{shore1994coexistence},
\cite{bukman1995conical},
and proven recently in
\cite{aggarwal2020nonexistence}.

\medskip

The \textbf{main results of our paper}
are:
\begin{itemize}
	\item 
		We construct an irreversible
		continuous time
		Markov dynamics
		$\mathcal{C}(t)$, $t\in \mathbb{R}_{\ge0}$,
		on six vertex configurations in the full plane $\mathbb{Z}^2$.
		The dynamics
		preserves the
		KPZ pure Gibbs state $\pi({\mathsf{s}})$.
		More precisely, we
		show that the full plane Markov dynamics
		is well-defined 
		(i.e., almost surely 
		does not make infinitely many jumps in finite time in a finite region) 
		when 
		started from $\pi({\mathsf{s}})$ (\Cref{thm:C_t_full_plane_well_defined}), 
		and that $\pi({\mathsf{s}})$
		is its stationary distribution (\Cref{thm:C_t_preserves_Gibbs}).
	\item We show that the current 
		(that is, the average drift of the height function)
		under the dynamics $\mathcal{C}$ 
		started from $\pi({\mathsf{s}})$
		is given by an explicit formula
		(\Cref{thm:current_computation}),
		\begin{equation}
			\label{eq:intro_current}
			J(\mathsf{s},\varphi(\mathsf{s}\mid u))
			=
			-\frac{\mathsf{s}\ssp(1 - \mathsf{s})}
			{(\mathsf{s} + u - \mathsf{s} u)^2} 
			=
			-\frac{(1-\mathsf{s})\bigl(\varphi(\mathsf{s}\mid u)\bigr)^2 }{\mathsf{s}}
			=
			\frac{\partial}{\partial u}\ssp \varphi(\mathsf{s}\mid u).
		\end{equation}
	\item 
		In \Cref{sec:torus}
		we present an analogue of the continuous time dynamics
		$\mathcal{C}(t)$ which lives 
		on six vertex configurations on a torus.
		This construction is independent of the full plane one.
		Moreover, we are able to 
		extend our Markov processes to the case of the most
		general six vertex weights (traditionally denoted by 
		$a_1,a_2,b_1,b_2,c_1,c_2$), and for
		pure states on the torus
		with arbitrary average slopes
		$(\mathsf{s},\mathsf{t})$
		(in particular, away from the one-parameter family of KPZ pure states).
		Our torus dynamics is structurally similar
		to the one of \cite{BorodinBufetov2015},
		but they are different.
\end{itemize}

\medskip

Let us make some general remarks on these results, and then in
\Cref{sub:dynamics_definition} below we formulate them in detail.

\medskip

Algebraically, both the construction of the full plane
dynamics and the dynamics on the torus 
come from the bijectivisation
of the Yang--Baxter equation
\cite{BufetovPetrovYB2017}, which
turns the equation into a Markov map. 
Its action maps the random six vertex configuration
into a random six vertex configuration with swapped
spectral parameters.
Composing these Markov maps and passing to a Poisson type 
continuous time
limit following the approach in \cite{PetrovSaenz2019backTASEP}
produces the jump rates in the full plane dynamics. 
In the torus case, we deform the six vertex graph on the 
torus by a certain twist to 
define the Markov maps, and then observe that in the continuous time
limit the twist trivializes with probability going to one.
We also present another proof of the fact that the dynamics on the 
torus preserves the Gibbs distributions using
an adaptation of \cite{BorodinBufetov2015},
which allows to 
generalize our
construction to arbitrary six vertex weights and arbitrary
average slopes.

\medskip

For the full plane model, as in the dimer case
considered in \cite{Toninelli2015-Gibbs},
we employ a nontrivial probabilistic
argument to prove that the Markov dynamics 
$\mathcal{C}(t)$ is well-defined.
There are two major obstacles which we overcome 
compared to the dimer case.
First, instead of coupling the full plane dynamics to 
the Hammersley process
\cite{hammersley1972few}, \cite{aldous1995hammersley}
to upper bound the probability of long jumps,
we need to consider a new particle system in which
particles can jump and annihilate one another.
For this Annihilation-Jump process we are able to produce the 
required estimates. 
Furthermore, unlike in the dimer case, the stationary measure
$\pi(\mathsf{s})$
does not possess a determinantal structure, 
so we have to restrict our analysis
to KPZ pure states admitting an explicit
description 
\cite{Amol2016Stationary}
as trajectories of a (1+1)-dimensional system, see
\Cref{sub:pure_states} below.

\medskip

We compute the current 
\eqref{eq:intro_current}
for all KPZ pure states of the stochastic
six vertex model. One in principle could employ our dynamics
on the torus 
to get the current for 
general six vertex weights and arbitrary slopes, 
but this computation seems out of reach with existing techniques.

\medskip

We believe that 
our Markov processes on the torus 
(extended to the general six vertex weights)
should
belong
to the anisotropic KPZ class,
at least for the weights
$a_1,a_2,b_1,b_2,c_1,c_2$ 
which are ``sufficiently close'' to the dimer situation
(cf. fluctuation universality for small perturbations away from the dimer 
case in \cite{Giuliani_Mastropietro_Toninelli2020nonintegrable}).
The six vertex model corresponds to a dimer model for several choices of parameters.
Let us mention two cases:
\begin{itemize}
	\item 
		Setting
		$a_1=a_2=b_1=b_2=1$ and $c_1=c_2=\sqrt 2$
		maps six vertex configurations into domino tilings
		\cite{elkies1992alternating}, \cite{zinn2000six}, \cite{ferrari2006domino}.
	\item 
		Setting $a_2=0$ and $b_1b_2=c_1c_2$ forbids the vertex 
		$(1,1;1,1)$ and maps six vertex configurations
		into Gibbs ensembles of nonintersecting lattice paths.
		Here ``Gibbs'' means that the measure 
		is invariant under uniform resampling.
		Such nonintersecting paths
		are readily identified with lozenge tilings, for example,
		see 
		\cite[Figures 2,3]{borodin2010gibbs}.
		In this case we relate our processes on the torus to the 
		ones studied in \cite{BorFerr2008DF}, \cite{Toninelli2015-Gibbs},
		\cite{CFT2019}, see \Cref{sub:lozenges}.
\end{itemize}
Both these degenerations to dimer models, however, 
cannot be obtained by directly specializing parameters 
in the stochastic six vertex
model. For the latter, 
we know the current $J(\mathsf{s},\varphi(\mathsf{s}))$
only along a one-dimensional curve of slopes, and so we cannot 
access the full Hessian of $J(\mathsf{s},\mathsf{t})$ to determine its sign.

\medskip

Above we have outlined and briefly discussed our main results. 
In \Cref{sub:dynamics_definition} below we define the Markov
dynamics $\mathcal{C}(t)$ on six vertex configurations and 
formulate the main theorems.

\subsection{Main results}
\label{sub:dynamics_definition}

Fix two parameters $0 < \delta_1 < \delta_2 < 1$
which
determine the weights of the stochastic six vertex model, 
see \Cref{fig:height_fn}, top, and also \Cref{sub:vertex_weights}
below for more detail.
Another useful parametrization is in terms of the
\emph{spectral parameter} $u =
\frac{1-\delta_1}{1-\delta_2}$, and the \emph{quantum
parameter} $q = \delta_1/\delta_2$, where $q,u\in (0,1)$. 
The parameters $(\delta_1,\delta_2)$ (or $(q,u)$)
define the Gibbs property.

We define a Markov process whose state space is formed by
collections of up-right lattice paths in $\mathbb{Z}^2$
which can meet at a vertex but cannot cross or share an
edge.
Allowed path configurations are in bijection with height
functions defined (up to a constant) on the faces of the
lattice such that around each vertex 
the differences of the height functions are of one of six types 
displayed in \Cref{fig:height_fn}, top.
An example of a state is given in
\Cref{fig:height_fn}, bottom. 

\begin{figure}[h]
\begin{center}
	\includegraphics[width=\textwidth]{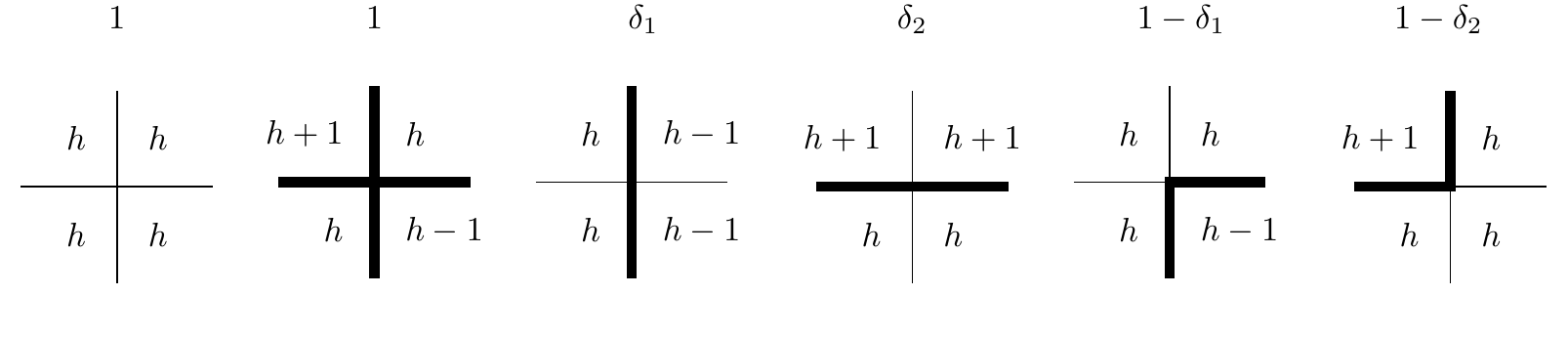}
	
	\includegraphics[width=.4\textwidth]{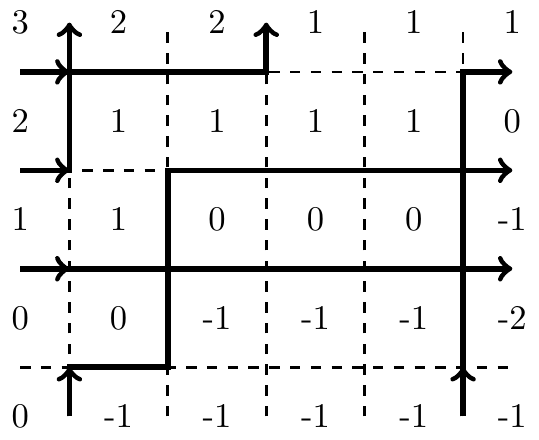}
 \end{center}
 \caption{Top: Local rules satisfied by a valid height
	 function, and vertex weights of each 
	local configuration. Bottom: A path configuration in the full plane
 and its corresponding height function (with some choice of
 the constant).}
 \label{fig:height_fn}
\end{figure}

The Markov process $\mathcal{C}(t)$, $t\in \mathbb{R}_{\ge0}$,
is driven by a collection of independent
Poisson clocks. Each vertical edge of $\mathbb{Z}^2$ has a
Poisson clock, and the rate of the clock at an edge
$(x,y)-(x,y+1)$, $x,y\in \mathbb{Z}$,
is a function of the local path
configuration around that edge, as shown in the table in
\Cref{fig:jumps_and_rates}. (Due to the memorylessness
of the Poisson process, this is the same as
attaching three independent Poisson clocks with rates
given in the table.)

\begin{figure}[htpb]
	\centering
	\begin{tabular}{ | m{5em} | m{2cm}| } 
\hline
Up jump & Rate \\ 
	\hline
	$\pcu $		& $\frac{1-q}{(1-u)\, (1 - q u)}$ \\[23pt]
	\hline
	$\pau $		& $\frac{(1 - u)\, q}{(1 - q)\,(1 - q u)\, u }$  \\[23pt]
		\hline
		$\pbu $		& $\frac{1 - q u}{(1 - u) \, (1 - q)\, u}$ \\[23pt]
		\hline
	\end{tabular}
	\qquad 
	\qquad 
	\begin{tabular}{ | m{5em} | m{2cm}|  } 
	\hline
	Down jump & Rate \\ 
		\hline
		$\pcd $		& $\frac{1-q}{(1-u)\, (1 - q u)}$ \\[23pt] 
		\hline
		$\pad $		& $\frac{(1 - u)\, q}{(1 - q)\,(1 - q u)\, u }$  \\[23pt] 
		\hline
	$\pbd $		& $\frac{1 - q u}{(1 - u) \, (1 - q)\, u}$ \\[23pt] 
		\hline
	\end{tabular}
	\caption{Jump rates for a given vertical edge, depending on the nearby path configuration. Denote by $(x,y)$
	the coordinates of the lower of the two vertices.}
\label{fig:jumps_and_rates}
\end{figure}

When the local configuration at some $(x,y)\in \mathbb{Z}^2$
corresponds to an 
up jump (\Cref{fig:jumps_and_rates}, left) and the clock rings, 
then the path passing through
the edge $(x,y)-(x+1,y)$
jumps up by $1$. 
Instantaneously, 
each path through $(x+i,y)-(x+i+1,y)$ jumps
up as well, for $i=1,2,\dots, c$, where $c$ is the largest
integer such that for each $i=1,2,\dots, c$,
$(x+i,y)-(x+i+1,y)$ is occupied and the horizontal edge
immediately above is unoccupied. 
Then $(x,y)-(x,y+1)$ becomes occupied, and we also remove the vertical
path from the 
vertical edge
$(x+c+1,y)-(x+c+1,y+1)$ which was previously occupied.
Similarly, when the local configuration
at $(x,y)$ corresponds to a down jump, 
then the path occupying the horizontal edge
$(x,y+1)-(x+1,y+1)$ jumps down, and again a maximal
sequence of adjacent occupied horizontal edges also jumps
down. 
In words, both for up and down jumps we restore the
path configuration to the allowed state using 
the most possible horizontal
edges to the right
of $(x,y)$.
See also \Cref{fig:jump_prop} in the text
for an illustration of a jump.

The KPZ pure Gibbs state $\pi(\mathsf{s})$
is a 
translation invariant ergodic Gibbs
measure
on full plane path configurations
under which
the 
average density of occupied
vertical edges is
$\mathsf{s}\in(0,1)$, and 
the average density of occupied horizontal edges
is $\varphi(\mathsf{s})$.

\begin{theorem*}[\Cref{thm:C_t_full_plane_well_defined,thm:C_t_preserves_Gibbs}
	in the text]
	Let $0 < \mathsf{s} < 1$. For a set $\Omega$
	of initial states 
	(\Cref{def:admissible_configurations} below)
	which is
	probability $1$ under the Gibbs measure $\pi(\mathsf{s})$, there
	exists a Markov process $\mathcal{C}(t)$
	whose generator acts according to
	the jump rates defined above. 
	Furthermore, $\pi(\mathsf{s})$ is
	stationary under $\mathcal{C}(t)$.
\end{theorem*}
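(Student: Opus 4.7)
The plan is to split the statement into two tasks: first, to construct $\mathcal{C}(t)$ by a Harris-type graphical recipe driven by the independent Poisson clocks on vertical edges, and second, to identify $\pi(\mathsf{s})$ as an invariant measure via the Yang--Baxter bijectivisation. The subtle part is the first task, because a single clock ring triggers a ``cascade'' sweeping an \emph{a priori} unbounded block of horizontal edges to the right; I must prove that these cascades are almost surely finite and that, in any bounded space-time region, only finitely many cascades contribute. The set $\Omega$ of admissible initial conditions will be the configurations on which this finiteness holds, and the work consists of showing $\pi(\mathsf{s})[\Omega]=1$.

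For the well-definedness part I would dominate the length of a cascade by the excursion length of an auxiliary one-dimensional particle system (the Annihilation--Jump process indicated in the introduction). The idea is that a cascade starting at $(x,y)$ scans horizontally to the right along row $y$ and terminates at the first column where the pair ``occupied horizontal / unoccupied above'' fails; translating this rule into rates defines an explicit Markov chain on $\mathbb{Z}$ which I would run under the marginal of $\pi(\mathsf{s})$ on that row. Crucially, the one-dimensional description of the KPZ pure state as a trajectory of a stationary transfer-matrix Markov chain (due to Aggarwal, cited as \cite{Amol2016Stationary} in the excerpt) gives a tractable joint law of two consecutive rows. From this, I would extract exponential tails $\mathbb{P}[\mathrm{cascade}>\ell]\le Ce^{-c\ell}$ uniform in location, by identifying the cascade end with the first hitting time of a recurrent ``closing'' configuration of the auxiliary chain. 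Given such tails, a standard Borel--Cantelli and finite-speed-of-propagation argument (as in Toninelli \cite{Toninelli2015-Gibbs}) shows that in a bounded space-time box $B\times[0,T]$ only finitely many clocks have cascades that ever touch $B$, so the evolution is defined piecewise and is c\`adl\`ag almost surely on $\Omega$.

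For the invariance of $\pi(\mathsf{s})$ I would exploit the bijectivisation of the Yang--Baxter equation from \cite{BufetovPetrovYB2017}. On two adjacent rows carrying spectral parameters $(u,v)$, the bijectivisation gives a random two-row Markov map that swaps $(u,v)$ and preserves the row-wise product of stochastic six-vertex weights; iterating this in a Poisson limit (following the template of \cite{PetrovSaenz2019backTASEP}) yields precisely the jump rates of Figure~\ref{fig:jumps_and_rates}, and the limiting generator inherits preservation of the two-row Gibbs law. Passing from two rows to the full plane is done by observing that the jump rates and their cascade resolutions depend only on local configurations, so the local detailed identity provided by the bijectivisation implies $\pi(\mathsf{s})\mathcal{L}\varphi=0$ for every local cylinder function $\varphi$; together with the well-definedness from the first step, this gives stationarity. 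Alternatively, invariance can be transferred from the torus construction of Section~\ref{sec:torus}, where finite-volume arguments make preservation of Gibbs measures straightforward, and then coupled to the full-plane dynamics using the cascade tail bound above to control boundary effects as $L\to\infty$.

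The main obstacle is the exponential tail estimate for cascades. Unlike the hexagonal dimer case, $\pi(\mathsf{s})$ is not determinantal, so I cannot rely on Fredholm-type manipulations nor on a Hammersley domination; every quantitative step must be extracted from the explicit one-dimensional Markov representation of the KPZ pure state, and this is precisely why the theorem is formulated for $\pi(\mathsf{s})$ rather than for general pure Gibbs states. Once the tail bound is in hand, both finiteness of cascades and stationarity become essentially routine, but establishing it (uniformly in the starting column and in $\mathsf{s}\in(0,1)$) is where the genuine probabilistic work lies.
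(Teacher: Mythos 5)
Your existence argument follows essentially the paper's route (domination of jump propagation by the Annihilation--Jump system, Borel--Cantelli, and a Toninelli-style finite-speed-of-propagation comparison of truncations), but one point needs care: the dynamical estimate on cascade lengths cannot be obtained by ``running the auxiliary chain under the marginal of $\pi(\mathsf{s})$ on that row'' at positive times, since the law of the configuration at time $t>0$ is exactly what is unknown until stationarity is proved; that would be circular. The paper's main estimate (\Cref{lemma:main_full_plane_lemma}) is a quenched statement, valid for every deterministic initial configuration with polylogarithmic string bounds, proved purely from the Poisson clocks via the monotone AJ coupling; the law $\pi(\mathsf{s})$ enters only at time $0$ (through \Cref{lem:strbound}). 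Also, no exponential tail for cascades at positive times is ever established or needed --- polylogarithmic growth with polynomially small failure probability suffices.

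The genuine gap is in your stationarity argument. The bijectivisation does not yield a local identity of the form $\pi(\mathsf{s})\,G_u f=0$: the two-row Markov map $U_{u,v}$ transports the $(u,v)$-Gibbs law to the $(v,u)$-Gibbs law, i.e.\ it \emph{changes} the spectral parameters rather than preserving a fixed measure, and in the Poisson limit in the quadrant the rates carry the inhomogeneous factor $y$, so the semigroup $\mathcal{Q}(\tau)$ moves $\mathbb{P}^{\mathrm{s6v,\,hom}}_{u}$ to $\mathbb{P}^{\mathrm{s6v,\,hom}}_{u+(1-e^{-\tau})\eta}$ (\Cref{thm:quad_process_exists}); nothing in this structure gives a detailed-balance-type cancellation for the homogeneous full-plane generator against $\pi(\mathsf{s})$. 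The paper's proof of \Cref{thm:C_t_preserves_Gibbs} is genuinely global: it slows the quadrant dynamics by $1/M(N)$, localizes around a bulk point at height $M(N)$, uses the measure-mapping property so that the spectral parameter changes only by $t/M\to0$, and invokes the local statistics theorem of \cite{aggarwal2020limit} (Proposition 2.17 there) twice --- before and after the evolution --- to identify both local laws with $\pi(\mathsf{s})$, with the propagation estimates controlling all truncation errors (\Cref{lemma:proof_58_l1,lemma:proof_58_l2,lemma:proof_58_l3,lemma:proof_58_l4}). Your fallback via the torus also does not close: transferring invariance from $\mathbb{T}_{L}$ to $\mathbb{Z}^2$ would require local convergence of the fixed-height-change torus ensembles $\mu_{k_1,k_2}$ to the KPZ pure state as $L\to\infty$, which is not available (and is of a difficulty comparable to the open questions about pure states of the six vertex model), so neither of your two proposed routes establishes that $\pi(\mathsf{s})$ is stationary.
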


We can
define the height change at a face 
under the Markov chain $\mathcal{C}(t)$
as follows.
If a path jumps up past a
face, then the height at this face height decreases by $1$, and if it jumps
down past a face, then the height increases by $1$. 
Denote the resulting randomly evolving height function by 
$h_t(x,y)$.
Define the
current in the KPZ pure state $\pi(\mathsf{s})$ by
\begin{equation}
	\label{eq:current_definition}
	J(\mathsf{s},\varphi(\mathsf{s})) \coloneqq 
	\frac{1}{t}
	\ssp
	\mathbb{E}_{\pi(\mathsf{s})}
	\left( h_t(0,0)-h_0(0,0) \right),
\end{equation}
where the dynamics starts in stationarity from $\pi(\mathsf{s})$.
Due to stationarity, the ratio in the right-hand
side of \eqref{eq:current_definition} is independent of $t$, so 
$J(\mathsf{s},\varphi(\mathsf{s}))$ is well-defined.
\begin{theorem*}
	[\Cref{thm:current_computation}
	in the text]
	The current is given by 
	formula \eqref{eq:intro_current}.
\end{theorem*}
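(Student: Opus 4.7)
The plan is to reduce $J$ to a single-vertex expectation via the generator and translation invariance, and then evaluate it using the explicit description of $\pi(\mathsf{s})$.

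First, I would write $J = \mathbb{E}_{\pi(\mathsf{s})}[\mathcal{L}\,h(0,0)]$, where $\mathcal{L}$ is the generator of $\mathcal{C}(t)$. An up jump triggered at vertex $(x,0)$ with propagation length $L$ decreases $h(0,0)$ by $1$ exactly when $x \le 0$ and $x + L \ge 1$, and the analogous statement holds for down jumps. Translation invariance in the $x$-direction together with the telescoping identity $\sum_{k \ge 0}\mathbf{1}\{L \ge k+1\} = L$ converts the resulting double sum into
\begin{equation*}
J \;=\; -\,\mathbb{E}_{\pi(\mathsf{s})}\bigl[r_{\mathrm{up}} \cdot L_{\mathrm{up}}\bigr] \;+\; \mathbb{E}_{\pi(\mathsf{s})}\bigl[r_{\mathrm{down}} \cdot L_{\mathrm{down}}\bigr],
\end{equation*}
where $r_{\mathrm{up}}$ and $r_{\mathrm{down}}$ are the configuration-dependent rates from Figure~\ref{fig:jumps_and_rates} evaluated at $(0,0)$ (zero unless the local type at $(0,0)$ supports the corresponding jump), and $L_{\mathrm{up}}, L_{\mathrm{down}}$ are the respective propagation lengths.

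Second, I would split each of these expectations according to the three possible local jump types at $(0,0)$ and compute the conditional law of $L$ given each type. Using the realization of $\pi(\mathsf{s})$ from \cite{Amol2016Stationary} as trajectories of a stationary Markov chain generated row-by-row by the stochastic six-vertex transfer matrix with parameter $u$, the joint distribution of the two rows of horizontal edges adjacent to the jump and of the intervening vertical edges admits an explicit Markovian description in the column coordinate. The event $\{L_{\mathrm{up}} \ge k+1\}$ corresponds to a prescribed pattern of occupied lower and unoccupied upper horizontal edges across $k$ consecutive columns to the right of $(0,0)$, and this conditional probability decays geometrically in $k$. Summing the resulting geometric series yields, for each of the six jump types, a closed-form rational expression in $(\mathsf{s},q,u)$.

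Third, I would combine the six contributions and simplify algebraically. The specific rates in Figure~\ref{fig:jumps_and_rates} originate from the bijectivisation of the Yang--Baxter equation (see the Introduction as well as \cite{BufetovPetrovYB2017} and \cite{PetrovSaenz2019backTASEP}), and this origin is responsible for the collapse of the sum to $-\mathsf{s}(1-\mathsf{s})/(\mathsf{s}+u-\mathsf{s} u)^2$; in particular all $q$-dependent terms should cancel, consistently with the six-vertex Gibbs property pinning the slope to $\varphi(\mathsf{s}\mid u)$. The identity with $\partial_u \varphi(\mathsf{s}\mid u)$ follows from direct differentiation of \eqref{eq:intro_phi_definition_new_with_u}; conceptually, $\mathcal{C}(t)$ arises as the rate-$1/\epsilon$ Poisson limit of bijectivisation swaps $u \leftrightarrow u+\epsilon$, so one unit of time corresponds informally to an infinitesimal shift of the spectral parameter, inducing precisely the shift $\partial_u \varphi$ in the average horizontal edge density.

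The main obstacle lies in the second step. Since $\pi(\mathsf{s})$ is not determinantal, the conditional probabilities $\mathbb{P}[L \ge k \mid \text{type at }(0,0)]$ must be extracted from the column-wise Markov structure of the two-row generating process rather than from a correlation kernel, and careful bookkeeping across the six jump types is required. The final algebraic collapse to \eqref{eq:intro_current} then serves both as the endpoint of the proof and as a nontrivial consistency check.
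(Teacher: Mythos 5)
Your proposal is correct and follows essentially the same route as the paper's proof: both start from the generator/stationarity identity for $J$, use the explicit realization of $\pi(\mathsf{s})$ from \cite{Amol2016Stationary} to reduce everything to local two-row probabilities (with per-column factors $\mathsf{s}\delta_1$ and $(1-\mathsf{s})\delta_2$), and sum the resulting geometric series against the rates $\mathfrak{a}(u),\mathfrak{b}(u),\mathfrak{c}(u)$. The only difference is bookkeeping: you fix the seed at the origin and compute the expectation of rate times propagation length via translation invariance and the telescoping identity, whereas the paper fixes the face $(0,0)$ and sums over path structures anchored at the rightmost special pair to its left, the term $|\mathbf{k}|\,\mathfrak{c}(u)$ there playing exactly the role of your interior seed pairs, so the two organizations of the sum are equivalent.
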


Formula \eqref{eq:intro_current} for the current allows to predict a
hydrodynamic limit equation 
(in two space dimensions)
for the height function in a
non-stationary version 
of our dynamics acting in the quadrant
$\mathbb{Z}_{\ge0}\times \mathbb{Z}_{\ge1}$.
(In fact, we defined the full plane dynamics after looking 
at the bulk behavior of the quadrant dynamics.)
Under this non-stationary dynamics
which we denote by $\mathcal{Q}(\tau)$, the edge Poisson clocks
have rates depending on 
the lattice coordinate 
$y$ and time~$\tau$,
but otherwise the dynamics is the same as 
$\mathcal{C}(t)$, see \Cref{sub:q_cont_def} below.
The action of $\mathcal{Q}(\tau)$
changes (in distribution)
the Gibbs property of the stochastic six vertex model
by continuously increasing the spectral parameter 
from $u$ to some terminal value $u+\eta\in(0,1)$,
see \Cref{thm:quad_process_exists} in the text. 
We match
the current \eqref{eq:intro_current}
to a (heuristic) hydrodynamic limit of
$\mathcal{Q}(\tau)$. 
The latter should continuously transform the
explicit limit shapes
(obtained in 
\cite{BCG6V}, see also \cite{Reshetikhin2018LimitSO}, \cite{aggarwal2020limit})
of the stochastic six vertex model in the quadrant
with empty bottom and fully packed left boundary conditions.
See \Cref{fig:s6v_sim}, left, for a simulation of the stochastic six vertex
model with such boundary conditions.

\subsection{Outline}
\label{sub:outline}

In \Cref{sec:six_vertex_model} we define the
stochastic six vertex model,
describe Gibbs pure states,
and review the Yang--Baxter equation for the six stochastic six vertex model. 
In \Cref{sec:bijectivisation} we employ bijectivisation
to turn the Yang--Baxter equation 
into a Markov map, and introduce the resulting 
discrete time Markov dynamics on six 
vertex
configurations in the quadrant.
In
\Cref{sec:continuous_time_limit} we look at its continuous time
Poisson type limit leading to a Markov dynamics $\mathcal{Q}(\tau)$
in the quadrant, and state the measure mapping property.
In \Cref{sec:Markov_full_plane_process} we 
define the full plane dynamics $\mathcal{C}(t)$
and show that it exists and preserves the KPZ pure state
$\pi(\mathsf{s})$. The proof of the main estimate
is postponed to 
\Cref{sec:proof_lemma_S6}, where we describe the coupling
with the Annihilation-Jump particle system.
In \Cref{sec:hydrodynamics} we calculate the
current and discuss the hydrodynamic limit
of the dynamics $\mathcal{Q}(\tau)$ in the quadrant. In \Cref{sec:torus} we
present the construction of the dynamics on the torus, first
using bijectivisation, and then using an adaptation of the
proof in \cite{BorodinBufetov2015} to generalize our
Markov chain to general six vertex weights and arbitrary
average slopes.

\subsection{Notation}
\label{sub:notation}

Finite subsets of $\mathbb{Z}$ are denoted as
$\lambda=(\lambda_1>\ldots>\lambda_{\ell(\lambda)})$, where 
$\ell(\lambda)$ is the number of elements.
We also use letters like $\mu,\kappa,\nu$, and so on, for such finite subsets.
If all $\lambda_i\ge1$, the subset $\lambda$
may be viewed as a strict partition 
\cite[Example I.1.9]{Macdonald1995}
of the integer 
$|\lambda|\coloneqq\lambda_1+\ldots+\lambda_{\ell(\lambda)}$.

Arbitrary (possibly infinite) subsets of $\mathbb{Z}$ are denoted by 
similar letters but in the upright font: $\uplambda, \upkappa,\upmu,\upnu$, and so on.
For $h\ge1$ and 
$\uplambda\subseteq \mathbb{Z}_{\ge0}$, denote the
truncation $\uplambda^{[<h]}\coloneqq \uplambda\cap \left\{ 0,1,\ldots,h-1  \right\}$.

Each $\uplambda\subseteq \mathbb{Z}$ may be written in a multiplicative
notation $\uplambda=(\ldots(-1)^{m_{-1}}0^{m_0}1^{m_1}2^{m_2}\ldots  )$,
where $m_i=1$ if $i\in \uplambda$, and $m_i=0$ otherwise.

For an event or condition $A$,
$\mathbf{1}_{A}$ stands for the indicator of $A$.  

\subsection{Acknowledgments}

We are grateful to Amol Aggarwal, Alexei Borodin,
David Keating, and Fabio Toninelli
for helpful discussions.
The work was partially supported by the NSF 
grant DMS-1664617, and the 
Simons Collaboration Grant for Mathematicians 709055. 
This material is based upon work supported by the National
Science Foundation under Grant No. DMS-1928930 while LP
participated in program hosted by the Mathematical
Sciences Research institute in Berkeley, California, during
the Fall 2021 semester.

\section{Stochastic six vertex model}
\label{sec:six_vertex_model}

Here we define our 
main ``static'' objects --- certain families of
probability measures on configurations of up-right lattice paths
on edges of $\mathbb{Z}^2$
given by the stochastic six vertex model.

\subsection{Vertex weights}
\label{sub:vertex_weights}

Throughout the paper we fix the ``quantization'' parameter
$q\in[0,1)$.

The six vertex model is a probability measure on configurations
of up-right paths on the two-dimensional discrete lattice $\mathbb{Z}^2$,
such that there is at most one path allowed per edge. 
The paths are allowed to meet at a vertex.
See \Cref{fig:boundary_conditions} below for an example of a
path configuration.

For a single vertex, let $i_1,j_1,i_2,j_2\in \left\{ 0,1 \right\}$
be the number of paths entering and exiting the vertex as 
shown in \Cref{fig:6types}, left. 
Due to the path preservation, it must be $i_1+j_1=i_2+j_2$,
which leaves six possibilities for a vertex.
We assign the following weights to these six possible vertices:
	\label{eq:w_u_weights}
\begin{equation}
	\begin{split}
		w_u(0,0;0,0)&=
		w_u(1,1;1,1)=1,\\
		w_u(1,0;1,0) &= \frac{q (1 - u)}{1 - q u},
		\qquad 
		w_u(0,1;0,1)  =  \frac{1 - u }{1 - q u} ,
		\\
		w_u(1,0;0,1) &= \frac{1 - q}{1 - q u}
		,\qquad 
		w_u(0,1;1,0)  = \frac{u(1 -q) }{1 - q u}.
	\end{split}
\end{equation}
Here $u\in(0,1)$ is the \emph{spectral parameter} (also 
called the \emph{rapidity}) which may change from one vertex to another.

\begin{figure}[htpb]
	\centering
	\includegraphics[width=.9\textwidth]{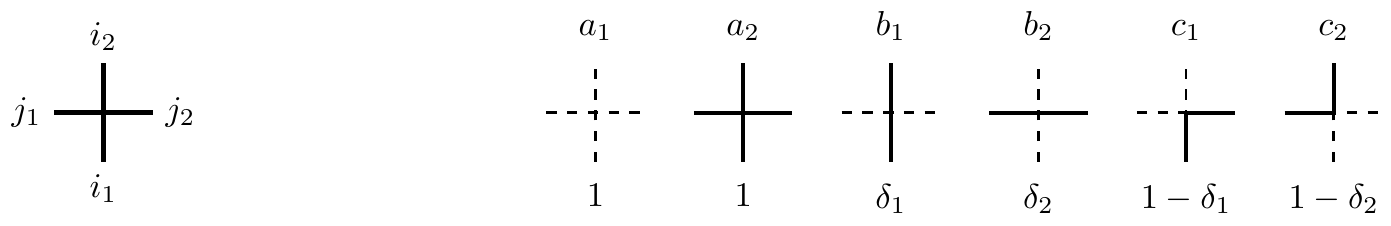}
	\caption{Stochastic six vertex weights arise as a specialization
		of the general asymmetric six vertex Boltzmann
		weights $(a_1,a_2,b_1,b_2,c_1,c_2)$
		as $a_1=a_2=1$, $b_1=1-c_1=\delta_1$, and 
		$b_2=1-c_2=\delta_2$.
		The general Boltzmann weights are
		listed above the vertices.}
	\label{fig:6types}
\end{figure}

We will use the parametrization 
of the vertex weights
by $(q,u)$ as in \eqref{eq:w_u_weights}
interchangeably with 
another one. This other parametrization 
involves two parameters $(\delta_1,\delta_2)$
with $0\le \delta_1<\delta_2\le 1$, see \Cref{fig:6types} for 
an illustration. The two parametrizations are related by
\begin{equation}
	\label{eq:delta_1_2_through_t_u}
	\delta_1=\delta_1(u)\coloneqq\frac{q (1 - u)}{1 - q u},\qquad 
	\delta_2=\delta_2(u)\coloneqq\frac{1 - u}{1 - q u},\qquad 
	q=\frac{\delta_1}{\delta_2}.
\end{equation}
In particular, the ratio $\delta_1/\delta_2$ is 
fixed throughout.

The weights \eqref{eq:w_u_weights}
satisfy the \emph{stochasticity} property at each vertex, namely,
\begin{equation}
	\label{eq:stochasticity_of_w_u}
	\sum_{i_2,j_2}w_u(i_1,j_1;i_2,j_2)=1
	\qquad 
	\textnormal{for all $i_1,j_1$}.
\end{equation}
Thus, our model is a particular case of the general asymmetric six vertex
model with weights $a_1,a_2,b_1,b_2,c_1,c_2$
specialized as 
$a_1=a_2=1$, $b_1=1-c_1=\delta_1$, and 
$b_2=1-c_2=\delta_2$
(cf. \Cref{fig:6types}).
The stochastic case of the six vertex model was introduced
by Gwa and Spohn
\cite{GwaSpohn1992} who 
studied its hydrodynamics 
in 1+1 dimension.
Asymptotic 
Tracy--Widom GUE fluctuations in this model
were obtained in Borodin--Corwin--Gorin
\cite{BCG6V}.

\subsection{Gibbs property}
\label{sub:gibbs_property}

Let us fix a finite rectangle $\Lambda=\{x,x+1,\ldots,x+R \}\times
\left\{ y,y+1,\ldots,y+R'  \right\}
\subset \mathbb{Z}^{2}$, and 
specify \emph{boundary conditions}, that is, the 
positions of entering paths at the bottom and the left boundaries, 
as well as the positions of exiting paths at the upper and right boundaries
(see \Cref{fig:boundary_conditions} for an example).
\begin{figure}[htpb]
	\centering
	\includegraphics[width=.4\textwidth]{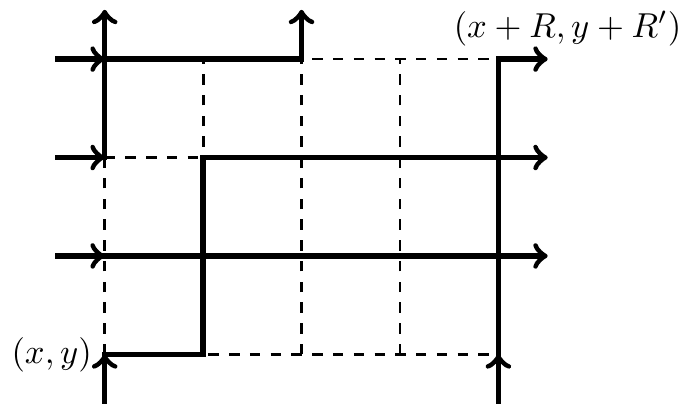}
	\caption{A rectangle with $R=4$, $R'=3$, and a configuration 
		of up-right paths with specified boundary conditions. 
		The boundary conditions are the positions of the 
		incoming and the outgoing paths, and are illustrated by arrows.}
	\label{fig:boundary_conditions}
\end{figure}

Observe that the set of all up-right path configurations
in $\Lambda$ with specified 
incoming and outgoing
boundary conditions is finite.
We consider a probability measure on these path configurations
depending on a sequence of spectral parameters
$\mathbf{u}=(u_y,u_{y+1},\ldots,u_{y+R'} )$ (with $u_l\in (0,1)$ for all $l$)
given by
\begin{equation}
	\label{eq:Gibbs_measures}
	\mathbb{P}_{\mathbf{u}}^{\mathrm{rect}}
	\left( \textnormal{path configuration} \right)
	\coloneqq\frac{1}{Z}
	\prod_{k=x}^{x+R}\,
	\prod_{l=y}^{y+R'}
	w_{u_l}\Biggl( 
		\begin{tikzpicture}
			[scale=.4,baseline=-3pt]
			\draw[fill] (0,0) circle (5pt);
			\draw[ultra thick] (-1,0)--++(2,0) node[right,yshift=3pt,xshift=-2pt] {\small$j_2$};
			\draw[ultra thick] (0,-1)--++(0,2) node[above,xshift=5pt,yshift=-4pt] {\small$i_2$};
			\node[below,xshift=-3pt,yshift=4pt] at (0,-1) {\small$i_1$};
			\node[left] at (-1,0) {\small$j_1$};
			\node at (1.4,-1) {$(k,l)$};
		\end{tikzpicture}
	\Biggr).
\end{equation}
The product in \eqref{eq:Gibbs_measures} is 
taken over all vertices $(k,l)$ in the lattice,
and for each vertex we pick one of the weights from
\eqref{eq:w_u_weights} (with the corresponding spectral parameter $u=u_l$)
depending on the occupation numbers $i_1,j_1,i_2,j_2$ 
of the edges adjacent to 
that vertex.
Here $Z$ is the \emph{partition function}, 
that is, the probability normalizing constant 
which ensures that the right-hand side of \eqref{eq:Gibbs_measures}
sums to $1$ over all possible path configurations with the given
boundary conditions.

One can also consider 
a probability measure 
$\mathbb{P}_{\mathbf{u}}^{\mathrm{free}}$
on path configurations
in the rectangle $\Lambda$ with \emph{free outgoing boundary conditions},
for which the locations and numbers of 
outgoing paths on the left and top boundaries of the rectangle
are not specified. Due to the 
stochasticity condition \eqref{eq:stochasticity_of_w_u},
the partition function of
$\mathbb{P}_{\mathbf{u}}^{\mathrm{free}}$
is simply equal to $1$,
and the measure $\mathbb{P}_{\mathbf{u}}^{\mathrm{free}}$
can be sampled by running a row-to-row Markov chain
based on the vertex weights $w_u$,
see \Cref{fig:P_u_free} for an illustration.
The conditional distributions of 
$\mathbb{P}_{\mathbf{u}}^{\mathrm{free}}$
with fixed locations of all outgoing paths
are precisely the measures $\mathbb{P}_{\mathbf{u}}^{\mathrm{rect}}$.

\begin{figure}[htpb]
	\centering
	\includegraphics[width=\textwidth]{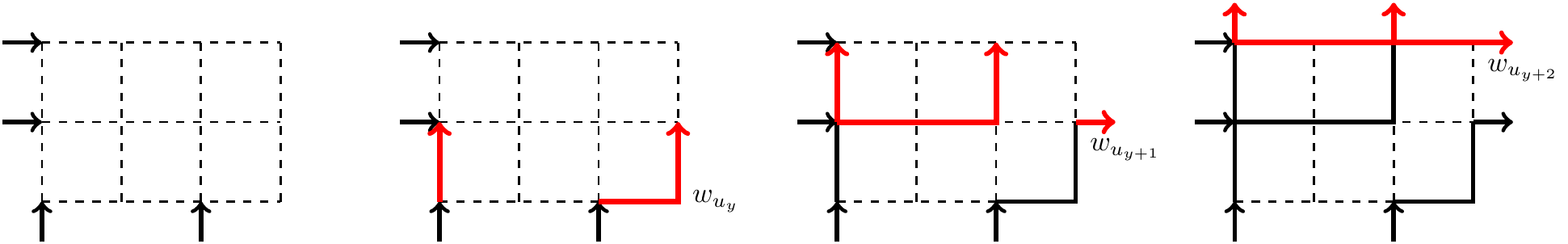}
	\caption{Sampling of $\mathbb{P}_{\mathbf{u}}^{\mathrm{free}}$ 
		by the row-to-row Markov chain. At each step, 
		we perform the sequential update (from left to right)
		in a single row, using the vertex weights $w_{u_{y+i}}$, $i=0,1,2$,
		and the incoming paths from the left and from the row below.
		For example, after one step the probability
		of getting the displayed configuration is
		equal to 
		$w_{u_y}(1,0;1,0)w_{u_y}(1,0;0,1)w_{u_y}(0,1;1,0)$.}
	\label{fig:P_u_free}
\end{figure}

We use the measures 
$\mathbb{P}_{\mathbf{u}}^{\mathrm{rect}}$
\eqref{eq:Gibbs_measures} 
as a building block for random ensembles of 
up-right paths in infinite regions of $\mathbb{Z}^2$.

\begin{definition}[Stochastic six vertex $\mathbf{u}$-Gibbs measures]
	\label{def:Gibbs}
	Let $\mathbf{u}=\{u_l\in(0,1)\colon l\in \mathbb{Z}\}$
	be a sequence of spectral parameters, and
	$\Uplambda\subseteq \mathbb{Z}^{2}$ a finite or infinite rectangular region.
	A probability measure $\mathbb{P}$ on configurations of 
	up-right paths in $\Uplambda$ is called 
	\emph{$\mathbf{u}$-Gibbs} if for any finite rectangle 
	$\Lambda=\{x,\ldots,x+R \}\times
	\left\{ y,\ldots,y+R'  \right\}\subset \Uplambda$,
	the conditional distribution of the up-right paths 
	in $\Lambda$ with arbitrary fixed incoming and outgoing boundary
	conditions 
	on all four sides of $\Lambda$ is 
	$\mathbb{P}^{\mathrm{rect}}_{\mathbf{u}\vert_\Lambda}$.
	Here
	$\mathbf{u}\vert_{\Lambda}\coloneqq(u_y,u_{y+1},\ldots,u_{y+R'} )$
	is the corresponding restriction of the spectral parameter sequence.
\end{definition}

\begin{remark}
	In \Cref{def:Gibbs}	
	it suffices to consider only the choices of boundary conditions
	for $\Lambda$
	having nonzero $\mathbb{P}$-probability.
\end{remark}

Let $\Uplambda$ be finite. In this case the $\mathbf{u}$-Gibbs measures
of \Cref{def:Gibbs}
on up-right path ensembles in $\Uplambda$
are mixtures of the measures 
$\mathbb{P}_{\mathbf{u}}^{\mathrm{rect}}$
corresponding to taking random boundary conditions
on all four sides of the finite rectangle $\Uplambda$.
Since the set of all possible boundary conditions
is finite, the mixture is also finite.
The measure $\mathbb{P}_{\mathbf{u}}^{\mathrm{free}}$
is a particular example of a $\mathbf{u}$-Gibbs measure
with random boundary conditions (on the right and top boundaries only).

\begin{definition}[Homogeneous stochastic six vertex Gibbs measures]
	\label{def:homogeneous_Gibbs}
	The above \Cref{def:Gibbs}
	deals with
	inhomogeneous (in the vertical direction)
	Gibbs measures for the stochastic six vertex model. 
	Setting 
	$u_l\equiv u\in(0,1)$ for all $l\in \mathbb{Z}$
	leads to the important subclass of
	\emph{homogeneous}
	stochastic six vertex Gibbs
	measures.
	The homogeneous Gibbs property is indexed by two parameters
	$(q,u)$ (equivalently, $(\delta_1,\delta_2)$, see \eqref{eq:delta_1_2_through_t_u}).
\end{definition}

\begin{remark}
	\label{rmk:ferroelectric}
	The space of homogeneous
	Gibbs measures coincides with the
	space of the Gibbs measures
	for the \emph{symmetric ferroelectric} six vertex model.
	The symmetry means that
	$a_1=a_2=a$, $b_1=b_2=b$, and $c_1=c_2=c$
	in the notation as in \Cref{fig:6types},
	and the ferroelectric condition reads
	$\Delta=(a^2+b^2-c^2)/(2ab)>1$.
	We refer to 
	\cite[Appendix A.1]{Amol2016Stationary} for details.
\end{remark}

Let us define a family of $\mathbf{u}$-Gibbs measures
on up-right paths
in the quadrant 
$\mathbb{Z}_{\ge0}\times \mathbb{Z}_{\ge 1}$
which are analogues of $\mathbb{P}_{\mathbf{u}}^{\mathrm{free}}$
and can also be sampled by running a row-to-row Markov chain:

\begin{definition}[Stochastic six vertex model in the quadrant]
	\label{def:s6v_quadrant}
	Fix a sequence of spectral parameters
	$u_l\in (0,1)$, $l=1,2,\ldots $,
	and take 
	$\Uplambda$ to be the quadrant $\mathbb{Z}_{\ge0}\times \mathbb{Z}_{\ge 1}$.
	The stochastic six vertex in the quadrant 
	with \emph{step}
	(also called \emph{half domain wall}) boundary conditions \cite{GwaSpohn1992}, \cite{BCG6V}
	has empty bottom boundary and an incoming
	path at each horizontal edge $(-1,l)-(0,l)$, $l\ge1$,
	along the left boundary.
	It is the unique $\mathbf{u}$-Gibbs measure
	for which for any $h\ge0$, $l\ge 1$
	the marginal distribution of the configuration
	in a finite rectangle of the form
	$\{0,1,\ldots,h \}\times\{1,\ldots,l\}$
	is $\mathbb{P}_\mathbf{u}^{\mathrm{free}}$
	(with empty and packed incoming conditions
	at the bottom, respectively, the left boundary).
	The uniqueness of thus described measure
	follows in a standard way from the Kolmogorov
	extension theorem, since the marginals
	$\mathbb{P}_{\mathbf{u}}^{\mathrm{free}}$
	in 
	finite rectangles are compatible.
	
	More generally, if the bottom boundary has 
	paths incoming at locations encoded by a 
	(finite or infinite)
	subset $\uplambda\subseteq \mathbb{Z}_{\ge0}$,
	and the left boundary is packed,
	then we call these the \emph{step-$\uplambda$ boundary conditions}.
	If the left boundary is empty and the bottom boundary is 
	encoded by $\uplambda$,
	we refer to this as the \emph{empty-$\uplambda$ boundary conditions}.

	We denote 
	by $\mathbb{P}_{\mathbf{u}}^{\mathrm{s6v}}$
	the distribution 
	of the stochastic six vertex model 
	in the quadrant
	(with step-$\uplambda$ or empty-$\uplambda$ boundary conditions,
	which is specified in each case separately).
\end{definition}

\subsection{Pure states}
\label{sub:pure_states}

Here we discuss homogeneous Gibbs measures 
on path ensembles in the full plane 
satisfying certain natural assumptions.

\begin{definition}
	\label{def:pure_state}
	Consider the homogeneous stochastic six vertex Gibbs property
	(\Cref{def:homogeneous_Gibbs}) with some parameters
	$(q,u)$.
	A 
	\emph{translation invariant, ergodic Gibbs measure}
	(also called a \emph{pure state}, for short)
	is a Gibbs probability measure on configurations of up-right
	paths in the whole plane $\mathbb{Z}^2$ which satisfies
	two additional properties:
	\begin{itemize}
		\item 
			The distribution of the path ensemble
			does not change under shifts
			of the underlying lattice $\mathbb{Z}^2$ by arbitrary 
			elements of $\mathbb{Z}^2$;
		\item The measure is ergodic, which, by definition, means that
			the probability of any 
			translation invariant event 
			(from the $\sigma$-algebra associated with path configurations on $\mathbb{Z}^2$)
			is either $0$ or $1$.
	\end{itemize}
\end{definition}

In particular, each pure state admits a \emph{slope}
$(\mathsf{s},\mathsf{t})\in[0,1]^2$, where
\begin{equation}
	\label{eq:slope}
	\begin{split}
		\mathsf{s}&\coloneqq
		\mathbb{P}\left( \textnormal{the vertical edge $(0,0)-(0,1)$ is 
		occupied by a path}\right),\\
		\mathsf{t}&\coloneqq
		\mathbb{P}\left( \textnormal{the horizontal edge $(0,0)-(1,0)$ is 
		occupied by a path}\right).
	\end{split}
\end{equation}
In other words, $\mathsf{s}$ and $\mathsf{t}$ are the 
average densities of the vertical and horizontal edges 
under the pure state. If a pure state has slope $(\mathsf{s},\mathsf{t})$,
we denote it by $\pi_{\mathsf{s},\mathsf{t}}$.

\begin{figure}[htpb]
	\centering
	\includegraphics[width=.4\textwidth]{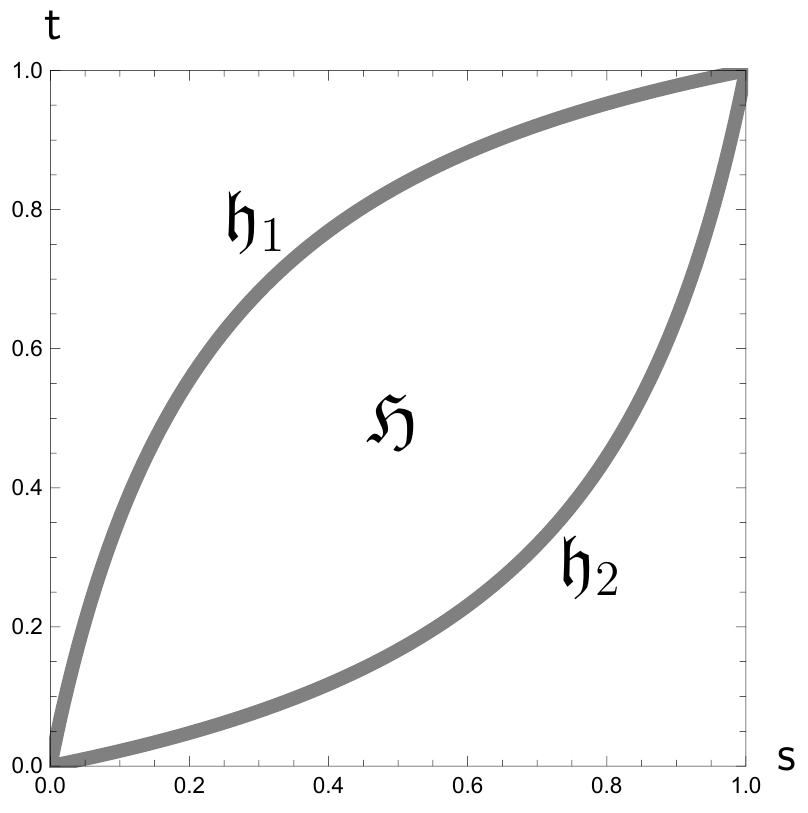}
	\caption{Phase diagram
	of pure states in the stochastic six vertex model for $u=\frac{1}{5}$.
	Remarkably, the phase diagram does not depend on $q$.}
	\label{fig:lens}
\end{figure}

Let us recall known results and predictions
about pure states $\pi_{\mathsf{s},\mathsf{t}}$.
We refer to 
\cite{aggarwal2020nonexistence}
for details, and follow the notation of that paper.
Let 
\begin{equation}
	\label{eq:phi_of_rho_define}
	\varphi(\mathsf{t})=\varphi(\mathsf{t}\mid u)\coloneqq
	\frac{\mathsf{t}}{\mathsf{t}+u-\mathsf{t}u},
	\qquad \mathsf{t}\in[0,1].
\end{equation}
We usually omit the spectral parameter $u$ in the notation.
Define the following subsets of $[0,1]^2$:
\begin{equation*}
	\mathfrak{h}_1\coloneqq
	\left\{ (\mathsf{s},\varphi(\mathsf{s}))\colon \mathsf{s}\in[0,1] \right\}
	,\qquad 
	\mathfrak{h}_2\coloneqq
	\left\{ (\varphi(\mathsf{t}),\mathsf{t})\colon \mathsf{t}\in[0,1] \right\}
	.
\end{equation*}
Let 
$\mathfrak{H}$ be the open subset of $[0,1]^2$ 
between $\mathfrak{h}_1$ and $\mathfrak{h}_2$,
so that $\partial\mathfrak{H}=\mathfrak{h}_1\cup \mathfrak{h}_2$.
See \Cref{fig:lens} for an illustration.

By \cite[Theorem 1.2]{aggarwal2020nonexistence},
for $(\mathsf{s},\mathsf{t})\in \mathfrak{H}$, there are 
no pure states with this slope,
and 
for each
$(\mathsf{s},\mathsf{t})\in \partial\mathfrak{H}$, 
there is exactly one pure state 
with this slope.
Since the phase diagram in \Cref{fig:lens}
is symmetric in $(\mathsf{s},\mathsf{t})$,
it suffices to consider only $\pi(\mathsf{s})\coloneqq\pi_{\mathsf{s},\varphi(\mathsf{s})}$.
For path configurations, the symmetry $(\mathsf{s},\mathsf{t})
\leftrightarrow(\mathsf{t},\mathsf{s})$
is realized by the reflection across the line $y = x$.

The measure
$\pi(\mathsf{s})$ is precisely the 
trajectory of the 
\emph{stationary 
stochastic six vertex model} from \cite{Amol2016Stationary}.
Namely, under
$\pi(\mathsf{s})$
the joint distribution of the locations
of the occupied entering vertical and horizontal edges along the boundary of any quadrant 
$\{x, x+1, \dots \} \times \{y,y+1, \dots \} \subset \mathbb{Z}^2$ 
is given by the Bernoulli product
measure with density $\mathsf{s}$ for the vertical and 
$\varphi(\mathsf{s})$ for the horizontal edges, respectively
(that is, each edge is occupied independently with probability
$\mathsf{s}$ for vertical and $\varphi(\mathsf{s})$ for horizontal entering edges).
Then given a boundary condition, we use the stochastic weights depending on $(q, u)$ to sample the configuration
in the quadrant according to the measure $\mathbb{P}_{u}^{\mathrm{s6v}}$.
In other words, the row-to-row transfer matrix in the stochastic six vertex model
defines an interacting particle system in (1+1) dimensions (that is, 
a discrete time Markov process on configurations in $\mathbb{Z}$).
The path configuration in $\mathbb{Z}^2$ under $\pi(\mathsf{s})$ is 
the trajectory of the evolution of the Bernoulli measure of density $\mathsf{s}$
(on a horizontal slice)
under this interacting particle system.

We refer to the 
part $\partial\mathfrak{H}$ of the phase diagram as the \emph{KPZ phase}
of the stochastic six vertex model, since in this phase the 
fluctuations of the height function live on scale $N^{1/3}$ 
(in domains of size $N$)
along 
the characteristic
direction, and are described by the Baik--Rains distribution \cite{Amol2016Stationary}
(the distribution was introduced in
\cite{baik2000limiting_BR_distribution}).
This asymptotic fluctuation behavior is typical
for stationary models in the KPZ
universality class \cite{CorwinKPZ}.

We discussed pure states
corresponding to slopes 
$(\mathsf{s},\mathsf{t})$ from
$\overline{\mathfrak{H}}\coloneqq\mathfrak{H}\cup \partial\mathfrak{H}$. 
For $(\mathsf{s},\mathsf{t})$
not on the boundary of $[0,1]^2$ but
outside of $\overline{\mathfrak{H}}$,
the pure states conjecturally exist 
and exhibit \emph{liquid phase} behavior in the sense of
\cite{KOS2006}. That is, the height
fluctuations in a domain of size $N$
grow logarithmically with $N$.
The liquid phase behavior for the stochastic six vertex model is
a major open problem.
In the present paper we mostly
deal with pure states in the KPZ phase, 
and it would be very interesting to extend at least some of our
results (besides the existence of the irreversible dynamics
on the torus)
to the liquid phase.

\subsection{Yang--Baxter equation}
\label{sub:YBE}

The vertex weights $w_u$
\eqref{eq:w_u_weights}
satisfy the Yang--Baxter equation which we now recall.
Define the cross vertex weights as follows:
\begin{equation}
	\label{eq:X_vertex}
	X_{u,v}(i_1,j_1;i_2,j_2)
	=
	X_{u,v}
	\Biggl(
		\scalebox{.5}{\begin{tikzpicture}
		[scale=1.5, baseline=16.5pt]
		\draw[line width=3] (0,0)--++(1,1);
		\draw[line width=3] (0,1)--++(1,-1);
		\node[left] at (0,0) {\LARGE$i_1$};
		\node[left] at (0,1) {\LARGE$i_2$};
		\node[right] at (1,0) {\LARGE$j_2$};
		\node[right] at (1,1) {\LARGE$j_1$};
		\end{tikzpicture}}
	\Biggr)
	\coloneqq w_{u/v}(i_1,j_1;i_2,j_2),\qquad i_1,j_1,i_2,j_2\in \left\{ 0,1 \right\}.
\end{equation}

\begin{proposition}[Yang--Baxter equation]
	\label{prop:YBE}
	For any fixed $i_1,i_2,i_3,j_1,j_2,j_3 \in \left\{ 0,1 \right\}$ we have
	\begin{equation}
		\label{eq:YBE}
		\begin{split}
			&
			\sum_{k_1,k_2,k_3 \in \left\{ 0,1 \right\}}
			X_{u,v}(i_2,i_1;k_2,k_1)\,
			w_u(i_3,k_1;k_3,j_1)\,
			w_v(k_3,k_2;j_3,j_2)
			\\&\hspace{100pt}=
			\sum
			_{k_1',k_2',k_3'\in \left\{ 0,1 \right\}}
			w_v(i_3,i_2;k_3',k_2')\,
			w_u(k_3',i_1;j_3,k_1')
			\,
			X_{u,v}(k_2',k_1';j_2,j_1)
			.
		\end{split}
	\end{equation}
	See \Cref{fig:YBE} for an illustration of the sums involved in both sides of 
	\eqref{eq:YBE}.
\end{proposition}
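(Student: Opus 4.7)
The plan is to prove \eqref{eq:YBE} by direct case analysis organized around a conserved quantity. Local path conservation at each vertex forces $i_1+j_1=i_2+j_2$ for any non-vanishing $w_u(i_1,j_1;i_2,j_2)$, and likewise for the cross weight $X_{u,v}$. Summing the three local conservation laws on either side of \eqref{eq:YBE} shows that both sides vanish unless the external indices satisfy $N\coloneqq i_1+i_2+i_3=j_1+j_2+j_3$. I would therefore partition the verification by the value $N\in\{0,1,2,3\}$.

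First, I would dispatch the extreme cases $N=0$ and $N=3$: in each, the internal triples $(k_1,k_2,k_3)$ and $(k_1',k_2',k_3')$ are uniquely determined by conservation, and both sides of \eqref{eq:YBE} collapse to a single product of three weights, each equal to $1$ by the first line of \eqref{eq:w_u_weights}. The substantive content lies in $N=1$ and $N=2$, contributing $\binom{3}{1}^2+\binom{3}{2}^2=18$ external configurations. For each, conservation admits a small number (typically one or two) of internal triples, so each instance of \eqref{eq:YBE} reduces to an equality of two explicit rational functions in $q,u,v$. Clearing the common denominator $(1-qu)(1-qv)(v-qu)$ converts each such equality into a polynomial identity that is verified by direct expansion.

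The main obstacle is organizational rather than conceptual: one must track, for each of the $18$ nontrivial external configurations, precisely which internal edges are frozen by conservation and which may fluctuate, and then assemble both sides using \eqref{eq:w_u_weights} with spectral parameter $u/v$ on the cross vertex and $u$ or $v$ on the two straight vertices. Some bookkeeping can be saved by the observation that reversing all arrows (i.e.\ swapping incoming and outgoing indices) pairs up the $N=1$ and $N=2$ cases up to a relabeling of weights, cutting the count roughly in half.

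A conceptually cleaner alternative, which I would include as a remark rather than as the proof, is to recognize that $w_u$ is obtained from the standard $R$-matrix of $U_q(\widehat{\mathfrak{sl}}_2)$ in its two-dimensional evaluation representation by a diagonal gauge transformation combined with the stochasticization of \cite{baxter2007exactly}; the classical $RLL=LLR$ relation for that $R$-matrix is preserved under such gauge operations, so \eqref{eq:YBE} follows from a known integrability result. I would present the direct case-by-case verification as the self-contained proof and cite the $R$-matrix viewpoint for context.
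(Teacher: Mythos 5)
Your proposal is correct and follows essentially the same route as the paper, which simply verifies \eqref{eq:YBE} directly for each of the finitely many choices of boundary conditions $i_1,i_2,i_3,j_1,j_2,j_3$ as an identity of rational functions in $u,v,q$. Your organization by the conserved quantity $N=i_1+i_2+i_3$ and the concluding $R$-matrix remark are sensible refinements of the same finite case check, not a different argument.
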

\begin{proof}
	There are finitely many choices of the boundary
	conditions $i_1,i_2,i_3,j_1,j_2,j_3$, and
	for each such choice
	the Yang--Baxter equation
	\eqref{eq:YBE} (an identity between rational functions
	in $u,v$, and~$q$)
	is verified in a straightforward way.
\end{proof}

\begin{figure}[htpb]
	\centering
	\includegraphics[width=.6\textwidth]{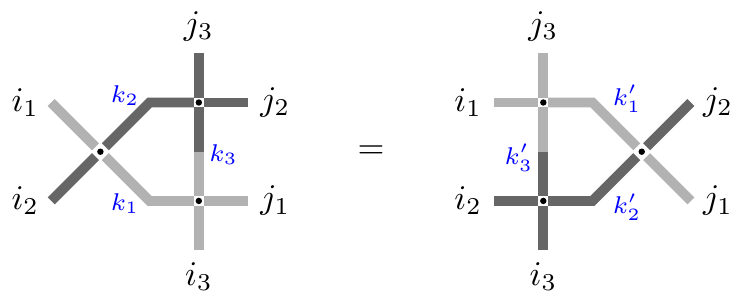}
	\caption{An illustration of the Yang--Baxter equation. 
		The straight vertex in the lighter shade has weight $w_u$, and in the 
		darker shade has weight $w_v$. The cross
		vertex has weight $X_{u,v}$.}
	\label{fig:YBE}
\end{figure}

\begin{remark}
	\label{rmk:number_of_terms_YBE}
	One can readily check that for any choice of 
	$i_1,i_2,i_3,j_1,j_2,j_3$
	the number of summands in each part of the Yang--Baxter
	equation
	\eqref{eq:YBE}
	is at most two.
\end{remark}

\subsection{Row operators}
\label{sub:row_operators}

Let $V=\mathbb{C}^2$ with the canonical orthonormal basis $e_0,e_1$.
We think that the space $V$ is associated
with the vertical direction at a vertex,
and use the vertex weights
$w_u$ \eqref{eq:w_u_weights}
to define four operators $\mathsf{A}_u,\mathsf{B}_u,\mathsf{C}_u$, and $\mathsf{D}_u$
in $V$. The operators $\mathsf{A}_u$ and $\mathsf{D}_u$ act diagonally:
\begin{equation*}
	\begin{split}
		\mathsf{A}_u e_0&=w_u(0,0;0,0)e_0,\qquad 
		\mathsf{A}_u e_1=w_u(1,0;1,0)e_1\\
		\mathsf{D}_u e_0&=w_u(0,1;0,1)e_0,\qquad 
		\mathsf{D}_u e_1=w_u(1,1;1,1)e_1,
	\end{split}
\end{equation*}
and the other two operators act as
\begin{equation*}
	\begin{split}
		\mathsf{B}_u e_0&=w_u(0,1;1,0)e_1,\qquad 
		\mathsf{B}_u e_1=0,
		\\
		\mathsf{C}_u e_1&=w_u(1,0;0,1)e_0,
		\qquad 
		\mathsf{C}_u e_0=0.
	\end{split}
\end{equation*}

Stacking the vertices horizontally, one can define the action of 
the operators 
$\mathsf{A}_u,\mathsf{B}_u,\mathsf{C}_u,\mathsf{D}_u$
in finite tensor powers
$V^{\otimes k}$. This can be done inductively:
\begin{equation*}
	\begin{split}
		\mathsf{A}_u(v_1\otimes v_2)&=
		\mathsf{A}_u v_1 \otimes \mathsf{A}_u v_2
		+
		\mathsf{C}_u v_1 \otimes \mathsf{B}_u v_2
		,
		\qquad 
		\mathsf{B}_u(v_1\otimes v_2)=
		\mathsf{B}_u v_1 \otimes \mathsf{A}_u v_2
		+
		\mathsf{D}_u v_1 \otimes \mathsf{B}_u v_2,
		\\
		\mathsf{C}_u(v_1\otimes v_2)&=
		\mathsf{C}_u v_1 \otimes \mathsf{D}_u v_2
		+
		\mathsf{A}_u v_1 \otimes \mathsf{C}_u v_2,
		\qquad 
		\mathsf{D}_u(v_1\otimes v_2)=
		\mathsf{D}_u v_1 \otimes \mathsf{D}_u v_2
		+
		\mathsf{B}_u v_1 \otimes \mathsf{C}_u v_2
		.
	\end{split}
\end{equation*}
Here $v_1\in V^{\otimes m},v_2\in V^{\otimes l}$, where $m+l=k$, $m,l<k$.

Multiplying two operators like $\mathsf{A}_u\mathsf{A}_v$ (acting in $V$)
corresponds to stacking two vertices vertically, with the spectral 
parameters $u$ 
and $v$ in the lower and the upper vertex, respectively.
Thanks to the Yang--Baxter equation (\Cref{prop:YBE}), the operators
satisfy a number of quadratic relations.
We do not need all of the relations, so let us 
list some of them which are used below in the paper:
\begin{align}
	\label{eq:commutation_relations_AABBCCDD}
		\mathsf{A}_u\mathsf{A}_v&=\mathsf{A}_v\mathsf{A}_u,\qquad 
		\mathsf{B}_u\mathsf{B}_v=\mathsf{B}_v\mathsf{B}_u,\qquad 
		\mathsf{C}_u\mathsf{C}_v=\mathsf{C}_v\mathsf{C}_u,\qquad 
		\mathsf{D}_u\mathsf{D}_v=\mathsf{D}_v\mathsf{D}_u,
		\\
		\label{eq:comm_AC_1}
		\mathsf{A}_v\mathsf{C}_u&=
		\frac{1-q}{1-qu/v}\,
		\mathsf{A}_u\mathsf{C}_v
		+
		\frac{1-u/v}{1-qu/v}\,
		\mathsf{C}_u\mathsf{A}_v,
		\\
		\label{eq:comm_AC_2}
		\mathsf{C}_v\mathsf{A}_u&=
		\frac{q(1-u/v)}{1-qu/v}\,
		\mathsf{A}_u\mathsf{C}_v+
		\frac{u/v(1-q)}{1-qu/v}\,
		\mathsf{C}_u\mathsf{A}_v,
		\\
		\label{eq:comm_BD_1}
		\mathsf{D}_v\mathsf{B}_u&=
		\frac{u/v(1-q)}{1-qu/v}\,
		\mathsf{D}_u\mathsf{B}_v+
		\frac{q(1-u/v)}{1-qu/v}\,
		\mathsf{B}_u\mathsf{D}_v
		,
		\\
		\label{eq:comm_BD_2}
		\mathsf{B}_v\mathsf{D}_u&=
		\frac{1-u/v}{1-qu/v}\,
		\mathsf{D}_u\mathsf{B}_v+
		\frac{1-q}{1-qu/v}\,
		\mathsf{B}_u\mathsf{D}_v
		.
\end{align}
By summing 
\eqref{eq:comm_AC_1} and \eqref{eq:comm_AC_2},
we see that 
$\mathsf{A}_v\mathsf{C}_u+\mathsf{C}_v\mathsf{A}_u$
is symmetric in $(u,v)$, 
which together with \eqref{eq:commutation_relations_AABBCCDD}
implies that
the operators
$\mathsf{A}_u+\mathsf{C}_u$ and 
$\mathsf{A}_v+\mathsf{C}_v$
commute. Similarly,
$\mathsf{B}_u+\mathsf{D}_u$ and 
$\mathsf{B}_v+\mathsf{D}_v$ commute.

Note relations \eqref{eq:commutation_relations_AABBCCDD}--\eqref{eq:comm_BD_2}
hold not only when acting in $V$ (in a single-vertex situation,
which is \Cref{prop:YBE}),
but also in each finite tensor power $V^{\otimes k}$. This is due to the fact that 
we can iterate the Yang--Baxter equation and move the cross
vertex horizontally through a row of adjacent vertices.

\subsection{Row operators in the half-infinite tensor product}
\label{sub:row_operators_infinite}

We need the half-infinite tensor product $V^{[0,\infty)}$ of the spaces $V$ with 
the fixed vector $e_0$.
By definition, the space
$V^{[0,\infty)}$ has the orthonormal
basis indexed by finite subsets
$\lambda\subset \mathbb{Z}_{\ge0}$
(recall notation from \Cref{sub:notation}),
where 
\begin{equation*}
	e_{\lambda}\coloneqq e_{m_0}\otimes e_{m_1}\otimes e_{m_2}\otimes \ldots,\qquad 
	m_i=\mathbf{1}_{i\in \lambda}.
\end{equation*}
We see that 
all but finitely many of the $m_i$'s 
are equal to $0$. We do not need the Hilbert space
completion of $V^{[0,\infty)}$ since all our
expressions involving $V^{[0,\infty)}$ below are in terms of 
matrix elements.

Let us define how the operators $\mathsf{A}_u$ and $\mathsf{B}_u$ act in $V^{[0,\infty)}$.
Representing 
$V^{[0,\infty)}=\bigoplus_{k=0}^{\infty}V^{[0,\infty)}_{k}$,
where 
$V^{[0,\infty)}_{k}$
is the span of $e_\lambda$ with $\ell(\lambda)=k$,
we have
\begin{equation*}
	\mathsf{A}_u\colon
	V^{[0,\infty)}_{k}
	\to
	V^{[0,\infty)}_{k},\qquad 
	\mathsf{B}_u\colon
	V^{[0,\infty)}_{k}
	\to
	V^{[0,\infty)}_{k+1}.
\end{equation*}
Indeed, each matrix element
$\langle \mathsf{A}_u e_\lambda,e_\mu \rangle $
with $\ell(\lambda)=\ell(\mu)$
is the product of the vertex weights
$w_u$ \eqref{eq:w_u_weights} over all vertices indexed by $\mathbb{Z}_{\ge0}$,
and in this product all but finitely many of the 
vertices have weight 
$w_u(0,0;0,0)=1$. This implies that the action 
of $\mathsf{A}_u$ (in terms of matrix elements) is well-defined,
and similarly for 
$\langle \mathsf{B}_u e_\lambda,e_\nu \rangle $, where
$\ell(\nu)=\ell(\lambda)+1$.

Note that the other two operators, $\mathsf{C}_u$ and $\mathsf{D}_u$,
do not act in the infinite tensor product
$V^{[0,\infty)}$ due to the presence of infinitely many
vertex weights $w_u(0,1;0,1)\ne 1$ in the corresponding products for their
matrix elements.

\begin{remark}
	\label{rmk:probabilities_as_matrix_elements}
	The operators in $V^{[0,\infty)}$ allow to express 
	probabilities in the stochastic six vertex model
	in the quadrant (\Cref{def:s6v_quadrant}) as matrix elements.
	For example, for the step boundary conditions, the probability
	to observe a configuration $\lambda$
	of occupied vertical edges 
	at height $l\ge1$ from the bottom is given by
	\begin{equation*}
		\langle e_\lambda, \mathsf{B}_{u_l}\ldots \mathsf{B}_{u_2}\mathsf{B}_{u_1}(e_0\otimes e_0\otimes e_0\otimes\ldots ) \rangle .
	\end{equation*}
	This probability is nonzero if and only if $\ell(\lambda)=l$.
\end{remark}

\section{Bijectivisation and transition probabilities}
\label{sec:bijectivisation}

\subsection{Bijectivisation of summation identities}
\label{sub:bijectivisation}

We recall the notion of bijectivisation 
from Bufetov--Petrov \cite[Section 2]{BufetovPetrovYB2017}.
Suppose we have two disjoint
finite sets $\mathcal{A}, \mathcal{B}$, so that each element
$x \in \mathcal{A} \cup \mathcal{B}$ is equipped with a
positive weight $\mathbf{w}(x)$, such that 
\begin{equation}
	\label{eq:bijectivisation_of_identity}
	\sum_{a\in \mathcal{A}}\mathbf{w}(a)
	=
	\sum_{b\in \mathcal{B}}\mathbf{w}(b).
\end{equation}
Identity \eqref{eq:bijectivisation_of_identity}
defines probability distributions on $\mathcal{A}$ and $\mathcal{B}$
with probability weights proportional to 
$\{\mathbf{w}(a)\}_{a\in \mathcal{A}}$
and
$\{\mathbf{w}(b)\}_{b\in \mathcal{B}}$,
respectively. A \emph{bijectivisation}
is a coupling between these two probability distributions,
expressed via conditional probabilities:

\begin{definition}[Bijectivisation]
	\label{def:bijectivisation}
	A \emph{bijectivisation}
	is a family of forward and backward
	transition probabilities
	$\mathbf{p}^{\mathrm{fwd}}(a \rightarrow b)\ge0$, $\mathbf{p}^{\mathrm{bwd}}(b \rightarrow a)\ge0$, 
	where $a\in \mathcal{A}$, $b\in \mathcal{B}$,
	satisfying
	\begin{itemize}
	\item Sum to one property:
	\begin{equation}
		\label{eq:bij_sum_to_one}
		\sum_{b \in \mathcal{B}} \mathbf{p}^{\mathrm{fwd}}(a \rightarrow b) = 1 \quad   \forall a \in \mathcal{A},
		\qquad 
		\qquad 
		\sum_{a \in \mathcal{A}} \mathbf{p}^{\mathrm{bwd}}(b \rightarrow a) = 1 \quad  \forall b \in \mathcal{B} .
	\end{equation}
	\item Reversibility condition:
	\begin{equation}
		\label{eq:bij_reversibility}
		\mathbf{w}(a)\, \mathbf{p}^{\mathrm{fwd}}(a \rightarrow b) = 
		\mathbf{w}(b)\, \mathbf{p}^{\mathrm{bwd}}(b \rightarrow a),
		\qquad 
		\forall a\in \mathcal{A},\ b\in \mathcal{B}.
	\end{equation}
	\end{itemize}
\end{definition}

For general $\mathcal{A}$ and
$\mathcal{B}$, a bijectivisation is not unique.

\begin{remark}
	\label{rmk:bij_is_unique}
	In the special case when $|\mathcal{A}|$ or $|\mathcal{B}|$ is equal to $1$,
	there is at most one bijectivisation. 
	More precisely, in this case the solution 
	$\{\mathbf{p}^{\mathrm{fwd}}(a \rightarrow b),
	\mathbf{p}^{\mathrm{bwd}}(b \rightarrow a)\}_{a\in \mathcal{A},\ b\in \mathcal{B}}$
	to 
	the family of linear equations
	\eqref{eq:bij_sum_to_one}--\eqref{eq:bij_reversibility}
	\emph{is unique}. When, moreover, this solution is 
	nonnegative, then it determines a 
	\emph{bona fide} bijectivisation
	(i.e., a stochastic Markov map).
\end{remark}

\begin{figure}[htb]
\centering
	\includegraphics[width=.85\textwidth]{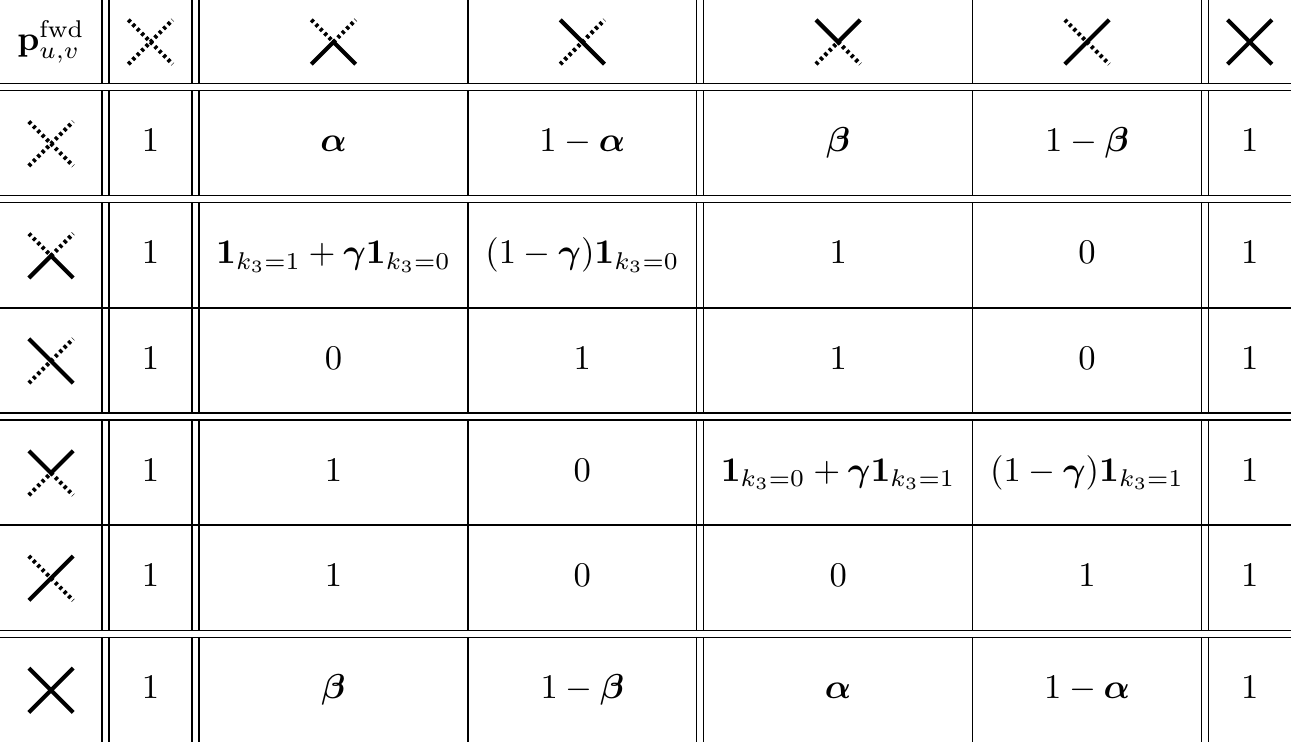}
	\caption{The bijectivisation of the Yang--Baxter equation in terms of the forward
		transition probabilities 
		$\mathbf{p}_{u,v}^{\mathrm{fwd}}\left( (k_1,k_2,k_3)\to (k_1',k_2',k_3')\mid I,J \right)$
		expressed through the 
		quantities \eqref{eq:YBE_bij_alpha_beta_gamma_parameters}.
		The row label is the cross vertex state on the left hand side, and the
		column label is the cross vertex state on the right hand side.
		The cross vertex states determine $i_1,i_2,k_1,k_2$ and $j_1,j_2,k_1,k_2'$
		(recall the labels in \Cref{fig:YBE}).
		Throughout most of the table, the remaining states of the edges 
		are determined uniquely (otherwise $I,J$ are incompatible).
		However, in four cells additional information is required,
		and it is provided in terms of the indicators $\mathbf{1}_{k_3=0}$
		and $\mathbf{1}_{k_3=1}$.
	}
\label{fig:bijProbs}
\end{figure}

\subsection{Bijectivisation of the Yang--Baxter equation}
\label{sub:bijectivisation_of_YBE}

Here we apply the 
general definition of the bijectivisation 
from the previous \Cref{sub:bijectivisation}
to the Yang--Baxter equation of 
\Cref{prop:YBE}.

Denote $I=\{i_1,i_2,i_3 \}$
and $J=\{ j_1,j_2,j_3 \}$. The 
combination of $I$ and $J$
encodes a choice of the boundary conditions
in \eqref{eq:YBE}.
For each such choice, let
$\mathcal{A}_{I, J}$ and $\mathcal{B}_{I,J}$
index the nonzero summands in the left,
respectively, the right-hand side of
\eqref{eq:YBE}.
One can check that for any $I,J$, we have 
one of the following three possibilities:
\begin{itemize}
	\item Either the sums in both sides 
		of \eqref{eq:YBE} are empty
		For 
		example, this happens when
		$i_1+i_2+i_3\ne j_1+j_2+j_3$.
		Call this the \emph{incompatible} case. 
	\item Or one or both of
		$|\mathcal{A}_{I, J}|$
		and
		$|\mathcal{B}_{I, J}|$ is equal to $1$.
		Call this the \emph{one-to-two} case.
	\item 
		We have
		$|\mathcal{A}_{I, J}| = |\mathcal{B}_{I, J}| =2$,
		but the Yang--Baxter equation 
		\eqref{eq:YBE}
		looks as 
		$\mathbf{w}(a_1)+\mathbf{w}(a_2) = \mathbf{w}(b_1)+\mathbf{w}(b_2)$,
		where $\mathbf{w}(a_1)=\mathbf{w}(b_1)$ 
		and $\mathbf{w}(a_2)=\mathbf{w}(b_2)$.
		In other words, we can always match each of the two terms in the 
		left-hand side to the corresponding term in the right-hand
		side which has the same weight.
		Call this the \emph{two-to-two} case.
\end{itemize}

For the one-to-two case, there is 
at most one nonnegative bijectivisation 
of the Yang--Baxter equation
(by 
\Cref{rmk:bij_is_unique}).
In the two-to-two case, we make the natural choice and 
assign the bijectivisation to be deterministic. 
That is, in the notation of the two-to-two case, 
set 
$\mathbf{p}^{\mathrm{fwd}}(a_1\to b_1)
=
\mathbf{p}^{\mathrm{fwd}}(a_2\to b_2)
=1$,
and all the other forward probabilities to zero
(and similarly to the backward probabilities).
In this way, 
for any choice of the boundary conditions
$I,J$,
we have outlined a single
solution to the linear equations
\eqref{eq:bij_sum_to_one}--\eqref{eq:bij_reversibility}.
Denote this solution 
by
\begin{equation}
	\label{eq:transition_probabilities_bijectivisation}
	\mathbf{p}_{u,v}^{\mathrm{fwd}}\left( (k_1,k_2,k_3)\to (k_1',k_2',k_3')\mid I,J \right),
	\qquad 
	\mathbf{p}_{u,v}^{\mathrm{bwd}}\left( (k_1',k_2',k_3')\to (k_1,k_2,k_3)\mid I,J \right)
\end{equation}
(when $I$ and $J$ are incompatible, by agreement, we set all these probabilities
equal to $0$).
Here $u,v$ is the spectral parameter of the lower (resp. the upper) vertex
in the left-hand side of the Yang--Baxter equation
\eqref{eq:YBE}.

Denote
\begin{equation}
	\label{eq:YBE_bij_alpha_beta_gamma_parameters}
	\boldsymbol\upalpha \coloneqq
	\frac{(1-q) \, (1- q u)\, v}{(1 - q v)\, (v - q u)}
	,
	\qquad 
	\boldsymbol\upbeta \coloneqq
	\frac{(1-q)\,  (1- v)\, u}{(1 - u)\, (v - q u)}
	,\qquad 
	\boldsymbol\upgamma \coloneqq
	\frac{(1-v)\, (1- q u)}{(1 - u)\, (1 - q v)}
	.
\end{equation}

\begin{proposition}
	\label{prop:YBE_bij_probabilities}
	For all $I,J$, the solutions \eqref{eq:transition_probabilities_bijectivisation}
	are expressed through the quantities 
	$\boldsymbol\upalpha, \boldsymbol\upbeta, \boldsymbol\upgamma$
	and their complementaries
	$1-\boldsymbol\upalpha, 1-\boldsymbol\upbeta, 1-\boldsymbol\upgamma$.
	The forward solutions $\mathbf{p}_{u,v}^{\mathrm{fwd}}$
	are given in the table in \Cref{fig:bijProbs}. 
	When $0<u<v<1$,
	the forward and the backward solutions 
	are all nonnegative.
\end{proposition}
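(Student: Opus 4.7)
The plan is to reduce the claim to a finite case analysis over the admissible boundary data $(I, J)$. First I would enumerate the pairs $(I, J) \in \{0,1\}^3 \times \{0,1\}^3$ with $i_1 + i_2 + i_3 = j_1 + j_2 + j_3$ (the rest are incompatible and contribute nothing), and for each one compute $|\mathcal{A}_{I,J}|$ and $|\mathcal{B}_{I,J}|$ directly from \eqref{eq:w_u_weights} by running through the possible states $(k_1, k_2, k_3)$ and $(k_1', k_2', k_3')$ of the internal edges. As the text notes, every compatible case falls into exactly one of the three buckets: trivial one-to-one, one-to-two (or its mirror two-to-one), or two-to-two.

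In the one-to-two regime, Remark 3.1 ensures that the system \eqref{eq:bij_sum_to_one}--\eqref{eq:bij_reversibility} has a unique solution, and it is read off directly from \eqref{eq:bij_reversibility}: the two nontrivial forward probabilities are the weights of the two elements on the two-term side, normalized by their sum. I would substitute the vertex weights $w_u, w_v$ and $X_{u,v}$ from \eqref{eq:w_u_weights} and \eqref{eq:X_vertex}, and verify by a short algebraic simplification that each normalized ratio equals one of $\boldsymbol\upalpha, 1 - \boldsymbol\upalpha, \boldsymbol\upbeta, 1 - \boldsymbol\upbeta, \boldsymbol\upgamma, 1 - \boldsymbol\upgamma$ as defined in \eqref{eq:YBE_bij_alpha_beta_gamma_parameters}, matching the corresponding cell in \Cref{fig:bijProbs}; the backward probability in the same cell then follows from \eqref{eq:bij_reversibility}. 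The two-to-two cases are handled by exhibiting a weight-preserving pairing between the two summands on each side of \eqref{eq:YBE}; this is a direct identity between monomials in $u, v, q$ and justifies the deterministic $0$'s and $1$'s recorded in the four doubly-occupied rows of the table.

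For nonnegativity under $0 < u < v < 1$, each of $\boldsymbol\upalpha, \boldsymbol\upbeta, \boldsymbol\upgamma$ is a ratio of manifestly positive factors once one observes $v - qu > 0$ (since $v > u \geq qu$). For the complementaries, a one-line computation gives closed forms such as
\begin{equation*}
    1 - \boldsymbol\upalpha = \frac{q\,(v-u)\,(1-v)}{(1-qv)\,(v-qu)}, \qquad 1 - \boldsymbol\upbeta = \frac{(v-u)\,(1-qu)}{(1-u)\,(v-qu)}, \qquad 1 - \boldsymbol\upgamma = \frac{(1-q)\,(v-u)}{(1-u)\,(1-qv)},
\end{equation*}
each of which is nonnegative in the specified range.

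I expect the main obstacle to be purely organizational rather than conceptual: one must keep track of which of the six local vertex types appears at each of the three vertices on both sides of \eqref{eq:YBE}, correctly align each enumerated case with the appropriate row and column of \Cref{fig:bijProbs}, and carry out a handful of rational-function simplifications. The algebraic input from the Yang--Baxter equation itself is exactly what forces the ratios to collapse to the three quantities in \eqref{eq:YBE_bij_alpha_beta_gamma_parameters}, and once this collapse is observed in each case the table is filled in deterministically.
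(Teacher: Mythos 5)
Your proposal is correct and follows essentially the same route as the paper: a finite case-by-case verification of the table using the uniqueness of the bijectivisation in the one-to-two cases and the deterministic matching in the two-to-two cases, followed by the observation that $0<u<v<1$ and $0<q<1$ force $\boldsymbol\upalpha,\boldsymbol\upbeta,\boldsymbol\upgamma\in(0,1)$, with backward nonnegativity inherited via reversibility. Your explicit closed forms for $1-\boldsymbol\upalpha$, $1-\boldsymbol\upbeta$, $1-\boldsymbol\upgamma$ are correct and simply make the paper's terse positivity remark more concrete.
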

\begin{proof}
	The form of the forward solutions
	in \Cref{fig:bijProbs}
	is verified in a straightforward way
	for each choice of $I,J$.
	The backward solutions are readily found from 
	the reversibility condition \eqref{eq:bij_reversibility},
	but we do not need their explicit form 
	in the present work. 

	Finally, conditions 
	$0<u<v<1$ together with $0<q<1$ 
	ensure that 
	$\boldsymbol\upalpha,\boldsymbol\upbeta,\boldsymbol\upgamma$
	belong to $(0,1)$, which guarantees the nonnegativity of the 
	forward probabilities in \Cref{fig:bijProbs}. 
	The fact that the backward probabilities
	are then also nonnegative follows from the reversibility.
\end{proof}

\subsection{Two-row Markov operator}
\label{sub:two_row_Markov_operator}

Take the two-row lattice $\mathbb{Z}_{\ge0}\times \left\{ 1,2 \right\}$
and consider a path configuration 
on it encoded by the triple
$(\kappa, \mu, \lambda)$ of finite subsets of $\mathbb{Z}_{\ge0}$.
At the bottom boundary, paths enter according to $\kappa$,
at the top boundary they exit according to $\lambda$,
and $\mu$ encodes the occupation of internal vertical edges. 
We assume that no paths proceed infinitely far to the right,
and that
on the left the boundary edges 
$(-1,1)-(0,1)$ and
$(-1,2)-(0,2)$ are both occupied or are both empty. 
That is, either $\ell(\lambda)=\ell(\mu)=\ell(\kappa)$,
or 
$\ell(\lambda)=\ell(\mu)+1=\ell(\kappa)+2$.
See \Cref{fig:triple_configuration} for an illustration.

\begin{figure}[htpb]
	\centering
	\includegraphics[width=.5\textwidth]{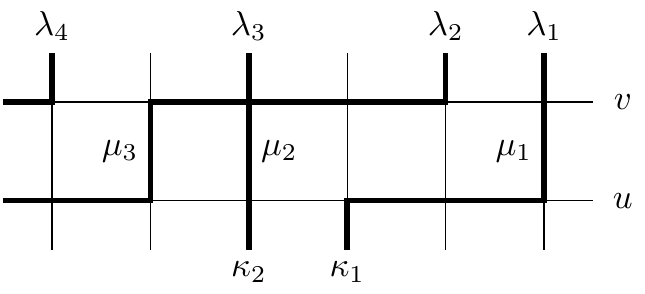}
	\caption{A path configuration 
		in $\mathbb{Z}_{\ge0}\times \left\{ 1,2 \right\}$
		encoded by 
		$\lambda=(5,4,2,0)$, $\mu=(5,2,1)$, and $\kappa=(3,2)$.
		In this example both left boundary edges are occupied.}
	\label{fig:triple_configuration}
\end{figure}

\begin{definition}[Two-row bijectivisation]
	\label{def:two_row_bij}
	Take a triple $(\kappa,\mu,\lambda)$ as described above
	with 
	$\ell(\lambda)=\ell(\mu)=\ell(\kappa)$
	or 
	$\ell(\lambda)=\ell(\mu)+1=\ell(\kappa)+2$.
	Let the spectral parameters $u,v$
	with $0<u<v<1$ be associated to the bottom and,
	respectively, the top row of the lattice 
	$\mathbb{Z}_{\ge0}\times \left\{ 1,2 \right\}$.
	We sample a new configuration 
	$(\kappa, \nu, \lambda)$, where $\nu$ is random and depends on the 
	old triple $(\kappa,\mu,\lambda)$, as follows:
	\begin{itemize}
	\item First, add an extra cross vertex 
		at the left boundary of the lattice between rows $1$ and $2$.
		Depending on the left boundary, the cross is fully empty or 
		fully occupied. In both cases its weight is equal to $1$,
		see \eqref{eq:X_vertex} and \eqref{eq:w_u_weights}.

	\item For each $i = 0, 1, 2, \dots$, 
		drag the cross past the $i^{th}$ vertical edge in the
		lattice.
		This is a random operation 
		involving the transition probabilities
		$\mathbf{p}^{\mathrm{fwd}}_{u,v}$ from \Cref{fig:bijProbs}. 
		Since $\kappa,\mu,\lambda$ are finite subsets,
		this random procedure eventually becomes deterministic far to the right
		because all edges are eventually empty.
		This corresponds to 
		the forward transition probabilities becoming trivial:
		$
		\mathbf{p}_{u,v}^{\mathrm{fwd}}\left( (0,0,0)\to (0,0,0)\mid \left\{ 0,0,0 \right\},\left\{ 0,0,0 \right\} \right)=1
		$.
		The resulting state of the cross vertex is empty, 
		and it has weight $1$.
	\end{itemize}
	Denote the law of the resulting 
	random $\nu$ by
	$U_{u,v}(\mu \rightarrow \nu \mid \kappa, \lambda)$.
	We call
	$U_{u,v}$ the \emph{two-row Markov transition 
	operator} which takes $\mu$ to a random $\nu$
	given the boundary conditions encoded by~$\kappa,\lambda$.
\end{definition}

\subsection{Action on two-row Gibbs measures}
\label{sub:tworow_action_Gibbs}

Within the setting of the previous \Cref{sub:two_row_Markov_operator},
let us show how
the operator $U_{u,v}$ (\Cref{def:two_row_bij})
swaps the spectral parameters $u \leftrightarrow v$ in Gibbs measures
on path configurations in the two-row lattice
$\mathbb{Z}_{\ge0}\times \left\{ 1,2 \right\}$.

Assume that the old triple 
$(\kappa,\mu,\lambda)$ 
with $\ell(\lambda)=\ell(\mu)+1=\ell(\kappa)+2$
has the $(u,v)$-Gibbs 
distribution. This means that the conditional distribution
of $\mu$ is 
\begin{equation*}
	\mathbb{P}_{u,v}(\mu\mid \kappa,\lambda)=\frac{1}{Z}
	\langle e_\lambda, \mathsf{B}_v e_\mu \rangle \,
	\langle e_\mu, \mathsf{B}_u e_\kappa \rangle,
\end{equation*}
where $Z=Z(u,v)$ is the normalizing constant
(the other case $\ell(\lambda)=\ell(\mu)=\ell(\kappa)$ is similar,
but the operators $\mathsf{B}$ should be replaced with $\mathsf{A}$).
The Yang--Baxter equation (\Cref{prop:YBE})
implies that $Z(u,v)$ is symmetric in $u,v$.
Indeed, after summing over $\mu$ to get $Z(u,v)$,
we see that the symmetry follows from 
the commutation of the operators
$\mathsf{B}_u$ and $\mathsf{B}_v$,
see \eqref{eq:comm_BD_1}--\eqref{eq:comm_BD_2}.
The Markov operator $U_{u,v}$ extends this symmetry to the 
level of probability distributions:

\begin{proposition}
	\label{prop:U_swap_uv_tworow}
	Let $0 < u < v < 1$. For any triple 
	$(\kappa,\mu,\lambda)$
	of finite
	subsets of $\mathbb{Z}_{\ge0}$, we have
	\begin{equation}
		\label{eq:U_swap_uv_tworow}
		\sum_{\tilde \mu}
		\langle e_\lambda, \mathsf{B}_v e_{\tilde\mu} \rangle 
		\langle e_{\tilde \mu}, \mathsf{B}_u e_\kappa \rangle
		\,
		U_{u,v}(\tilde \mu\to \mu\mid \kappa,\lambda)
		=
		\langle e_\lambda, \mathsf{B}_u e_\mu \rangle 
		\langle e_\mu, \mathsf{B}_v e_\kappa \rangle
	\end{equation}
	The same identity also
	holds with the operators $\mathsf{B}$ replaced everywhere by $\mathsf{A}$.
\end{proposition}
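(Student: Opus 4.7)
The plan is to establish (3.6) by interpreting both sides as weighted sums of vertex configurations in the two-row strip and then iteratively applying the one-column bijectivisation of the Yang--Baxter equation (Proposition~3.3) column by column. By the definition of the row operators in Section~2.6, the product $\langle e_\lambda, \mathsf{B}_v e_{\tilde\mu}\rangle\langle e_{\tilde\mu}, \mathsf{B}_u e_\kappa\rangle$ equals the product of $w_u$-weights over the lower row and $w_v$-weights over the upper row in the two-row configuration $(\kappa, \tilde\mu, \lambda)$. The first move is to insert on the extreme left boundary a cross vertex $X_{u,v}$: under the assumption $\ell(\lambda)=\ell(\mu)+1=\ell(\kappa)+2$ the two left-boundary horizontal edges are both occupied, and under $\ell(\lambda)=\ell(\mu)=\ell(\kappa)$ they are both empty; in either case the inserted cross has weight $X_{u,v}(0,0;0,0)=X_{u,v}(1,1;1,1)=1$ by (2.5), so it does not change the LHS of (3.6).

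Next I would invoke the sampling procedure defining $U_{u,v}$ in Definition~3.6, which drags the cross through columns $i=0,1,2,\ldots$ using the forward transition probabilities $\mathbf{p}^{\mathrm{fwd}}_{u,v}$. For each column $i$, dragging the cross past the two stacked straight vertices is exactly the operation bijectivised by Proposition~3.3, and the crucial consequence of the reversibility condition (3.2) together with the backward sum-to-one property (3.1) is
\begin{equation*}
\sum_{a\in\mathcal{A}_{I,J}} \mathbf{w}(a)\,\mathbf{p}^{\mathrm{fwd}}_{u,v}(a\to b\mid I,J) \;=\; \mathbf{w}(b), \qquad b\in\mathcal{B}_{I,J},
\end{equation*}
where $\mathbf{w}(a)=X_{u,v}\cdot w_u\cdot w_v$ and $\mathbf{w}(b)=w_v\cdot w_u\cdot X_{u,v}$ are the products of the three local weights in the cross-on-left and cross-on-right arrangements respectively. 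This is the one-column identity to iterate.

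I would then induct on the column index $i$. The inductive claim after $i$ columns are processed is that the sum over the still-unprocessed middle entries $(\tilde\mu_j)_{j\ge i}$ and over all previous randomness equals the product of swapped-parameter weights $w_v\cdot w_u$ at columns $0,\ldots,i-1$ (evaluated on the new middle entries $\mu_0,\ldots,\mu_{i-1}$), times the weight of the cross between columns $i-1$ and $i$, times the original $w_u\cdot w_v$ weights at columns $\ge i$. The inductive step is exactly the displayed identity with $(I,J)$ read off from the current cross arms, the bottom-boundary bit $\mathbf{1}_{i\in\kappa}$, the top-boundary bit $\mathbf{1}_{i\in\lambda}$, and the horizontal edges entering column $i+1$ from the original configuration. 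Since $\kappa,\mu,\lambda$ are finite, for all sufficiently large $i$ the dragging is deterministic with all relevant edges empty, so the cross eventually exits on the right with weight $1$. What remains is the product of swapped-parameter vertex weights for the new middle row $\mu$, which is precisely the RHS of (3.6). The $\mathsf{A}$-operator case is identical in the configuration $\ell(\lambda)=\ell(\mu)=\ell(\kappa)$, with the inserted cross again having weight~$1$.

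The hardest part is the bookkeeping of horizontal-edge data across column boundaries: the ``left'' YB boundary $(i_1,i_2)$ at column $i$ is inherited from the cross state after step $i-1$ and thus lives at the interface between the partially transformed and still-original parts of the strip, while the ``right'' YB boundary $(j_1,j_2)$ is read off from the original configuration at the interface with column $i+1$; verifying that these pieces match consistently as the cross is advanced is precisely what the dragging procedure of Definition~3.6 encodes, and comes down to path conservation at each vertex, but it requires the most careful notational setup.
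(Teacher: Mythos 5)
Your proposal is correct and follows essentially the same route as the paper's proof: insert the weight-one cross on the left, drag it rightward column by column, and at each column apply the one-column consequence of reversibility \eqref{eq:bij_reversibility} and the backward sum-to-one property \eqref{eq:bij_sum_to_one} (namely $\sum_a \mathbf{w}(a)\,\mathbf{p}^{\mathrm{fwd}}_{u,v}(a\to b)=\mathbf{w}(b)$), until the cross exits to the right of the rightmost path with weight one, leaving the swapped-parameter weights. Your inductive bookkeeping of the interface data is just a more explicit rendering of the same iteration the paper performs.
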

\begin{proof}
	We prove the statement with $\mathsf{B}$, the
	one with $\mathsf{A}$ is analogous.
	The action of $U_{u,v}$ starts by 
	adding the full cross (having weight $1$)
	on the left. Then it proceeds by 
	dragging the cross 
	through the lattice, and 
	sampling $\mu$ given $\kappa,\tilde \mu,\lambda$
	by a sequence of $\mathbf{p}_{u,v}^{\mathrm{fwd}}$ steps.
	Thus, we can
	write the 
	left-hand side of
	\eqref{eq:U_swap_uv_tworow}
	as the product of local vertex weights 
	and the local transition probabilities
	$\mathbf{p}_{u,v}^{\mathrm{fwd}}$. 
	The dragging of the cross and 
	the summation over $\tilde\mu$ in the left-hand side
	of \eqref{eq:U_swap_uv_tworow}
	allows to repeatedly apply
	the local relation
	\begin{equation*}
		\begin{split}
		&
		\sum_{k_1,k_2,k_3}
		\underbrace{X_{u,v}(i_2,i_1;k_2,k_1)
		w_u(i_3,k_1;k_3,j_1)\,
		w_v(k_3,k_2;j_3,j_2)}_{\textnormal{from the LHS 
		of the Yang--Baxter equation \eqref{eq:YBE}}}
		\mathbf{p}_{u,v}^{\mathrm{fwd}}\left( (k_1,k_2,k_3)\to (k_1',k_2',k_3')
		\mid I,J \right)
		\\
		&
		=
		\sum_{k_1,k_2,k_3}
		\underbrace{w_v(i_3,i_2;k_3',k_2')\,
		w_u(k_3',i_1;j_3,k_1')
		\,
		X_{u,v}(k_2',k_1';j_2,j_1)}
		_{\textnormal{from the RHS 
		of the Yang--Baxter equation \eqref{eq:YBE}}}
		\mathbf{p}_{u,v}^{\mathrm{bwd}}\left( (k_1',k_2',k_3')\to (k_1,k_2,k_3)\mid I,J \right)
		\\&=
		w_v(i_3,i_2;k_3',k_2')\,
		w_u(k_3',i_1;j_3,k_1')
		\,
		X_{u,v}(k_2',k_1';j_2,j_1),
		\end{split}
	\end{equation*}
	which follows from the 
	properties \eqref{eq:bij_sum_to_one}--\eqref{eq:bij_reversibility} of the bijectivisation
	(recall that $I=\left\{ i_1,i_2,i_3 \right\}$,
	$J=\left\{ j_1,j_2,j_3 \right\}$).
	The term 
	$w_v(i_3,i_2;k_3',k_2')\,
	w_u(k_3',i_1;j_3,k_1')$
	in the 
	right-hand side of the local relation
	signifies the appearance of the local $(v,u)$-Gibbs distribution.
	The additional
	cross vertex weight 
	$X_{u,v}(k_2',k_1';j_2,j_1)$ participates in the next local relation
	in the process of dragging the cross.

	Ultimately, one arrives at the
	configuration $(\kappa,\mu,\lambda)$
	whose weight has swapped spectral parameters.
	The eventual state of the 
	cross vertex is empty because it is to the right of the rightmost path.
	Since the weight of an empty cross is $1$, we can
	remove the cross vertex weight, 
	and we are left with the right hand side of the desired
	identity \eqref{eq:U_swap_uv_tworow}.
\end{proof}

\subsection{Action on infinite configurations}
\label{sub:truncation}

In the previous \Cref{sub:two_row_Markov_operator,sub:tworow_action_Gibbs}
we defined the two-row Markov operator
$U_{u,v}(\tilde \mu\to \mu\mid \kappa,\lambda)$ acting on finite 
configurations of vertical arrows. 
Here we extend this operator to possibly infinite 
subsets of $\mathbb{Z}_{\ge0}$.
Recall that for $\upmu\subseteq\mathbb{Z}_{\ge0}$
and $h\ge1$, the truncation is 
$\upmu^{[<h]}=\upmu\cap\left\{ 0,1,\ldots,h-1  \right\}$.

Pick
$\upkappa,\uplambda\subseteq\mathbb{Z}_{\ge0}$,
and assume that $\upmu\subseteq\mathbb{Z}_{\ge0}$
has a $(u,v)$-Gibbs distribution on the two-row
lattice $\mathbb{Z}_{\ge0}\times \left\{ 1,2 \right\}$.
That is, for any $h\ge1$ and any choice of the occupations
of the horizontal edges
$(h-1,1)-(h,1)$ and $(h-1,2)-(h,2)$ on the right boundary,
the conditional distribution of 
$\upmu^{[<h]}$ 
is given by $\mathbb{P}_{(u,v)}^{\mathrm{rect}}$
with the corresponding boundary conditions.

For $h\ge1$, denote by
$U_{u,v}^{[<h]}(\upmu^{[<h]}\to \upnu^{[<h]}\mid \upkappa^{[<h]},\uplambda^{[<h]})$ 
the probability that 
by randomly dragging the cross
to the right past the horizontal coordinate $h-1$
(using the probabilities $\mathbf{p}^{\mathrm{fwd}}_{u,v}$ from
\Cref{fig:bijProbs}),
the state of the internal edges at $0,1,\ldots,h-1 $
is given by $\upnu^{[<h]}$.
By the same computation as in the proof of \Cref{prop:U_swap_uv_tworow},
the distribution of 
$\upnu^{[<h]}$ is $(v,u)$-Gibbs. 
Moreover, as $h\to+\infty$, 
the distributions of $\upnu^{[<h]}$ are compatible.
Thus, by the Kolmogorov extension, we arrive at a 
random subset $\upnu\subseteq\mathbb{Z}_{\ge0}$
which has a $(v,u)$-Gibbs distribution.

\medskip

Let us now take the stochastic six vertex model
in the quadrant as in \Cref{def:s6v_quadrant},
with the
spectral parameters
$\mathbf{u}=(u_1,u_2,\ldots )$
and step-$\uplambda$
or empty-$\uplambda$ boundary conditions.
The full configuration of the up-right paths in the quadrant
can be encoded by 
a sequence 
$\uplambda^{(i)}$, $i\ge0$,
of subsets 
of $\mathbb{Z}_{\ge0}$,
where $\uplambda^{(0)}=\uplambda$ (the bottom boundary condition),
and each $\uplambda^{(i)}$ represents which 
vertical edges among
$\{(h,i)-(h,i+1)\colon h\in \mathbb{Z}_{\ge0}\}$
are occupied by paths.

\begin{definition}
	\label{def:L_k_operator}
	Fix $k\ge1$ and assume that $u_k<u_{k+1}$.
	Let 
	$L_{k,\mathbf{u}}$
	be the Markov transition operator acting 
	on the sequence $(\uplambda^{(i)})_{i\ge0}$
	by randomly changing $\uplambda^{(k)}$ to $\upnu^{(k)}$ according
	to the two-row transition probability
	$U_{u_{k},u_{k+1}}\left( \uplambda^{(k)}\to\upnu^{(k)}\mid 
	\uplambda^{(k-1)},\uplambda^{(k+1)} \right)$.
	For all $i\ne k$, the operator $L_{k,\mathbf{u}}$
	leaves $\uplambda^{(i)}$ intact.
\end{definition}

\begin{proposition}
	\label{prop:action_of_L_k_on_s6v}
	We have
	\begin{equation}
		\label{eq:action_of_L_k_on_s6v}
		\mathbb{P}_{\mathbf{u}}^{\mathrm{s6v}}
		L_{k,\mathbf{u}}=
		\mathbb{P}_{s_k\mathbf{u}}^{\mathrm{s6v}},
		\qquad 
		s_k\mathbf{u}\coloneqq(u_1,\ldots,u_{k-1},u_{k+1},u_k,u_{k+2},\ldots  ),
	\end{equation}
	where 
	$\mathbb{P}_{\mathbf{u}}^{\mathrm{s6v}}
	L_{k,\mathbf{u}}$
	is the action of a Markov operator on a probability measure,
	$s_k$ is the $k$-th elementary permutation, and 
	the boundary conditions of both
	stochastic six vertex models in the quadrant
	in \eqref{eq:action_of_L_k_on_s6v} are the same.
\end{proposition}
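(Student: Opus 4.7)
The plan is to reduce the full-quadrant statement to the two-row swap of \Cref{prop:U_swap_uv_tworow} by invoking the Gibbs property of $\mathbb{P}_{\mathbf{u}}^{\mathrm{s6v}}$. First I would observe that under $\mathbb{P}_{\mathbf{u}}^{\mathrm{s6v}}$, the conditional distribution of $\uplambda^{(k)}$ given all other rows $\{\uplambda^{(i)}\}_{i\ne k}$ is precisely the $(u_k,u_{k+1})$-Gibbs distribution on the two-row strip $\mathbb{Z}_{\ge0}\times\{k,k+1\}$ with incoming bottom boundary $\uplambda^{(k-1)}$ and outgoing top boundary $\uplambda^{(k+1)}$ (together with the left boundary determined by the step-$\uplambda$ or empty-$\uplambda$ choice). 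This conditional identification is a direct consequence of \Cref{def:Gibbs} applied to finite truncations: on any rectangle $\{0,\ldots,h-1\}\times\{k,k+1\}$ with fixed boundary data, the conditional law is $\mathbb{P}^{\mathrm{rect}}_{(u_k,u_{k+1})}$, and these laws are compatible as $h\to\infty$.

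Next, by the infinite-volume extension of $U_{u_k,u_{k+1}}$ described in \Cref{sub:truncation}, the application of $L_{k,\mathbf{u}}$ transforms this conditional law of $\uplambda^{(k)}$ into the $(u_{k+1},u_k)$-Gibbs distribution on the same two-row strip, with the same boundary data $\uplambda^{(k-1)}, \uplambda^{(k+1)}$. The finite-volume statement is exactly \Cref{prop:U_swap_uv_tworow} (or its $\mathsf{A}$-variant, depending on how many paths occupy the left boundary of rows $k,k+1$), and the passage to infinite $\upmu$ follows from the Kolmogorov extension of the truncated operators $U^{[<h]}_{u_k,u_{k+1}}$ already established in \Cref{sub:truncation}.

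To finish, I need to match this with $\mathbb{P}_{s_k\mathbf{u}}^{\mathrm{s6v}}$. Because $L_{k,\mathbf{u}}$ leaves $\uplambda^{(i)}$ intact for $i\ne k$, the marginal of $\mathbb{P}_{\mathbf{u}}^{\mathrm{s6v}}L_{k,\mathbf{u}}$ on $\{\uplambda^{(i)}\}_{i\ne k}$ equals that of $\mathbb{P}_{\mathbf{u}}^{\mathrm{s6v}}$ itself. The partition function of the two-row strip with boundary data $\uplambda^{(k-1)},\uplambda^{(k+1)}$ is symmetric in $(u_k,u_{k+1})$: this is precisely the symmetry of $\langle e_{\uplambda^{(k+1)}}, \mathsf{B}_{u_{k+1}}\mathsf{B}_{u_k}\, e_{\uplambda^{(k-1)}}\rangle$ (or the analogous $\mathsf{A}$-expression) under swapping $u_k\leftrightarrow u_{k+1}$, which follows from the commutation $\mathsf{B}_u\mathsf{B}_v=\mathsf{B}_v\mathsf{B}_u$ in \eqref{eq:commutation_relations_AABBCCDD}. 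Consequently the marginal of $\mathbb{P}_{\mathbf{u}}^{\mathrm{s6v}}$ on $\{\uplambda^{(i)}\}_{i\ne k}$ coincides with that of $\mathbb{P}_{s_k\mathbf{u}}^{\mathrm{s6v}}$. Combining this with the conditional law established in the previous paragraph, the joint distribution $\mathbb{P}_{\mathbf{u}}^{\mathrm{s6v}}L_{k,\mathbf{u}}$ agrees with $\mathbb{P}_{s_k\mathbf{u}}^{\mathrm{s6v}}$ on every finite cylinder, and hence on the full quadrant.

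The main obstacle I expect is not algebraic but analytic: ensuring that the infinite-volume action of $L_{k,\mathbf{u}}$ really is well-defined and consistent with truncations on the right. The cross dragging can propagate arbitrarily far, so one must confirm that with probability one it stabilizes, and that the resulting joint measure on the full quadrant is obtained as a Kolmogorov limit of the truncated ones. This is precisely the content of \Cref{sub:truncation}, so the remaining work is bookkeeping rather than a new probabilistic estimate.
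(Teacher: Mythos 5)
Your proposal follows essentially the same route as the paper's proof: condition on the neighboring rows $\uplambda^{(k-1)},\uplambda^{(k+1)}$ via the Gibbs property, apply the two-row swap of \Cref{prop:U_swap_uv_tworow} through its truncated extension from \Cref{sub:truncation}, and conclude by commutativity of the row operators together with Kolmogorov extension. The only cosmetic difference is that for the truncated windows the paper writes the conditional law using the operators $\mathsf{B}_u+\mathsf{D}_u$ (resp.\ $\mathsf{A}_u+\mathsf{C}_u$), since paths may exit through the right edge of the window, so the precise identity invoked is the commutation of these sums rather than of $\mathsf{B}_u$ alone.
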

\begin{proof}
	Let 
	$\upkappa=\uplambda^{(k-1)}$,
	$\upmu=\uplambda^{(k)}$,
	$\uprho=\uplambda^{(k+1)}$,
	and $\upnu = \upnu^{(k)}$.
	The subsets 
	$\upkappa,\upmu,\uprho$ encode the configuration of 
	the stochastic six vertex model
	before the application of $L_{k,\mathbf{u}}$, 
	and $\upnu$ is the result of this application.

	Under the step-$\uplambda$ boundary conditions,
	for any truncation $h\ge1$
	the conditional
	distribution
	of $\upmu^{[<h]}$ given $\upkappa^{[<h]},\uprho^{[<h]}$
	is
	\begin{equation*}
		\mathbb{P}_{(u,v)}^{\mathrm{s6v}}
		(
		\upmu^{[<h]}\mid \upkappa^{[<h]},\uprho^{[<h]}
		)
		=
		\frac{1}{Z}
		\langle 
		e_{\uprho^{[<h]}},
		\left( \mathsf{B}_v+\mathsf{D}_v \right)
		e_{\upmu^{[<h]}}
		\rangle \,
		\langle 
		e_{\upmu^{[<h]}},
		\left( \mathsf{B}_u+\mathsf{D}_u \right)
		e_{\upkappa^{[<h]}}
		\rangle.
	\end{equation*}
	For the empty-$\uplambda$ boundary conditions,
	both operators $\mathsf{B}+\mathsf{D}$ should be replaced by
	$\mathsf{A}+\mathsf{C}$ with the same spectral parameters,
	and the argument is analogous.
	Applying the operator $U_{u,v}^{[<h]}$
	which maps $\upmu^{[<h]}$ to a random subset
	$\upnu^{[<h]}$,
	and arguing as in the proof of \Cref{prop:U_swap_uv_tworow},
	we see that the distribution of $\upnu^{[<h]}$
	is 
	$\mathbb{P}_{(v,u)}^{\mathrm{s6v}}
	(
	\upnu^{[<h]}\mid \upkappa^{[<h]},\uprho^{[<h]}
	)$.
	Indeed, 
	this follows from the commutation
	of $\mathsf{B}_u+\mathsf{D}_u$ with
	$\mathsf{B}_v+\mathsf{D}_v$,
	which in turn is a consequence of the Yang--Baxter
	equation, see \eqref{eq:comm_BD_1}--\eqref{eq:comm_BD_2}. 
	This completes the proof.
\end{proof}

\subsection{Two-row Markov operator via coin flips}
\label{sub:sample}

Here we present another description 
of the Markov operator $U_{u,v}$
which is adapted to taking the continuous
time limit in \Cref{sec:continuous_time_limit} below.
Fix spectral parameters $0<u<v<1$, and recall that we are dragging a cross
through the two-row lattice $\mathbb{Z}_{\geq 0} \times \{1,2\}$, with
spectral parameters $u$ and $v$ on rows $1$ and $2$,
respectively.

Let us argue in the setting of finite subsets. 
Assume that $\kappa,\mu,\lambda$ with either $\ell(\lambda)=\ell(\mu)+1=\ell(\kappa)+2$
or $\ell(\lambda)=\ell(\mu)=\ell(\kappa)$
encode the configurations of occupied vertical edges as in \Cref{fig:triple_configuration}.
We aim to present an algorithm sampling $\nu$
from the distribution
$U_{u, v}(\mu \rightarrow \nu \mid \kappa, \lambda)$. 
This algorithm involves independent random coin flips to
\emph{initiate jumps} at each horizontal coordinate $i$. If a jump is
initiated, then it propagates to the right according to
certain rules. A jump can be either up or down, let us describe them.

Denote the horizontal edges 
$(i,1)-(i+1,1)$ and $(i,2)-(i+1,2)$
by $e_{i,1}$ and $e_{i,2}$, respectively.
The occupation of these edges changes after initiating a jump
(and the vertical edge's occupation changes accordingly).
Note that the occupation of the previous horizontal edges
$e_{i-1,1},e_{i-1,2}$
does not change after a jump at $i$ is initiated.

\paragraph{Initiating up jump.}
An up jump can be initiated at $i$
if the local path configuration 
is one of 
\begin{equation}
	\label{eq:initiate_up_jump}
	\pcu,\pau,\pbu
	\;\;.
\end{equation}
After initiating an up jump, the path occupying
$e_{i,1}$ jumps up from $e_{i,1}$ to $e_{i, 2}$, and
the vertical edge $(i,1)-(i,2)$ becomes occupied. For
example, in the first case in \eqref{eq:initiate_up_jump} we have
\begin{equation*}
	\pcu \rightarrow \pcid \;\;.
\end{equation*}

\paragraph{Propagation of up jump.}
The up jump
propagates as follows.
Find the largest $c = c(i) =
c(i, \kappa,\lambda,\mu) \geq 0$ such that $e_{i+j,1}$ is
occupied and $e_{i+j,2}$ is unoccupied for all $j
=0,1,\dots, c$, and for all these edges we swap the
occupation status of $e_{i+j,1}$ and  $e_{i+j,2}$. We also
change the occupation status of $(i+c+1,1)-(i+c+1,2)$
from occupied to unoccupied. This results in a well
defined configuration. See \Cref{fig:jump_prop} for an illustration.

\paragraph{Initiating down jump.}
A down jump can be initiated at $i$ if the local configuration is one of
$$\pcd,\pad,\pbd \;\;.$$ 
Then the path occupying $e_{i,2}$ jumps down from $e_{i,2}$ to $e_{i,1}$,
and the vertical edge $(i,1)-(i,2)$ becomes unoccupied.
For example, 
$$\pbd \rightarrow \pau \;\;.$$

\paragraph{Propagation of down jump.}

The down jump propagates as follows. Find the largest $ c =
 c(i) =  c(i, \kappa,\lambda,\mu) \geq 0$ such that
$e_{i+j,1}$ is unoccupied and $e_{i+j,2}$ is occupied for
all $j =0,1,\dots,  c$. For all these edges, swap the
occupation status of $e_{i+j,1}$ and $e_{i+j,2}$. 
Change the state of the last vertical edge 
$(i+ c+1,1)-(i+ c+1,2)$ from unoccupied to occupied.

 \paragraph{Sampling procedure.}

Having defined initiation and propagation 
of the up and down jumps, we are now in a position to describe how the 
random finite subset $\nu$ with distribution $U_{u,v}$
is sampled.
Start by setting $i = 0$. 
While $i \leq \lambda_1$ (where $\lambda_1$ is the position of the rightmost 
occupied vertical edge), sequentially repeat the following steps:
\begin{enumerate}
\item If we can initiate an up jump at $i$, do 
	so according to an independent coin flip with probability
	$1-\boldsymbol\upgamma,1-\boldsymbol\upalpha$, or 
	$1-\boldsymbol\upbeta$ (see \eqref{eq:YBE_bij_alpha_beta_gamma_parameters})
	depending on the local configuration as in the table in
	\Cref{fig:Coin_flips}, left.

\item 
If we can initiate a down jump at $i$, do so
according to an independent coin flip with probability
$1-\boldsymbol\upgamma,1-\boldsymbol\upalpha$, or 
$1-\boldsymbol\upbeta$ (see \eqref{eq:YBE_bij_alpha_beta_gamma_parameters})
depending on the local configuration as in the table in
\Cref{fig:Coin_flips}, right.

\item If we initiated an up or down jump, let $c(i)$ be as described above, and propagate the jump to the configuration at positions $i+1, \dots, i+c(i)$. Then set $i = i+c(i)+1$.

\item If we cannot initiate
	any jump at $i$, set
	$i = i + 1$.
\end{enumerate}

\begin{figure}[htpb]
	\centering
	\begin{tabular}{ | m{6.5em} | m{2cm}| } 
\hline
Up jump & Probability \\ 
  \hline
	$\pcu \rightarrow \pcid$		& $1-\boldsymbol\upgamma$ \\[23pt]
  \hline
$\pau \rightarrow \pbd$		& $1-\boldsymbol\upalpha$  \\[23pt]
  \hline
$\pbu \rightarrow \pad$		& $1-\boldsymbol\upbeta$ \\[23pt]
  \hline
\end{tabular}
\qquad 
\qquad 
\begin{tabular}{ | m{6.5em} | m{2cm}|  } 
\hline
Down jump & Probability \\ 
  \hline
$\pcd \rightarrow \pciu$		& $1-\boldsymbol\upgamma$ \\[23pt] 
  \hline
$\pad \rightarrow \pbu$		& $1-\boldsymbol\upalpha$  \\[23pt] 
  \hline
$\pbd \rightarrow \pau$		& $1-\boldsymbol\upbeta$ \\[23pt] 
  \hline
\end{tabular}
\caption{Coin flip probabilities for initiating a jump expressed
	through the quantities defined in \eqref{eq:YBE_bij_alpha_beta_gamma_parameters}.
Note that the probabilities for initiating up and down jumps 
are the same under the inversion of occupation of all edges.}
	\label{fig:Coin_flips}
\end{figure}

\begin{proposition}
	\label{prop:coin_flips_work_s3}
	The algorithm for sampling random $\nu$ given
	$\kappa,\mu,\lambda$
	indeed produces $\nu$ with the distribution
	$U_{u,v}(\mu\to\nu\mid \kappa,\lambda)$.
\end{proposition}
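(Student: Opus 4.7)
The plan is to show that the coin-flip sampling procedure is an alternative realization of the cross-dragging Markov operator $U_{u,v}$ from Definition \ref{def:two_row_bij}. Both procedures sweep horizontal positions $i = 0, 1, 2, \ldots$ from left to right and make random choices that jointly determine the new configuration $\nu$, so it suffices to couple them step-by-step.

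First I would reinterpret the cross-dragging as a Markov chain on positions $i$, whose state is the cross state $(k_2,k_1) \in \{0,1\}^2$ immediately to the right of the cross. Upon dragging the cross past column $i$, the new cross state, the new vertical edge $\nu \cap \{i\}$, and the horizontal edges between columns $i$ and $i+1$ are all sampled jointly from the forward probabilities $\mathbf{p}^{\mathrm{fwd}}_{u,v}$ of Figure \ref{fig:bijProbs}, given the fixed inputs $\kappa \cap \{i\}$, $\mu \cap \{i\}$, and $\lambda \cap \{i\}$. I would then partition the cross states into \emph{rest states} $\{(0,0),(1,1)\}$ and \emph{active states} $\{(0,1),(1,0)\}$, identifying rest with ``no jump being propagated'' and the two active states with up and down jumps in progress, respectively.

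The heart of the proof is a direct case-analysis of the table in Figure \ref{fig:bijProbs}. When the cross is in a rest state and the local configuration at column $i$ matches one of $\pcu, \pau, \pbu$ (respectively $\pcd, \pad, \pbd$), the table assigns the transition into the appropriate active state probability $1 - \boldsymbol\upgamma$, $1 - \boldsymbol\upalpha$, or $1 - \boldsymbol\upbeta$, exactly matching the coin-flip probabilities of Figure \ref{fig:Coin_flips}; with the complementary probability the cross remains in the rest state and leaves the vertical edge unchanged. When the local configuration at column $i$ admits no jump initiation, the transition is deterministic with the cross staying at rest. When the cross is already in an active state, Figure \ref{fig:bijProbs} forces deterministic transitions that exactly reproduce the propagation rule: as long as the local occupancy remains compatible with the ongoing jump, the cross remains active and the two horizontal edges swap, implementing an increment of $c(i)$; at the first column where compatibility breaks, the cross returns to a rest state and flips the vertical edge, implementing termination at $i + c(i) + 1$.

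Combining these observations, the sequence of random choices in the cross-dragging is distributionally identical to the sequence of coin flips in the sampling algorithm, so both produce $\nu$ with the distribution $U_{u,v}(\mu \to \nu \mid \kappa, \lambda)$. The main obstacle is simply the finite case-analysis above; in particular, one must correctly interpret the four cells of Figure \ref{fig:bijProbs} containing the indicators $\mathbf{1}_{k_3 = 0}$ and $\mathbf{1}_{k_3 = 1}$ as propagation transitions that read off the original vertical edge $\mu$ to decide whether to continue or terminate the jump, so that the stochastic cross-dragging and the deterministic propagation-plus-initiation-coin-flip algorithm agree vertex-by-vertex.
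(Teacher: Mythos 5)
Your proposal is correct and follows essentially the same route as the paper: both arguments amount to a case-by-case check that the forward probabilities in Figure \ref{fig:bijProbs} assign probability $1-\boldsymbol\upalpha$, $1-\boldsymbol\upbeta$, or $1-\boldsymbol\upgamma$ to flipping the vertical edge (jump initiation, with the cross state before and after determined by the jump) and probability $1$ to the moves realizing propagation and termination. Your ``rest/active cross state'' Markov-chain framing is just a more explicit organization of that same verification, including the correct reading of the four cells with the indicators $\mathbf{1}_{k_3=0}$, $\mathbf{1}_{k_3=1}$.
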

\begin{proof}
	From the table of the forward transition 
	probabilities 
	$\mathbf{p}_{u,v}^{\mathrm{fwd}}$ in 
	\Cref{fig:bijProbs} we see that the probability to change the 
	occupation state of a vertical edge by dragging the cross
	is equal to $1-\boldsymbol\upalpha,1-\boldsymbol\upbeta$,
	or $1-\boldsymbol\upgamma$ depending on the state of the horizontal and cross
	edges around. Indeed, the state of the cross before and after the 
	dragging is determined uniquely by the initiated jump.
	The jump is not initiated with the complementary probability
	$\boldsymbol\upalpha,\boldsymbol\upbeta$, or $\boldsymbol\upgamma$, respectively, 
	which corresponds to another state of the cross vertex after the dragging.
	Next, the rules for propagation of up or down jumps come from the 
	parts of the table in 
	\Cref{fig:bijProbs} where 
	$\mathbf{p}_{u,v}^{\mathrm{fwd}}=1$. This completes the proof.
\end{proof}

	\begin{figure}[h]
\centering
	\includegraphics[width=.7\textwidth]{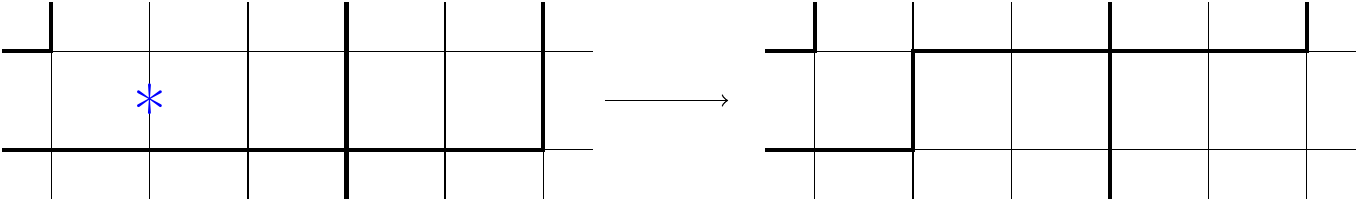}
\caption{A generic up jump with propagation. In this case an
up jump was initiated at the vertical edge with an asterisk.
Note that the propagation can continue past vertical paths
which go straight up.}
\label{fig:jump_prop}
\end{figure}

Let us extend \Cref{prop:action_of_L_k_on_s6v}
to infinite subsets
$\upkappa,\upmu,\uplambda$ 
(the setup of \Cref{sub:truncation}).
For each $h\ge1$, we may truncate the sampling
algorithm by allowing the initiation of jumps 
only at $i=0,1,\ldots,h-1 $.
Reading the resulting vertical path configuration after all the jumps, 
we clearly get $\upnu^{[<h]}$
from the distribution
$U_{u,v}^{[<h]}(\upmu^{[<h]}\to \upnu^{[<h]}\mid \upkappa^{[<h]},\uplambda^{[<h]})$.
For $h\to+\infty$, these truncations of the sampling algorithm
are consistent.
Therefore, \Cref{prop:action_of_L_k_on_s6v}
holds when the Markov operator $L_{k,\mathbf{u}}$
is defined as above in this subsection using jumps initiation and propagation.

\begin{remark}
	\label{rmk:U_uv_identity}
	When $u=v$, from \eqref{eq:YBE_bij_alpha_beta_gamma_parameters} we see
	that $\boldsymbol\upalpha=\boldsymbol\upbeta=\boldsymbol\upgamma=1$.
	This means that no up or down jumps can be initiated,
	so the Markov map $U_{u,u}$ is simply the identity operator.
\end{remark}

\section{Continuous time limit in the quadrant}
\label{sec:continuous_time_limit}

\subsection{Moving $u_1$ up to infinity}
\label{sub:u1_to_inf}

In this section we construct
a continuous time Markov chain
on the homogeneous 
stochastic six vertex model configurations in
the quadrant $\mathbb{Z}_{\ge0}\times \mathbb{Z}_{\ge1}$.
The continuous time Markov chain is a Poisson type limit of the 
discrete time Markov chain 
obtain by a repeated application of the operators $L_{k,\mathbf{u}}$
(\Cref{def:L_k_operator}),
as the inhomogeneous parameters $u_i$ become equal.
The Taylor expansion of the transition probabilities
around the identity (cf. \Cref{rmk:U_uv_identity})
leads to the desired continuous time dynamics.
This Poisson type limit is analogous to the one in
\cite{PetrovSaenz2019backTASEP}.
In particular, we also get space-inhomogeneous jump rates
linearly depending
on the $y$ coordinate.

Start with a sequence
$\mathbf{u}=(u_1,u_2,\ldots )$
of spectral parameters satisfying $0<u_1<u_2<\ldots <1$.
Let $(\uplambda^{(y)})_{y\ge0}$
be a sequence of subsets of $\mathbb{Z}_{\ge0}$
encoding a state of the stochastic six vertex
$\mathbb{P}^{\mathrm{s6v}}_{\mathbf{u}}$
model with step-$\uplambda$ or empty-$\uplambda$
boundary conditions (see \Cref{def:s6v_quadrant}). 
Here $\uplambda^{(0)}=\uplambda$ is the fixed
bottom boundary condition.

\begin{definition}
	\label{def:L_full_operator}
	Denote by $\mathbf{L}_{\mathbf{u}}$ the one-step Markov
	operator which is the
	result of the application of the infinite sequence of 
	Markov operators
	$L_{1,\mathbf{u}},L_{2,s_1\mathbf{u}}, L_{3,s_2s_1\mathbf{u}},\ldots $
	(in this order).
	Here each $s_k$ is the elementary permutation
	$(k,k+1)$, the first operator $L_{1,\mathbf{u}}$ involves
	the spectral parameters
	$u_1,u_2$, the next operator 
	$L_{2,s_1\mathbf{u}}$ involves 
	$u_1,u_3$, and so on.

	Let $(\tilde \uplambda^{(y)})_{y\ge0}$
	be the random sequence encoding the 
	result of the application of $\mathbf{L}_{\mathbf{u}}$
	to the sequence
	$(\uplambda^{(y)})_{y\ge0}$ we started with.
	The new random sequence 
	is well-defined
	since for any finite $y_0$, 
	the first several layers
	$(\tilde \uplambda^{(y)})_{0\le y\le y_0}$
	are obtained 
	from $(\uplambda^{(y)})_{y\ge0}$
	by finitely many Markov operators.
\end{definition}

\begin{figure}[htpb]
	\centering
	\includegraphics[width=.3\textwidth]{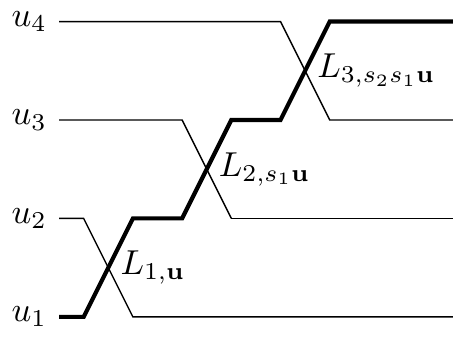}
	\caption{Moving the spectral parameter $u_1$ in the 
		stochastic six vertex model $\mathbb{P}^{\mathrm{s6v}}_{\mathbf{u}}$
		up to infinity
		by applying a sequence of two-row Markov operators
		$L_{k,s_{k-1}\ldots s_1 \mathbf{u} }$.}
	\label{fig:u1_inf}
\end{figure}

Denote by $S\mathbf{u}\coloneqq (u_2,u_3,u_4,\ldots )$
the one-sided shift of the sequence 
of the spectral parameters.

\begin{proposition}
	\label{prop:action_of_big_L}
	The operator $\mathbf{L}_{\mathbf{u}}$ acts on the measure
	$\mathbb{P}_{\mathbf{u}^{\mathrm{s6v}}}$ 
	as
	$\mathbb{P}_{\mathbf{u}}^{\mathrm{s6v}}
	\mathbf{L}_{\mathbf{u}}=
	\mathbb{P}_{S\mathbf{u}}^{\mathrm{s6v}}$.
\end{proposition}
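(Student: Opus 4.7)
The plan is to iterate Proposition \ref{prop:action_of_L_k_on_s6v} in combination with the truncation observation from Definition \ref{def:L_full_operator}. Start from $\mathbb{P}^{\mathrm{s6v}}_{\mathbf{u}}$. Applying $L_{1,\mathbf{u}}$ first and then invoking Proposition \ref{prop:action_of_L_k_on_s6v} yields distribution $\mathbb{P}^{\mathrm{s6v}}_{s_1\mathbf{u}}$. Applying next $L_{2,s_1\mathbf{u}}$ and using the same proposition with the permuted sequence $s_1\mathbf{u}$ in place of $\mathbf{u}$ yields $\mathbb{P}^{\mathrm{s6v}}_{s_2 s_1\mathbf{u}}$. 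Iterating, a straightforward induction on $N$ shows that after the first $N$ operators of the infinite composition have been applied, the distribution of $(\tilde\uplambda^{(y)})_{y\ge 0}$ is exactly $\mathbb{P}^{\mathrm{s6v}}_{s_N\cdots s_1\mathbf{u}}$.

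Next I would track the permuted sequence. A direct computation gives
\begin{equation*}
s_N\cdots s_1\mathbf{u} = (u_2,u_3,\ldots,u_{N+1},u_1,u_{N+2},u_{N+3},\ldots),
\end{equation*}
so that at each position $k\le N$ the spectral parameter of $s_N\cdots s_1 \mathbf{u}$ coincides with that of $S\mathbf{u}=(u_2,u_3,u_4,\ldots)$. In particular, for any fixed finite sub-rectangle $\{0,\ldots,h\}\times\{1,\ldots,y_0\}\subset\mathbb{Z}_{\ge 0}\times\mathbb{Z}_{\ge 1}$, as soon as $N\ge y_0$ the marginal of $\mathbb{P}^{\mathrm{s6v}}_{s_N\cdots s_1\mathbf{u}}$ on this rectangle agrees with the corresponding marginal of $\mathbb{P}^{\mathrm{s6v}}_{S\mathbf{u}}$, because the sub-rectangle marginal depends only on spectral parameters at rows $1,\ldots,y_0$ (recall \Cref{def:s6v_quadrant}).

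Finally, I would pass to the limit. Because each $L_k$ modifies only $\uplambda^{(k)}$ and leaves all other rows untouched, the first $y_0$ rows are unaffected by any $L_k$ with $k>y_0$. Therefore the marginal of $\mathbb{P}^{\mathrm{s6v}}_{\mathbf{u}}\mathbf{L}_{\mathbf{u}}$ on $\{0,\ldots,h\}\times\{1,\ldots,y_0\}$ coincides with the marginal already obtained after $N=y_0$ steps, which in turn equals the corresponding marginal of $\mathbb{P}^{\mathrm{s6v}}_{S\mathbf{u}}$. Since this agreement holds on every finite sub-rectangle, the Kolmogorov extension uniqueness used in \Cref{def:s6v_quadrant} forces $\mathbb{P}^{\mathrm{s6v}}_{\mathbf{u}}\mathbf{L}_{\mathbf{u}}=\mathbb{P}^{\mathrm{s6v}}_{S\mathbf{u}}$. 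No serious obstacle is expected here; the one thing to be careful about is the non-commutativity of the $L_k$'s, which is why it is essential to apply Proposition \ref{prop:action_of_L_k_on_s6v} to the appropriately permuted parameter sequence at each step, rather than to the original $\mathbf{u}$.
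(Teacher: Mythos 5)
Your argument is correct and follows essentially the same route as the paper, whose proof simply iterates \Cref{prop:action_of_L_k_on_s6v}; you merely spell out the bookkeeping of the permuted parameter sequence and the finite-window stabilization plus Kolmogorov-extension step that the paper leaves implicit (and already addresses in \Cref{def:L_full_operator}). No issues.
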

See \Cref{fig:u1_inf} for an illustration.
\begin{proof}[Proof of \Cref{prop:action_of_big_L}]
	Immediately follows by iterating \Cref{prop:action_of_L_k_on_s6v}.
\end{proof}

\subsection{Poisson type limit and jump rates}
\label{sub:u_q_cont}

Here we employ the description of the Markov operator
$L_{k,\mathbf{u}}$ in terms of independent coin
flips (\Cref{prop:coin_flips_work_s3})
to obtain a Poisson type limit of the transition probabilities.

\begin{lemma}
	\label{lemma:limit_of_probabilities}
	The quantities $\boldsymbol\upalpha,\boldsymbol\upbeta,\boldsymbol\upgamma$
	\eqref{eq:YBE_bij_alpha_beta_gamma_parameters}	
	depending on the spectral parameters $0<u<v<1$
	admit the following expansions as $v\to u$:
	\begin{equation*}
		\begin{split}
			\boldsymbol\upalpha(u,v)&=
			1-\mathfrak{a}(u)\cdot (v-u)+O(v-u)^2,\\
			\boldsymbol\upbeta(u,v)&=
			1-\mathfrak{b}(u)\cdot (v-u)+O(v-u)^2,\\
			\boldsymbol\upgamma(u,v)&=
			1-\mathfrak{c}(u)\cdot (v-u)+O(v-u)^2,
		\end{split}
	\end{equation*}
	where
	\begin{equation}
		\label{eq:abc_rates}
			\mathfrak{a}(u) \coloneqq  
			\frac{(1 - u)\, q}{(1 - q)\,(1 - q u)\, u },\qquad
			\mathfrak{b}(u) \coloneqq  
			\frac{1 - q u}{(1 - u) \, (1 - q)\, u},\qquad
			\mathfrak{c}(u) \coloneqq 
			\frac{1-q}{(1-u)\, (1 - q u)}.
	\end{equation}
\end{lemma}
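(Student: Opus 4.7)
The plan is a direct Taylor expansion of each of the three rational functions $\boldsymbol\upalpha,\boldsymbol\upbeta,\boldsymbol\upgamma$ in the variable $v$ around the point $v=u$. Since the formulas in \eqref{eq:YBE_bij_alpha_beta_gamma_parameters} are rational and nonsingular at $v=u$ (because $u\in(0,1)$ and $q\in[0,1)$, so the denominators $(1-qv)(v-qu)$, $(1-u)(v-qu)$, and $(1-u)(1-qv)$ do not vanish there), each of the three functions is analytic in $v$ at $v=u$ and therefore admits a Taylor series. What must be checked is (i) that the constant term equals $1$, and (ii) that the linear coefficient matches the stated $\mathfrak{a}(u),\mathfrak{b}(u),\mathfrak{c}(u)$.

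First I would substitute $v=u$ and cancel the factor $v-qu = u(1-q)$ against $(1-q)$ present in the numerators of $\boldsymbol\upalpha$ and $\boldsymbol\upbeta$ (and verify the trivial cancellation for $\boldsymbol\upgamma$) to get $\boldsymbol\upalpha(u,u)=\boldsymbol\upbeta(u,u)=\boldsymbol\upgamma(u,u)=1$. This is consistent with \Cref{rmk:U_uv_identity}: at $v=u$ the two-row Markov operator is the identity, so every ``no jump'' probability must be $1$.

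Next I would differentiate each quantity with respect to $v$ and evaluate at $v=u$. For $\boldsymbol\upgamma$ this is a one-line computation since only the factor $(1-v)/(1-qv)$ depends on $v$. For $\boldsymbol\upbeta$, only $(1-v)/(v-qu)$ depends on $v$, and at $v=u$ the denominator becomes $u(1-q)$. For $\boldsymbol\upalpha$ it is cleanest to take the logarithmic derivative of the $v$-dependent part $v/[(1-qv)(v-qu)]$, obtaining
\begin{equation*}
\frac{1}{v}+\frac{q}{1-qv}-\frac{1}{v-qu},
\end{equation*}
which at $v=u$ collapses to $-\tfrac{q(1-u)}{u(1-q)(1-qu)}$; multiplying by the value of the function at $v=u$ yields the claimed $-\mathfrak{a}(u)$. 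In each case the resulting coefficient of $(v-u)$ carries a minus sign, giving the stated expansions with $\mathfrak{a},\mathfrak{b},\mathfrak{c}$ as in \eqref{eq:abc_rates}.

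The $O((v-u)^2)$ remainders are automatic from Taylor's theorem for analytic functions at the regular point $v=u$; no additional estimate is required. There is no substantive obstacle — the only minor bookkeeping step is the cancellation of $v-qu$ at $v=u$ that is responsible for the clean leading value $1$, after which the linear coefficients are read off by direct differentiation.
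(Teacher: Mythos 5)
Your proposal is correct and is essentially the paper's argument: the paper's proof is exactly a straightforward Taylor expansion of $\boldsymbol\upalpha,\boldsymbol\upbeta,\boldsymbol\upgamma$ at $v=u$, and your computed values at $v=u$ and first derivatives (in particular the logarithmic-derivative evaluation for $\boldsymbol\upalpha$) all match $\mathfrak{a}(u),\mathfrak{b}(u),\mathfrak{c}(u)$ in \eqref{eq:abc_rates}. You simply spell out the bookkeeping that the paper leaves implicit.
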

\begin{proof}
	Straightforward Taylor expansion.
\end{proof}

We utilize the expansion from
\Cref{lemma:limit_of_probabilities}
together with the iterated 
moving of the bottom spectral parameter up to infinity, as 
defined in the previous \Cref{sub:u1_to_inf}.
In this subsection we outline the main expansions, and in the next
\Cref{sub:u_q_cont}
we define the generator of the continuous time dynamics, 
and show the existence of the dynamics.

Define
\begin{equation*}
	\mathbf{Q}_m=\mathbf{L}_{\mathbf{u}}\mathbf{L}_{S\mathbf{u}}
	\mathbf{L}_{S^2\mathbf{u}}\ldots\mathbf{L}_{S^{m-1}\mathbf{u}},
	\qquad m=1,2,\ldots .
\end{equation*}
(The order of the composition of Markov operators means that 
$\mathbf{L}_{\mathbf{u}}$ is applied first.)
By \Cref{prop:action_of_big_L}, we have
$\mathbb{P}_{\mathbf{u}}^{\mathrm{s6v}}
\mathbf{Q}_m=
\mathbb{P}_{S^m\mathbf{u}}^{\mathrm{s6v}}$.

Let us take spectral parameters close to each other.
Fix real parameters $u$ and $\eta>0$ such that $u,u+\eta\in (0,1)$.
Let $\varepsilon>0$ be sufficiently small, and define
\begin{equation}
	\label{eq:u_i_for_limit}
	u_i \coloneqq u+(1-e^{-i \varepsilon})\,\eta,\qquad i=1,2,\ldots. 
\end{equation}
Denote by $\tau\in \mathbb{R}_{\ge0}$ the rescaled time.
Then we have
\begin{equation*}
	\mathbb{P}_{\mathbf{u}}^{\mathrm{s6v}}
	\mathbf{Q}_{\lfloor \tau/\varepsilon \rfloor }=
	\mathbb{P}^{\mathrm{s6v}}_{\mathbf{u}[\tau]},\qquad 
	\mathbf{u}[\tau]_i \coloneqq u+\bigl( 1-e^{-\varepsilon\left( i+\lfloor \tau/\varepsilon \rfloor  \right)} \bigr)\,\eta,\quad i=1,2,\ldots .
\end{equation*}

As $\varepsilon\to0$, the parameters $u_i$ \eqref{eq:u_i_for_limit}
become all equal to $u$, and
$\mathbf{u}[\tau]_i$ become all equal to 
$u+(1-e^{-\tau})\ssp\eta$.
The difference of the spectral parameters
$\mathbf{u}[\tau]_1$ and 
$\mathbf{u}[\tau]_{k+1}$
(which are exchanged at horizontal layer $k$ 
at the step $\mathbf{L}_{S^{\lfloor \tau/\varepsilon \rfloor -1}\mathbf{u}}$
in
the chain
$\mathbf{Q}_{\lfloor \tau/\varepsilon \rfloor }$)
has the form
\begin{equation}
	\label{eq:difference_spectral_parameters_Poisson_limit}
	\mathbf{u}[\tau]_{k+1}
	-
	\mathbf{u}[\tau]_1
	=
	\eta \ssp e^{-\tau}
	\bigl( 1- e^{-k\varepsilon } \bigr)
	+O(\varepsilon^2)
	=
	k\ssp \varepsilon \ssp \eta \ssp e^{-\tau}
	+O(\varepsilon^2)
	,\qquad \varepsilon\to0.
\end{equation}
Therefore, the $\varepsilon\to0$ limit of the 
Markov transition operators
$\mathbf{Q}_{\lfloor \tau/\varepsilon \rfloor }$
should lead to a continuous
time Markov chain with the transition semigroup
$(\mathcal{Q}(\tau))_{\tau\in \mathbb{R}_{\ge0}}$, which acts on the
homogeneous stochastic six vertex model
in the quadrant as
$\label{eq:P_act}
\mathbb{P}^{\mathrm{s6v,\,hom}}_{u}
\mathcal{Q}(\tau)=
\mathbb{P}^{\mathrm{s6v,\,hom}}_{u+(1-e^{-\tau})\eta}$.
We see that from $\tau=0$ to $\tau=+\infty$,
the chain $\mathcal{Q}(\tau)$ continuously
increases the spectral parameter
$u$ to $u+\eta$.
The definition of $\mathcal{Q}(\tau)$
employs the probabilities 
$\mathfrak{a}(u),\mathfrak{b}(u),\mathfrak{c}(u)$
\eqref{eq:abc_rates},
and is given in the next \Cref{sub:q_cont_def}.

\subsection{Continuous time chain in the quadrant}
\label{sub:q_cont_def}

Let us define the continuous time Markov semigroup
$\mathcal{Q}(\tau)$ in terms of its generator
$G_{u,\eta,\tau}^{\mathrm{quad}}$.
The generator depends on $u,\eta$,
and also on the time variable $\tau$. The latter means that
the continuous time Markov chain is time-inhomogeneous.
First, recall a basic definition:
\begin{definition}
	[Time-inhomogeneous Poisson process]
	\label{def:inhom_Poisson}
	A random locally finite 
	point configuration $(\tau_1<\tau_2<\ldots )\subset\mathbb{R}_{>0}$
	is said to be distributed as an inhomogeneous Poisson process
	with bounded rate function $r(\tau)>0$ iff 
	\begin{itemize}
		\item The number of points $\tau_i$ in each interval $[s,t]\subset\mathbb{R}_{>0}$
			is a Poisson distributed random variable with mean
			$\int_s^t r(\tau)d\tau$;
		\item For finitely or countably many 
			disjoint intervals $[s_i,t_i]\subset \mathbb{R}_{>0}$,
			the numbers of random points in them are independent
			random variables.
	\end{itemize}
	For short, we say
	that the arrivals $\tau_i$ in the Poisson process
	occur according to an
	\emph{exponential clock with time-dependent
	rate} $r(\tau)$.
\end{definition}

\begin{definition}
	\label{def:seed_pair}
	Let $v_1,v_2$ be a pair of vertices at vertically adjacent positions $(j,k), (j,k+1)$
	in the quadrant. If the local configuration
	of the paths around $v_1,v_2$
	is one of the six configurations in \Cref{fig:Coin_flips},
	we call the pair $(v_1,v_2)$ a \emph{seed pair}.
\end{definition}

We attach to each seed pair an independent exponential clock with 
the time-dependent rate
\begin{equation}
	\label{eq:rate_tau_dependent}
	k\ssp \eta \ssp e^{-\tau}\mathfrak{R}_{v_1,v_2}(u+(1-e^{-\tau})\ssp \eta),
\end{equation}
where $k$ is the $y$-coordinate of $v_1$, and
$\mathfrak{R}_{v_1,v_2}(u)$ is given in 
\Cref{fig:rates_for_G_quad}.
The rate
\eqref{eq:rate_tau_dependent}
is the coefficient by $\varepsilon$
in the expansion of $1-\boldsymbol\upalpha,1-\boldsymbol\upbeta$, or $1-\boldsymbol\upgamma$ as
in \Cref{lemma:limit_of_probabilities}, 
where we took into account
the inhomogeneity 
coming from exchanging the spectral parameters, see
\eqref{eq:difference_spectral_parameters_Poisson_limit}.

When the clock at a seed pair $(v_1,v_2)$ rings, 
this generates an up or down jump of the horizontal path,
as illustrated in \Cref{fig:Coin_flips}.
This jump then instantaneously (at the same time 
moment, without any waiting)
propagates to the right according to the rules 
given in \Cref{sub:sample}.

\begin{figure}[htpb]
	\centering
	\begin{tabular}{ | m{5em} | m{1cm}| } 
\hline
Up jump & Rate \\ 
	\hline
	$\pcu $		& $\mathfrak{c}(u)$ \\[23pt]
	\hline
	$\pau $		& $\mathfrak{a}(u)$  \\[23pt]
		\hline
		$\pbu $		& $\mathfrak{b}(u)$ \\[23pt]
		\hline
	\end{tabular}
	\qquad 
	\qquad 
	\begin{tabular}{ | m{5em} | m{1cm}|  } 
	\hline
	Down jump & Rate \\ 
		\hline
		$\pcd $		& $\mathfrak{c}(u)$ \\[23pt] 
		\hline
		$\pad $		& $\mathfrak{a}(u)$  \\[23pt] 
		\hline
	$\pbd $		& $\mathfrak{b}(u)$ \\[23pt] 
		\hline
	\end{tabular}
		\caption{Jump rates $\mathfrak{R}_{v_1,v_2}(u)$ 
			in the generator $G^{\mathrm{quad}}_{u,\eta,\tau}$,
			where
			$\mathfrak{a}(u),\mathfrak{b}(u),\mathfrak{c}(u)$ are
			given in \eqref{eq:abc_rates}.}
	\label{fig:rates_for_G_quad}
\end{figure}

Thus defined jumps lead to the following infinitesimal
generator of a time-inhomogeneous continuous
time Markov chain:

\begin{definition}[Generator]
	\label{def:generator}
	Let $\sigma$ denote a configuration of up-right
	paths in the quadrant.
	If $(v_1,v_2)$ is a seed pair for $\sigma$,
	denote by $\sigma_{(v_1,v_2)}$ the result of 
	initiating an (up or down) 
	jump of the horizontal path 
	at $(v_1,v_2)$, and the propagation of this jump
	according to the rules in \Cref{sub:sample}.
	Let
	$f(\sigma)$ be a cylindric function.
	That is, $f$ depends on $\sigma$ only through the restriction of $\sigma$ to a 
	finite window inside the quadrant (this window depends on $f$).
	The generator of the dynamics on the stochastic six vertex model in the quadrant
	is, by definition, the operator acting as
	\begin{equation}\label{eq:quad_generator}
		(G^{\mathrm{quad}}_{u,\eta,\tau}f)(\sigma)=
		\eta \ssp e^{-\tau}
		\sum_{\textnormal{$(v_1,v_2)$ is a seed pair}}
		y(v_1)\ssp
		\mathfrak{R}_{v_1,v_2}
		\bigl(u+(1-e^{-\tau})\eta\bigr)
		\left( f(\sigma_{(v_1,v_2)})-f(\sigma) \right).
	\end{equation}
	Here the sum is over all seed pairs $(v_1,v_2)$ of $\sigma$.
	While the number of seed pairs
	may be infinite, the
	action of $G^{\mathrm{quad}}_{u,\eta,\tau}$ \eqref{eq:quad_generator} 
	is well-defined on cylindric functions.
\end{definition}

We aim to define a Markov semigroup $\mathcal{Q}(\tau)$
with generator \eqref{eq:quad_generator} which can
start from configurations belonging to a certain space of
\emph{regular initial configurations} $\Omega^{\mathrm{quad}}$.
This makes sure that $\mathcal{Q}(\tau)$ does not
make infinitely many jumps through a finite space in finite time.
Let us define the space of initial configurations, and 
then prove that the semigroup $\mathcal{Q}(\tau)$ and the corresponding Markov process
$\sigma(\tau)$ exists.

\begin{definition}
	\label{def:quad_initial_configurations}
	Let $\Omega^{\mathrm{quad}}$ be the
	set of up-right path configurations $\sigma$ in the quadrant
	with the following condition:
	For each $R'>0$ there exists
	an $R = R(R')$ such that the configuration in the region
	$[0, R'] \times [R,\infty)$ either is fully empty
	(each vertex has state $(0,0;0,0)$)
	or is fully packed 
	(each vertex has state $(1,1;1,1)$).
		
\end{definition}

\begin{lemma}
	\label{lemma:regular_bc_in_quadrant_has_probability_1}
	For the homogeneous stochastic six vertex
	model with step-$\uplambda$ or 
	empty-$\uplambda$ boundary conditions (\Cref{def:s6v_quadrant})
	we have
	$\mathbb{P}_{u}^{\mathrm{s6v,\,hom}}(\Omega^{\mathrm{quad}})=1$
	for any $0<u<1$.
\end{lemma}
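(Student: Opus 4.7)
The plan is to fix $R'\ge 0$, show that almost surely there is a finite random row $R=R(R')$ above which the configuration in $[0,R']\times[R,\infty)$ is deterministic, and then conclude by taking a countable intersection over $R'\in\mathbb Z_{\ge 1}$.

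For the step-$\uplambda$ boundary conditions (packed left, arbitrary $\uplambda$ at bottom), I would prove by induction on $k=0,1,\dots,R'$ that almost surely there exists a finite row $R_k$ such that for every $l\ge R_k$ the positions $0,1,\dots,k$ are all simultaneously occupied at row $l$ (meaning the vertical edges $(j,l)-(j,l+1)$ are occupied for $j\le k$). The key observation is the deterministic weight $w_u(1,1;1,1)=1$: if $\{0,\dots,k\}$ are occupied at row $l$, then at each vertex $(j,l+1)$ with $j\le k$, processed left-to-right, the inputs $i_1=j_1=1$ force $i_2=j_2=1$, so the block is self-propagating and in addition every horizontal edge $(j,l+1)-(j+1,l+1)$ for $j\le k$ is filled. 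For the inductive step $k-1\mapsto k$, once row $R_{k-1}$ is past we have $j_1=1$ at every vertex $(k,l+1)$ by the propagation above; whenever $i_1=0$, the transition produces $(i_2,j_2)=(1,0)$ with probability $w_u(0,1;1,0)=\frac{u(1-q)}{1-qu}>0$, giving a configuration-uniform lower bound on the per-row success probability. A standard Borel--Cantelli / geometric waiting-time argument then yields $R_k<\infty$ almost surely. Taking $R\coloneqq R_{R'}+1$ makes $[0,R']\times[R,\infty)$ fully packed.

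For the empty-$\uplambda$ boundary conditions (empty left, $\uplambda$ at bottom), no path can enter the strip from the left and paths are up-right, so the only paths ever present in $[0,R']\times\mathbb Z_{\ge 1}$ are those entering through the bottom at positions in $\uplambda\cap\{0,\dots,R'\}$, a set of size at most $R'+1$. In the particle interpretation (occupied vertical edges), particles in the strip can only drift weakly rightward under successive row updates and exit the strip when they reach column $R'+1$. I would show that the rightmost particle currently in the strip has probability bounded below by a positive constant depending only on $q,u$ of moving strictly to the right during a single row update (this is a direct consequence of the stochastic weights $w_u$ being strictly positive on the relevant vertex types); a geometric waiting-time estimate then gives an a.s.\ finite exit time for it, after which the next rightmost becomes the rightmost, and so on. Since only finitely many particles ever occupy the strip, all of them exit in a.s.\ finite time; defining $R$ to be the first row by which this has happened gives $[0,R']\times[R,\infty)$ fully empty.

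The main technical obstacle I anticipate lies in the empty-$\uplambda$ case, namely a clean lower bound on the one-step rightward drift of the rightmost particle, since a single row update can displace a particle by several positions via chains of $(1,1;1,1)$ vertices when particles are adjacent. This chaining, however, only accelerates exit, so a crude lower bound on the event ``the rightmost particle in the strip advances by at least $1$ during this row update'' is enough. Once both cases are dealt with, the event ``some finite $R(R')$ exists with the required property'' has probability $1$ for each $R'\in\mathbb Z_{\ge 1}$, and $\Omega^{\mathrm{quad}}$ is their countable intersection, hence also has probability $1$ under $\mathbb P_u^{\mathrm{s6v,\,hom}}$.
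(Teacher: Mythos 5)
Your proposal is correct in substance but takes a genuinely different route from the paper. The paper's proof reduces everything to the two extreme boundary conditions via stochastic monotonicity in $\uplambda$ (adding an occupied bottom edge can only increase the probability that $[0,R']\times[R,\infty)$ becomes packed, respectively decrease the probability that it becomes empty), and then invokes known asymptotic facts: the law of large numbers of \cite{BCG6V} for the step-$\varnothing$ model (the boundary of the packed region is asymptotically linear, so any fixed vertical strip is eventually packed), and the rightward drift $\delta_1<\delta_2$ for the empty-$\mathbb{Z}_{\ge0}$ model (the region above the diagonal is empty). You instead argue directly for an arbitrary $\uplambda$: in the step case, a column-by-column induction exploiting the deterministic weight $w_u(1,1;1,1)=1$ (a packed block of columns together with the packed left boundary is absorbing and also fills the horizontal edges inside it) plus the uniform per-row probability $w_u(0,1;1,0)=1-\delta_2>0$ of extending the block by one column, followed by a geometric waiting-time bound; in the empty case, the observation that at most $R'+1$ paths ever enter the strip and that occupation can only drift rightward. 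Your step-case argument is complete, elementary, and self-contained (no coupling and no limit-shape input), which is a genuine alternative to the paper's shorter but less self-contained proof.

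The one place that needs repair is your key estimate in the empty-$\uplambda$ case: the claim that the rightmost particle in the strip advances in a single row update with probability bounded below by a constant depending only on $(q,u)$, ``because the weights are strictly positive.'' If ``particle'' means the rightmost occupied vertical edge in the strip, this is false: when it sits at the right end of a block of $m$ adjacent occupied columns, any horizontal path entering the block is forced through the weight-one vertices of type $(1,1;1,1)$ and reaches it, in which case that site does \emph{not} vacate; the site vacates only if none of the particles to its left emits a horizontal reaching it and it jumps itself, which has probability of order $\delta_1^{m}(1-\delta_1)$ --- not uniform in the configuration, only in $m\le R'$. Two easy fixes, either of which closes your argument since $R'$ is fixed: (i) work with the number of particles in the strip, which is non-increasing (no path enters the strip from the left) and decreases in a given row with probability at least $(1-\delta_1)\,\delta_2^{R'}$, because with probability at least $1-\delta_1$ a horizontal is present immediately to the right of the rightmost strip particle (forced if one arrives, probability $1-\delta_1$ otherwise), and it then crosses the at most $R'$ empty columns to its right with probability at least $\delta_2^{R'}$; or (ii) keep the tagged-particle phrasing but adopt the pushing convention for identities (an arriving horizontal stops and the resident particle continues right), under which the rightmost tagged particle moves strictly right with conditional probability at least $1-\delta_1$ in each row and can move at most $R'+1$ times before exiting the strip. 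With either bookkeeping, your geometric waiting-time argument and the final observation that an empty strip with empty left boundary stays identically of type $(0,0;0,0)$ complete the proof.
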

\begin{proof}
	In the step-$\uplambda$ case,
	the configuration in the region
	$[0, R'] \times [R,\infty)$ is stochastically 
	monotone in $\uplambda$. That is, when
	one adds an extra occupied initial edge to $\uplambda$,
	the probability that the configuration
	in $[0, R'] \times [R,\infty)$
	is fully packed does not decrease. This follows by considering 
	the stochastic six vertex weights 
	(\Cref{fig:6types}), 
	and observing that 
	for fixed $j_1$,
	the probability that $i_2=1$ increases when $i_1=1$.
	When $\uplambda=\varnothing$, the probability 
	of $\Omega^{\mathrm{quad}}$ is equal to $1$
	thanks to the Law of Large Numbers established in \cite{BCG6V}.
	Indeed, the latter states that the bottom boundary of the fully occupied region
	in the quadrant is linear, see \Cref{fig:s6v_sim}, left, for a simulation.
	
	\begin{figure}[htpb]
		\centering
		\includegraphics[width=.4\textwidth]{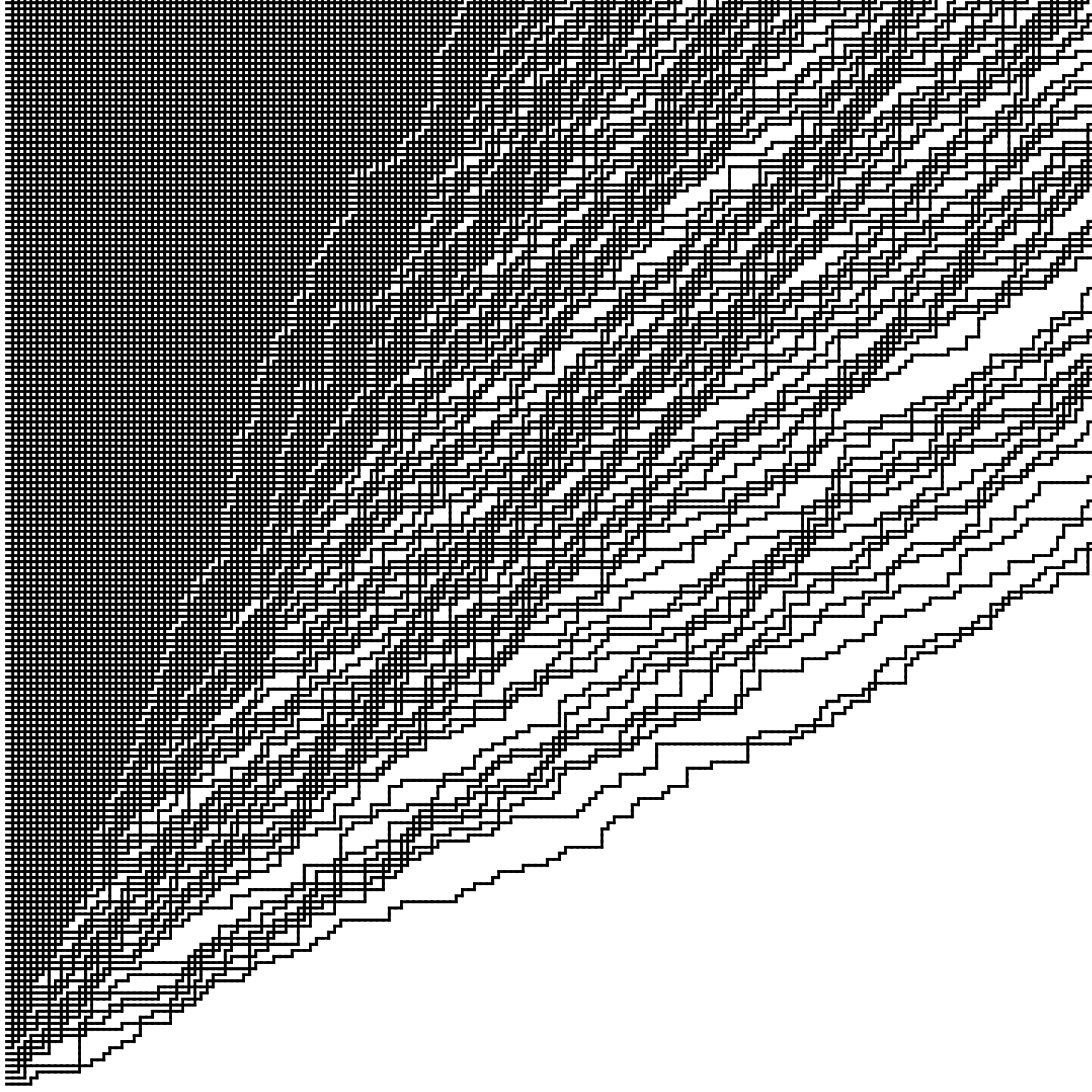}
		\qquad 
		\includegraphics[width=.4\textwidth]{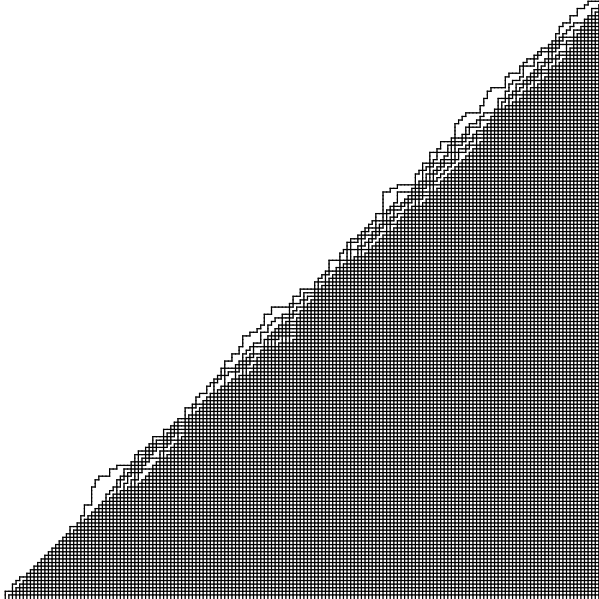}
		\caption{A simulation of the stochastic six vertex
			model in the quadrant with step-$\varnothing$ (left)
			and empty-$\mathbb{Z}_{\ge0}$ (right)
		boundary conditions.}
		\label{fig:s6v_sim}
	\end{figure}

	Similarly, for empty-$\uplambda$ boundary conditions, 
	the probability that
	$[0, R'] \times [R,\infty)$
	is empty of paths does not increase 
	when adding extra occupied edges to $\uplambda$.
	When $\uplambda=\mathbb{Z}_{\ge0}$, 
	the 
	probability 
	of $\Omega^{\mathrm{quad}}$ is also equal to $1$.
	Indeed, since $\delta_1$
	(the probability of going up)
	is smaller than $\delta_2$,
	the whole region which is slightly
	above the diagonal of the quadrant 
	is empty, see \Cref{fig:s6v_sim}, right, for a simulation.
\end{proof}

\begin{theorem}
	\label{thm:quad_process_exists}
	Assume that
	$\sigma(0)\in \Omega^{\mathrm{quad}}$.
	There exists a continuous time 
	Markov chain $\sigma(\tau)$, $\tau\in \mathbb{R}_{\ge0}$,
	on configurations of 
	up-right paths 
	whose generator at time $\tau$ is 
	$G^{\mathrm{quad}}_{u,\eta,\tau}$
	given by \eqref{eq:quad_generator}
	(here we use the convention around 
	time-dependent rates, cf. \Cref{def:inhom_Poisson}).
	Moreover, the transition operator $\mathcal{Q}(\tau)$ of this Markov chain
	acts on the homogeneous six vertex model 
	(with step-$\uplambda$ or empty-$\uplambda$ boundary
	conditions for arbitrary fixed $\uplambda\subset \mathbb{Z}_{\ge0}$)
	as follows:
	\begin{equation}
		\label{eq:quad_Q_action_on_s6v_homogeneous}
		\mathbb{P}_u^{\mathrm{s6v,\,hom}}\ssp
		\mathcal{Q}(\tau)=
		\mathbb{P}_{u+(1-e^{-\tau})\ssp\eta}^{\mathrm{s6v,\,hom}}.
	\end{equation}
\end{theorem}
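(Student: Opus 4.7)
The plan is to prove the theorem in three steps: (a) build the process from a graphical (Harris-type) representation based on independent time-inhomogeneous Poisson clocks attached to all potential seed pairs; (b) verify non-explosion using the regularity of $\Omega^{\mathrm{quad}}$ together with a decay estimate for the jump propagation lengths; and (c) deduce the measure-mapping identity \eqref{eq:quad_Q_action_on_s6v_homogeneous} as the Poisson-type limit of the discrete-time identity of Proposition \ref{prop:action_of_big_L}.

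For (a) and (b), I would attach to each potential seed pair $(v_1, v_2)$, with $v_1$ at $y$-coordinate $k$, an independent inhomogeneous Poisson process on $[0, \infty)$ of rate $k\ssp\eta\ssp e^{-\tau}\ssp\mathfrak{R}_{v_1, v_2}(u + (1 - e^{-\tau})\eta)$, in agreement with \eqref{eq:rate_tau_dependent}. The trajectory $\sigma(\tau)$ is built by processing the arrivals of these clocks in chronological order: at each arrival time, if the current configuration still has $(v_1, v_2)$ as a seed pair, perform the prescribed jump and propagate it to the right as in \Cref{sub:sample}; otherwise the arrival is ignored. To check non-explosion, fix a window $W = [0, R'] \times [1, R']$ and, using \Cref{def:quad_initial_configurations}, choose $R = R(R')$ so that $[0, R'] \times [R, \infty)$ is constant in $\sigma(0)$. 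Since the six seed-pair patterns of \Cref{fig:Coin_flips} all mix occupied and unoccupied edges, no seed pair can ever arise strictly inside the constant region, and since propagations in \Cref{sub:sample} are strictly rightward, the activity reaching $W$ can only originate from clocks inside a bounded box $[0, L] \times [1, R' \vee R]$ whose total rate is finite on $[0, T]$. To ensure that each individual ring triggers only a finite propagation $c(i)$, I would use the Gibbs property of $\mathbb{P}_u^{\mathrm{s6v,\,hom}}$: the alternating pattern controlling $c(i)$ has geometrically decaying tails, so $c(i) < \infty$ almost surely, yielding a well-defined process on a $\mathbb{P}_u^{\mathrm{s6v,\,hom}}$-full-measure subset of $\Omega^{\mathrm{quad}}$. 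By inspection, its generator is $G^{\mathrm{quad}}_{u, \eta, \tau}$.

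For (c), set $u_i = u + (1 - e^{-i\varepsilon})\eta$ as in \eqref{eq:u_i_for_limit}. Iterating Proposition \ref{prop:action_of_big_L} gives $\mathbb{P}_{\mathbf{u}}^{\mathrm{s6v}}\ssp\mathbf{Q}_{\lfloor \tau/\varepsilon \rfloor} = \mathbb{P}_{S^{\lfloor \tau/\varepsilon \rfloor} \mathbf{u}}^{\mathrm{s6v}}$, whose right-hand side converges, on any cylinder set, to $\mathbb{P}_{u + (1 - e^{-\tau})\eta}^{\mathrm{s6v,\,hom}}$ as $\varepsilon \to 0$. The coin-flip description of Proposition \ref{prop:coin_flips_work_s3}, combined with Lemma \ref{lemma:limit_of_probabilities} and the gap estimate \eqref{eq:difference_spectral_parameters_Poisson_limit}, shows that after Poisson rescaling the discrete coin flips driving $\mathbf{Q}_{\lfloor \tau/\varepsilon\rfloor}$ converge to the Poisson arrivals of (a), so the discrete chain converges to the graphical process. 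Together with the non-explosion bound this identifies $\mathbb{P}_u^{\mathrm{s6v,\,hom}}\ssp\mathcal{Q}(\tau)$ with $\mathbb{P}_{u + (1 - e^{-\tau})\eta}^{\mathrm{s6v,\,hom}}$.

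The principal obstacle is step (b): a single clock ring can instantaneously trigger a cascade of an arbitrary finite length, so the standard Harris construction does not apply off the shelf, and the control of propagation distances has to be bootstrapped through the Gibbsian decay of correlations of $\mathbb{P}_u^{\mathrm{s6v,\,hom}}$. Arranging this so that the resulting almost-sure statements survive the discrete-to-continuous limit in (c) is the technically delicate part.
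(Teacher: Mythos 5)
There is a genuine gap in your step (b), and it sits exactly where the real difficulty of the theorem lies. Your claim that ``no seed pair can ever arise strictly inside the constant region'' is only true at time $0$: the dynamics immediately destroys it. An up jump initiated at a seed pair at height $k$ moves a horizontal path into row $k+1$, so the topmost path crossing the strip $\{0,\dots,R'\}$ climbs into the initially empty region $[0,R']\times[R,\infty)$ and creates new seed pairs at larger and larger $y$-coordinates. Consequently the set of clocks relevant to the window $W$ is \emph{not} contained in any bounded box, and --- crucially --- the rates \eqref{eq:rate_tau_dependent} grow linearly in the vertical coordinate $k$, so one cannot dismiss the possibility that this climb runs off to infinity (infinitely many seed arrivals) in finite time. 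What is needed, and what your proposal lacks, is a comparison of the height of the topmost path with a pure birth (Yule) process with rate proportional to $k$; that process is non-explosive because $\sum_k (Ck)^{-1}=+\infty$, and this is the argument that actually closes the non-explosion step. The issue you flag as the ``principal obstacle'' (finiteness of the horizontal propagation length $c(i)$) is not the real problem: jump propagation only moves to the right, so the restriction of the dynamics to a vertical strip $\{0,\dots,R'\}\times\mathbb{Z}_{\ge1}$ is autonomous, and one can construct the process strip by strip and then take a Kolmogorov extension in $R'$, never needing any control of $c(i)$ from the right.

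A second, smaller defect: by invoking the Gibbs property of $\mathbb{P}_u^{\mathrm{s6v,\,hom}}$ you only obtain a process defined on a full-measure subset of initial conditions, whereas the theorem asserts existence for \emph{every} deterministic $\sigma(0)\in\Omega^{\mathrm{quad}}$; the strip-plus-Yule argument is purely deterministic in the initial data and gives the stronger statement. Your step (c) --- the identity \eqref{eq:quad_Q_action_on_s6v_homogeneous} as a Poisson-type limit of the discrete swap identity of \Cref{prop:action_of_big_L}, via the coin-flip description and the expansion of the transition probabilities --- is in line with the intended argument and is fine once (b) is repaired.
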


\begin{figure}[h]
	\centering
	\includegraphics[width=.35\textwidth]{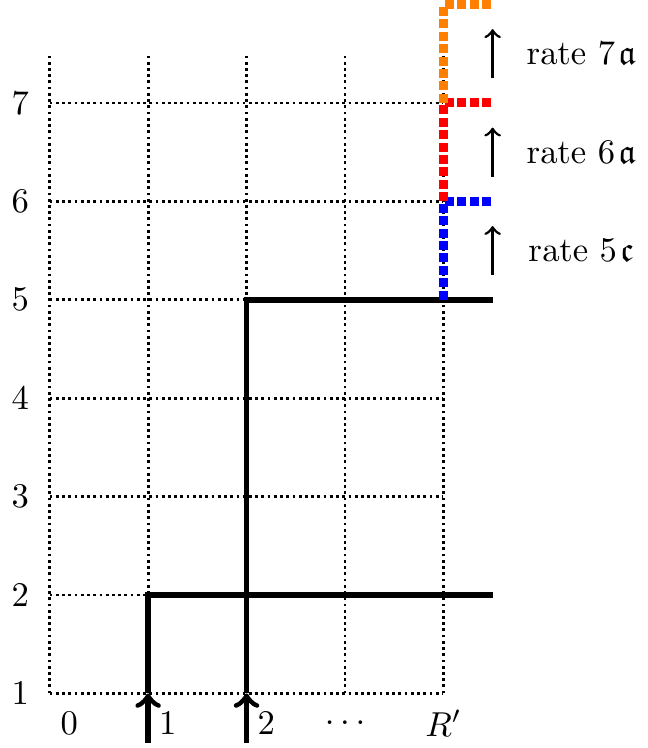}
	\caption{The top path crossing the right boundary of the 
	vertical strip jumps up three times.}
	\label{fig:seed_arrivals_quadrant}
\end{figure}

\begin{proof}
	Recall the truncation of the bijectivisation Markov
	operator $U$ to $U^{[<h]}$, see \Cref{sub:truncation}.
	Since the jump rates in our generator
	$G^{\mathrm{quad}}_{u,\eta,\tau}$
	are Poisson limits of the ones in the operators $U$,
	the generator is compatible with these truncations, too. 
	That is, 
	$G^{\mathrm{quad}}_{u,\eta,\tau}$
	preserves the space of cylindric functions
	$f(\sigma)$
	which depend on $\sigma$ only through the restriction of $\sigma$ to 
	a finite vertical strip
	$\left\{ 0,1,\ldots,R'  \right\}\times \mathbb{Z}_{\ge1}$ (where $R'>0$ is fixed).
	In other words, due to the very construction of the jump rates in 
	$G^{\mathrm{quad}}_{u,\eta,\tau}$,
	the dependence does not propagate
	from $\left\{ R'+1,R'+2,\ldots  \right\}\times \mathbb{Z}_{\ge1}$
	into
	$\left\{ 0,1,\ldots,R'  \right\}\times \mathbb{Z}_{\ge1}$.
	Therefore, it suffices to show existence of the 
	process $\sigma(\tau)$ restricted to an arbitrary
	vertical strip $\left\{ 0,1,\ldots,R'  \right\}\times \mathbb{Z}_{\ge1}$, and the full quadrant process
	then arises by Kolmogorov extension.

	Consider the following collection of independent 
	time-inhomogeneous Poisson processes:
	\begin{equation*}
		\{W_{j,k}^\mathfrak{l}\colon \mathfrak{l} =\mathfrak{a},\mathfrak{b},\mathfrak{c};\, j \geq 0, k \geq 1\}.
	\end{equation*}
	Here $(j,k)$ are the lattice coordinates, and 
	the Poisson process 
	$W_{j,k}^\mathfrak{l}=W_{j,k}^\mathfrak{l}(\tau)$ has time-dependent
	rate $k\ssp \eta e^{-\tau}\ssp \mathfrak{l}\left( u+(1-e^{-\tau})\ssp \eta \right)$,
	where $\mathfrak{l}$ is one of the letters $\mathfrak{a},\mathfrak{b}$, or $\mathfrak{c}$,
	see \eqref{eq:abc_rates}. 
	We think of the Poisson process $W_{j,k}^\mathfrak{l}$
	as attached to the vertical edge
	$(k,j)-(k,j+1)$.

	Fix an initial condition $\sigma(0)\in \Omega^{\mathrm{quad}}$, 
	and $R',R=R(R')>0$ such that
	$\sigma(0)$ is empty in the region
	$[0,R']\times[R,\infty)$
	(see \Cref{def:quad_initial_configurations}).
	The case when $\sigma(0)$ is full in 
	$[0,R']\times[R,\infty)$
	is treated analogously.

	We can define
	the evolution $\sigma(\tau)$ 
	restricted to 
	$\left\{ 0,1,\ldots,R'  \right\}\times \mathbb{Z}_{\ge1}$
	as a function of all the Poisson processes
	$W_{j,k}^\mathfrak{l}$,
	in the spirit of the Harris
	graphical construction \cite{Harris1978}.
	That is, for each seed pair $(v_1,v_2)$, if there is an
	arrival in one of the Poisson processes attached to the edge $v_1-v_2$, then
	it triggers the corresponding jump of the horizontal
	path up or down, which propagates to the right.
	We call such an arrival in a Poisson process the \emph{seed arrival}.
	
	To complete the construction of the process in
	$\left\{ 0,1,\ldots,R'  \right\}\times \mathbb{Z}_{\ge1}$,
	we need to show that 
	in any finite time interval, there are
	almost surely finitely many seed arrivals
	in the Poisson processes
	$W_{j,k}^\mathfrak{l}$,
	where $0\le j\le R'$, $k\ge1$.
	Note that this is not obvious since the jump 
	rates depend linearly on the vertical coordinate $k$,
	and hence are unbounded.

	Since the configuration 
	in $[0,R']\times[R,\infty)$ is initially empty,
	infinitely many 
	seed arrivals might arise only 
	when up-right paths 
	perform infinitely many up jumps.
	More precisely, the top of the paths 
	crossing the vertical line with $x$-coordinate $R'$
	must jump up infinitely many times.
	See \Cref{fig:seed_arrivals_quadrant}
	for an illustration.

	The process of up jumps of the top path
	is bounded from above by the 
	pure birth process with $\mathrm{rate}(k\to k+1)= C k$, for some 
	constant $C>0$. (This process is also called the 
	Yule process.) It is well-known (e.g., \cite{KMG57BDClassif}) 
	that this pure birth process does not 
	run off to infinity in finite time
	because the sum of its inverse rates diverges:
	$\sum_{k=1}^{\infty}(Ck)^{-1}=+\infty$.
	This implies that the desired process
	$\sigma(\tau)$ restricted
	to 
	$\left\{ 0,1,\ldots,R'  \right\}\times \mathbb{Z}_{\ge1}$
	does not perform infinitely many jumps in finite time,
	and hence completes the construction of the 
	dynamics $\sigma(\tau)$ (with the Markov generator $\mathcal{Q}(\tau)$) in the quadrant.

	Formula 
	\eqref{eq:quad_Q_action_on_s6v_homogeneous}
	for the
	action of $\mathcal{Q}(\tau)$
	on the homogeneous stochastic six vertex model
	follows as a Poisson limit
	of \Cref{prop:action_of_big_L},
	as explained  
	in \Cref{sub:u_q_cont}.
\end{proof}

\section{Markov process preserving full plane Gibbs measures}
\label{sec:Markov_full_plane_process}

\subsection{Bulk limit of the quadrant dynamics. Heuristics}
\label{sub:bulk_limit_heuristics}

In this section we discuss the full plane continuous time 
dynamics arising in the bulk of the 
process $\mathcal{Q}(\tau)$ constructed in \Cref{sec:continuous_time_limit}
above. 
Consider running $\mathcal{Q}(\tau)$ with an initial
configuration sampled from the stochastic six vertex model
$\mathbb{P}_{u}^{\mathrm{s6v,\,hom}}$
with, say, step-$\varnothing$ boundary conditions. 
Let $\varepsilon > 0$
be small, and consider a rectangular
part of the lattice 
around a point with coordinates
$(\lfloor x/\varepsilon \rfloor,\lfloor y/\varepsilon\rfloor)$.
Assume that the limit
$\varepsilon\to0$ preserves the lattice scale, that is, 
the rectangular part of the lattice 
turns into the full plane $\mathbb{Z}^2$ as $\varepsilon\to0$.

The local statistics of
the path configuration 
around
$(\lfloor x/\varepsilon \rfloor,\lfloor y/\varepsilon\rfloor)$
are described 
\cite{aggarwal2020limit}
by the pure state of a slope
$(\mathsf{s},\mathsf{t})$ belonging to either the KPZ phase 
or the frozen phase, in the terminology from
\Cref{sub:pure_states}. See also \Cref{fig:s6v_sim},
left,
for a simulation. In the rest of the discussion we ignore the frozen part
and focus on the KPZ phase.

Slowing down $\mathcal{Q}(\tau)$ so that it runs at speed $\varepsilon$,
the transition rates (for initiating jumps)
around 
$(\lfloor x/\varepsilon \rfloor,\lfloor y/\varepsilon\rfloor)$
in a finite time interval
$[0,\tau_0]$
have the form 
\begin{equation*}
	\varepsilon\ssp 
	\eta
	\left\lfloor \frac{y}{\varepsilon} \right\rfloor 
	\mathfrak{R}_{v_1,v_2}\left( u+O(\varepsilon) \right)
	\left( 1+O(\varepsilon) \right)
	=\eta \ssp y \ssp
	\mathfrak{R}_{v_1,v_2}(u)+O(\varepsilon),
\end{equation*}
Here $O(\varepsilon)$ may depend on $\tau_0$, but $\tau_0$ is fixed.
Since $y$ is also fixed, we see that as $\varepsilon\to0$,
the 
jump rates converge to finite values proportional to 
$\mathfrak{R}_{v_1,v_2}(u)$. In other words, 
we arrive at a (so far, hypothetical)
full plane continuous time Markov chain
with \emph{homogeneous} rates
$\mathrm{const}\cdot\mathfrak{R}_{v_1,v_2}(u)$.
Throughout this section we denote the full plane chain by $\mathcal{C}(t)$,
$t\in \mathbb{R}_{\ge0}$.

Therefore, if there is reasonable \emph{locality} in the original process
$\mathcal{Q}(\tau)$ in the quadrant
(more precisely, if we can turn off jumps outside of
a large enough box around 
$(\lfloor x/\varepsilon \rfloor,\lfloor y/\varepsilon\rfloor)$
without affecting the process in a smaller box),
then 
in the bulk limit regime
the mapping of the measures \eqref{eq:quad_Q_action_on_s6v_homogeneous}
turns into the statement that $\mathcal{C}(t)$
should \emph{preserve} the local distribution,
the pure Gibbs state with slope $(\mathsf{s},\mathsf{t})$ in the KPZ phase.

In \Cref{sec:Markov_full_plane_process,sec:proof_lemma_S6}
we prove the existence of the process
$\mathcal{C}(t)$ together with the preservation of the KPZ pure states.
We obtain $\mathcal{C}(t)$ directly from the 
jump rates, and not as a bulk limit of the 
dynamics in the quadrant. However, we employ the dynamics in the quadrant
to show that the full plane process preserves the KPZ pure Gibbs state.

\subsection{Admissible configurations of up-right paths}
\label{sub:full_plane_process}

We aim to construct 
a continuous time Markov process
$\mathcal{C}(t)$
on up-right path configurations
in the full plane $\mathbb{Z}^2$ with the following
dynamics. Recall that to each 
path configuration
we associate its seed pairs $(v_1,v_2)$ of vertically adjacent
vertices, see
\Cref{def:seed_pair}.

\begin{definition}[Jumps in the process $\mathcal{C}(t)$]
	\label{def:C_t_jump_rates}
	Each seed pair $(v_1,v_2)$ has 
	an exponential clock with rate $\mathfrak{R}_{v_1,v_2}(u)$,
	see \Cref{fig:rates_for_G_quad}.
	When the clock at $(v_1,v_2)$ rings,
	the horizontal path passing through this seed pair
	jumps up or down. This jump then instantaneously 
	propagates to the right until it finds a vertical edge where it can stop
	(see \Cref{fig:jump_prop} for an 
	illustration, and \Cref{sub:sample}
	for the definition of jump propagation).
	Note that in contrast with the dynamics in the quadrant, 
	in the full plane case all rates are time-independent and are homogeneous
	throughout the plane.
\end{definition}

Since the jumps may potentially propagate very far to the right, it is 
not immediately clear how to define the process
$\mathcal{C}(t)$ 
with the jump rates
from \Cref{def:C_t_jump_rates}
even locally. Indeed, a new jump can be 
initiated anywhere along a very long 
horizontal path, and close to its right end there could be 
infinitely many propagated jumps in finite time.
In other words, defining the 
generator of this process formally as
\begin{equation}\label{eq:full_plane_generator}
	(G_{u}f)(\sigma)=
	\sum_{\textnormal{$(v_1,v_2)$ is a seed pair}}
	\mathfrak{R}_{v_1,v_2}(u)
	\left( f(\sigma_{(v_1,v_2)})-f(\sigma) \right),
\end{equation}
similarly to \eqref{eq:quad_generator},
where the sum is over all seed pairs $(v_1,v_2)$ of $\sigma$,
may lead to divergence for some configurations $\sigma$.
Therefore, let us first define the space of admissible 
configurations of up-right paths:

\begin{definition}
	\label{def:admissible_configurations}
	Fix real $R,A>0$.
	Let 
	$\Lambda_R \coloneqq [-R,R]^2 \cap \mathbb{Z}^2$
	be the finite square with side $2R$ around the origin.
	Let $E_{R,A}$ be the 
	event (i.e., a subspace of up-right path configurations)
	such that there exists a horizontal sequence
	of vertices $(x_0, y), (x_0+1,y), \dots, (x_0 + n, y) =:
	(x_1,y)$ where the path configuration around $(x_0 + j, y)$ is not
	equal to $\ru$
	for any $0 \leq j \leq n$, where $n > A$ and
	$(x_1,y) \in \Lambda_R$ (here $\ru$ is a shorthand for the vertex
	$(0,1;1,0)$).
	In words, configurations in $E_{R,A}$ have
	horizontal strings of occupied edges of length $>A$ in an $R$-neighborhood of the origin.
	Clearly, for $B>A$ we have $E_{R,B}\subset E_{R,A}$.
	
	Define the set of \emph{admissible configurations} to be
	\begin{equation}
		\label{eq:Omega_admissible_set}
		\Omega \coloneqq 
		\bigcup_{p \in \mathbb{Z}_{\geq 1}}
		\bigcup_{R_0 \in \mathbb{Z}_{\ge1} } \bigcap_{R \geq R_0,\, R\in \mathbb{Z}}
		(E_{R, (\log R)^p})^c.
	\end{equation}
	In words, configurations in $\Omega$ are such that
	for some $p\ge1$ and all large enough $R$,
	every horizontal string of adjacent
	vertices of length larger than $(\log R)^p$ ending in
	$\Lambda_R$
	contains $\ru$.
\end{definition}

Next, let us show that under pure states in the 
KPZ phase, almost surely the configuration of up-right paths in admissible.
Fix the parameters of the model $q,u\in(0,1)$,
and the density of vertical occupied
edges $\mathsf{s}=\rho\in(0,1)$. 
Let $\pi(\rho)=\pi_{\rho,\varphi(\rho)}$
be the corresponding KPZ pure state (\Cref{sub:pure_states}).
Denote this measure by $\pi$, for short.

\begin{lemma}\label{lem:strbound}
Let $\zeta > 0$. There exists a constant 
$C_\zeta > 0$ such that for all $R > 0$,
\begin{equation*}
	\pi(E_{R,C_\zeta \log R}) \leq C_\zeta R^{-\zeta}.
\end{equation*}
\end{lemma}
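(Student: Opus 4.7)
The overall strategy is a union bound over possible bad strings, combined with an exponential decay estimate for a single string. I would first observe that by translation invariance of $\pi$,
\begin{equation*}
	\pi(E_{R,A}) \le \sum_{(x_1,y) \in \Lambda_R} \sum_{n > A} p_n,
	\qquad\text{where}\qquad
	p_n \coloneqq \pi(A_n),
\end{equation*}
and $A_n$ is the event that a fixed horizontal string of $n+1$ consecutive vertices in a single row contains no vertex of type $(0,1;1,0) = \ru$. Since $|\Lambda_R| \le (2R+1)^2$, the lemma will follow once I establish that $p_n$ decays exponentially in $n$.

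The key input for estimating $p_n$ is the Bernoulli product description of the KPZ pure state (\Cref{sub:pure_states}): along the boundary of any quadrant anchored at the left endpoint $(x_0,y)$ of the string, the incoming vertical edges $V_j \coloneqq \mathbf{1}\{\textnormal{edge $(x_0+j,y-1)-(x_0+j,y)$ occupied}\}$, $j \ge 0$, are i.i.d.\ Bernoulli$(\rho)$, and the incoming horizontal edge $H_{-1} \coloneqq \mathbf{1}\{\textnormal{edge $(x_0-1,y)-(x_0,y)$ occupied}\}$ is Bernoulli$(\varphi(\rho))$ independent of the $V_j$. Given this and the stochastic vertex weights $w_u$, the sequence of horizontal occupations $H_j \coloneqq \mathbf{1}\{\textnormal{edge $(x_0+j,y)-(x_0+j+1,y)$ occupied}\}$ along the string is a two-state Markov chain. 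Inspecting the six vertex types shows that a $(0,1;1,0)$ vertex at position $j$ occurs precisely when $H_{j-1}=1$, $V_j=0$, and the internal coin flip selects $(0,1;1,0)$ over $(0,1;0,1)$ (probability $1-\delta_2$). In particular, once $H_j=1$ the only way to avoid $\ru$ at the next step is to have $H_{j+1}=1$ again, so on the event $A_n$ the sequence $H_{-1},H_0,\ldots,H_n$ must be monotone: a string of $0$'s followed by a string of $1$'s.

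I would then quantify this as follows. Conditional on $H_{j-1}=0$, the probability that $H_j=0$ (no switch, and automatically no $\ru$) equals $\gamma_1 \coloneqq 1 - \rho(1-\delta_1) < 1$, while conditional on $H_{j-1}=1$, the probability that the vertex is not $\ru$ (which forces $H_j=1$) equals $\gamma_2 \coloneqq \rho + (1-\rho)\delta_2 < 1$. Setting $\gamma \coloneqq \max(\gamma_1,\gamma_2) \in (0,1)$ and decomposing according to the switch time $j^\ast \in \{0,1,\ldots,n\} \cup \{\infty\}$,
\begin{equation*}
	p_n \le \varphi(\rho)\,\gamma_2^{n+1} + (1-\varphi(\rho))\,\gamma_1^{n+1}
	+ (1-\varphi(\rho))\sum_{j^\ast=0}^{n} \gamma_1^{j^\ast}\,\rho(1-\delta_1)\,\gamma_2^{n-j^\ast}
	\le (n+2)\,\gamma^{n},
\end{equation*}
and hence $p_n \le C\,\tilde\gamma^{\,n}$ for any fixed $\tilde\gamma \in (\gamma,1)$ and a constant $C$ depending only on $(q,u,\rho)$.

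Combining everything,
\begin{equation*}
	\pi(E_{R,A}) \le (2R+1)^2 \sum_{n > A} C\,\tilde\gamma^{\,n} \le C'\,R^{2}\,\tilde\gamma^{\,A}.
\end{equation*}
Choosing $C_\zeta \coloneqq (\zeta + 2)/|\log \tilde\gamma|$ and setting $A = C_\zeta \log R$ converts the right-hand side into $C'\,R^{2 + C_\zeta \log \tilde\gamma} = C'\,R^{-\zeta}$, which after adjusting constants gives the claimed bound $\pi(E_{R,C_\zeta \log R}) \le C_\zeta R^{-\zeta}$.

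The main technical point, and the only place requiring care, is the case analysis for the one-dimensional Markov chain $(H_j)$: one must verify that in \emph{both} regimes (before and after the possible switch from $0$ to $1$) a strictly sub-unit geometric factor is accumulated at each step. Everything else is a straightforward union bound, and the Bernoulli product property of $\pi(\rho)$ on the boundary of a quadrant is precisely what decouples the vertical inputs from $H_{-1}$ and makes the one-dimensional Markov analysis legitimate.
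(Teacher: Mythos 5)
Your proof is correct and follows essentially the same route as the paper: a union bound over positions in $\Lambda_R$ and string lengths, combined with an exponential-in-$n$ bound on the probability that a fixed horizontal string contains no vertex of type $(0,1;1,0)$, derived from the description of $\pi(\rho)$ via Bernoulli boundary data and sequential sampling with the stochastic weights. The only difference is cosmetic: you compute the single-string probability exactly via the monotone two-state chain $(H_j)$ and a switch-time decomposition, whereas the paper uses the cruder observation that every two consecutive vertices contain $\ru$ with probability bounded below, giving $\theta^{\lfloor n/2\rfloor}$; both yield the needed exponential decay.
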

\begin{proof}
	It suffices to show that for some constant $0 < \theta <
	1$, the probability under $\pi$ of a particular sequence
	of vertices $(x,y), (x+1,y), \dots, (x + n, y)$ not having
	a vertex equal to $\ru$ is upper bounded by $\theta^n$.
	Indeed, then we have by taking union bound:
	\begin{equation*}
		\pi(E_{R,C_\zeta \log R})\le 
		\mathrm{const}\cdot R^2 \theta^{C_\zeta \log R}= 
		\mathrm{const}\cdot R^{2-C_\zeta \log(1/\theta)},
	\end{equation*}
	which may be made less than $C_\zeta R^{-\zeta}$ by a choice of $C_\zeta$.

	Now, conditioning on the configuration up to the vertex
	$(x+i-1,y)$, we have two possibilities
	(recall the vertex weights in \Cref{fig:6types}):
	\begin{itemize}
	\item The horizontal edge exiting from $(x + i-1, y)$ is
		occupied. Conditioning on this, the probability of $\ru$
		at $(x +i, y)$ is at least $(1-\rho)(1-\delta_2)$. 
	\item The horizontal edge exiting $(x + i - 1, x+ i)$ is
		not occupied. Conditioning on this, the probability of
		$(x + i+1, y)$ being $\ru$ is at least 
		$\rho(1-\rho)
		(1-\delta_1)(1-\delta_2) $. 
	\end{itemize}
	Taking
	$\theta = 
	\max\left(1-\rho(1-\rho) (1-\delta_1)(1-\delta_2), 1-(1-\rho)(1-\delta_2) \right)$,
	leads to the upper bound 
	of the probability that no vertex in the sequence is $\ru$
	by $\theta^{\lfloor n/2 \rfloor }$.
	This completes the proof.
\end{proof}

\begin{proposition}
	\label{prop:admissible_are_almost_sure}
	We have $\pi(\Omega)=1$.
\end{proposition}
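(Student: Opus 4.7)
The plan is a direct Borel--Cantelli argument: use \Cref{lem:strbound} to get summable-in-$R$ tail estimates for $\pi(E_{R,(\log R)^p})$ for a suitable choice of $p$, and combine with the monotonicity $E_{R,B}\subset E_{R,A}$ when $B>A$.

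First I would fix $\zeta=2$, which yields a constant $C_2>0$ by \Cref{lem:strbound} such that $\pi(E_{R,C_2\log R})\le C_2 R^{-2}$ for every $R>0$. Then I would pick the exponent $p=2$ in the definition \eqref{eq:Omega_admissible_set} of $\Omega$; the point is that $(\log R)^2\ge C_2\log R$ holds for all $R\ge R_1$, where $R_1$ is chosen so that $\log R_1\ge C_2$. By the monotonicity $E_{R,B}\subset E_{R,A}$ (for $B>A$) noted in \Cref{def:admissible_configurations}, this gives
\begin{equation*}
  \pi\bigl(E_{R,(\log R)^2}\bigr)\le \pi\bigl(E_{R,C_2\log R}\bigr)\le C_2 R^{-2}\qquad\text{for all }R\ge R_1.
\end{equation*}

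Since $\sum_{R\ge R_1} R^{-2}<\infty$, the Borel--Cantelli lemma gives
\begin{equation*}
  \pi\Bigl(\limsup_{R\to\infty}E_{R,(\log R)^2}\Bigr)=0,
\end{equation*}
which is to say that $\pi$-almost surely there exists a (random) $R_0\in\mathbb{Z}_{\ge 1}$ such that the configuration lies in $(E_{R,(\log R)^2})^c$ for every integer $R\ge R_0$. Taking $p=2$ in the definition \eqref{eq:Omega_admissible_set}, this places the configuration in $\bigcap_{R\ge R_0}(E_{R,(\log R)^2})^c\subseteq \Omega$, so $\pi(\Omega)=1$.

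There is no real obstacle here; all the work has been packaged into \Cref{lem:strbound}, whose exponential decay in the length of the non-$\ru$ string is more than enough to dominate the polylogarithmic gap $(\log R)^p$ versus $\log R$ and to overcome the union bound over $O(R^2)$ sequences in $\Lambda_R$. The only minor point to be careful about is to take $p\ge 2$ rather than $p=1$, since the constant $C_\zeta$ provided by \Cref{lem:strbound} may exceed $1$; any $p\ge 2$ (or indeed any $p>1$ after a further integer ceiling) works identically.
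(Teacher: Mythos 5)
Your proof is correct and follows essentially the same route as the paper: both invoke \Cref{lem:strbound} with a fixed $\zeta>1$, use the monotonicity $E_{R,B}\subset E_{R,A}$ to pass from $C_\zeta\log R$ to $(\log R)^2$ for large $R$, and conclude by Borel--Cantelli that the $p=2$ part of the definition of $\Omega$ already has full measure. The only (immaterial) difference is your explicit choice $\zeta=2$ and the spelled-out threshold $R_1$ with $\log R_1\ge C_2$.
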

\begin{proof}
	Follows from \Cref{lem:strbound}
	by a Borel--Cantelli type
	argument. Indeed, taking $\zeta>1$ in \Cref{lem:strbound},
	we have
	\begin{equation*}
		\sum_{R=1}^{\infty} C_\zeta R^{-\zeta}<\infty
		\qquad 
		\Longrightarrow 
		\qquad 
		\pi
		\biggl( 
			\bigcap_{R_0\in \mathbb{Z}_{\ge1}} 
			\bigcup_{R\ge R_0,\,R\in \mathbb{Z}}
			E_{R,C_\zeta\log R}
		\biggr)=0.
	\end{equation*}
	Since $E_{R,C_\zeta \log R}\supset E_{R,(\log R)^2}$ for all $R$
	large enough, we see that 
	$\pi(\cup_{R_0}\cap_{R\ge R_0} E_{R,(\log R)^2})=0$.
	This implies that
	$\pi(\Omega^c)=0$ 
	as $\Omega^c$ is the intersection for all
	$p\ge1$, and the part with $p=2$ already has zero probability.
\end{proof}

\subsection{Formulations}

We construct the full plane dynamics $\mathcal{C}(t)$
as a limit of the truncated processes,
which exists for admissible configurations. 

\begin{definition}[Truncated process]
	\label{def:truncated_dynamics}
	Let $R>0$, and define $\mathcal{C}^R(t)$, $t\in \mathbb{R}_{\ge0}$,
	to be a continuous time Markov process on configurations
	of up-right paths with the jumps described in \Cref{def:C_t_jump_rates},
	with the modification that 
	new jumps can be initiated only by seed pairs 
	$(v_1,v_2)$ inside the finite square
	$\Lambda_R$.
\end{definition}

The truncated processes $\mathcal{C}^R$ are clearly well-defined.

\begin{remark}
	\label{rmk:coupling_of_truncated_processes}
	For a fixed initial configuration
	$\mathcal{C}^{R}(0)=\mathcal{C}_0$
	the distributions of 
	$\{\mathcal{C}^{R}\}_{t\in \mathbb{R}_{\ge0}}$
	can be coupled for various values of $R$. Indeed, a natural coupling 
	of $\mathcal{C}^{R}$
	and $\mathcal{C}^{R'}$, $R'>R$, 
	corresponds to using the same Poisson clocks for both $\mathcal{C}^{R}$
	and $\mathcal{C}^{R'}$ inside $\Lambda_R$, and turning on additional Poisson clocks
	for all seed pairs in $\Lambda_{R'}\setminus \Lambda_R$ for $\mathcal{C}^{R'}$. 
\end{remark}
We aim to show that when the initial configuration $\mathcal{C}_0$ 
belongs to $\Omega$ 
\eqref{eq:Omega_admissible_set},
then
the restrictions of the processes 
$\{\mathcal{C}^{R}(t)\}_{0\le t\le t_0}$ to $\Lambda_K$ for fixed
$K,t_0$ stabilize as $R\to+\infty$.
This would lead to the two main results 
of this section which we now formulate.

\begin{theorem}[Existence]
	\label{thm:C_t_full_plane_well_defined}
	For all $\mathcal{C}_0 \in \Omega$, there exists a Markov
	process $\{\mathcal{C}(t)\}_{t\in \mathbb{R}_{\ge0}}$
	started from $\mathcal{C}(0) =
	\mathcal{C}_0$, which evolves according to the jumps
	described in \Cref{def:C_t_jump_rates}.
	That is, 
	the action of the generator $(G_u f)(\sigma)$ \eqref{eq:full_plane_generator}
	is well-defined for $\sigma\in \Omega$,
	and corresponds to a continuous time Markov process.
	Moreover, for all $t$ we have $\mathcal{C}(t)\in \Omega$.
\end{theorem}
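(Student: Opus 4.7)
The plan is to construct $\mathcal{C}(t)$ as an almost-sure limit of the truncated processes $\mathcal{C}^R(t)$ from \Cref{def:truncated_dynamics}, coupled through common Poisson clocks as in \Cref{rmk:coupling_of_truncated_processes}. Concretely, for fixed $K \ge 1$ and $t_0 > 0$, I would aim to show that almost surely there is a (random) $R_\infty < \infty$ such that for all $R \ge R_\infty$, the restrictions $\mathcal{C}^R(t)|_{\Lambda_K}$ agree for every $t \in [0, t_0]$. One then defines $\mathcal{C}(t)|_{\Lambda_K}$ as this stabilized value, and consistency as $K$ grows together with the Kolmogorov extension theorem produces the full-plane process.

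The obstruction to this stabilization is that a clock ringing far to the left of $\Lambda_K$ could, in principle, trigger a jump whose rightward propagation sweeps across $\Lambda_K$. Since propagation is strictly rightward, only seed pairs with $x$-coordinate $\le K$ are relevant, so what must be controlled is the maximal leftward extent from $\Lambda_K$ of jumps reaching $\Lambda_K$ during $[0, t_0]$. The length of such propagation is bounded above by the length of the horizontal string of vertices (in row $y$ of the seed pair) without a $\ru$-vertex, since the rules in \Cref{sub:sample} force propagation to terminate at, or before, such a vertex. Under the initial configuration $\mathcal{C}_0 \in \Omega$ one has, by \eqref{eq:Omega_admissible_set}, a deterministic polylogarithmic bound on string lengths ending in $\Lambda_R$; the task is to show that this bound is preserved (up to constants) under the dynamics in any finite time interval.

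This is the main obstacle, and I would address it exactly as foreshadowed by the outline: by coupling the propagation statistics to the auxiliary Annihilation-Jump process mentioned in \Cref{sub:KPZ}, whose simpler structure permits explicit probabilistic estimates (the ``main estimate'' deferred to \Cref{sec:proof_lemma_S6}). The key output of such a coupling should be that, almost surely, there exists $p' \ge 1$ and $R_0'$ (depending on $\mathcal{C}_0$ and $t_0$) such that for every $R \ge R_0'$ and every $t \in [0, t_0]$, every horizontal run of non-$\ru$ vertices ending in $\Lambda_R$ has length at most $(\log R)^{p'}$. Granted this, for any $K$ the choice $R_\infty := \max(R_0', K + (\log R_\infty)^{p'})$ (made precise by a fixed-point argument in $R$) ensures that no jump initiated outside $\Lambda_{R_\infty}$ can propagate into $\Lambda_K$, so $\mathcal{C}^R$ and $\mathcal{C}^{R_\infty}$ agree on $\Lambda_K \times [0, t_0]$ for all $R \ge R_\infty$.

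Given the stabilization, the limit process $\mathcal{C}(t)$ inherits the Markov property and the action of the generator \eqref{eq:full_plane_generator} on cylinder functions from the truncated chains: for any cylinder $f$ supported in $\Lambda_K$, the generator $(G_u f)(\sigma)$ involves only finitely many seed pairs within a finite neighbourhood of $\Lambda_K$ (by admissibility of $\sigma$), and matches the generator of $\mathcal{C}^R$ restricted to $\Lambda_K$ for all sufficiently large $R$. The final claim $\mathcal{C}(t) \in \Omega$ for all $t$ then follows from the same polylogarithmic string-length bound obtained above, combined with a Borel--Cantelli argument of the same flavour as in \Cref{prop:admissible_are_almost_sure}, applied uniformly in $t \in [0, t_0]$ via a union bound over a dense countable subset of times together with right-continuity of the path configuration under the jump dynamics.
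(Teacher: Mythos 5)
Your overall strategy---realizing $\mathcal{C}(t)$ as a limit of the truncated processes $\mathcal{C}^R(t)$ coupled through common Poisson clocks, with the polylogarithmic string-length control supplied by the AJ coupling, then reading off the generator on cylinder functions---is the same as the paper's. The genuine gap is in your stabilization step. You argue that once every horizontal run of non-$(0,1;1,0)$ vertices ending in $\Lambda_R$ has length at most $(\log R)^{p'}$ at all times, choosing $R_\infty$ a bit larger than $K$ plus this length guarantees that $\mathcal{C}^R$ and $\mathcal{C}^{R_\infty}$ agree on $\Lambda_K\times[0,t_0]$, because ``no jump initiated outside $\Lambda_{R_\infty}$ can propagate into $\Lambda_K$.'' The quoted statement is true for a single jump, but agreement of the two truncations does not follow from it deterministically. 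The extra clocks of $\mathcal{C}^R$ in $\Lambda_R\setminus\Lambda_{R_\infty}$ create a discrepancy just inside the left edge of $\Lambda_{R_\infty}$ (a jump started at $x<-R_\infty$ may propagate up to one string length to the right of $-R_\infty$); after that, a \emph{shared} clock ringing within one string length to the left of the discrepancy front can act differently in the two processes (it may be a seed pair in one configuration and not in the other, or trigger jumps that stop at different columns), pushing the front further right by up to another string length. Iterating, discrepancies can be relayed across $\Lambda_{R_\infty}$ entirely by shared clocks, so agreement on $\Lambda_K$ holds only with high probability, not surely. This relay mechanism is precisely what the paper's \Cref{lemma:delta_comparison_Rlim} handles: a discrepancy reaching a fixed edge forces a left-to-right chain of at least $R/(\tilde{C}(\log R)^p)$ clock rings within time $T$, each within distance $\tilde{C}(\log R)^p$ of the previous one, and the probability of this is killed by a counting argument times a Poisson tail; Borel--Cantelli (as in \Cref{lemma:limit_of_edge_variables_in_R_exists}) then yields almost sure stabilization edge by edge. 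Without this chain estimate your fixed-point choice of $R_\infty$ does not close the argument.

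A related, smaller issue: the ``key output'' you request from the AJ coupling---an almost sure bound holding simultaneously for all scales $R\ge R_0'$, all $t\in[0,t_0]$, and implicitly for the whole family of truncated processes---is stronger than what the main estimate (\Cref{lemma:main_full_plane_lemma}) provides, namely a bound $\tilde{C}R^{-\zeta}$ on the probability of failure for each fixed scale $R$ and each fixed truncation $\tilde R\ge R$. Since your comparison necessarily involves the increasing family $\{\mathcal{C}^R\}_{R}$, you must in any case work with these quantitative per-$R$ bounds and sum them, and at that point the chain-of-discrepancies estimate above is the missing ingredient rather than an optional refinement. The remaining parts of your sketch (Markov property, generator identification, and $\mathcal{C}(t)\in\Omega$ from the string-length bound) do follow the paper's route once stabilization is established.
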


Recall that $\pi$
is the KPZ pure Gibbs state with density
$\rho$ of the occupied vertical edges,
where $0<\rho<1$.

\begin{theorem}[Preservation of KPZ pure states]
	\label{thm:C_t_preserves_Gibbs}
	The Markov process $\mathcal{C}(t)$ from
	\Cref{thm:C_t_full_plane_well_defined}
	preserves the measure $\pi$. 
	More precisely, for any cylindric function $f$ we have 
	\begin{equation}\label{eq:preservation_of_Gibbs}
		\int_{\Omega} 
		\mathbb{E}_{\mathcal{C}_0} [f(\mathcal{C}(t))] \ssp \pi(d \mathcal{C}_0) 
		=
		\pi [ f ],
	\end{equation}
	where
	$\mathbb{E}_{\mathcal{C}_0}$ is the expectation
	with respect to the Markov chain
	$\mathcal{C}(t)$ started from $\mathcal{C}_0$,
	and $\pi[f]$ is the integral of $f$ over the measure $\pi$.
\end{theorem}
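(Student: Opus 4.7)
My plan is to derive the invariance of $\pi=\pi(\mathsf{s})$ under $\mathcal{C}(t)$ from the measure-mapping property of the quadrant dynamics (\Cref{thm:quad_process_exists}) by passing to a bulk limit, making the heuristic in \Cref{sub:bulk_limit_heuristics} rigorous. The point is that over a rescaled time interval $\tau=\varepsilon t$, the dynamics $\mathcal{Q}(\tau)$ shifts the spectral parameter by only $O(\varepsilon)$, while its rescaled generator (after zooming in around a bulk point) converges to the generator of $\mathcal{C}(t)$; the measure-mapping then forces invariance in the limit.

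To implement this, I would fix a cylindric test function $f$ supported in a finite window $W\subset \mathbb{Z}^2$ and use the limit shape and local-limit results for the stochastic six vertex model in the quadrant \cite{BCG6V,aggarwal2020limit} to select $u\in(0,1)$ and a macroscopic point $(x,y)$ in the KPZ region so that the local distribution of $\mathbb{P}_u^{\mathrm{s6v,\,hom}}$ (with, say, step-$\varnothing$ boundary conditions), translated so that $(\lfloor x/\varepsilon\rfloor,\lfloor y/\varepsilon\rfloor)$ becomes the origin, converges weakly to $\pi(\mathsf{s})$ as $\varepsilon\to 0$. Take $\eta=1/y$. Then by \Cref{lemma:limit_of_probabilities} together with \eqref{eq:difference_spectral_parameters_Poisson_limit}, the rate $k\ssp\eta\ssp e^{-\tau}\mathfrak{R}_{v_1,v_2}(u+(1-e^{-\tau})\eta)$ of a seed pair at $y$-coordinate $k=\lfloor y/\varepsilon\rfloor+O(1)$ in $\mathcal{Q}(\tau)$, multiplied by $\varepsilon$ under the slow-down $\tau=\varepsilon t$, converges to $\mathfrak{R}_{v_1,v_2}(u)$, the rate of $\mathcal{C}(t)$. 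Coupling the Poisson clocks of $\mathcal{Q}$ to those of $\mathcal{C}$ accordingly, I would evaluate both sides of
\begin{equation*}
\mathbb{P}_u^{\mathrm{s6v,\,hom}}\,\mathcal{Q}(\varepsilon t) = \mathbb{P}_{u+(1-e^{-\varepsilon t})\eta}^{\mathrm{s6v,\,hom}}
\end{equation*}
against $f$ (restricted to the translated window) and let $\varepsilon\to 0$. The left-hand side converges to $\int \mathbb{E}_{\mathcal{C}_0}[f(\mathcal{C}(t))]\,\pi(d\mathcal{C}_0)$, and the right-hand side converges to $\pi[f]$ because $u+(1-e^{-\varepsilon t})\eta\to u$ and the local limit is continuous in the spectral parameter (using the description of $\pi(\mathsf{s})$ as a stationary trajectory from \Cref{sub:pure_states}). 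This is precisely \eqref{eq:preservation_of_Gibbs}.

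The hard part is the bulk convergence of the dynamics, namely, showing that $\mathcal{Q}(\varepsilon t)$ restricted to the translated window $W$ converges in distribution to $\mathcal{C}(t)$ restricted to $W$. Because jumps can propagate arbitrarily far to the right through strings of occupied horizontal edges, the evolution inside $W$ depends a priori on the configuration far outside $W$, so one needs a locality estimate of the same nature as in the proof of \Cref{thm:C_t_full_plane_well_defined}: for initial configurations in the almost sure set $\Omega$ (\Cref{prop:admissible_are_almost_sure}), only finitely many jumps originating outside a sufficiently large box can propagate into $W$ within a finite time interval, and the same bound is uniform in $\varepsilon$ for the rescaled quadrant dynamics. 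This should follow from the exponential horizontal-string bound (\Cref{lem:strbound}) together with the coupling to the Annihilation-Jump particle system of \Cref{sec:proof_lemma_S6}, which controls the density and range of jumps that can feed into $W$.
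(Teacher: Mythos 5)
Your strategy is essentially the one the paper itself uses: both arguments deduce stationarity of $\pi$ from the measure-mapping property of the quadrant dynamics (\Cref{thm:quad_process_exists}) by zooming in near a bulk lattice point, matching the slowed-down, spatially inhomogeneous rates of $\mathcal{Q}$ with the homogeneous rates of $\mathcal{C}$, and controlling the error through truncation and propagation estimates of the same kind as in the proof of \Cref{thm:C_t_full_plane_well_defined}; the paper's \Cref{lemma:proof_58_l1,lemma:proof_58_l2,lemma:proof_58_l3,lem:quarterplanepropdef} implement exactly the chain of couplings you sketch, with the quarter-plane analogue of \Cref{lemma:main_full_plane_lemma} supplying the locality you flag as the hard part. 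The one substantive difference is the choice of approximating quadrant measure. You start from $\mathbb{P}^{\mathrm{s6v,\,hom}}_u$ with step-$\varnothing$ boundary data and invoke bulk local statistics at a macroscopic point $(x,y)$; the paper instead starts from $\nu_{N,u}$ (\Cref{def:nu_N_u_quadrant_measure}), i.e.\ i.i.d.\ Bernoulli$(\rho)$ inputs on the bottom segment $\bigl[\frac{N}{2},\frac{3N}{2}\bigr]$ with empty left boundary, observed at height $M=M(N)$ with $M/N\le\frac{1-\delta_2}{20}$. That choice pins the local slope at $\rho$ for every value of the spectral parameter, so the final continuity step (\Cref{lemma:proof_58_l4}) becomes an exact coupling statement via Proposition 2.17 of \cite{aggarwal2020limit} with an exponentially small error. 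In your variant the slope at the fixed macroscopic point drifts as the dynamics increases $u$ (by $O(\varepsilon)$ under your scaling $\tau=\varepsilon t$, $\eta=1/y$), so the step ``the right-hand side converges to $\pi[f]$'' needs local convergence to $\pi(\mathsf{s}')$ uniformly as $u'\to u$, plus continuity of the limit-shape density in $u$ and of $\mathsf{s}\mapsto\pi(\mathsf{s})$; this is plausible with the same input from \cite{aggarwal2020limit}, but it is asserted rather than argued, and it is precisely the delicacy the paper's choice of $\nu_{N,u}$ is engineered to bypass. Apart from making that uniformity explicit, your plan is sound and would yield \eqref{eq:preservation_of_Gibbs}.
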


Note that in \eqref{eq:preservation_of_Gibbs} we could integrate 
over the set of all up-right path configurations instead of admissible
configurations $\Omega$, but this is the 
same since $\pi(\Omega)=1$ by \Cref{prop:admissible_are_almost_sure}.

The proofs of \Cref{thm:C_t_full_plane_well_defined,thm:C_t_preserves_Gibbs}
occupy the rest of
\Cref{sec:Markov_full_plane_process}
and also \Cref{sec:proof_lemma_S6}.

\subsection{Proof of Theorem \ref{thm:C_t_full_plane_well_defined}}
\label{sub:proof_thm_57}

The following is the first and main lemma in our argument.
Recall the sets $E_{R,A}$ from \Cref{def:admissible_configurations}
in which there are long horizontal paths 
leading to far propagation of jumps.
Throughout the rest of the section we assume the following lemma.

\begin{lemma}[Main estimate]
	\label{lemma:main_full_plane_lemma}
Fix real numbers $\zeta > 0$, $p_0>0$, $C > 0$, $T > 0 $,
and $R > 0$. There is a constant $\tilde{C} =
\tilde{C}_{\zeta}$ such that the following holds for all
$\tilde{R}  \geq R > 0$. If we start the truncated dynamics
$\mathcal{C}^{\tilde{R}}(t)$ from a configuration
$\mathcal{C}_0 \notin E_{R, C (\log R)^{p_0}}$, then for
some $p \geq p_0$, 
\begin{equation*}
	\mathbb{P}
	\left(
		\exists\ssp t \in [0, T] \textnormal{\ such that\ } \mathcal{C}^{\tilde{R}}(t) \in E_{R, \tilde{C} (\log R)^p}
	\right) 
	\leq \tilde{C} R^{-\zeta}.
\end{equation*}
\end{lemma}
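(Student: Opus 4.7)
The plan is to control the growth of long horizontal runs of occupied edges ending in $\Lambda_R$ by coupling the truncated dynamics $\mathcal{C}^{\tilde R}$ to an auxiliary ``Annihilation--Jump'' particle system on $\mathbb{Z}$, as flagged in the introduction. The initial condition hypothesis says that at time $0$, every horizontal run ending in $\Lambda_R$ has length at most $C(\log R)^{p_0}$. A violation of the conclusion requires some run ending in $\Lambda_R$ to grow past length $\tilde C(\log R)^p$ during the time window $[0,T]$. Since the dynamics alters path configurations only via initiated jumps and their rightward propagations (\Cref{def:C_t_jump_rates}), the only way such a run can grow is by absorbing propagating jumps that terminate inside (or just beyond) $\Lambda_R$, or by itself being the output of a long propagation originating to its right.

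\textbf{Steps.} First, I would package the configuration near $\Lambda_R$ into a collection of ``right endpoints'' of maximal horizontal runs, tracked as particles on $\mathbb{Z}$. When a seed pair fires somewhere inside a run of horizontal edges, the propagation moves an entire horizontal block to the opposite row, so the right endpoint of that run jumps to a new position (determined by the first vertical edge where the propagation can stop), and two neighboring right endpoints may merge or annihilate. The total rate at which a given right endpoint moves is bounded above by a constant times the length of its supporting run. I would then construct an Annihilation--Jump process on $\mathbb{Z}$ in which each particle jumps rightward at a rate proportional to its gap to the next particle on the left (this encodes the number of seed pairs that can drive it), and two particles annihilate upon meeting. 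By comparing local vertex rates to the maximum of $\mathfrak{a}(u),\mathfrak{b}(u),\mathfrak{c}(u)$, I would argue that the right-endpoint dynamics is stochastically dominated by this Annihilation--Jump process.

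\textbf{Bounding excursions.} Given such a coupling, the maximum run length ending in $\Lambda_R$ at time $t \in [0,T]$ is controlled by how far the ``right particle'' can drift to the right in time $T$ starting from a configuration with gaps at most $C(\log R)^{p_0}$. I would establish a Gronwall-type estimate on the first moment (or better, an exponential moment) of the maximum run length $X_t$, of the form $\mathbb{E}[e^{\lambda X_t}] \le e^{K t} \mathbb{E}[e^{\lambda X_0}]$ for small $\lambda$, using that new seed pairs are produced at rate linear in $X_t$. Combined with $X_0 \le C(\log R)^{p_0}$ and a union bound over the $O(R^2)$ possible endpoints in $\Lambda_R$ and over a fine time discretization of $[0,T]$ (together with monotonicity between discrete times), this yields the tail bound $\tilde C R^{-\zeta}$ provided we choose $\tilde C$ and $p \ge p_0$ large enough in terms of $\zeta,C,T$ and the model parameters.

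\textbf{Main obstacle.} The hard part will be the coupling itself: a single propagation event can simultaneously modify many adjacent vertical edges, so the induced motion of right endpoints is not local in a naive sense, and the rate of right-endpoint motion is \emph{unbounded} (it grows linearly in run length), which is precisely the reason the Hammersley-process coupling used in the dimer setting of \cite{Toninelli2015-Gibbs} cannot be imported directly. Setting up an Annihilation--Jump process that faithfully dominates the right-endpoint dynamics while still admitting a sub-exponential tail estimate requires care, especially since annihilations and mergers of endpoints interact with the length-dependent jump rates. I expect the bulk of the work in \Cref{sec:proof_lemma_S6} to consist of (i) a clean definition of the Annihilation--Jump process and the coupling, (ii) a self-improving estimate on run lengths combining the linear-in-length rate with Doob or Gronwall-type bounds, and (iii) the union bound over $\Lambda_R$ and $[0,T]$ that produces the polynomial-in-$R$ error with logarithmic scale $(\log R)^p$.
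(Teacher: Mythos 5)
Your overall skeleton---reduce to horizontal slices, couple the motion of the run boundaries with an auxiliary annihilation--jump particle system, and finish with a union bound over the $O(R^2)$ endpoints in $\Lambda_R$---is indeed the route taken in \Cref{sec:proof_lemma_S6}. The genuine gap is in your quantitative core, the step ``Bounding excursions''. A maximal run without $\ru$ vertices does not grow by bounded increments at a length-proportional rate: it grows by \emph{merging}. A single clock ring at the $\ru$ vertex bounding a run on the left (a rate-$O(1)$ event, independent of the run's length) can destroy that vertex, and the run instantly absorbs the entire neighboring run. Consequently (i) a dominating process in which right endpoints advance at a rate proportional to run length cannot stochastically dominate the true endpoint motion unless its jumps are themselves merges of unbounded size, and (ii) the proposed bound $\mathbb{E}[e^{\lambda X_t}]\le e^{Kt}\,\mathbb{E}[e^{\lambda X_0}]$ fails in the stated form: even for a unit-increment process with jump rate linear in $X$ one has $\frac{d}{dt}\mathbb{E}[e^{\lambda X_t}]=c\,(e^{\lambda}-1)\,\mathbb{E}[X_t e^{\lambda X_t}]$, which is not bounded by $K\,\mathbb{E}[e^{\lambda X_t}]$, and for the true merging dynamics the generator produces terms $e^{\lambda(X+Y)}$ with $Y$ the (random, possibly comparable to $X$) neighboring run length, so a single-run exponential moment does not close without a priori control of all neighboring runs --- which is exactly what is being proved. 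Your heuristic ``new seed pairs are produced at rate linear in $X_t$'' also misidentifies the mechanism: seed pairs in the interior of a run create new $\ru$ vertices and only shorten it; the dangerous events are the $O(1)$-rate destructions of the boundary defect.

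The missing idea is to count defects rather than to bound run lengths by moments. On a two-row slice the paper tracks the positions $\mathbf{A},\mathbf{B}$ of the two types of turning (defect) vertices and couples them (\Cref{lemma:monotone_coupling}) with an Annihilation-Jump system whose particles make Hammersley-type \emph{left} jumps at rate $\mathfrak{m}$ per vacant site of the gap and whose adjacent opposite pairs annihilate at rate $2\mathfrak{m}$, in such a way that the AJ particles always sit weakly to the left; hence $\mathbb{P}^{AJ}\bigl(a_1(T)\le-\tilde C(\log R)^p\bigr)$ dominates the probability of a long run. That event forces the clearing of an interval initially containing $n\ge(\tilde C/C)(\log R)^{p-p_0}$ defects, which can happen only through (a) annihilating all but at most $n^r$ of them, with probability $\le e^{-cn^r}$ because the total annihilation rate is proportional to the number of surviving pairs (a pure-death, sum-of-exponentials estimate, \Cref{lemma:K_n_estimate}), or (b) transporting at least $n^r$ survivors far to the left, which is controlled by comparison with semi-discrete Poisson last passage percolation (\Cref{lemma:E_n_K_n_second_estimate}). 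Choosing $r\in(\tfrac12,1)$ and $p$ large enough makes both bounds decay faster than any power of $R$, and only then does the final union bound go through. Your argument would need estimates of this type in place of the Gronwall step; as written it does not yield the superpolynomial decay in $R$ at the $(\log R)^p$ threshold.
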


In words, if there is initially a bound on the length of jump propagation,
then at each finite time a slightly worse bound holds with 
high probability.
The proof of \Cref{lemma:main_full_plane_lemma} 
utilizes a nontrivial coupling and
is postponed till
the next \Cref{sec:proof_lemma_S6}.

\medskip

Given a path configuration $\mathcal{C}$, for $e$ an edge in
$\mathbb{Z}^2$, denote by $\updelta_e = \updelta_e(\mathcal{C})$
the path occupation indicator variable for that edge. 
For a trajectory of the truncated process $\mathcal{C}^R(t)$, denote the
corresponding edge indicator at time $t$ by
$\updelta^R_e(t)$.
The random variables
$\updelta^R_e(t)$ are naturally coupled for various values of $R$,
see \Cref{rmk:coupling_of_truncated_processes}.

\begin{lemma}\label{lemma:delta_comparison_Rlim}
Let $\zeta, T > 0$, and $e$ be a fixed edge
of $\mathbb{Z}^{2}$. 
Let the initial condition $\mathcal{C}_0$ belong to $\Omega$. 
Then there exists a
constant
${C} = {C}_\zeta > 0$ such that for any sufficiently large
$R', R$ with $R' > R$ we have
\begin{equation*}
	\mathbb{P}\left(\updelta^R_e(t)\neq \updelta^{R'}_e(t) \ \text{for some }
	t \in [0, T]\right) \leq {C} R^{-\zeta}.
\end{equation*}
\end{lemma}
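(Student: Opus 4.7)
The plan is to use the shared-clock coupling of \Cref{rmk:coupling_of_truncated_processes}, in which $\mathcal{C}^R$ and $\mathcal{C}^{R'}$ share Poisson clocks on every seed pair inside $\Lambda_R$ while $\mathcal{C}^{R'}$ has additional independent clocks on seed pairs in $\Lambda_{R'}\setminus\Lambda_R$. Fix $R$ large enough that $e\in\Lambda_{R/2}$. The first step is to apply the main estimate (\Cref{lemma:main_full_plane_lemma}) simultaneously to both processes with the same inner radius $R$: since $\mathcal{C}_0\in\Omega$, the hypothesis $\mathcal{C}_0\notin E_{R,C(\log R)^{p_0}}$ holds for suitable $C,p_0$ and all large $R$. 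This yields a good event $\mathcal{G}$ with $\mathbb{P}(\mathcal{G}^{c})\le 2\tilde C R^{-\zeta}$ on which, at every $t\in[0,T]$, neither configuration contains a horizontal string of length exceeding $L\coloneqq \tilde C(\log R)^{p}$ that ends in $\Lambda_R$. In particular, on $\mathcal{G}$ the propagation length of any jump whose trajectory enters $\Lambda_R$ is at most $L$.

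Next, on $\mathcal{G}$, I would trace a backward causal chain starting from a putative discrepancy at $e$. If $\updelta^{R}_e(t^{*})\neq\updelta^{R'}_e(t^{*})$ at the first such time $t^{*}$, the firing responsible is either a firing of $\mathcal{C}^{R'}$ alone in the annulus whose propagation reaches $e$, or a shared-clock firing inside $\Lambda_R$ at which local configurations or downstream propagation paths already disagree. The first option is ruled out directly: reaching $e$ from outside $\Lambda_R$ would require a propagation of length $\ge |x_e|+R\gg L$, contradicting $\mathcal{G}$. The second option forces a pre-existing disagreement which itself must arise from an earlier firing, yielding a chain $f_0\to f_1\to\dots\to f_n$ with $f_0$ in the annulus, $f_n$ the firing at $t^{*}$, and consecutive firings spatially linked through their length-$\le L$ propagation windows, so that $|x_{f_k}-x_{f_{k-1}}|\le L$ and $|y_{f_k}-y_{f_{k-1}}|\le 1$. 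Since $f_0$ must satisfy $x_{f_0}\ge -R-L$ (otherwise its propagation never enters $\Lambda_R$) and $f_n$ must lie within $x$-distance $L$ of $x_e$, telescoping forces $n\ge R/(2L)$.

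Finally, I would bound the probability of such a long chain by a union bound. There are $O(Ln)=O(R)$ admissible positions for $f_0$ (a band of width $L$ in $x$ and a reachable $y$-band of size $O(n)$ around $y_e$); each successor in the chain has $O(L)$ spatial choices; and for a fixed spatial chain of length $n$, the Poisson probability that the $n$ firings occur in $[0,T]$ in the correct time order is at most $(CT)^{n}/n!$. By Stirling, the total estimate becomes $O(R)\cdot (C'TL^{2}/R)^{R/(2L)}$, which decays super-polynomially in $R$ because $L$ is polylogarithmic; combined with $\mathbb{P}(\mathcal{G}^{c})\le 2\tilde C R^{-\zeta}$ this completes the proof. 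I expect the main obstacle to be the rigorous definition of the backward causal chain: one must distinguish between discrepancies arising from a differing local configuration at the seed pair, from differing propagation paths, and from differing propagation endpoints, and ensure uniformly that each such discrepancy forces the next firing in the chain to lie within $x$-distance $L$ of its predecessor. A natural formalization is via a \emph{disagreement graph} whose vertices are firings and whose edges are defined by overlap of propagation windows, with its connected-component diameters controlled by $L$ through \Cref{lemma:main_full_plane_lemma}; the combinatorial count in the last step then reduces to counting paths of length $n$ in this graph from the annulus to $e$.
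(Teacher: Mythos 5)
Your proposal is correct and follows essentially the same route as the paper's proof: apply the main estimate (\Cref{lemma:main_full_plane_lemma}) to both truncated processes to restrict propagation lengths to $\tilde C(\log R)^p$, then trace a chain of discrepancies between the fixed edge and the region outside $\Lambda_R$ whose consecutive steps are within polylogarithmic distance, forcing $n\gtrsim R/(\log R)^p$ clock rings, and finish with the same counting-times-Poisson-tail bound $(\mathrm{const}\cdot(\log R)^p)^n\cdot(CT)^n/n!$, which decays super-polynomially. The only differences are bookkeeping (you parametrize the chain by firings from the annulus toward $e$ rather than by diverging edges traced backward from $e$, and you carry an extra harmless factor of $O(R)$ for the starting position), so no substantive divergence from the paper's argument.
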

In words, the probability that two
truncations diverge on a fixed edge is small.

\begin{proof}[Proof of \Cref{lemma:delta_comparison_Rlim}]
	We closely follow the proof of Proposition 7.6
	of \cite{Toninelli2015-Gibbs}, adapting it to our setting. 

	Suppose that $e$ is the vertical
	edge $((0,0),(0,1))$. This does not restrict the generality since
	a change in a horizontal edge is accompanied by a change of a vertical edge
	which is sufficiently close (since $\mathcal{C}_0\in \Omega$).

	Plug in the constant $p_0$ from 
	the fact that $\mathcal{C}_0\in \Omega$ and $C=1$
	into \Cref{lemma:main_full_plane_lemma}.
	Let $p,\tilde C$ be the constants from
	\Cref{lemma:main_full_plane_lemma}.
	Let $E_1$ denote the event that either
	$\mathcal{C}^{R}(t') \in E_{R, \tilde{C}_{\zeta} (\log
	R)^p}$ or $\mathcal{C}^{R'}(t') \in E_{R, \tilde{C}_{\zeta}
	(\log R)^p}$ for some time $t' \in [0, T]$.
	In words, under $E_1$ there exists a time $t'$ when
	in
	$\mathcal{C}^R(t')$ or $\mathcal{C}^{R'}(t')$
	we see a horizontal string of vertices of 
	length $\tilde{C}_{\zeta} (\log R)^p$, 
	none of which are $\ru$.
	By \Cref{lemma:main_full_plane_lemma},
	$\mathbb{P}(E_1)\le \tilde{C} R^{-\zeta}$,
	which means that in the rest of the 
	proof we may assume that $E_1$ did not happen.

	Suppose
	$\updelta^{R'}_e(t) \neq \updelta^R_e(t)$ for some $t \in [0, T]$ and $E_1$ did not happen.
	Denote this event by $E_2$.
	Let $t_1 \leq  T$
	be the first time at which $\updelta^{R'}_e(t_1^{+}) \neq
	\updelta^R_e(t_1^{+})$. (Throughout
	the proof, $t^{\pm}$ mean one-sided limits.)
	Then, there must have been a clock that rang
	at time $t_1$ which caused $e$ to change in 
	one but not the other of the configurations
	$\mathcal{C}^R$ and $\mathcal{C}^{R'}$.
	Note that in both processes
	$\mathcal{C}^R$ and $\mathcal{C}^{R'}$
	the jumps cannot propagate by more than 
	$\tilde{C}(\log R)^p$.
	Thus, 
	there must be an
	edge $e_1$ touching the rectangle $[-\tilde{C}(\log R)^p, 0]
	\times [0,1]$ such that $\updelta_{e_1}^{R'}(t_1^{-}) \neq
	\updelta_{e_1}^R(t_1^{-})$. Let $t_2 < t_1$ be the first time
	at which $\updelta_{e_1}^{R'}(t_2^+) \neq \updelta_{e_1}^R(t_2^+)$,
	and there must be another edge $e_2$ to the left of 
	$e_1$ such that 
	$\updelta_{e_2}^{R'}(t_2^{-}) \neq
	\updelta_{e_2}^R(t_2^{-})$,
	and the first time $t_3$ when the occupations
	of $e_2$ diverged in two processes.
	We continue this argument, and obtain a sequence
	of edges and times, which terminates 
	with an edge $e_n$ outside of $\Lambda_R$.
	We see that $\mathcal{C}^R$ could not ever change the state of 
	$e_n$, and it might have changed under $\mathcal{C}^{R'}$.

	To summarize, in $E_2$ 
	there exists a sequence of
	clocks that ring in $\Lambda_R$, at times $0 < t_n < t_{n-1}
	< \cdots < t_2 < t_1$ and positions $(x_1,y_1), (x_2,y_2),
	\dots, (x_n, y_n)$, such that $|y_i - y_{i-1}| \leq 1$ and
	$x_i - x_{i-1} \leq \tilde{C} (\log R)^p$. 
	This implies
	that $n \geq R / (\tilde{C} (\log R)^p)$.
	Let us bound the probability of $E_2$ from above.
	We use two more observations:
	\begin{itemize}
		\item 
			We choose $n$ locations in $[0,R]$
			where the clocks must ring such that the distance between
			two consecutive locations is $\le \tilde C (\log R)^p$.
			Therefore, each next location chooses from at most 
			$3C (\log R)^p$ available locations, and 
			thus the total number of ways to choose the locations is 
			upper bounded by
			$(3 \tilde{C} (\log R)^p)^n$.
		\item 
			There are at least $n$ clock rings in $[0,T]$,
			and we know that the rate of each clock ringing
			is bounded by a constant. Therefore, 
			the total number of clock rings during $[0,T]$ is 
			stochastically dominated by a Poisson random variable with 
			mean $\theta T$ for some $0<\theta < + \infty$. Therefore, 
			$\mathbb{P}( \mathrm{Poisson}(\theta T) \geq  n) \leq \mathrm{const}\cdot
			(e\ssp \theta T)^n/(n!)$.
	\end{itemize}
	Therefore, we have
	\begin{equation*}
		\mathbb{P}(E_2) \leq \sum_{n \geq R / (\tilde{C} (\log R)^p)} 
		( 3 \tilde{C} (\log R)^p )^n \cdot
		\mathbb{P}(\text{Poisson}(\theta T) \geq  n)
		\le \sum_{n \geq R / ( \tilde{C} (\log R)^p)} 
		\frac{(C')^n  (\log R)^{p n}}{n!}.
	\end{equation*}
	For fixed $R$,
	the sum over $n$ is, up to a constant, bounded from above by its first term
	$(C'')^{n_0} (\log R)^{p n_0}/(n_0!)$, where
	$n_0=R/(\tilde C (\log R)^p)$.
	As $R\to+\infty$, one readily checks that this first term
	goes to zero faster than any power of $R$, 
	and so is of order $o(R^{-\zeta})$.
	Combining the bounds on the probabilities of $E_1$ and $E_2$ yields the result.
\end{proof}

\begin{lemma}\label{lemma:limit_of_edge_variables_in_R_exists}
	Fix an admissible
	initial configuration $\mathcal{C}_0\in \Omega$,
	and an edge $e$. Recall that 
	$\updelta^R_e(t)$
	is the occupation of $e$ under the $R$-truncated process.
	With probability $1$, the limit
	\begin{equation}
		\label{eq:limit_of_truncations_of_edge_variables_in_R}
		\updelta_e(t) \coloneqq \lim_{R\rightarrow \infty} \updelta^R_e(t) 
	\end{equation}
	exists uniformly on compact intervals of $\mathbb{R}_{\geq 0}$.
\end{lemma}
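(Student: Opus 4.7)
The plan is to use the main estimate of \Cref{lemma:delta_comparison_Rlim} together with the Borel--Cantelli lemma along the naturally coupled sequence of truncated processes (cf. \Cref{rmk:coupling_of_truncated_processes}). Fix an admissible initial configuration $\mathcal{C}_0 \in \Omega$, an edge $e$, and a time horizon $T > 0$. Choose $\zeta > 1$ and apply \Cref{lemma:delta_comparison_Rlim} with the pair of truncation radii $R$ and $R' = R+1$. For all integers $R$ large enough,
\begin{equation*}
	\mathbb{P}\bigl(D_R\bigr) \leq C_\zeta\, R^{-\zeta},
	\qquad
	D_R \coloneqq \bigl\{\updelta^{R}_e(t)\ne \updelta^{R+1}_e(t)\textnormal{ for some }t\in[0,T]\bigr\}.
\end{equation*}
Since $\zeta>1$, the series $\sum_R \mathbb{P}(D_R)$ converges.

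By the first Borel--Cantelli lemma, $\mathbb{P}(D_R\textnormal{ i.o.})=0$. Hence almost surely there exists a (random) threshold $R_0 = R_0(\omega,e,T)$ such that $\updelta^{R}_e(t) = \updelta^{R+1}_e(t)$ for every $t \in [0,T]$ and every $R \geq R_0$. By induction on $R$, this means that on the event of full measure, the function $t\mapsto \updelta^R_e(t)$ is constant in $R$ for $R \geq R_0$ and $t \in [0,T]$. Therefore the pointwise limit
\begin{equation*}
	\updelta_e(t) \coloneqq \lim_{R\to\infty}\updelta^{R}_e(t)
\end{equation*}
exists for all $t \in [0,T]$, and in fact $\updelta^{R}_e(t) = \updelta_e(t)$ for every $R \geq R_0$ and every $t \in [0,T]$ simultaneously. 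Since the sequence of functions is eventually equal on $[0,T]$, the convergence is automatically uniform on this interval.

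To upgrade from a single pair $(e,T)$ to the statement claimed in the lemma, note that the set of edges of $\mathbb{Z}^2$ is countable, and every compact interval of $\mathbb{R}_{\geq 0}$ is contained in some $[0,T]$ with $T \in \mathbb{Z}_{\geq 1}$. Intersecting the probability-one events obtained above over the countable collection $\{(e,T)\colon e\textnormal{ edge of }\mathbb{Z}^2,\, T\in \mathbb{Z}_{\ge1}\}$ yields a single event of full probability on which $\updelta_e(t)$ exists for every edge $e$, uniformly on every compact time interval. No step here poses a genuine obstacle: all the probabilistic content is already contained in \Cref{lemma:delta_comparison_Rlim}, and the remaining work is a standard Borel--Cantelli argument combined with a countable union.
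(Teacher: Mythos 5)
Your proposal is correct and follows essentially the same route as the paper: apply \Cref{lemma:delta_comparison_Rlim} to consecutive truncation radii $R,R+1$ with $\zeta>1$, invoke Borel--Cantelli to conclude that the trajectories on $[0,T]$ agree for all large $R$ (hence converge uniformly), and intersect the resulting full-measure events over integer time horizons (and, as the paper notes right after the lemma, over the countably many edges).
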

Since there are countably many edges, the almost sure limit in
\eqref{eq:limit_of_truncations_of_edge_variables_in_R}
also exists simultaneously for all edges $e$.
\begin{proof}[Proof of \Cref{lemma:limit_of_edge_variables_in_R_exists}]
	Let $\Delta t>0$ be fixed.
	If 
	$\lim\limits_{R \rightarrow \infty}
	(\updelta^R_e(t), t \in [0, \Delta t])$ does not exist in
	the sense of uniform convergence,
	then for infinitely many positive integers
	$R$,
	there exists a time
	$t \in [0, \Delta t]$ for which $\updelta^R_e(t)\neq
	\updelta^{R+1}_e(t)$. However, by 
	\Cref{lemma:delta_comparison_Rlim}, for arbitrary $\zeta > 1$
	and a constant $\tilde{C} > 0$ depending on $\zeta$ but not on $R$, we have
	\begin{equation*}
		\sum_{R \geq 1} \mathbb{P}(\updelta^R_e(t)\neq
		\updelta^{R+1}_e(t), \ \text{for some } t 
		\in [0, \Delta
		t]) \leq \tilde{C} \sum_{R \geq 1} R^{-\zeta} < \infty .
	\end{equation*}
	By
	the 
	Borel--Cantelli lemma, 
	for any $\Delta t$, we get, on a probability $1$ event,
	the uniform convergence of the paths $\{ (\updelta^R_e(t),
	t \in [0, \Delta t]) \}$ as $R \rightarrow \infty$. Taking
	the intersection of these events as $\Delta t$ ranges over
	positive integers yields the result. 
\end{proof}

\Cref{lemma:main_full_plane_lemma} and the proof technique of \Cref{lemma:delta_comparison_Rlim} imply 
the following bound on the propagation speed:
\begin{lemma}
	\label{lemma:bound_on_propagation_speed}
	Let $R_1$ and $R_2>0$ be truncation radii,
	let $R > 0$ be an integer with
	$R < \min(R_1,R_2)$, and let $k < R$ be finite (i.e., 
	not thought of as large). Fix time $\Delta t>0$.

	Let $\mathcal{C}_1(0) = \mathcal{C}_1$,
	$\mathcal{C}_2(0) = \mathcal{C}_2 \in \Omega$ be
	configurations that agree inside $\Lambda_R$. Then there
	exists a coupling of the trajectories of
	$\mathcal{C}_1^{R_1}(t)$ and $\mathcal{C}_2^{R_2}(t)$, under which
	they agree inside $\Lambda_k$ for all $t \in [0, \Delta t]$ with
	probability at least $1- C R^{-\zeta}$ for $C$
	independent of $R,R_1,R_2$. 
\end{lemma}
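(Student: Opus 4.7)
The plan is to adapt directly the argument used in the proof of \Cref{lemma:delta_comparison_Rlim}, with the roles slightly shifted: there the discrepancy was seeded by the difference between two truncation radii $R,R'$ acting on the same initial configuration, whereas here it is seeded both by the difference between $R_1,R_2$ and, more importantly, by the fact that the two initial configurations may only agree on $\Lambda_R$. The key observation is that in either case, any discrepancy appearing inside $\Lambda_k$ at time $t\le\Delta t$ can be causally traced back through a chain of clock rings to the complement of $\Lambda_R$.

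First, I set up the natural coupling: attach a single family of independent exponential clocks, one to every seed pair in the lattice, and let each truncated process $\mathcal{C}_i^{R_i}(t)$ listen only to those clocks in $\Lambda_{R_i}$. Because $R<R_1,R_2$, the two processes react to \emph{exactly the same} clock rings inside $\Lambda_R$. Next, I apply the main estimate \Cref{lemma:main_full_plane_lemma} to both processes, starting from $\mathcal{C}_1,\mathcal{C}_2\in\Omega$: for some $p\ge p_0$ and a constant $\tilde C_\zeta$, with probability at least $1-2\tilde C_\zeta R^{-\zeta}$, on the whole interval $[0,\Delta t]$ neither $\mathcal{C}_1^{R_1}(t)$ nor $\mathcal{C}_2^{R_2}(t)$ lies in $E_{R,\tilde C_\zeta(\log R)^p}$. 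Call this good event $F$; on $F$, every jump that occurs during $[0,\Delta t]$ propagates at most $\tilde C_\zeta (\log R)^p$ horizontally.

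Now assume $F$ holds and that the two coupled trajectories disagree at some edge $e\in\Lambda_k$ at a time $t_1\in[0,\Delta t]$. Exactly as in \Cref{lemma:delta_comparison_Rlim}, I build iteratively a backward chain of edges $e,e_1,e_2,\ldots,e_n$ and times $t_1>t_2>\cdots>t_n\ge0$, where consecutive edges differ in $x$-coordinate by at most $\tilde C_\zeta(\log R)^p$ and in $y$-coordinate by at most $1$, and each $t_{j+1}$ is the latest time at which the coupled occupations of $e_j$ disagreed. The chain can terminate in only two ways: either the last edge $e_n$ lies outside $\Lambda_R$, where the initial configurations are allowed to differ, or the clock ring at time $t_j$ lies outside $\Lambda_R$, where the truncations differ. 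Either way, the horizontal span of the chain is at least $R-k$, forcing $n\ge (R-k)/(\tilde C_\zeta(\log R)^p)$.

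Finally, I bound the probability of such a chain by the counting argument of \Cref{lemma:delta_comparison_Rlim}: the number of chains is at most $(3\tilde C_\zeta(\log R)^p)^n$, and the probability of seeing $\ge n$ arrivals of Poisson clocks of uniformly bounded rate in time $\Delta t$ along a fixed chain is at most $\mathrm{const}\cdot(e\ssp\theta\Delta t)^n/n!$. Summing over $n\ge (R-k)/(\tilde C_\zeta(\log R)^p)\sim R/(\log R)^p$ gives a bound which, for fixed $k$ and $\Delta t$, decays faster than any power of $R$ and is in particular $o(R^{-\zeta})$. Adding this to the failure probability of $F$ yields the claimed bound $C R^{-\zeta}$ with $C$ independent of $R_1,R_2$. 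The main obstacle is essentially bookkeeping: making sure the backward chain really terminates only at the exterior of $\Lambda_R$ and at clocks that can differ between the two coupled processes, which requires being careful about propagation (since a jump reads the state of a whole horizontal strip, a prior discrepancy anywhere in the potential propagation window of a ringing clock is a valid predecessor), exactly as handled in \Cref{lemma:delta_comparison_Rlim}.
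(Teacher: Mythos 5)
Your proposal is correct and matches the paper's intent exactly: the paper gives no separate proof of this lemma, stating only that it follows from \Cref{lemma:main_full_plane_lemma} together with the proof technique of \Cref{lemma:delta_comparison_Rlim}, which is precisely the coupling, backward discrepancy chain, and counting/Poisson bound you spell out.
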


Let us now show that 
the edge random variables $\updelta_e(t)$
indeed lead to a Markov process,
which, moreover, stays
admissible throughout the whole time.

\begin{lemma}\label{lem:markov_property}
	For any admissible $\mathcal{C}(0) =\mathcal{C}_0 \in \Omega$,
	the joint distribution of the 
	edge occupation
	trajectories 
	$\{\updelta_e(t)\}_{t\ge0}$
	over all edges $e$ defines a Markov process
	$\mathcal{C}(t)$ with values in $\Omega$ started from
	$\mathcal{C}(0) = \mathcal{C}_0$.
\end{lemma}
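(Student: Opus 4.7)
The plan is to build $\mathcal{C}(t)$ pathwise from the edge-occupation limits of \Cref{lemma:limit_of_edge_variables_in_R_exists}, and then transfer admissibility and the Markov property from the truncated processes $\mathcal{C}^R(t)$ using the main estimate \Cref{lemma:main_full_plane_lemma} and the finite-speed propagation bound \Cref{lemma:bound_on_propagation_speed}. First, set $\mathcal{C}(t) \coloneqq \{\updelta_e(t)\}_{e}$. Since the convergence $\updelta^R_e(t) \to \updelta_e(t)$ of \Cref{lemma:limit_of_edge_variables_in_R_exists} holds simultaneously for all (countably many) edges $e$, uniformly on compact time intervals, the map $t \mapsto \mathcal{C}(t)$ is almost surely a well-defined c\`adl\`ag trajectory of configurations; the natural filtration is $\mathcal{F}_t \coloneqq \sigma(\updelta_e(s) : e \textnormal{ an edge},\, s \le t)$.

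Next, to show $\mathcal{C}(t) \in \Omega$ almost surely for every fixed $t$, fix $\zeta > 1$, apply \Cref{lemma:main_full_plane_lemma} with $T = t$ and with $C, p_0$ chosen so that $\mathcal{C}_0 \notin E_{R, C(\log R)^{p_0}}$ for all $R$ large (guaranteed by $\mathcal{C}_0 \in \Omega$), obtaining constants $\tilde C, p$ such that $\mathbb{P}(\mathcal{C}^{\tilde R}(t) \in E_{R, \tilde C (\log R)^p}) \le \tilde C R^{-\zeta}$ for every $\tilde R \ge R$. Because $E_{R, \tilde C (\log R)^p}$ is a cylinder event depending only on edges in $\Lambda_R$, the pointwise limit $\mathcal{C}(t) = \lim_{\tilde R \to \infty} \mathcal{C}^{\tilde R}(t)$ inherits the same bound. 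A Borel--Cantelli argument then gives $\mathcal{C}(t) \notin E_{R, \tilde C (\log R)^p}$ for all but finitely many $R$ almost surely, so $\mathcal{C}(t) \in \Omega$ a.s.

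For the Markov property, the strategy is to propagate the Markov property of each truncated chain $\mathcal{C}^R(t)$ through the limit $R \to \infty$, using that local functions of $\mathcal{C}(t)$ depend essentially only on local data at time $s<t$. Fix $0 \le s < t$, a cylindric test function $f$ supported in $\Lambda_k$, and $R > k$. By the strong Markov property of $\mathcal{C}^R$ at time $s$, the conditional expectation $\mathbb{E}[f(\mathcal{C}^R(t)) \mid \mathcal{F}_s^R]$ is a function of $\mathcal{C}^R(s)$ only. Applying \Cref{lemma:bound_on_propagation_speed} with $\Delta t = t-s$ to a fresh copy of the dynamics restarted from $\mathcal{C}(s)$ at time $s$, one obtains a coupling between processes started from any two configurations agreeing on $\Lambda_R$ which also agree on $\Lambda_k$ throughout $[s,t]$ with probability $\ge 1 - C R^{-\zeta}$. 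This implies that $\mathbb{E}[f(\mathcal{C}(t)) \mid \mathcal{F}_s]$ is, up to an $L^\infty$ error $O(R^{-\zeta})$, a measurable function of $\mathcal{C}(s)|_{\Lambda_R}$ alone. Sending $R \to \infty$ yields $\mathbb{E}[f(\mathcal{C}(t)) \mid \mathcal{F}_s] = \mathbb{E}[f(\mathcal{C}(t)) \mid \mathcal{C}(s)]$, which is precisely the Markov property.

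The main obstacle, as always in such constructions, is the last step: verifying rigorously that after taking the $R \to \infty$ limit, the conditional expectation given the past factors through the single random variable $\mathcal{C}(s)$ rather than the full history. The cleanest route is to realize $\mathcal{C}(t)$ and the restarted process through a common family of Poisson clocks (in the spirit of the Harris-type construction in the proof of \Cref{thm:quad_process_exists}) so that \Cref{lemma:bound_on_propagation_speed} can be invoked directly as a comparison estimate. This is bookkeeping once \Cref{lemma:main_full_plane_lemma,lemma:delta_comparison_Rlim,lemma:limit_of_edge_variables_in_R_exists,lemma:bound_on_propagation_speed} are in hand; the genuine probabilistic input has already been absorbed into those estimates.
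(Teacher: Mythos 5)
Your proposal is correct and follows essentially the same route as the paper: admissibility of $\mathcal{C}(t)$ is inherited from \Cref{lemma:main_full_plane_lemma} in the $\tilde R\to\infty$ limit, and the Markov property is obtained by combining the Markov property of the truncated chains $\mathcal{C}^R$ with the agreement/propagation estimates of \Cref{lemma:delta_comparison_Rlim,lemma:bound_on_propagation_speed} and sending $R\to\infty$. The "bookkeeping" you defer — making the conditional expectation given $\mathcal{F}_s$ factor through $\mathcal{C}(s)$ — is exactly what the paper carries out by first replacing $\mathcal{C}$ with $\mathcal{C}^R$ inside the conditional expectation (bounded convergence), applying the truncated Markov property, and then comparing $\mathbb{E}_{\mathcal{C}^R(t_0)}$ with $\mathbb{E}_{\mathcal{C}(t_0)}$ under the common-clock coupling.
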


\begin{proof}
	The fact that the stochastic process
	$\mathcal{C}(t)$
	(coming from the limit 
	in \Cref{lemma:limit_of_edge_variables_in_R_exists})
	stays
	in the space $\Omega$ of admissible configurations
	once started in $\Omega$ 
	follows as $\tilde{R}\to\infty$ limit of 
	the estimate in \Cref{lemma:main_full_plane_lemma}.

	Let us show the Markov property of the stochastic
	process $\mathcal{C}(t)$.
	Let $F\bigl(
	\{\mathcal{C}(t)\}_{t \in [0,s]}\bigr) = F\bigl( \{\updelta_{e_1}(t),
	\dots, \updelta_{e_k}(t)\}_{t \in [0,s]} \bigr)$ be a bounded
	continuous functional on the space of right continuous paths
	in $\Omega$, which depends only on the evolution of finitely
	many edges $e_1,\dots, e_k$, and only on their values up to
	some time~$s$. 
	Let $\mathcal{F}_{t_0}$ be the $\sigma$-algebra generated
	by the random variables $\updelta_e(t)$ with $t\le t_0$.
	Then we have 
	\begin{equation}
		\label{eq:proof_of_Markov_property}
		\begin{split}
		\mathbb{E}_{\mathcal{C}_0}[F\bigl(  \{\mathcal{C}(t+t_0)\}_{t
		\in [0,s]}\bigr) \mid \mathcal{F}_{t_0}   ] &= \lim_{R
		\rightarrow \infty} \mathbb{E}_{\mathcal{C}_0}[F\bigl(
		\{\mathcal{C}^R(t+t_0)\}_{t \in [0, s]} \bigr)\mid
		\mathcal{F}_{t_0}   ]   \\ &= \lim_{R \rightarrow \infty}
		\mathbb{E}_{\mathcal{C}^R(t_0)}[ F\bigl(
		\{\mathcal{C}^R(t)\}_{t\in [0,s]} \bigr)  ] .
		\end{split}
	\end{equation}
	In the final expression above, $\mathcal{C}^R(t_0)$ is a
	random configuration obtained from running the truncated
	dynamics started from $\mathcal{C}_0$ for time $t_0$. The
	first equality is by the bounded convergence theorem, and
	the second is by the Markov property for the truncated
	dynamics. We claim that the final expression in
	\eqref{eq:proof_of_Markov_property} 
	is
	equal to $\mathbb{E}_{\mathcal{C}(t_0)}[ F\bigl(
	\{\mathcal{C}(t)\}_{t \in [0,s]} \bigr)  ]$. 

	First, with high probability we have
	$\mathcal{C}^R(t_0)
	= \mathcal{C}(t_0)$ on a large square $\Lambda_{R_0}$ for
	large but fixed $R_0 < R$, as $R \rightarrow \infty$.
	Indeed, by \Cref{lemma:delta_comparison_Rlim},
	for a constant $C_{R_0,\zeta}$ independent of $R$ we have 
	\begin{equation*}\label{eqn:bd1}
		\mathbb{P}(\mathcal{C}^R(t_0)|_{\Lambda_{R_0}} =
		\mathcal{C}(t_0)|_{\Lambda_{R_0}}) \leq C_{R_0,\zeta }
		R^{-\zeta}.
	\end{equation*}
	As a result, almost surely
	\begin{equation*}
		\lim_{R \rightarrow \infty} \mathbf{1}_{
			\mathcal{C}(t_0)|_{\Lambda_{R_0}} =\ssp
		\mathcal{C}^R(t_0)|_{\Lambda_{R_0}} }  = 1.
	\end{equation*}

	Now let us apply \Cref{lemma:bound_on_propagation_speed}, or, more precisely,
	its limiting version as $R_1\to+\infty$.
	Conditioned on the fact that the 
	configurations
	$\mathcal{C}(t_0)$ and $\mathcal{C}^R(t_0)$
	agree inside $\Lambda_{R_0}$, we can upper bound the probability that  
	the edge indicators for $e_1,\dots, e_k$ are different in $\mathcal{C}(t)$
	(which is the limit of $\mathcal{C}^{R_1}(t)$ as $R_1\to+\infty$)
	and
	$\mathcal{C}^R(t)$ for some time $t \in
	[0, s]$. Namely,
	\begin{equation*}
		\mathbf{1}_{ \mathcal{C}(t_0)|_{\Lambda_{R_0}} =
		\mathcal{C}^R(t_0)|_{\Lambda_{R_0}}} \left|
		\mathbb{E}_{\mathcal{C}^R(t_0)}[ F\bigl(
		\{\mathcal{C}^R(t)\}_{t\in [0,s]} \bigr)  ] -
		\mathbb{E}_{\mathcal{C}(t_0)}[ F\bigl(  \{\mathcal{C}(t)\}_{t
		\in [0,s]} \bigr)  ] \right| \leq \|F \| C_0 R_0^{-\zeta}
	\end{equation*}
	for some $C_0$ independent of $R_0$.
	Thus, with probability $1$ we have
	\begin{equation*}
		\limsup_{R \rightarrow \infty} \left|\mathbb{E}_{\mathcal{C}^R(t_0)}[ F\bigl(  \{\mathcal{C}^R(t)\}_{t\in [0,s]} \bigr)  ] \\
		- \mathbb{E}_{\mathcal{C}(t_0)}[ F\bigl(  \{\mathcal{C}(t)\}_{t \in [0,s]} \bigr)  ]\right|
		\leq \|F\| C_0 R_0^{-\zeta} .
	\end{equation*}
	Taking $R_0 \rightarrow \infty$ through the positive integers gives the Markov
	property.
\end{proof}

\begin{proof}[Proof of \Cref{thm:C_t_full_plane_well_defined}]
	To finalize the proof of \Cref{thm:C_t_full_plane_well_defined},
	it remains to verify the correct jump rates in the generator
	$G_u$ \eqref{eq:full_plane_generator},
	as the existence of the process in the space
	$\Omega$ of admissible configurations follows from \Cref{lem:markov_property}.

	Take a finite box $\Lambda_k$ and a large integer $R>k$.
	We approximate the dynamics of
	$\{\updelta_e(t)\}_{e \in \Lambda_k}$ by the truncated
	dynamics $\{\updelta^R_e(t)\}_{e \in \Lambda_k}$.
	Keeping track of the error terms in
	\Cref{lemma:main_full_plane_lemma,lemma:delta_comparison_Rlim},
	we see that the probability that
	$\{\updelta_e(t)\}_{e \in \Lambda_k} \neq
	\{\updelta_e^R(t)\}_{e \in \Lambda_k}$ for some $t \in [0,
	\Delta t]$ is upper bounded by $C \Delta t R^{-\zeta}$ for
	any $\zeta>0$ and for some $C$ depending on $\zeta$.

	Let $f$ be a cylindric function depending only on the
	occupation of the edges in $\Lambda_k$.
	Denote by $G^R_u$ the generator as in
	\eqref{eq:full_plane_generator}, but with
	with Poisson clocks
	outside of $\Lambda_R$ turned off. Let $f_0 \coloneqq
	f(\mathcal{C}_0)$ be the value of $f$ at the initial
	configuration $\mathcal{C}_0\in \Omega$. 
	We have
	\begin{align*}
		\frac{1}{\Delta t} \ssp\mathbb{E}_{\mathcal{C}_0} [
		f(\{\updelta_e(\Delta t)\}_{e \in \Lambda_k})- f_0 ] &=
		\frac{1}{\Delta t}\left( \mathbb{E}_{\mathcal{C}_0} [
		f(\{\updelta^R_e(\Delta t)\}_{e \in \Lambda_k}) -f_0]
		+\Delta t \, O( R^{-\zeta}) \right) \\
		&=  
		G^R_u f ( \{\updelta_e(0)\}_{e \in \Lambda_k} ) 
		+ O(\Delta t+R^{-\zeta}).
	\end{align*}
	Then, taking $\Delta t \rightarrow 0$ we have 
	\begin{equation*}
		\limsup_{\Delta t \rightarrow 0}
		\left|  
			\frac{1}{\Delta t}\ssp
			\mathbb{E}_{\mathcal{C}_0} [ f(\{\updelta_e(\Delta t)\}_{e\in \Lambda_k})- f_0 ] -
			G^R_u f(\{\updelta_e(0)\}_{e\in \Lambda_k} )
		\right| \leq C R^{-\zeta}
	\end{equation*}
	for some constant $C$. 
	Sending $R \rightarrow \infty$ gives the desired generator.
\end{proof}

\subsection{Proof of Theorem \ref{thm:C_t_preserves_Gibbs}}

We now show that the Markov process $\mathcal{C}(t)$ 
preserves the 
KPZ pure Gibbs state $\pi=\pi(\rho)$ with density
$\rho$ of the occupied vertical edges,
where $0<\rho<1$. For this, we utilize the 
dynamics in the quadrant constructed in
\Cref{sec:continuous_time_limit}.

\begin{definition}
	\label{def:nu_N_u_quadrant_measure}
	Fix a large positive integer $N$.
	Let $\nu_{N,u}$ be the 
	stochastic six vertex measure
	with parameters $u,q$ (cf. \Cref{sub:vertex_weights}; 
	and we suppress the dependence on $q$)
	in the quadrant 
	with the empty-$\uplambda$ 
	boundary conditions
	(\Cref{def:s6v_quadrant}),
	where $\uplambda\subset\mathbb{Z}_{\ge0}$
	is a random subset defined as follows.
	For each $\frac{N}{2}\le i\le \frac{3N}{2}$, 
	independently toss a coin with probability of 
	success $\rho$. When the coin is a success, include $i$ into $\uplambda$.
	There are no incoming vertical paths at the bottom boundary outside 
	the interval $\bigl[\frac{N}{2},\frac{3N}{2}\bigr]$.
\end{definition}

Let $M= M(N)$ be such that the
ratio $M/N$ converges to a positive constant
$\leq \frac{1-\delta_2}{20}$,
where $\delta_2$ is in a small neighborhood of $\delta_2(u)$
\eqref{eq:delta_1_2_through_t_u}.
For any $k< M$, define the 
$k \times k$ box $\Lambda_k^{N} = [N-k, N+k]
\times [M-k, M+k] \subset \mathbb{Z}_{\geq 0}\times
\mathbb{Z}_{\geq 1}$. 
For a cylindric
function $f$ on full plane configurations such that $f \in
\sigma(\Lambda_k)$ (i.e., depending only on the path
configuration in $\Lambda_k = [-k, k]^2$), we
denote by $f_N$ its pullback under the shift by $(-N, -M)$.
That is, $f_N$ is a function
depending only on the configuration in the shifted $k \times k$ box
$\Lambda_k^N$. 

Denote by $\mathcal{C}_N(t)$ the process
in the quadrant (\Cref{def:generator} and \Cref{thm:quad_process_exists})
started from $\nu_{N,u}$ and run at the (slowed down) speed $1/M(N)$.
Take $R<M$, and 
let $\mathcal{C}_N^R(t)$ be the truncation of the process
$\mathcal{C}_N(t)$ corresponding to turning off the Poisson
clocks outside of the shifted lattice square
$[N-R,N+R]\times [M-R,M+R]$. We assume that $\mathcal{C}_N^R(t)$
also starts from $\nu_{N,u}$.

Recall that we denote by $\mathcal{C}(t)$ and $\mathcal{C}^R(t)$
the usual and the truncated full plane processes,
where for $\mathcal{C}^R(t)$ we turn off the Poisson 
clocks
outside the lattice square $\Lambda_R=[-R,R]^2$.
Let $\mathcal{C}(t)$ and $\mathcal{C}^R(t)$ start from the pure state $\pi$.

To establish \Cref{thm:C_t_preserves_Gibbs},
it suffices to show that 
$\mathbb{E}_{\pi}\left[ f(\mathcal{C}(t)) \right]=\pi[f]$
for any bounded cylindric function $f\in \sigma(\Lambda_k)$.
Recall that $\mathbb{E}_{\pi}$ is the expectation with respect
to the process started from the pure state $\pi$,
and $\pi[f]$ is the integral of the function $f$ against $\pi$.

We will approximate $f(\mathcal{C}(t))$ by 
$f_N(\mathcal{C}_N(t))$ employing a sequence of couplings 
coming from the bounds on
information propagation 
(established in \Cref{sub:proof_thm_57}),
and also 
from a
coupling lemma and the local statistics theorem of
\cite{aggarwal2020limit} applied to $\nu_{N,u}$. 
The result will then follow from
the mapping of the stochastic six vertex measures
under the dynamics on the quadrant
(\Cref{thm:quad_process_exists}),
and continuity of $\nu_{N,u}$ in $u$.

We proceed by establishing several lemmas.

\begin{lemma}
	\label{lemma:proof_58_l1}
	We have
	$ \left| \mathbb{E}_{\pi}[ f ( \mathcal{C}(t) ) ] -
	\mathbb{E}_{\pi} [ f ( \mathcal{C}^R(t) ) ] \right| \leq
	\varepsilon_R$, with $\varepsilon_R \rightarrow 0$ as $R
	\rightarrow \infty$.
\end{lemma}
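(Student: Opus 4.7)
The plan is to apply Lemma~\ref{lemma:limit_of_edge_variables_in_R_exists} together with bounded convergence. Since $f$ is cylindric with $f \in \sigma(\Lambda_k)$, it depends on the configuration only through the finitely many edge indicators $\{\updelta_e\}_{e \subset \Lambda_k}$. By Proposition~\ref{prop:admissible_are_almost_sure}, the initial condition $\mathcal{C}_0 \sim \pi$ lies in $\Omega$ almost surely, so the hypotheses of Lemma~\ref{lemma:limit_of_edge_variables_in_R_exists} are met $\pi$-a.s. Under the natural coupling of Remark~\ref{rmk:coupling_of_truncated_processes} (same initial configuration $\mathcal{C}_0$, same Poisson clocks), that lemma yields $\updelta_e^R(t) \to \updelta_e(t)$ almost surely for each fixed edge $e$.

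Taking a union over the finite set of edges touching $\Lambda_k$, I obtain that $\mathcal{C}^R(t)|_{\Lambda_k} = \mathcal{C}(t)|_{\Lambda_k}$ for all sufficiently large $R$, almost surely on the coupled probability space. Hence $f(\mathcal{C}^R(t)) \to f(\mathcal{C}(t))$ almost surely, and the bounded convergence theorem (with dominating function $\|f\|_\infty$) gives
\begin{equation*}
\lim_{R \to \infty} \mathbb{E}_\pi[f(\mathcal{C}^R(t))] = \mathbb{E}_\pi[f(\mathcal{C}(t))].
\end{equation*}
Defining $\varepsilon_R$ as the modulus of the difference establishes the statement.

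For a quantitative version, which may be convenient in the sequel, one would instead invoke the estimate of Lemma~\ref{lemma:delta_comparison_Rlim} and pass to the limit $R' \to \infty$ (justified by Lemma~\ref{lemma:limit_of_edge_variables_in_R_exists}) to get $\mathbb{P}_{\mathcal{C}_0}(\updelta_e^R(t) \neq \updelta_e(t) \text{ for some } t \in [0,T]) \leq C_\zeta R^{-\zeta}$ for each edge $e$ and all $R$ sufficiently large depending on $\mathcal{C}_0$. Union-bounding over the $O(k^2)$ edges in $\Lambda_k$ and multiplying by $2\|f\|_\infty$ gives the desired conclusion after integrating over $\pi(d\mathcal{C}_0)$. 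The only mild subtlety is that the threshold on $R$ in Lemma~\ref{lemma:delta_comparison_Rlim} depends on the admissibility parameters of $\mathcal{C}_0$; this is handled by splitting the integral against $\pi$ into the set $\{R_0(\mathcal{C}_0) \leq R\}$ (where the quantitative bound applies directly) and its complement (which has $\pi$-measure tending to $0$ since $R_0 < \infty$ almost surely on $\Omega$).

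No substantive obstacle remains: all the information-propagation control has already been developed in Lemmas \ref{lemma:main_full_plane_lemma}--\ref{lemma:limit_of_edge_variables_in_R_exists}, and the present claim is essentially a direct packaging of those results for the bounded cylindric observable~$f$.
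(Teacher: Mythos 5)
Your argument is correct, but it packages the ingredients a bit differently from the paper. Your primary (soft) argument takes the almost-sure stabilization of the edge indicators from \Cref{lemma:limit_of_edge_variables_in_R_exists}, restricts to the finitely many edges in $\Lambda_k$, and concludes by bounded convergence; since the lemma only asserts $\varepsilon_R\to0$ (and that is all the later chain of approximations in the proof of \Cref{thm:C_t_preserves_Gibbs} uses), this qualitative route suffices. The paper instead argues quantitatively and more directly: it bounds $\pi(E_{R,C\log R})\le CR^{-\zeta}$ via \Cref{lem:strbound}, and then, conditionally on $\mathcal{C}_0\notin E_{R,C\log R}$, uses the standard coupling of $\mathcal{C}(t)$ with $\mathcal{C}^R(t)$ together with the propagation estimates to conclude that the two processes agree on $\Lambda_k$ outside an event of probability $O(R^{-\zeta})$, giving $\varepsilon_R=O(R^{-\zeta})$. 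Your quantitative addendum recovers essentially this, with one difference worth noting: \Cref{lemma:delta_comparison_Rlim} has an ``$R$ sufficiently large'' threshold depending on the admissibility parameters of $\mathcal{C}_0$, which forces your splitting of the $\pi$-integral over $\{R_0(\mathcal{C}_0)\le R\}$ and its complement; the paper sidesteps this by conditioning directly on the event $E_{R,C\log R}$, whose $\pi$-probability is controlled uniformly in $R$ by \Cref{lem:strbound}, so no configuration-dependent threshold enters. Both treatments are sound; the paper's buys an explicit polynomial rate with a cleaner uniformity argument, while yours gets the stated conclusion with minimal extra work on top of the already-established limit lemma.
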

\begin{proof}
	For large enough $C$ we know that $\pi(\mathcal{C}(0)
	\in E_{R, C \log R}) \leq C R^{-\zeta}$, see \Cref{lem:strbound}. 
	Furthermore,
	given an initial configuration $\mathcal{C}(0)$ such that
	$\mathcal{C}(0) \notin E_{R, C \log R}$, under the
	standard coupling of the full plane process
	$\mathcal{C}(t)$ with its truncated counterpart
	$\mathcal{C}^R(t)$, the configurations
	$\mathcal{C}(t)$ and
	$\mathcal{C}^R(t)$ agree on $\Lambda_k$ (the subset 
	determining the values of $f$) except on an
	event with probability at most $O(R^{-\zeta})$. 
\end{proof}

\begin{lemma}
	\label{lemma:proof_58_l2}
	We have 
	$\left| \mathbb{E}_{\pi}[ f ( \mathcal{C}^R(t) ) ] -
	\mathbb{E}_{\nu_{N,u}} [ f_N ( \mathcal{C}_N^R(t) ) ] \right|
	\rightarrow 0$ as $N \rightarrow \infty$ with $R$
	arbitrary but fixed.
\end{lemma}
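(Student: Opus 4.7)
The plan is to construct a joint coupling of $\mathcal{C}^R(t)$ and $\mathcal{C}_N^R(t)$ on a common probability space such that, with probability tending to $1$ as $N\to\infty$, the two processes produce identical configurations on $\Lambda_R$ (after shifting the second by $-(N,M)$) throughout $[0,t]$; combined with the boundedness of $f$, this will yield the claim. The coupling has two independent parts: a coupling of the initial distributions, and a coupling of the driving noise.

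For the initial coupling, I would first establish the following local convergence: as $N\to\infty$, the restriction of $\nu_{N,u}$ to the shifted window $\Lambda_R^N$ converges in total variation to the restriction of $\pi$ to $\Lambda_R$. Both measures arise from the same row-to-row stochastic six vertex transfer matrix---the pure state $\pi$ from a bi-infinite Bernoulli$(\rho)$ initial condition by the characterization in \cite{Amol2016Stationary}, and $\nu_{N,u}$ from a Bernoulli$(\rho)$ initial condition supported on $[N/2,\,3N/2]$. By running both transfer matrices with shared update randomness and shared initial values on $[N/2,\,3N/2]$, the two configurations differ only within the ``light cones'' emanating from the endpoints $x=N/2$ and $x=3N/2$ of the bottom boundary discrepancy. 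The hypothesis $M(N)/N \le (1-\delta_2)/20$ is precisely what is needed to keep these light cones outside $\Lambda_R^N$ for $N$ large, giving a coupling under which the two configurations agree on $\Lambda_R^N$ with probability $1-o_N(1)$ (cf.\ the local statistics results of \cite{aggarwal2020limit}).

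For the dynamics, I would use a Harris graphical construction via shared Poisson processes $W_e^{\mathfrak{l}}$, $\mathfrak{l}\in\{\mathfrak{a},\mathfrak{b},\mathfrak{c}\}$, attached to each vertical edge $e$ of the truncation box. The clocks driving $\mathcal{C}^R$ have constant rates $\mathfrak{l}(u)$, while the clocks driving $\mathcal{C}_N^R$ (after the time rescaling by $1/M(N)$) have the time-dependent rates
\[
\frac{y}{M(N)}\,\eta\, e^{-t/M(N)}\,\mathfrak{l}\!\left(u+(1-e^{-t/M(N)})\eta\right),
\]
which converge uniformly to $\eta\,\mathfrak{l}(u)$ on the truncation box (where $|y-M(N)|\le R$) and on $t\in[0,T]$, at speed $O(1/M(N))$. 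After absorbing $\eta$ into a harmless time rescaling of $\mathcal{C}^R$, a standard Poisson thinning coupling produces a joint realization in which all clocks in the two processes ring at identical times with probability at least $1-O(R^2 T/M(N))=1-o_N(1)$. On the intersection of this event with the initial coupling event, both processes start from the same restriction to $\Lambda_R$ (after shifting) and see the same clock rings; because the truncations turn off all clocks outside a common finite region, the trajectories coincide on $\Lambda_R\supseteq\Lambda_k$ throughout $[0,t]$, so $f(\mathcal{C}^R(t))=f_N(\mathcal{C}_N^R(t))$.

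The main obstacle is the initial coupling: one must verify the local convergence of $\nu_{N,u}$ to $\pi$ on a window of fixed radius $R$, not just at a single point, which requires propagating information across $M(N)$ rows of the transfer matrix and quantitatively controlling the horizontal range of boundary effects using $\delta_2$. Once this local statistics input is in place, the remaining rate-comparison and graphical-coupling steps on a finite box are routine.
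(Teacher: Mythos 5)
Your argument is correct and follows essentially the same route as the paper: the paper likewise reduces the lemma to (i) convergence of $\nu_{N,u}|_{\Lambda_R^N}$ to $\pi|_{\Lambda_R}$, for which it simply cites Proposition 2.17 of \cite{aggarwal2020limit} (the rigorous substitute for your transfer-matrix light-cone coupling, whose direct verification is exactly the nontrivial point you flag), and (ii) convergence of the jump rates of $\mathcal{C}_N^R$ to those of $\mathcal{C}^R$, which is the bulk computation, after which the comparison is a statement about Markov chains on a finite box. Your explicit Harris/thinning coupling is just a concrete implementation of that finite-box convergence, and your treatment of the factor $\eta$ via a time renormalization matches the paper's implicit normalization.
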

\begin{proof}
	This is a statement about finite state space Markov
	chains. It suffices to see that the jump rates of
	$\mathcal{C}_N^R$ converge to those of $\mathcal{C}^R$, and
	that $\nu_{N,u}|_{\Lambda_R^N}$ converges to
	$\pi|_{\Lambda_R}$ (if we identify the two boxes). 
	The former claim is established by the computation
	in \Cref{sub:bulk_limit_heuristics}.	
	The
	latter claim follows, for example, from an application of
	Proposition 2.17 of \cite{aggarwal2020limit}.
\end{proof}

Note that the events $E_{R,A}$
(\Cref{def:admissible_configurations})
make
sense for quarter plane configurations, so long as we
replace the box $\Lambda_R$ centered at $(0,0)$ with 
the shifted box $\Lambda^{N}_R$, where $R$ is fixed, and $N$ is sufficiently large
so that $R<\min(N,M(N))$.
Denote the corresponding shifted event centered at $(N,M)$ by $E^{N}_{R,A}$.

\begin{lemma}\label{lem:quarterplanepropdef}
	There exists $C$ such that for all positive integers $R<\min(N,M(N))$ we have
	\begin{equation*}
		\mathbb{P}\left( 
			\textnormal{there exists}\ t\in [0,\Delta t]\colon
			\mathcal{C}_N(t)\in E_{R,C\log R}^N
		\right)\le CR^{-\zeta},
	\end{equation*}
	where $\mathcal{C}_N(t)$ is the quarter plane process
	started from $\nu_{N,u}$.
\end{lemma}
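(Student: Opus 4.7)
The plan is to reduce this quarter--plane estimate to the full--plane main estimate \Cref{lemma:main_full_plane_lemma} in two stages: first control long horizontal runs in $\nu_{N,u}$ at time $0$, then couple $\mathcal{C}_N(t)$ inside $\Lambda^N_R$ with an auxiliary truncated process whose rates are a bounded perturbation of the full--plane rates.

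For the initial bound I would follow the proof of \Cref{lem:strbound} almost verbatim. The only input of that proof is the lower bound $(1-\rho)(1-\delta_2)$ on the conditional probability that a vertex is $\ru$ given the configuration to its lower--left, which is a structural property of the stochastic six vertex weights and holds for $\nu_{N,u}$ provided the vertical edge density in the relevant region is bounded away from $0$ and $1$. This uniform nondegeneracy of the limit shape of $\nu_{N,u}$ at $(N,M)$ follows from Proposition~2.17 of \cite{aggarwal2020limit} together with our standing choice $M/N\le (1-\delta_2)/20$, which keeps $(N,M)$ in the bulk of the quadrant. A union bound over the $O(R^2)$ horizontal strings ending in $\Lambda^N_R$ then yields $\nu_{N,u}(E^N_{R,C\log R})\le C R^{-\zeta}$.

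For the dynamical step, set $\tilde R=R+\tilde C(\log R)^p$ and define $\widetilde{\mathcal{C}}^{\tilde R}_N(t)$ by switching off the Poisson clocks at all seed pairs outside the shifted box $\Lambda^N_{\tilde R}$, using the Harris--type graphical construction from the proof of \Cref{thm:quad_process_exists}. A clock arrival outside $\Lambda^N_{\tilde R}$ whose induced jump propagates into $\Lambda^N_R$ during $[0,\Delta t]$ must traverse a horizontal distance of at least $\tilde R-R$ through a string of non--$\ru$ vertices; an adaptation of the chain--of--edges argument in the proof of \Cref{lemma:delta_comparison_Rlim}, combined with the initial bound above, shows this occurs only on an event of probability $O(R^{-\zeta})$. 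On the complementary event $\mathcal{C}_N$ and $\widetilde{\mathcal{C}}^{\tilde R}_N$ agree inside $\Lambda^N_R$. Because time has been rescaled by $1/M(N)$ and we work in a window of fixed radius around height $M$ on a time interval of length $\Delta t$, the rates \eqref{eq:rate_tau_dependent} of $\widetilde{\mathcal{C}}^{\tilde R}_N$ are bounded above by a universal multiple of the homogeneous full--plane rates $\mathfrak{R}_{v_1,v_2}(u)$, uniformly in $N$.

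Finally, I would invoke not merely the statement but the method of proof of \Cref{lemma:main_full_plane_lemma}. The Annihilation--Jump coupling from \Cref{sec:proof_lemma_S6} depends only on an upper bound for the seed--pair rates and on the initial tail bound on long horizontal runs; both are available for $\widetilde{\mathcal{C}}^{\tilde R}_N$ by the previous two paragraphs, so the same argument produces the analogous $O(R^{-\zeta})$ bound for $\widetilde{\mathcal{C}}^{\tilde R}_N$. Summing the error terms and relabeling constants gives the claim. The main obstacle is confirming that the Annihilation--Jump coupling is genuinely robust under the mild space--time inhomogeneity of the quarter--plane rates; I expect this to reduce to stochastically dominating the associated AJ process by a homogeneous one with a slightly larger constant rate, whose polynomial tail estimates are unaffected by such a rescaling.
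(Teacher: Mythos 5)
Your overall strategy is the one the paper intends: its proof of \Cref{lem:quarterplanepropdef} is literally ``run the argument of \Cref{lemma:main_full_plane_lemma} again,'' and your two key ingredients --- a time-zero bound on long non-$\ru$ strings under $\nu_{N,u}$, and the observation that after the $1/M(N)$ slowdown the rates \eqref{eq:rate_tau_dependent} at heights $y\in[M-R,M+R]$ and times $\tau\in[0,\Delta t/M]$ are bounded above by a constant multiple of the homogeneous rates, so the Annihilation--Jump domination goes through with a larger constant $\mathfrak{m}$ --- are exactly the right ones. Two steps of your write-up, however, do not work as stated.

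The intermediate truncation step is circular. To rule out a clock ring outside $\Lambda^N_{\tilde R}$ influencing $\Lambda^N_R$ you invoke the chain-of-edges argument of \Cref{lemma:delta_comparison_Rlim}; but that argument begins by excluding, via \Cref{lemma:main_full_plane_lemma}, the event that the processes develop strings of length $\tilde C(\log R)^p$ at some positive time, and for the untruncated quarter-plane process this exclusion is precisely the statement of \Cref{lem:quarterplanepropdef} you are proving (indeed, a jump from outside $\Lambda^N_{\tilde R}$ reaching $\Lambda^N_R$ forces such a string at the jump time). The detour is also unnecessary: the coupling of \Cref{lemma:monotone_coupling} is insensitive to what happens in neighbouring rows and to whether a ring actually triggers a jump --- it only needs (i) an upper bound $\mathfrak{m}$ on the rates of the clocks attached to the vertical edges adjacent to the chosen row, (ii) the fact that jumps propagate only to the right, and (iii) a finite left truncation, which in the quadrant is supplied automatically by the boundary at $x=0$. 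So one can apply the AJ domination and the estimates of \Cref{lemma:K_n_estimate,lemma:E_n_K_n_second_estimate} directly to $\mathcal{C}_N$, slice by slice in the rows $[M-R-1,M+R]$, and take the union bound over the $O(R^2)$ reference vertices of $\Lambda^N_R$, with no auxiliary truncated process at all.

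The justification of the time-zero bound is also too quick: the factor $(1-\rho)$ in the proof of \Cref{lem:strbound} is a bound on the \emph{conditional} probability that the entering vertical edge is empty given everything to the left, which for the stationary state comes from the Bernoulli structure of $\pi$; for the non-stationary measure $\nu_{N,u}$ a marginal density bounded away from $0$ and $1$ does not give such a bound uniformly over the conditioning. The clean route is the one you half-cite: by Proposition 2.17 of \cite{aggarwal2020limit} (used the same way in \Cref{lemma:proof_58_l2,lemma:proof_58_l4}), $\nu_{N,u}$ couples with $\pi$ so that the configurations agree on $[N-M,N+M]\times[0,2M]\supset\Lambda^N_R$ up to probability $c^{-1}e^{-cM}\le c^{-1}e^{-cR}$, after which \Cref{lem:strbound} applies verbatim. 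Finally, note that this method controls strings of length $\tilde C(\log R)^{p}$ for some $p\ge1$ at positive times, not $C\log R$ as in the literal statement (the exponent degrades exactly as in \Cref{lemma:main_full_plane_lemma}); this polylogarithmic bound is all that \Cref{lemma:proof_58_l3} needs, but your ``relabeling constants'' remark should acknowledge the change of exponent rather than suggest the same $\log R$ threshold survives.
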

\begin{proof}
	This is established similarly to \Cref{lemma:main_full_plane_lemma},
	see also \Cref{lemma:bound_on_propagation_speed}.
\end{proof}

\begin{lemma}
	\label{lemma:proof_58_l3}
	We have
	\begin{equation*}
		\limsup_{N \rightarrow \infty} \left|   \mathbb{E}_{\nu_{N,u}}
		[ f_N ( \mathcal{C}_N^R(t) ) ] - \mathbb{E}_{\nu_{N,u}} [
		f_N ( \mathcal{C}_N(t) ) ] \right| \leq \varepsilon_R,
	\end{equation*}
	with
	$\varepsilon_R \rightarrow 0$ as $R \rightarrow \infty$.
\end{lemma}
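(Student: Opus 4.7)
The argument parallels the proof of \Cref{lemma:proof_58_l1}, with the full plane process replaced by its quadrant analog, and uses the propagation-tracking technique from \Cref{lemma:delta_comparison_Rlim}. The plan is to couple $\mathcal{C}_N^R$ and $\mathcal{C}_N$ by sharing Poisson clocks inside the truncation square $\Lambda_R^N$, and use \Cref{lem:quarterplanepropdef} to upper bound the propagation distance of jumps.

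First I would invoke \Cref{lem:quarterplanepropdef}: on an event $A$ of probability at least $1-CR^{-\zeta}$, for every $t'\in[0,t]$ the configuration $\mathcal{C}_N(t')$ avoids $E^N_{R,C\log R}$, so every jump initiated inside $\Lambda_R^N$ propagates rightward at most $C\log R$ lattice steps. On $A$, the only mechanism by which $\mathcal{C}_N^R|_{\Lambda_k^N}$ and $\mathcal{C}_N|_{\Lambda_k^N}$ can differ at some $t'\in[0,t]$ is a chain of clock rings initiated outside the truncation square and propagating inward.

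Second, I would run the backward-chain argument of \Cref{lemma:delta_comparison_Rlim}. If the two coupled processes disagree on $\Lambda_k^N$ at a first time $t_1\le t$, one extracts a sequence of clock rings at positions $(x_1,y_1),\ldots,(x_n,y_n)$ and decreasing times $t_1>t_2>\cdots>t_n$, with $|y_i-y_{i-1}|\le 1$ and $|x_i-x_{i-1}|\le C\log R$, ending at a position outside the shifted truncation square. Such a chain must satisfy $n\ge (R-k)/(C\log R)$, and the number of admissible choices of locations is at most $(3C\log R)^n$. The key input needed is that the slowed-down rates are uniformly bounded in the relevant spatial region and time window: at height $y\in[M-R,M+R]$ the time-inhomogeneous rate carries the factor $y/M(N)\in[1-R/M,1+R/M]\subset[1/2,3/2]$ (using the hypothesis $R<M/20$), while the spectral parameter $u+(1-e^{-\tau/M(N)})\eta$ lies in a compact subset of $(0,1)$ for $\tau/M(N)\in[0,t/M(N)]$. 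Hence the number of clock rings inside $\Lambda_R^N$ during $[0,t]$ is stochastically dominated by a Poisson variable with mean $O(R^2 t)$, and the probability of seeing a specific chain of the required length is bounded by
\begin{equation*}
\sum_{n\ge (R-k)/(C\log R)}\frac{(C'\log R)^n}{n!},
\end{equation*}
which decays faster than any polynomial in $R$.

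Combining the two bounds yields $|\mathbb{E}_{\nu_{N,u}}[f_N(\mathcal{C}_N^R(t))]-\mathbb{E}_{\nu_{N,u}}[f_N(\mathcal{C}_N(t))]|\le \|f\|\cdot O(R^{-\zeta})$ for all sufficiently large $N$, so that taking $\limsup_{N\to\infty}$ gives $\varepsilon_R=O(R^{-\zeta})\to 0$ as $R\to\infty$. The main subtlety is establishing the uniform-in-$N$ bound on the slowed-down rates inside $\Lambda_R^N\times[0,t]$; this is exactly where the choice $M/N\le (1-\delta_2)/20$ and the factor $1/M(N)$ in the time rescaling are used, so that the $y$-linear growth of the generator \eqref{eq:quad_generator} is normalized to an $O(1)$ quantity near the observation point $(N,M(N))$.
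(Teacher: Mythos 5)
Your overall strategy is exactly the one the paper intends for this lemma: couple $\mathcal{C}_N^R$ and $\mathcal{C}_N$ through shared Poisson clocks, control the propagation length of jumps, and then run the backward-chain argument of \Cref{lemma:delta_comparison_Rlim} to show that a discrepancy on $\Lambda_k^N$ forces an implausibly long chain of clock rings reaching in from outside the truncation square; your handling of the inhomogeneous, slowed-down rates (the factor $y/M(N)$ staying $O(1)$ near height $M$, the spectral parameter staying in a compact subset of $(0,1)$) is a correct and useful supplement to what the paper leaves implicit.

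There is, however, one concrete gap. Your event $A$ controls long horizontal strings (vertices avoiding $\ru$) only for the \emph{untruncated} process $\mathcal{C}_N(t)$, via \Cref{lem:quarterplanepropdef}. But the backward-chain argument needs the bound $x_i-x_{i-1}\leq C\log R$ on each link, and a link is bounded by the propagation length of a jump in \emph{whichever} of the two coupled processes the discrepancy-creating jump actually occurred --- in particular in $\mathcal{C}_N^R(t)$. If the truncated process developed a string of length much larger than $C\log R$ (its configuration differs from $\mathcal{C}_N$'s, so $A$ says nothing about it), a single ring well inside the truncation square could propagate straight into $\Lambda_k^N$, the chain would be short, and your lower bound $n\geq (R-k)/(C\log R)$ would fail. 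This is why the paper invokes, in addition to \Cref{lem:quarterplanepropdef}, an analogue of \Cref{lemma:main_full_plane_lemma} for the truncated quarter-plane dynamics: that estimate (proved by the same Annihilation--Jump coupling of \Cref{sec:proof_lemma_S6}, starting from the observation that the initial $\nu_{N,u}$-configuration avoids long strings near $(N,M)$ with high probability) gives the event on which \emph{both} $\mathcal{C}_N^R(t')$ and $\mathcal{C}_N(t')$ avoid $E^N_{R,C\log R}$ for all $t'\in[0,t]$. With that additional input your argument goes through verbatim; without it the key step ``the only mechanism for a discrepancy on $\Lambda_k^N$ is a chain with links of length at most $C\log R$'' is not justified.
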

\begin{proof}
	This follows similarly to
	\Cref{lemma:delta_comparison_Rlim}. 
	Indeed, one can bound the probability that, at any time in
	some compact time interval, either $\mathcal{C}_N^R(t)$ or
	$\mathcal{C}_N(t)$ develop long sequences of consecutive
	horizontally adjacent vertices which are not $\ru$. 
	These bounds are provided by
	\Cref{lem:quarterplanepropdef} and an analogue of
	\Cref{lemma:main_full_plane_lemma} (proved in the same manner
	by coupling, see \Cref{sec:proof_lemma_S6} below).
\end{proof}

\begin{lemma}
	\label{lemma:proof_58_l4}
	We have
	$ \left| \nu_{N,u+t/M(N)} [ f_N  ] - \pi[ f  ] \right| \rightarrow
	0$ as $N \rightarrow \infty$.
\end{lemma}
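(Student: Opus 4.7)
The plan is to reduce the statement to the already-cited local-statistics theorem for $\nu_{N,u}$ plus a continuity argument in the spectral parameter. Set $u_N \coloneqq u + t/M(N)$. Since $M(N)\to\infty$, we have $u_N\to u$, and in particular $u_N$ eventually lies in an arbitrarily small neighborhood of $u$. Write $\pi_{u'}$ for the KPZ pure Gibbs state $\pi_{\rho,\varphi(\rho\mid u')}$ corresponding to the six vertex parameter $u'$; so $\pi=\pi_u$. The triangle inequality then gives
\begin{equation*}
	\bigl|\nu_{N,u_N}[f_N]-\pi[f]\bigr|
	\leq
	\bigl|\nu_{N,u_N}[f_N]-\pi_{u_N}[f]\bigr|
	+
	\bigl|\pi_{u_N}[f]-\pi_u[f]\bigr|.
\end{equation*}

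For the second term I would use continuity of $\pi_{u'}$ in $u'$. Recall from \Cref{sub:pure_states} that $\pi_{u'}$ is built by iterating the row-to-row stochastic transfer matrix with weights $w_{u'}$ on the Bernoulli($\rho$) measure. Since $f$ is cylindric, $\pi_{u'}[f]$ is a polynomial in finitely many entries of $w_{u'}$ times Bernoulli marginals, and the entries of $w_{u'}$ are rational (hence continuous) in $u'$. Thus $\pi_{u_N}[f]\to\pi_u[f]$ as $N\to\infty$.

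For the first term the plan is to apply the local statistics theorem (Proposition 2.17 of \cite{aggarwal2020limit}) precisely as in the proof of \Cref{lemma:proof_58_l2}: at position $(N,M(N))$ with $M/N$ small, the box $\Lambda_k^N$ lies in the bulk of the Bernoulli-sampled interval $[N/2,3N/2]$, and the local statistics of $\nu_{N,u'}$ at $(N,M)$ converge to $\pi_{u'}|_{\Lambda_k}$. The one extra point needed is a mild uniformity in $u'$: since here the spectral parameter $u_N$ varies with $N$, the pointwise-in-$u'$ statement is not quite enough. However, the rate of convergence in Aggarwal's local statistics result is expressible through functions of the stochastic weights and of the bulk slope, all of which vary continuously with $u'$ on a compact neighborhood of $u$. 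Thus the convergence is uniform on such a neighborhood, and since $u_N\to u$ eventually enters this neighborhood, the first term tends to $0$.

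The only potentially delicate point in this plan is the uniformity of the local statistics theorem in the parameter $u'$. I expect this to follow transparently from inspecting the constants in the proof of Proposition 2.17 of \cite{aggarwal2020limit}; alternatively, one can sandwich $u_N$ between two deterministic sequences $u_\pm$ converging to $u$ and use stochastic monotonicity of $\nu_{N,u'}$ in $u'$ (cf.\ the monotonicity argument in the proof of \Cref{lemma:regular_bc_in_quadrant_has_probability_1}) to pass from the pointwise statement at $u_-,u_+$ to a statement at $u_N$, completing the proof.
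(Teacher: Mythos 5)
Your proposal is correct and takes essentially the same route as the paper: the paper likewise couples $\nu_{N,u+t/M}$ near $(N,M)$ with the pure state at spectral parameter $u+t/M$ via Proposition 2.17 of \cite{aggarwal2020limit}, and then couples that state's marginal on $\Lambda_k^N$ with $\pi|_{\Lambda_k}$ using continuity in $u$ of the weights and of $\varphi(\rho\mid u)$, which is exactly your two-term triangle-inequality decomposition. The uniformity in the spectral parameter that you flag as delicate is treated no more carefully in the paper, which simply asserts that the coupling constant is independent of $N$ when applying Aggarwal's result at the $N$-dependent parameter.
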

\begin{proof}
	Here we use the fact that the map $\varphi(\rho)$
	\eqref{eq:phi_of_rho_define}
	is continuous in $u$.
	Namely,
	for each $N > 0$ let $M = M(N) =
	\frac{1-\delta_2 - \varepsilon}{20} N$ for some $\varepsilon >0$
	small enough. Note that we can couple the boundary
	conditions of a six vertex configuration $\mathcal{C}_N$
	sampled from $\nu_{N,u}$ with those of $\mathcal{C}$
	sampled from $\pi$ such they agree with probability $1$ on
	$[N/2, 3N/2] \times \{0\}$. Therefore by Proposition 2.17
	of \cite{aggarwal2020limit}, for some constant $c$
	independent of $N$, there is a coupling of 
	the configurations 
	$\mathcal{C}_N$ and 
	$\mathcal{C}$ such that they agree on the set of edges in
	$[N-M,N+M] \times [0,2 M]$ with probability at least $1 -
	c^{-1}e^{-c M}$. We apply this with $u$ replaced by $u + t
	/ M$ and $\pi$ replaced by the Gibbs measure
	$\tilde{\pi}^M$ with $x$ slope $\rho$ and spectral
	parameter $u + t / M$. Then we note that
	$\tilde{\pi}^M|_{\Lambda_k^N}$ can in turn be coupled with
	$\pi|_{\Lambda_k}$ (the marginal of the Gibbs measure with
	spectral parameter $u$ on $\Lambda_k$, where the edges in
	$\Lambda_k$ are identified with those of $\Lambda_k^N$),
	such that they agree with probability $1-\varepsilon_M$, with
	$\varepsilon_M \rightarrow 0$ as $M \rightarrow \infty$.
	Therefore, we get a coupling of $\nu_{N,u +
	t/M}|_{\Lambda_k^N}$ with $\pi|_{\Lambda_k}$ such that the
	configurations agree with probability lower bounded by 
	$1- c^{-1}e^{-c M} - \varepsilon_M$.
	Since $M \rightarrow \infty$ as $N \rightarrow \infty$, this implies the claim.
\end{proof}

Now we can finish the proof of
\Cref{thm:C_t_preserves_Gibbs}. 
Using \Cref{lemma:proof_58_l1,lemma:proof_58_l2,lemma:proof_58_l3}, the
measure mapping property of the quarter plane dynamics 
(\Cref{thm:quad_process_exists}), and
then \Cref{lemma:proof_58_l4}, we get
\begin{align*}
\mathbb{E}_{\pi}[ f(\mathcal{C}(t)) ] &= \mathbb{E}_{\pi}[ f(\mathcal{C}^R(t)) ] + \varepsilon_R  \\
&= \mathbb{E}_{\nu_{N,u}}[ f_N(\mathcal{C}_N^R(t)) ] + \varepsilon_N + \varepsilon_R \\
&= \mathbb{E}_{\nu_{N,u}}[ f_N(\mathcal{C}_N(t)) ] + \varepsilon_N' + \varepsilon_R'\\
&= \nu_{N,u+t /M}[ f_N ] + \varepsilon_N' + \varepsilon_R'\\
&= \pi[ f ] + \varepsilon_N'' + \varepsilon_R'
\end{align*}
where $\varepsilon_N'' \rightarrow 0$ as $N \rightarrow \infty$
with $R$ fixed, and $\varepsilon_R' \rightarrow 0$ as $R
\rightarrow \infty$. So taking $N \rightarrow \infty$ first,
then $R \rightarrow \infty$, yields the result.

\section{Proof of Lemma \ref{lemma:main_full_plane_lemma} via coupling}
\label{sec:proof_lemma_S6}

Recall that
$E_{R,A}$ is the
set of up-right path configurations
which have
horizontal strings of occupied edges (i.e., no vertices
of occupation type $\ru$)
of length $>A$, in an $R$-neighborhood of the origin.
Under the truncated full plane dynamics $\mathcal{C}^R(t)$, 
configurations in $E_{R,A}$
might lead to jump propagation of length $>A$.
In this section we prove \Cref{lemma:main_full_plane_lemma}
which states that
if the initial configuration was not in 
$E_{R,C(\log R)^{p_0}}$, then with high probability 
it will never be in 
$E_{R,\tilde C(\log R)^{p}}$ with some $p\ge p_0$
under the dynamics $\mathcal{C}^{\tilde R}(t)$, up to time $t\le T$.
Here $\tilde R\ge R$ is an arbitrary integer (and we keep the notation $\tilde R$
throughout the section for consistency with \Cref{lemma:main_full_plane_lemma}).
We achieve this bound via a monotone coupling of the 
dynamics $\mathcal{C}^{\tilde R}(t)$ on a given horizontal
slice
with a particle system which is easier to analyze.

\subsection{Annihilation-Jump particle system}
\label{sub:particle_system}

We start by defining the jump rates of the
\emph{Annihilation-Jump particle system}
(AJ). Its state space 
consists of a
sequence of \emph{$\oplus$ particles} $a_i \in
\mathbb{Z}_{\leq 0}$, and a sequence of \emph{$\ominus$
particles} $b_i  \in \mathbb{Z}_{\leq 0}$, which satisfy
either $\cdots < b_2 <  a_2 < b_1< a_1 \leq 0$, or $\cdots <
a_2 <  b_2 < a_1< b_1 \leq 0$. 
Pick $\mathfrak{m}>0$, this parameter is the overall time scaling factor in the AJ dynamics.
The possible jumps and their
rates are as follows:
\begin{itemize}
	\item 
		Any particle at position $z$, with the closest
		particle to its left at position $y < z$, jumps to
		position $x$ at rate $\mathfrak{m}$ for any $y < x < z$. 
		This rule does not distinguish $\oplus$ and $\ominus$ particles,
		and is like in the Hammersley process \cite{hammersley1972few}, \cite{aldous1995hammersley}.

	\item 
		Take any $\ominus$ particle $b_i$ and let the closest $\oplus$ particle to
		its right be $a_j$ (so $j = i$ or $i-1$). 
		The pair $(b_i,
		a_j)$ disappears at rate $2\mathfrak{m}$, and the sequences of
		remaining particles $\{a_k \}_{k \neq j}, \{b_k \}_{k \neq
		i}$ are relabeled.  If $i = 1$ and $b_1$ is the rightmost
		particle, then $b_1$ simply disappears.

	\item 
		Take any $\oplus$ particle $a_i$ and let the closest $\ominus$ particle to
		its right be $b_j$ (so $j=i$ or $i-1$). The pair $(a_i, b_j)$ disappears at
		rate $2\mathfrak{m}$, and the remaining particles
		are relabeled. 
		If $i =
		1$ and $a_1$ is the rightmost particle, then $a_1$ simply
		disappears.
\end{itemize}
Similarly to the full plane dynamics $\mathcal{C}(t)$,
a priori 
it is not clear that
the AJ particle system is well-defined.
Indeed, there may be initial configurations
leading to infinitely many jumps through a finite space in finite time.
However, we only need to analyze a truncated version of AJ. 

\begin{definition}[Truncated AJ particle system]
	Let $\tilde{R},N_0 >0$ be positive integers which determine the truncation. 
	Given an
	initial configuration, define $a_i(t) = a_i^{\tilde{R}}(t),
	b_i(t) = b_i^{\tilde{R}}(t)$ to evolve according the
	following rules. Put an independent rate $\mathfrak{m}$ Poisson clock $P_z(t)$
	at each lattice site $z \in \mathbb{Z}_{\leq 0}$. 
	If $P_z$,
	$|z| \leq \tilde{R}$, rings at time $t$, then
	we take the leftmost particle to the right of $z$
	(regardless of whether it is 
	$\oplus$ or $\ominus$) and place it into $z$.
	If the clock $P_z$ rings at a particle $z=a_i$ or $z=b_j$,
	ignore this ring.

	Also take a collection of rate $2\mathfrak{m}$ Poisson clocks $P_i^a(t)$
	and $P_i^b(t)$ for
	each integer $i \geq 1$. For these clocks,
	\begin{itemize}
		\item If $P_i^b$ rings at time $t$, and $i\leq N_0$, then
			$b_{i}(t^-)$ and the $\oplus$ particle to its
			right, $a_j(t^-)$, annihilate each other and disappear,
			and we relabel the particles. If $i = 1$ and $b_1$ is the
			rightmost particle, then $b_1$ simply disappears.
		\item If $P_i^a$ rings at time $t$, and $i \leq N_0$, then
			$a_{i}(t^-)$ and the $\ominus$ particle to its
			right, $b_j(t^-)$, annihilate each other and disappear,
			and we relabel the particles. If $i = 1$ and $a_1$ is the
			rightmost particle, then $a_1$ simply disappears.
	\end{itemize}
	In the truncated dynamics we ignore all other clock rings.
\end{definition}

\subsection{Stochastic domination}
\label{sub:stoch_dom}

Here we describe in which sense the AJ system dominates the full plane dynamics.

Let $\uplambda^{(0)}, \uplambda^{(1)}$ be two infinite
interlacing signatures, that is, $\uplambda_i^{(1)} \geq \uplambda_i^{(0)} \geq
\uplambda_{i+1}^{(1)}$
for all $i \in
\mathbb{Z}$.
This pair represents a six vertex path
configuration on a one-row lattice $\mathbb{Z} \times
\{1\}$, by encoding the positions of 
occupied vertical edges
entering ($\uplambda^{(0)}$) and leaving
($\uplambda^{(1)}$)
this row. By analogy with the AJ system, 
below we also refer to occupied vertical edges as \emph{particles}.
\begin{definition}
Given $\uplambda^{(0)}, \uplambda^{(1)}$, define
sequences $\{\mathbf{A}_i(\uplambda^{(0)},
\uplambda^{(1)})\}_{i=1}^\infty$,
$\{\mathbf{B}_i(\uplambda^{(0)},
\uplambda^{(1)})\}_{i=1}^\infty$ as follows:
\begin{align*}
\mathbf{A}_1 &\coloneqq \max \{ \uplambda^{(1)}_i \colon  \uplambda^{(1)}_i \leq 0, \uplambda^{(1)}_i \neq \uplambda^{(0)}_i , \uplambda^{(1)}_i \neq \uplambda^{(0)}_{i-1} \}; \\
\mathbf{A}_{i+1} &\coloneqq \max \{ \uplambda^{(1)}_j < \mathbf{A}_{i} \colon \uplambda^{(1)}_j \neq \uplambda^{(0)}_j , \uplambda^{(1)}_j \neq \uplambda^{(0)}_{j-1} \}  .
\end{align*}
In other words, the sequence $\mathbf{A} = (\mathbf{A}_1 >
\mathbf{A}_2 > \cdots)$ indexes the positions of the
particles in $\uplambda^{(1)}$ with no particle at the same
position in $\uplambda^{(0)}$.
Similarly,
\begin{align*}
\mathbf{B}_1 &\coloneqq \max \{ \uplambda^{(0)}_i \colon  \uplambda^{(1)}_i \leq 0, \uplambda^{(0)}_i \neq \uplambda^{(1)}_i , \uplambda^{(0)}_i \neq \uplambda^{(1)}_{i+1} \}; \\
\mathbf{B}_{i+1} &\coloneqq \max \{ \uplambda^{(1)}_j < \mathbf{B}_{i} \colon \uplambda^{(0)}_j \neq \uplambda^{(1)}_j , \uplambda^{(0)}_j \neq \uplambda^{(1)}_{j+1}\}  .
\end{align*}
In other words, the sequence $\mathbf{B} = (\mathbf{B}_1 >
\mathbf{B}_2 > \cdots)$ indexes the positions of the
particles in $\uplambda^{(0)}$ with no particle at the same
position in $\uplambda^{(1)}$.
\end{definition}

For two consecutive horizontal rows (at the $0$-th and the first slice)
of a full plane six vertex model
configuration evolving under some Markov dynamics,
denote by
$\mathbf{A}(t), \mathbf{B}(t)$
the time-dependent random variables corresponding to this dynamics.

If
$s = (s_1 > s_2 > s_3 > \cdots), r = (r_1 > r_2 > r_3 >
\cdots)$ are two decreasing sequences of numbers, then we
say $s \leq r$ if $s_i \leq r_i$ for all $i = 1,2,3, \dots$.
We are now in a position to formulate the 
lemma on stochastic domination.

\begin{lemma}
	\label{lemma:monotone_coupling}
	Let $\mathcal{C}^{\tilde{R}}(t)$ denote the trajectory  of
	the full plane six vertex model under the truncated
	dynamics, which gives rise to the quantities
	$\mathbf{A}(t), \mathbf{B}(t)$.
	We can couple $\mathcal{C}^{\tilde{R}}(t)$
	with a truncated AJ particle system $(a(t),b(t))$ with initial
	configuration $a(0)= \mathbf{A}(0), b(0)= \mathbf{B}(0)$ 
	and suitable $\tilde R, N_0$, and $\mathfrak{m}$,
	in
	such a way that with probability $1$, for all $t \geq 0$ we have
	\begin{equation}
		\label{eq:monotone_coupling}
		a(t) \leq \mathbf{A}(t), \qquad  b(t) \leq \mathbf{B}(t).
	\end{equation}
	In words, the AJ particles are always further to the left 
	compared to their six vertex model counterparts.
	We say that \eqref{eq:monotone_coupling}
	means that the AJ system \textbf{stochastically
	dominates} this slice of the full plane dynamics.
	See \Cref{fig:initial_config_for_coupling} 
	for an illustration.
\end{lemma}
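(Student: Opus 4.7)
The plan is to construct a Harris-type graphical coupling of $\mathcal{C}^{\tilde R}(t)$ with a suitably chosen truncated AJ particle system, using a joint realization of Poisson clocks that preserves the ordering $a(t) \leq \mathbf{A}(t)$, $b(t) \leq \mathbf{B}(t)$ for all $t \geq 0$.

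My first step would be to enumerate the effect of each seed-pair jump — together with its instantaneous rightward propagation of length $c = c(i)$ — on the defect sequences $(\mathbf{A}, \mathbf{B})$. Since a single jump in the six vertex dynamics alters exactly two vertical edges on the slice, at the seed position $i$ and at the propagation endpoint $i + c + 1$, the induced change in $(\mathbf{A}, \mathbf{B})$ must fall in a short list of elementary moves: either (i) insertion of a new $\oplus$ or $\ominus$ defect at position $i$ (possibly accompanied by the shift or removal of a defect located at $i + c + 1$), (ii) annihilation of an adjacent $\oplus$--$\ominus$ pair created by the propagation reaching a defect of opposite type, or (iii) no change at all. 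The interlacing constraint $\uplambda^{(1)}_i \geq \uplambda^{(0)}_i \geq \uplambda^{(1)}_{i+1}$ forces adjacent defects to interact in a restricted way, so any genuine leftward motion of $\mathbf{A}_j$ or $\mathbf{B}_j$ is localized near the seed site $i$.

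Next, I would choose $\mathfrak{m} \geq \max(\mathfrak{a}(u),\mathfrak{b}(u),\mathfrak{c}(u))$ and pick $N_0$ larger than the maximum number of defects that can appear in the slice under the $\tilde R$-truncated dynamics. Using Poisson thinning, each six vertex seed-pair clock of rate $\mathfrak{R}_{v_1,v_2}(u)$ at position $z$ can be embedded in the AJ site clock $P_z$ of rate $\mathfrak{m}$, and each propagation-induced annihilation can be linked to a ring of $P_i^a$ or $P_i^b$. Extra rings of the AJ clocks that are not used by a six vertex event harmlessly push AJ particles further left or remove them, which only strengthens the inequalities. In particular, whenever a six vertex jump creates a new leftward defect at $z$, the coupled $P_z$ also fires and places the relevant AJ particle at $z$ or further left, keeping $a_j(t) \leq \mathbf{A}_j(t)$ (respectively $b_j(t) \leq \mathbf{B}_j(t)$) intact. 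The total Poisson rates match by construction since we chose $\mathfrak{m}$ to dominate every $\mathfrak{R}_{v_1,v_2}(u)$.

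The main obstacle I expect is the combinatorial verification that the invariants $a(t) \leq \mathbf{A}(t)$ and $b(t) \leq \mathbf{B}(t)$ survive each event. Concretely, one must check case by case, across the six jump types in \Cref{fig:jumps_and_rates} and the possible local arrangements of defects near the seed position $i$ and the propagation endpoint $i + c + 1$, that the coupled AJ move (a left jump of the nearest particle to $z$, or a linked annihilation) lands the relevant AJ particle at or to the left of its six vertex counterpart and preserves the index-wise ordering across the entire truncated sequences. The propagation length $c$ can be large, but since it only modifies one vertical edge at its endpoint, the accounting reduces to checking a bounded number of local scenarios. Once this case analysis is completed, the inequality propagates inductively over the ordered sequence of clock rings, and the claimed stochastic domination follows.
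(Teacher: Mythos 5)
Your overall architecture --- a Harris graphical coupling driven by rate-$\mathfrak{m}$ clocks with thinning, a case-by-case check that each clock ring preserves the index-wise ordering, and the remark that AJ rings unused by the six vertex side only push AJ particles further left --- is the same as the paper's. But there is a genuine gap at the one place where the coupling is delicate: the annihilation events. The dangerous direction is not extra AJ annihilations (removing an AJ pair and relabeling only decreases each $a_i$, $b_i$, so it is harmless), but a six vertex annihilation that is \emph{not} matched by a simultaneous AJ annihilation: if the defect pair containing $\mathbf{A}_k$ disappears while $a,b$ are unchanged, then after relabeling the new $\mathbf{A}_i$ equals the old $\mathbf{A}_{i+1}$ for $i\ge k$, and nothing prevents $a_i>\mathbf{A}_{i+1}$, so \eqref{eq:monotone_coupling} can fail. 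Your plan to embed the seed clocks into the site clocks $P_z$ and to ``link each propagation-induced annihilation to a ring of $P_i^a$ or $P_i^b$'' does not resolve this: if $P_i^a,P_i^b$ remain independent Poisson clocks (as the AJ law requires), the six vertex dynamics can annihilate a defect pair at a time when no AJ annihilation occurs and domination breaks; if instead you force $P_i^a,P_i^b$ to ring exactly at the six vertex annihilation times, the AJ marginal law is wrong, since those events occur at state-dependent, thinned rates.

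The paper resolves this by making the clock rings at the defect columns themselves the common source of randomness. On the slice, a defect pair can only be destroyed by a jump seeded at a defect column (a jump seeded at a non-defect column changes that column and the propagation endpoint, which can only create a new pair or move a defect left into the seed column), and at a defect column \emph{both} vertical edges, each carrying a rate-$\mathfrak{m}$ clock, can trigger the destructive event --- this is precisely why the AJ annihilation rate is $2\mathfrak{m}$. The coupling then decrees that every ring at a defect column annihilates the corresponding auxiliary pair, regardless of whether the six vertex configuration actually annihilates, moves the defect right, or does nothing; domination is checked case by case over the six local vertex types; and finally independent extra left jumps are added (at columns that are six vertex defects but not AJ particle positions) to upgrade the auxiliary system to the exact AJ law, which only strengthens \eqref{eq:monotone_coupling}. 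Your write-up is missing this rate identification and this specific routing of the rings, and without it the case analysis you defer to cannot be closed.
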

\begin{figure}[htpb]
	\centering
	\includegraphics[width=.8\textwidth]{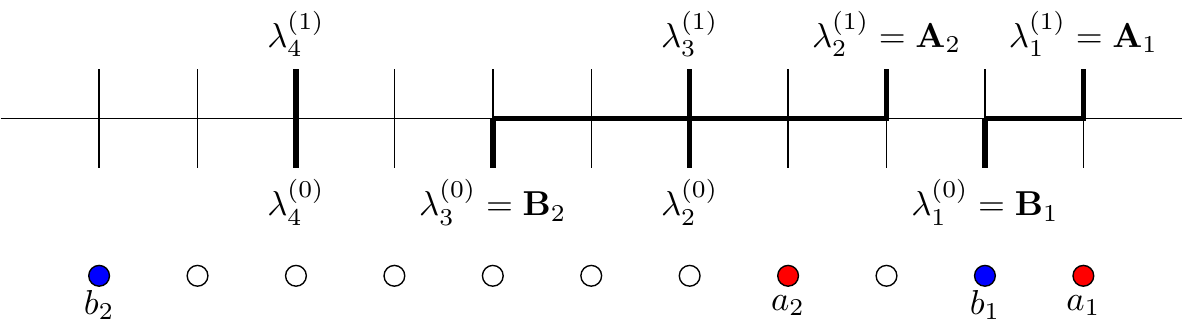}
	\caption{Putting the six vertex and the AJ configurations together.
		In the figure,
		the configurations are 
		ordered as in \eqref{eq:monotone_coupling}.}
	\label{fig:initial_config_for_coupling}
\end{figure}
\begin{proof}
	We couple the Poisson clocks 
	in the truncated full plane six vertex model dynamics $\mathcal{C}^{\tilde R}(t)$
	with the clocks in a suitably truncated AJ particle system.
	For the truncation in the AJ system
	we take the same $\tilde R$, and let 
	let $N_0$ be the maximum integer such that
	$\mathbf{A}_{N_0}(0) ,
	\mathbf{B}_{N_0}(0) \geq -\tilde{R}$. 
	
	We may realize $\mathcal{C}^{\tilde R}(t)$
	by putting independent rate $\mathfrak{m}$ Poisson
	clocks 
	at each
	vertical edge in the finite square $\Lambda_{\tilde R}$, where
	$\mathfrak{m}\equiv \mathfrak{m}(u)\coloneqq \max\left\{ \mathfrak{a}(u),\mathfrak{b}(u),\mathfrak{c}(u) \right\}$,
	and $\mathfrak{a}(u),\mathfrak{b}(u),\mathfrak{c}(u)$ are the rates in the full plane dynamics 
	defined in
	\eqref{eq:abc_rates}.
	When
	a clock at an edge rings (almost surely, there is at most one ring in finite time 
	in the truncated process),
	depending
	on the current configuration 
	and possibly on the
	outcome of an independent coin flip,
	this ring initiates a
	jump in $\mathcal{C}^{\tilde R}$, or we ignore it. 
	Here coin flips are needed to model smaller jump rates. For example,
	if 
	$\mathfrak{m}=\mathfrak{a}>\mathfrak{b}$, then we model rings at rate $\mathfrak{b}$ 
	from the Poisson clock of rate $\mathfrak{m}$ and 
	the coin with probability of success $\mathfrak{b}/\mathfrak{m}$.

	At time $t^-$, denote the 
	six vertex configuration on rows $0$ and $1$ by
	$\uplambda^{(0)} = \uplambda^{(0)}(t^-)$, $\uplambda^{(1)} =
	\uplambda^{(1)}(t^-)$, and the AJ system configuration by $a = a(t^-), b = b(t^-)$. 
	We first define an auxiliary AJ like particle system which is 
	a function of the
	Poisson clock rings in $\mathcal{C}^{\tilde R}$.
	It then will be evident that this auxiliary dynamics 
	is dominated by the AJ system in the same sense as in 
	\eqref{eq:monotone_coupling},
	which will lead to the desired statement. 
	We refer to this auxiliary particle
	system as the ``AJ system'' to simplify the wording.

	Pick $z\in \mathbb{Z}$
	with 
	$|z| \leq \tilde{R}$,
	and consider the configuration of 
	occupied edges around $z$. 
	There are six possible configurations
	corresponding to the allowed configurations under the 
	six vertex model (cf. \Cref{fig:6types}).
	We denote vertical edge locations corresponding to the configurations at
	horizontal levels $0$ and $1$
	by $(z,0)$ and $(z,1)$, respectively.
	In the six cases below we assume that a 
	clock rings at one of these edges, and define how the 
	particle system changes as a result of this ring.
	\begin{enumerate}
		\item 
			Let the path configuration around 
			$z$ be $(0,0;0,0)$, which means that 
			$\uplambda^{(1)}_{i} < z <
			\uplambda^{(0)}_{i-1} $.
			From \Cref{fig:rates_for_G_quad}
			we see that only the clock at the bottom vertical edge $(z,0)$
			may ring.
			In this case, we let
			the leftmost AJ particle to the right of
			$z$ (if it exists) jump into $z$, provided that $z$
			is not occupied by another AJ particle. 
			If no such AJ particle exists, 
			ignore this ring. 
			From now on, we will simply say ``the particle to the right of $z$
			attempts to jump into~$z$''.

			On the six vertex side, after this clock ring 
			the edge $(z,0)$ might stay empty if
			$\uplambda^{(0)},\uplambda^{(-1)}$ locally do not look
			like the configurations in \Cref{fig:rates_for_G_quad}, left,
			or if the coin flips do not produce an actual jump
			initiation in $\mathcal{C}^{\tilde R}$.
			In the remaining situation
			when the edge $(z,0)$ becomes occupied, one of the 
			vertical occupied edges at $(z',0)$, $z'>z$,
			must instantaneously become empty due to the jump propagation. 
			If $z'$ is not one of the $\mathbf{B}_j$'s, 
			then this corresponds to a right jump in the six vertex model
			(namely, a ``creation''
			of a new pair $\mathbf{A}_k,\mathbf{B}_k$ to the right
			of $z$, and relabeling of $\mathbf{A},\mathbf{B}$).
			Alternatively, $z'$ could be equal to the
			leftmost of the $\mathbf{B}_j$'s which are greater than $z$,
			and this is the furthest left jump that may occur under 
			$\mathcal{C}^{\tilde R}$.
			Clearly, in all these cases the domination
			\eqref{eq:monotone_coupling}
			is preserved by the jumps in the two systems.
			
			Here is an illustration of the moves in the six vertex
			model and the AJ system, with the furthest possible left jump
			of $(\mathbf{A},\mathbf{B})$
			under $\mathcal{C}^{\tilde R}$:
			\begin{center}
				\includegraphics[width=.8\textwidth]{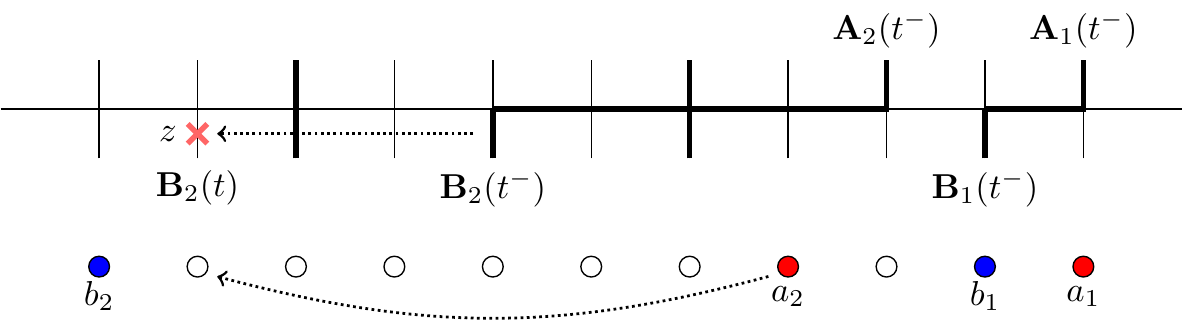}
			\end{center}
			The remaining five cases are considered similarly, 
			and we discuss them in less detail. One readily sees that 
			in each of the remaining five cases, the domination
			\eqref{eq:monotone_coupling} is preserved.
			
		\item 
			Let the path configuration 
			around $z$ be $(1,1;1,1)$,
			which means that 
			$z = \uplambda^{(1)}_{i} =\uplambda^{(0)}_{i-1}$.
			In this case, only the clock at the bottom edge $(z,0)$
			may ring, 
			and in the AJ system we define that
			the particle to the right of $z$ attempts to jump into $z$.
			In the illustration below we 
			display the furthest possible left jump 
			of $(\mathbf{A},\mathbf{B})$
			under 
			$\mathcal{C}^{\tilde R}$,
			which arises when the edge $(z,0)$
			becomes empty, and the furthest possible
			edge to the right becomes occupied:
			\begin{center}
				\includegraphics[width=.8\textwidth]{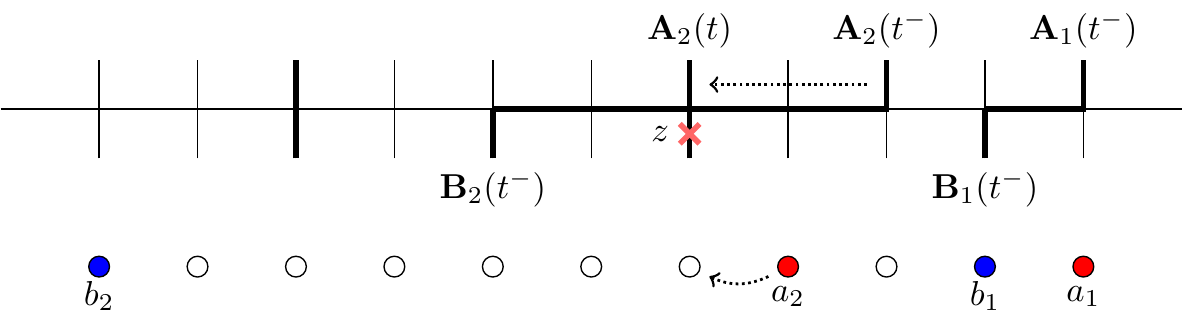}
			\end{center}

		\item 
			Let the path configuration 
			around $z$ be $(0,1;0,1)$,
			which means that 
			$\uplambda^{(0)}_{i} < z < \uplambda^{(1)}_i$.
			In this case, only the clock at the top edge $(z,1)$
			may ring, 
			and in the AJ system we define that
			the particle to the right of $z$ attempts to jump into $z$.
			In the illustration below we 
			display the furthest possible left jump 
			of $(\mathbf{A},\mathbf{B})$
			under 
			$\mathcal{C}^{\tilde R}$:
			\begin{center}
				\includegraphics[width=.8\textwidth]{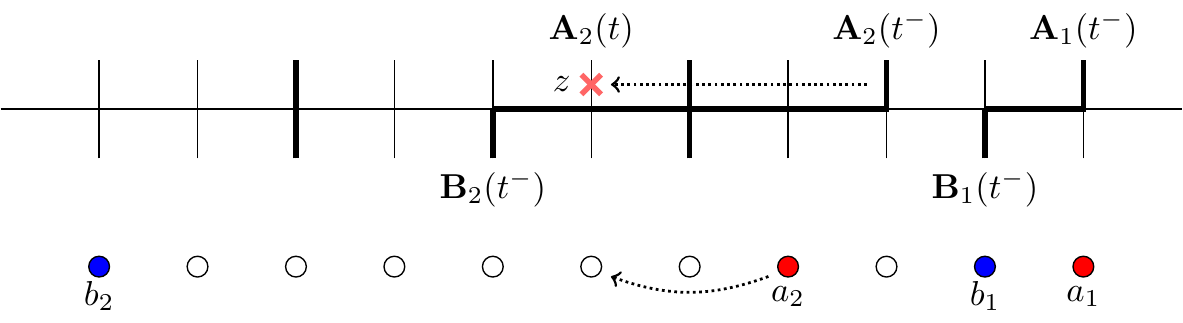}
			\end{center}
			
		\item 
			Let the path configuration 
			around $z$ be $(1,0;1,0)$,
			which means that 
			$z = \uplambda^{(1)}_{i} =\uplambda^{(0)}_{i}$.
			In this case, only the clock at the top edge $(z,1)$
			may ring, 
			and in the AJ system we define that
			the particle to the right of $z$ attempts to jump into $z$.
			In the illustration below we 
			display the furthest possible left jump 
			of $(\mathbf{A},\mathbf{B})$
			under 
			$\mathcal{C}^{\tilde R}$,
			which arises when the edge $(z,1)$ becomes empty, 
			and the furthest possible edge to the right becomes occupied:
			\begin{center}
				\includegraphics[width=.8\textwidth]{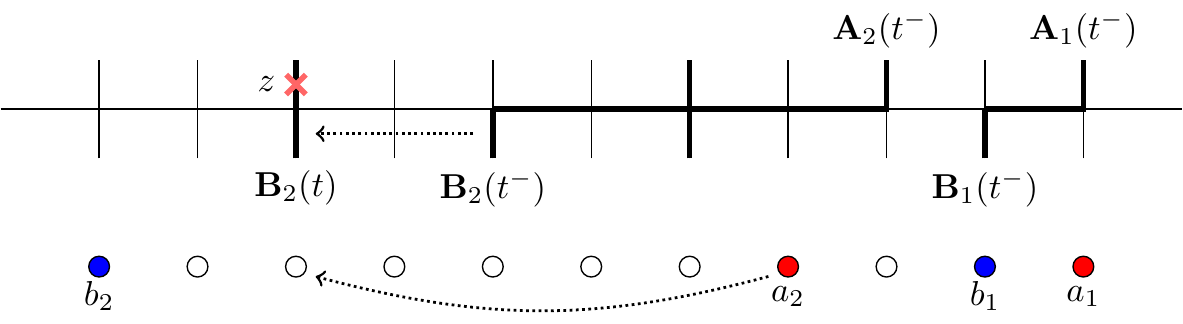}
			\end{center}
		\item 
			Let the path configuration 
			around $z$ be $(0,1;1,0)$,
			which means that 
			$\uplambda^{(0)}_i<z=\uplambda^{(1)}_i<\uplambda^{(0)}_{i-1}$.
			Then, by definition, $z=\mathbf{A}_k$ for some $k\le i$.
			Here the clock may ring at either $(z,0)$ or $(z,1)$
			(which corresponds to the double rate $2\mathfrak{m}$ of annihilation
			in the AJ system).
			In both situations,
			we define that $a_k$
			and the particle to its right annihilate
			(or $a_1$ disappears if it was the rightmost of the AJ particles).
			Here are the illustrations of both cases:
			\begin{center}
				\includegraphics[width=.8\textwidth]{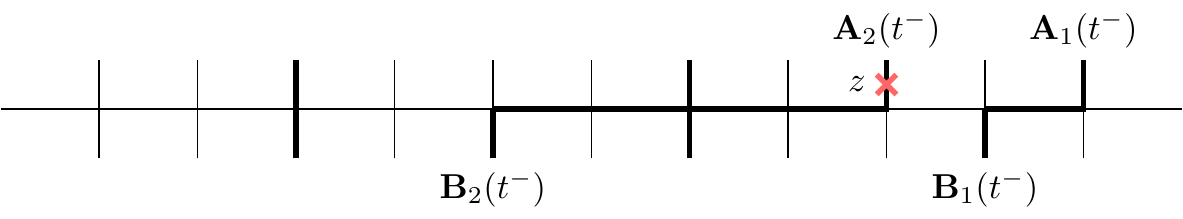}
				
				\includegraphics[width=.8\textwidth]{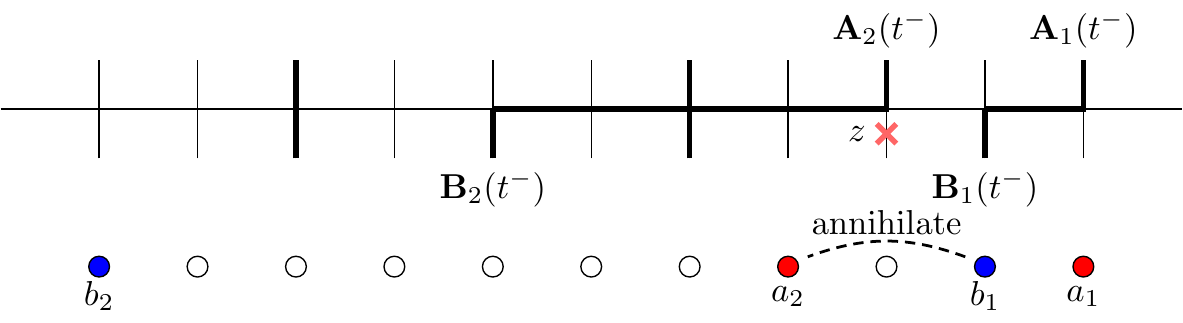}
			\end{center}
			In $\mathcal{C}^{\tilde R}$, 
			both clock rings might lead to 
			no change, or to a right jump of $\mathbf{A}_k$, 
			or to an annihilation of $\mathbf{A}_k$ and $\mathbf{B}_{k-1}$
			(and relabeling of the remaining particles in $\mathbf{A},\mathbf{B}$).
			In all these cases, the domination
			\eqref{eq:monotone_coupling} is preserved.
		\item 
			Let the path configuration 
			around $z$ be $(1,0;0,1)$,
			which means that 
			$\uplambda^{(1)}_{i+1}<z=\uplambda^{(0)}_i<\uplambda^{(1)}_{i}$.
			Then, by definition, $z=\mathbf{B}_k$ for some $k\le i$.
			Here the clock may ring at either $(z,0)$ or $(z,1)$
			(again, this corresponds to the fact 
			that the annihilation rate is $2\mathfrak{m}$).
			In both situations,
			we define that $b_k$
			and the particle to its right annihilate
			(or $b_1$ disappears if it was the rightmost of the AJ particles).
			Here are the illustrations of both cases,
			and we similarly see that the domination
			\eqref{eq:monotone_coupling} is preserved:
			\begin{center}
				\includegraphics[width=.8\textwidth]{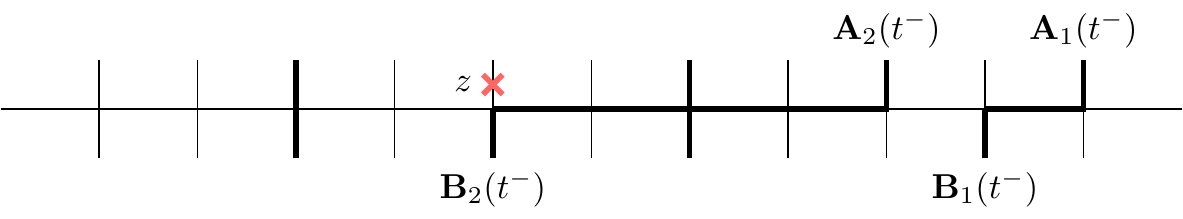}
				
				\includegraphics[width=.8\textwidth]{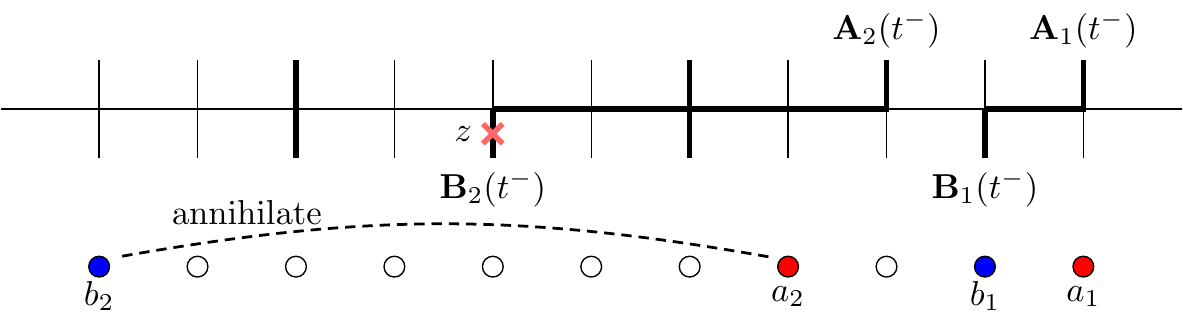}
			\end{center}
	\end{enumerate}
	
	We have thus constructed 
	the auxiliary AJ like particle system on $\mathbb{Z}$
	which is 
	a function of the rate $\mathfrak{m}$ Poisson clock rings in 
	$\mathcal{C}^{\tilde R}$,
	and which dominates the dynamics of 
	$(\mathbf{A},\mathbf{B})$.
	In the auxiliary system,
	particles indeed annihilate at rate 
	$2\mathfrak{m}$ per pair
	$(a,b)$ or $(b,a)$,
	but the auxiliary system lacks some of the 
	AJ system's
	left jumps. Namely, if a clock from $\mathcal{C}^{\tilde R}$
	at $z\notin \{a_i\}_{i\ge1}\cup \{b_i\}_{i\ge 1}$
	contributes to an annihilation event, 
	then it did not produce a jumping event in the auxiliary system.
	Therefore, adding extra independent jump events
	at rate $\mathfrak{m}$ produces the full AJ system,
	which clearly jumps to the left more
	often than the auxiliary system.
	We see that the coupling we constructed indeed
	satisfies the domination \eqref{eq:monotone_coupling},
	which
	completes the proof of \Cref{lemma:monotone_coupling}.
\end{proof}

\subsection{Completing the 
proof of Lemma \ref{lemma:main_full_plane_lemma}
via an estimate in the AJ system}
\label{sub:propbound_proof}

To finalize the proof of \Cref{lemma:main_full_plane_lemma}
we need to show that, given 
that the initial configuration was not in 
$E_{R,C(\log R)^{p_0}}$ for some $R,C$
(that is, it did not have long strings without vertices of type
$\ru$), then with high probability
the configuration will not be in 
$E_{R,\tilde C(\log R)^{p}}$ up to time $t\le T$,
where $p\ge p_0$ and $\tilde R\ge R$.
Due to the $\mathbb{Z}^2$ translation invariance of the dynamics, we 
may 
consider the event that a long string of vertices without
$\ru$ starts at $(0,1)$, and then take a union bound
over all vertices in $\Lambda_R$ (there are $\mathrm{const}\cdot R^2$ of them).

By the stochastic domination 
shown in \Cref{lemma:monotone_coupling},
it suffices to upper bound the following probability under the AJ system:
\begin{equation}
	\label{eq:AJ_probability_need_to_estimate}
	\mathbb{P}^{AJ}(
	a_1(T) \leq - \tilde{C} (\log R)^p).
\end{equation}
Here the AJ particle system is truncated
at $\tilde R,N_0$, and
starts from the configuration 
$a(0) = \mathbf{A}(0), b(0)=\mathbf{B}(0)$,
as described before \Cref{lemma:monotone_coupling}.
Note that since the AJ system jumps only to the left, 
\eqref{eq:AJ_probability_need_to_estimate} also is an upper bound 
for 
$\mathbb{P}^{AJ}(\exists\, t\in [0,T]\colon 
a_1(t) \leq - \tilde{C} (\log R)^p)$. The latter quantity 
is an upper bound for the corresponding quantity in the
dynamics $\mathcal{C}^{\tilde R}$.
The constants $\tilde{C}>0$ and $p\ge p_0 > 0$ 
in \eqref{eq:AJ_probability_need_to_estimate}
will be determined later to make the probability
\eqref{eq:AJ_probability_need_to_estimate} sufficiently small, namely, 
of order
$R^{-\zeta}$ for any $\zeta>0$ (as $R$ grows).

Let $n$ be such that $a_n(0)\le -\tilde C(\log R)^p$.
Observe that
$n$
is large for large $R$ because the initial configuration
is not in $E_{R,C(\log R)^{p_0}}$, namely,
\begin{equation}
	\label{eq:n_in_terms_of_R}
	n\ge \frac{\tilde C}{C}\ssp(\log R)^{p-p_0}.
\end{equation}
Moreover, $a_n(0)$ cannot be too large in the absolute value 
because the initial configuration
is not in $E_{R,C(\log R)^{p_0}}$, namely,
\begin{equation}
	\label{eq:bound_on_a_n}
	|a_n(0)|\le C n(\log R)^{p_0}.
\end{equation}

To determine if the event in
\eqref{eq:AJ_probability_need_to_estimate} occurred,
we may only look at the behavior of the particles started in
$(a_n(0),0]$ up to time $T$. Moreover, by taking $a_n(0)$
sufficiently small, we may assume that
the event in
\eqref{eq:AJ_probability_need_to_estimate} 
is due entirely to 
a combination of annihilations,
and jumps caused by Poisson clocks in the interval
$(a_n(0),0]$ up to time~$T$.
Moreover, 
$a_1(T) \leq - \tilde{C} (\log R)^p$ 
implies that there are 
no particles left 
at time $T$ between $-\tilde C(\log R)^p$ and $0$.
We treat separately the cases
when this absence of particles is 
mainly due to annihilations or mainly due to particles jumping out. 

Denote by 
$K(n)$ the (random) number of particles out of the ones
started at time $t=0$ in $(a_n(0), 0]$, and which did not get
annihilated up to time $t=T$.
Fix $r\in \left( \tfrac 12,1 \right)$.
If $a_1(T)\le -\tilde C(\log R)^p$ and
$K(n)< n^r$, then there were many annihilations, while if 
$K(n)\ge n^r$, then there should have been many jumps.
First, we estimate the probability of many annihilations:

\begin{lemma}
	\label{lemma:K_n_estimate}
	For some $c>0$ we have
	\begin{equation*}
		\mathbb{P}^{AJ}
		\left( 
			K(n)<n^r
		\right)\le \exp\{-c n^{r}\}
	\end{equation*}
	for all large enough $n$.
\end{lemma}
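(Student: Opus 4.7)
My plan is to reduce the estimate to a bound on the number of annihilation events occurring among the $n$ region particles (those initially in $(a_n(0),0]$), and then control this number via a Chernoff bound on a sum of independent exponentials stochastically dominating the inter-event waiting times. As preliminaries, I would observe two structural simplifications: the Hammersley-like jump clocks $P_z$ can be ignored entirely, since they change positions but not ranks and so do not affect annihilation events; and, because all jumps are strictly to the left and particles cannot cross, the region particles permanently occupy the rightmost $\sigma(t)$ positions, with current ranks at most $n$. Consequently every event that removes a region particle is triggered either by a rank-clock $P_i^a$ or $P_i^b$ with $i \leq \sigma(t)$, contributing total rate at most $2\mathfrak{m}\sigma(t)$, or by the rank-clock of the rightmost non-region particle whose right neighbor lies in the region, which contributes rate at most $2\mathfrak{m}$.

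This gives the key rate estimate: region pair-annihilations happen at total rate $\leq 2\mathfrak{m}\sigma(t)$, while \emph{singleton} boundary events (rightmost disappearance or boundary annihilations removing only one region particle) occur at uniformly bounded rate $O(\mathfrak{m})$. By a Poisson tail bound, the number of singleton events in $[0,T]$ exceeds $n^{r}/2$ only on an event of probability $\leq e^{-c n^{r}}$, so on the complement $\{K(n) < n^{r}\}$ forces at least $j := \lceil (n - 3 n^{r}/2)/2 \rceil = \Theta(n)$ pair-annihilations in $[0,T]$. Each pair-annihilation reduces $\sigma$ by $2$, so before the $l$-th one $\sigma \leq n - 2(l-1)$, and hence the waiting time $E_l$ stochastically dominates $\mathrm{Exp}(2\mathfrak{m}(n - 2(l-1)))$. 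Taking these dominating exponentials $\widetilde E_l$ independent reduces the task to bounding $\mathbb{P}\bigl(\sum_{l=1}^j \widetilde E_l \leq T\bigr)$.

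A Chernoff estimate with parameter $\theta = 2\mathfrak{m} n^r$ gives the product $\prod_{l=1}^j (1 + n^r/(n - 2(l-1)))^{-1}$, whose logarithm I would evaluate by a Riemann-sum comparison to $(n^r/2)\int_1^{n^{1-r}} \log(1 + 1/u)\, du$, which is of order $(1-r)\, n^r \log n$. Combining with the $e^{\theta T} = e^{2\mathfrak{m} T n^r}$ factor yields a bound of the form $\exp\bigl(2\mathfrak{m} T n^r - c(1-r)\, n^r \log n\bigr) \leq e^{-c' n^r}$ for any $c' > 0$ once $n$ is sufficiently large. The principal obstacle is that one must exploit the \emph{shrinking} rate $2\mathfrak{m}(n - 2l)$ as particles are consumed: a naive Poisson domination using the static rate $2\mathfrak{m} n$ only yields useful concentration for $T \lesssim 1/\mathfrak{m}$, whereas the extra $\log n$ in the Chernoff exponent — coming from the terminal regime with only $O(n^r)$ region particles remaining, where each log-term is of order one — is what covers arbitrary fixed $T$.
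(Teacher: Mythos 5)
Your proposal is correct and follows essentially the same route as the paper's proof: dominate the waiting times between removals of the region particles by independent exponentials whose rates are proportional to the number of surviving region particles (hence linearly shrinking), then apply a Chernoff-type tail bound for the sum, which yields a decay of order $e^{-c\ssp n^r\log n}$, stronger than required. The only cosmetic differences are that you separate the rate-$O(\mathfrak{m})$ singleton removals (rightmost-particle disappearance and the boundary annihilation) and control them by a Poisson tail, and that you carry out the Chernoff computation with $\theta=2\mathfrak{m}n^r$ explicitly, whereas the paper folds all removals into exponentials of rate $4i$ and invokes Janson's tail bound for sums of exponentials.
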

By \eqref{eq:n_in_terms_of_R},
$\exp\left\{ -cn^r \right\}$ decays to zero faster than any power of $R$
as $R\to+\infty$
provided that $(p-p_0)r>1$ (which is ensured by choosing $p$ large enough).
\begin{proof}[Proof of \Cref{lemma:K_n_estimate}]
	We need to bound the probability that at least $n-\lfloor n^r/2 \rfloor $
	particles disappear during time $T$. 
	This is upper bounded by
	\begin{equation*}
		\mathbb{P}\Biggl( 
			\sum_{i=\lfloor n^r/2 \rfloor}^n
			E_i<T
		\Biggr),
	\end{equation*}
	where $\{E_i\}$ are independent exponential random variables, 
	where the rate of $E_i$ is equal to $4i$ (the rate is $4$
	since particles can annihilate with their left or right neighbors).
	This expression is bounded using 
	\cite[Theorem 5.1.(iii)]{janson2018tail}
	with $a_*=4\lfloor n^r/2 \rfloor $ (minimum of the rates of the~$E_i$'s),
	$\mu=\frac{1-r}{4}\log n+O(1)$ (mean of the sum), $\lambda=T/\mu\le 1$,
	and the bound is of the form
	$e^{-a_* \mu(\lambda-1-\log \lambda)}$. 
	The dominating term 
	in the exponent 
	(going fastest to $-\infty$
	as $n\to+\infty$) 
	is
	$-a^*\mu\log(1/\lambda)\sim -\mathrm{const}\cdot
	n^r (\log n )(\log \log n)
	$, which leads to an estimate $\le e^{-cn^r}$, as desired.
\end{proof}

\begin{figure}[htpb]
	\centering
	\includegraphics[width=.7\textwidth]{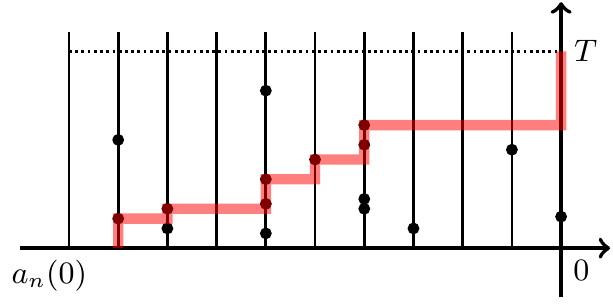}
	\caption{Semi-discrete Poisson percolation and an
	up-right path of percolation length 5. The environment consists of
	independent rate $1$ Poisson processes on $[0,T]$
	placed at integer locations. The last passage
	percolation length is the maximal number of Poisson points collected 
	by an up-right path from $(a_n(0)+1,0)$ to $(0,T)$,
	where we count at most one point per lattice site.
	In terms of
	jumps in the AJ system, the Poisson points collected by the maximal up-right
	path are the times
	when particles jump into the corresponding locations.
	We count at most ony point per lattice site since several clock
	rings at one site might not lead to actual jumps in the AJ system.}
	\label{fig:Poisson_percolation}
\end{figure}

\Cref{lemma:K_n_estimate}
bounds the number of annihilations. 
Let us now consider the case $K(n)\ge n^r$. 
Then the main contribution to
the event $a_1(T)\le -\tilde C(\log R)^p$
comes from having many jumps.
Without annihilations, the AJ system is simply the Hammersley
process on $\mathbb{Z}$, a natural analogue
of the Hammersley process on 
$\mathbb{R}$
introduced in \cite{hammersley1972few}, see also \cite{aldous1995hammersley},
\cite{SeppStickprocess}.
The Hammersley process on~$\mathbb{Z}$
(which is essentially the same as the PushTASEP via the
particle-hole involution) is coupled to the semi-discrete Poisson
last passage percolation
\cite[Section 5]{Toninelli2015-Gibbs}.
We thus observe that
when $K(n)\ge n^r$, the event
$a_1(T)\le -\tilde C(\log R)^p$
may occur only if
there exists an
up-right path 
of percolation length
at least $n^r$
in the (rate 1) semi-discrete Poisson last passage percolation
in the space-time region 
$(a_n(0),0]\times [0,T]\subset \mathbb{Z}\times \mathbb{R}$
(see \Cref{fig:Poisson_percolation} for an illustration).
Denote by $E_n$ the event that 
such a path exists.
We upper bound its probability as follows:

\begin{lemma}
	\label{lemma:E_n_K_n_second_estimate}
	Let $n$ be such that $a_n(0)\le -\tilde C(\log R)^p$. 
	Then we have for some $\tilde c>0$:
	\begin{equation*}
		\mathbb{P}^{AJ}\left( E_n\cap \left\{ K(n)\ge n^r \right\} \right)
		\le
		\frac{\tilde c\ssp |a_n(0)\ssp e\ssp T|^{n^r}}{(\lfloor n^r \rfloor !)^2}.
	\end{equation*}
\end{lemma}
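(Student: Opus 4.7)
The plan is to drop the $\{K(n)\ge n^r\}$ condition entirely, since the intersection is contained in $E_n$, and to bound $\mathbb{P}^{AJ}(E_n)$ directly by a first-moment (Markov) argument on the number of increasing $\ell$-tuples of Poisson points, where $\ell \coloneqq \lceil n^r \rceil$. On $E_n$ there exists an up-right path in $\{a_n(0)+1,\ldots,0\}\times[0,T]$ collecting at least $n^r$ Poisson points; crucially, the ``at most one point per lattice site'' convention in the definition of percolation length (cf.\ \Cref{fig:Poisson_percolation}) forces the $x$-coordinates along the path to be strictly increasing. Discarding points if necessary, $E_n$ therefore implies the existence of an $\ell$-tuple $(x_1,t_1),\ldots,(x_\ell,t_\ell)$ of Poisson points with $a_n(0)<x_1<\cdots<x_\ell\le 0$ and $0<t_1<\cdots<t_\ell<T$.

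Let $N_\ell$ be the number of such ordered $\ell$-tuples. By Markov's inequality, $\mathbb{P}^{AJ}(E_n)\le\mathbb{E}[N_\ell]$, and I would compute $\mathbb{E}[N_\ell]$ using the fact that the Poisson processes at distinct lattice sites are independent of common rate $1$ on $[0,T]$. For any fixed choice of $\ell$ distinct sites among $|a_n(0)|$ many, the expected number of time-ordered selections of one point per site is simply
\begin{equation*}
    \int_{0<t_1<\cdots<t_\ell<T} dt_1\cdots dt_\ell \;=\; \frac{T^\ell}{\ell!},
\end{equation*}
so summing over all $\binom{|a_n(0)|}{\ell}$ choices of site tuples yields
\begin{equation*}
    \mathbb{E}[N_\ell] \;=\; \binom{|a_n(0)|}{\ell}\cdot \frac{T^\ell}{\ell!}.
\end{equation*}

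To match the target form, I would then use the elementary bounds $\binom{m}{k}\le m^k/k!$ together with $\lceil n^r\rceil!\ge \lfloor n^r\rfloor!$ to obtain
\begin{equation*}
    \mathbb{P}^{AJ}(E_n) \;\le\; \frac{(|a_n(0)|\ssp T)^{\lceil n^r\rceil}}{(\lfloor n^r\rfloor!)^2},
\end{equation*}
and absorb the leftover factor $(|a_n(0)|T)^{\lceil n^r\rceil - n^r}$, with exponent in $[0,1)$, into the constant $\tilde c$ (the $e^{n^r}$ factor on the right-hand side of the claim leaves enormous slack). I do not anticipate a real obstacle here: the hard work has already been done by the monotone coupling of \Cref{lemma:monotone_coupling}, which reduced the question to a clean first-moment estimate in semi-discrete Poisson LPP, and the strictly-increasing $x$-coordinate feature coming from the one-point-per-site rule is precisely what makes this first-moment bound sharp enough to produce the $(\lfloor n^r\rfloor!)^2$ denominator.
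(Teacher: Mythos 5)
Your proposal is correct and is essentially the paper's argument: both proofs reduce to the same counting estimate, namely $\le |a_n(0)|^{\ell}/\ell!$ choices of increasing sites in $(a_n(0),0]$ times a weight $\le (\mathrm{const}\cdot T)^{\ell}/\ell!$ for the time-ordered Poisson points, the paper organizing this as a union bound over the exact path length $M\ge n^r$ followed by summing the series, while you take a single first-moment (Markov) bound at $\ell=\lceil n^r\rceil$, which cleanly sidesteps the tail-sum step. The only point to note is the final absorption of $(|a_n(0)|\ssp T)^{\lceil n^r\rceil-n^r}$ into $\tilde c$: this requires $|a_n(0)|\ssp T\lesssim e^{n^r}$, which indeed holds in the regime where the lemma is used (by \eqref{eq:bound_on_a_n} and \eqref{eq:n_in_terms_of_R}, with $p$ large) and is no more of a gap than the corresponding step in the paper's own proof, where bounding the sum over $M\ge n^r$ by a constant times its first term likewise uses that $n^r$ dominates $\sqrt{|a_n(0)|\ssp e\ssp T}$.
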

\begin{proof}
	We first estimate the probability that the maximal
	(in the sense of last passage percolation)
	up-right path has length $M$, where $M\ge n^r$.
	The existence of such a path means that there are 
	$M$ Poisson 
	points picked up by the path
	(see \Cref{fig:Poisson_percolation} for an illustration). 
	The number of configurations of such points 
	is equal to the number of 
	sequences $x_1<x_2<\ldots<x_M$
	with $a_n(0)<x_1$ and $x_M\le 0$, which is upper bounded by
	$\frac{|a_n(0)|^M}{M!}$. 
	For a fixed sequence $\{x_i \}$,
	using the strong Markov property, we see that there must be 
	at least $M$ points in the rate $1$ Poisson process
	on the segment $[0,T]$. This probability
	is bounded from above by
	$\frac{(e T)^M}{M!}$. Thus, we have
	\begin{equation*}
		\mathbb{P}\left( \textnormal{the maximal up-right path
		has length $M$} \right)\le
		\frac{|a_n(0)\ssp e\ssp T|^M}{(M!)^2}.
	\end{equation*}
	We see that these quantities decay in $M$ faster than the terms of a geometric
	series, so their sum over $M\ge n^r$ (coming from the union bound)
	is bounded by a constant times the first term.
	This produces the desired estimate.
\end{proof}

To finalize the proof of \Cref{lemma:main_full_plane_lemma},
we
pick the constants $\tilde C$ and $p$
to 
bound the desired probability
$\mathbb{P}^{AJ}(
a_1(T) \leq - \tilde{C} (\log R)^p)$ from above.
We use \Cref{lemma:K_n_estimate,lemma:E_n_K_n_second_estimate}
and a union bound over 
$n\ge n_1 \coloneqq \lfloor \frac{\tilde C}{C}\ssp(\log R)^{p-p_0}\rfloor$,
so that $a_n(0)\le -\tilde C(\log R)^p$:
\begin{equation}
	\label{eq:final_thing_to_estimate}
	\mathbb{P}^{AJ}(
	a_1(T) \leq - \tilde{C} (\log R)^p)
	\le
	\sum_{n \ge n_1}
	\left( e^{-c n^{r}}+ 
	\frac{\tilde c\ssp|a_n(0)\ssp e\ssp T|^{n^r}
	}{(\lfloor n^r \rfloor !)^2}
	\right).
\end{equation}
The first series in \eqref{eq:final_thing_to_estimate} 
is bounded from above by
\begin{equation}
	\label{eq:final_thing_to_estimate_proof1}
	(\textnormal{power of $n_1$})\cdot
	\exp\left( -\mathrm{const}\cdot n_1^r \right)
	=
	(\textnormal{power of $\log R$})\cdot
	\exp\left( -\mathrm{const}\cdot (\log R)^{r(p-p_0)} \right).
\end{equation}
Indeed, one can bound the tail 
$\sum_{n \ge n_1}e^{-c n^{r}}$ by $e^{-cn_1^r}+\int_{n_1}^\infty e^{-cx^r}dx$.
The integral is equal to a constant times an incomplete Gamma integral
of the form $\int_z^\infty t^{a-1}e^{-t}dt$. For fixed $a$ and large $z$ (which is our
case), the behavior is of the form $e^{-z}$ times a power of $z$
(e.g., see \cite[8.11(i)]{DLMF}).
We thus see that by \eqref{eq:final_thing_to_estimate_proof1},
the first series in \eqref{eq:final_thing_to_estimate}
decays faster than any power of $R$ as $R\to+\infty$ as long
as $(p-p_0)r>1$.

For the second summand in \eqref{eq:final_thing_to_estimate} we have,
using \eqref{eq:n_in_terms_of_R} and \eqref{eq:bound_on_a_n}:
\begin{equation*}
	\begin{split}
		&
		\log
		\left( 
				\frac{\tilde c\ssp |a_n(0)\ssp e\ssp T|^{n^r}
				}{(\lfloor n^r \rfloor !)^2}
			\right)
		\le
		\mathrm{const}
		\cdot
		\Bigl(  
			n^r \log |a_n(0)| +n^r \log (eT)
			-2r n^r\log n+2n^r+O(\log n)
		\Bigr)
		\\&\hspace{80pt}
		\le
		\mathrm{const}
		\cdot
		\Bigl(  
			n^r 
			\bigl( (1-2r)(p-p_0)\log(\log R)+p_0\log(\log R) \bigr)
			+\textnormal{lower order terms}
		\Bigr)
	\end{split}
\end{equation*}
Since $r>\frac{1}{2}$,
by taking $p$ large enough we may 
make the first term dominate as $R\to+\infty$. This leads to an 
overall decay of the second sum in \eqref{eq:final_thing_to_estimate}
as $R\to+\infty$, faster than any power of $R$.

To conclude, the desired probability
\eqref{eq:final_thing_to_estimate}
is $O(R^{-\zeta})$ 
for any $\zeta>0$. Taking the final union bound
over all vertices in $\Lambda_R$, as discussed in the beginning
of this \Cref{sub:propbound_proof},
multiplies our estimate by $\mathrm{const}\cdot R^2$.
With this factor the probability still decays 
faster than any power of $R$, 
and so we arrive at the estimate in 
\Cref{lemma:main_full_plane_lemma}.

\section{Current and hydrodynamics}
\label{sec:hydrodynamics}

\subsection{Computing the current}

Recall that with each path configuration
of the six vertex model in $\mathbb{Z}^{2}$ we associate
the height function $h(x,y)$ (defined up to a constant), see 
\Cref{sub:dynamics_definition} in the Introduction.
The full plane dynamics $\mathcal{C}(t)$
gives rise to the time-dependent random function $h_t(x,y)$.
For the KPZ pure state $\pi(\mathsf{s})$
(we recall its definition in \Cref{sub:pure_states}),
we define the corresponding current (average change of the height function)
by
\begin{equation}
	\label{eq:current_def_text}
	J(\mathsf{s},\varphi(\mathsf{s})) \coloneqq 
	\frac{1}{t}
	\ssp
	\mathbb{E}_{\pi(\mathsf{s})}
	\left( h_t(0,0)-h_0(0,0) \right),
\end{equation}
where the initial height function $h_0$
corresponds to the configuration distributed as $\pi(\mathsf{s})$.
Instead of $(0,0)$, we could take an arbitrary face of the lattice (by 
translation invariance of the measure and the dynamics).
The right-hand side of \eqref{eq:current_def_text}
is independent of $t$, so we may send $t\to 0$, and write
\begin{equation*}
	J(\mathsf{s},\varphi(\mathsf{s})) =
	\frac{\partial}{\partial t}
	\ssp
	\mathbb{E}_{\pi(\mathsf{s})}\ssp
	h_t(0,0)\ssp\big\vert_{t=0}.
\end{equation*}
Therefore, we may compute the current by looking at the Markov 
generator $G_u$ of $\mathcal{C}(t)$ given by \eqref{eq:full_plane_generator}.
Namely, we have 
\begin{equation}
	\label{eq:J_average_rate_of_change}
	J(\mathsf{s},\varphi(\mathsf{s}))=
	\sum_{\textnormal{$(v_1,v_2)$ is a vertical edge}}
	\mathfrak{R}_{v_1,v_2}(u)\ssp
	\mathbb{E}_{\pi(\mathsf{s})}
	\left[ 
		\mathbf{1}_{\textnormal{$(v_1,v_2)$ is a seed pair for $h_0$}}\ssp
		\Delta_{v_1,v_2}h_0(0,0)
	\right],
\end{equation}
where
$\Delta_{v_1,v_2}h_0(0,0)$ 
is the (signed) change of the height function
at $(0,0)$
triggered by initiating the jump at the vertical edge $v_1-v_2$.
In taking the expectation, we assume that 
$h_0$ is distributed according to $\pi(\mathsf{s})$.
Recall 
$\mathfrak{R}_{v_1,v_2}(u)$
is equal to either of the quantities
$\mathfrak{a}(u),\mathfrak{b}(u)$, or $\mathfrak{c}(u)$
\eqref{eq:abc_rates}
depending
on the surrounding paths, see
\Cref{fig:rates_for_G_quad}.

We are now in a position to compute the current:

\begin{theorem}
	\label{thm:current_computation}
	We have
	\begin{equation}
		\label{eq:current_final_formula_in_the_text}
		J(\mathsf{s},\varphi(\mathsf{s}))
		=
		-\frac{\mathsf{s}\ssp(1 - \mathsf{s})}
		{(\mathsf{s} + u - \mathsf{s} u)^2}.
	\end{equation}
\end{theorem}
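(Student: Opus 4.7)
The plan is to evaluate the generator-level expression \eqref{eq:J_average_rate_of_change} term by term. Fix the face $(0,0)$ to lie between rows $y_0$ and $y_0+1$. The crucial geometric observation is that $\Delta_{v_1,v_2}h_0(0,0)$ is nonzero only when the seed pair $(v_1,v_2)$ lies in the row pair $(y_0,y_0+1)$ \emph{and} the rightward propagation of the jump reaches column~$0$; in that case $\Delta h=-1$ for the three up-jump seed pairs $\pcu,\pau,\pbu$ and $\Delta h=+1$ for the three down-jump seed pairs $\pcd,\pad,\pbd$. This reduces \eqref{eq:J_average_rate_of_change} to a double sum over the seed-pair column $x\le 0$ and over the six seed-pair types, with rates $\mathfrak{a},\mathfrak{b},\mathfrak{c}$ drawn from \eqref{eq:abc_rates}.

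I would compute each term by using the characterization of $\pi(\mathsf{s})$ recalled in \Cref{sub:pure_states}: along any staircase boundary of a quadrant, the incoming horizontal edges are i.i.d.\ Bernoulli$(\varphi(\mathsf{s}))$, the incoming vertical edges are i.i.d.\ Bernoulli$(\mathsf{s})$, and the two families are independent; the configuration inside is generated by the vertex weights $w_u$ of \eqref{eq:w_u_weights}. Choosing a staircase just below row $y_0$ and far to the left of column $x$, the joint event ``seed pair at column $x$ plus propagation conditions at columns $x+1,\dots,0$'' decomposes into local vertex-weight calculations. A short case analysis shows that at each intermediate column $j\in\{x+1,\dots,0\}$ the propagation condition forces $e_{j-1,y_0}=e_{j,y_0}=1$ and $e_{j-1,y_0+1}=e_{j,y_0+1}=0$, and only two vertex-pair configurations are compatible: the row-$y_0$ vertex of type $(0,1;0,1)$ paired with an empty row-$(y_0+1)$ vertex, or the row-$y_0$ vertex of type $(1,1;1,1)$ paired with a straight-vertical row-$(y_0+1)$ vertex. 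Summing the two contributions gives a one-step propagation factor equal to $(1-\mathsf{s})\delta_2+\mathsf{s}\delta_1\in(0,1)$, so the full propagation from $x$ to $0$ contributes this factor raised to the power $|x|$.

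Summing the geometric series over $x\le 0$ for each of the six seed-pair types and combining with the up/down signs produces a rational expression in $\mathsf{s},u,q$. Substituting $\delta_1=q(1-u)/(1-qu)$, $\delta_2=(1-u)/(1-qu)$ and using $\varphi(\mathsf{s})=\mathsf{s}/(\mathsf{s}+u-\mathsf{s}u)$, I expect $q$ to cancel out and the expression to collapse to $-\mathsf{s}(1-\mathsf{s})/(\mathsf{s}+u-\mathsf{s}u)^2$. The main obstacle is the algebra: although each individual seed-pair probability and propagation factor is explicit, tracking twelve contributions (six types at the initiating column plus their geometric tails) and simplifying the resulting rational function to the compact form requires careful bookkeeping. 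A useful conceptual cross-check is the factorized expression $J=\partial_u\varphi(\mathsf{s}\mid u)$ from \eqref{eq:intro_current}, which is suggested by the measure-mapping \eqref{eq:quad_Q_action_on_s6v_homogeneous} of the quadrant dynamics: in the bulk, the quadrant rates equal the row coordinate $y$ times the full-plane rates, so heuristically the full-plane current at a bulk point should match the rate of change of the local horizontal density $\varphi(\mathsf{s}\mid u)$ with respect to $u$—exactly the compact form that the direct computation must produce.
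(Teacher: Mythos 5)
Your proposal is correct and is essentially the paper's proof: both evaluate \eqref{eq:J_average_rate_of_change} under the stationary two-row description of $\pi(\mathsf{s})$ (Bernoulli inputs plus the vertex weights $w_u$), reduce the height change at the face to jumps initiated at columns $x\le 0$ in the single relevant row pair whose propagation reaches column $0$, and sum a geometric series in the propagation length; your per-column transparency factor $(1-\mathsf{s})\delta_2+\mathsf{s}\delta_1$ and the claim that the algebra collapses to $-\mathsf{s}(1-\mathsf{s})/(\mathsf{s}+u-\mathsf{s}u)^2=\partial_u\varphi(\mathsf{s}\mid u)$ both check out. The only organizational difference is in how the sum is sliced: the paper parametrizes the whole two-row configuration to the left of the face by an anchor pair at $x$ together with $n$ vertical crossings and $b_2$-strings of lengths $\mathbf{k}$, so for up jumps it must add the $|\mathbf{k}|$ interior seeds at rate $\mathfrak{c}(u)$ and use the derivative $\sum|\mathbf{k}|\,\xi^{|\mathbf{k}|}=(n+1)\xi/(1-\xi)^{n+2}$, while you sum directly over the initiating seed's column with the factor $\bigl((1-\mathsf{s})\delta_2+\mathsf{s}\delta_1\bigr)^{|x|}$ to its right; the two bookkeepings agree via a short stationarity identity, and yours treats up and down jumps symmetrically. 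One small point to make explicit when writing it up: your transparency analysis is stated only for up jumps; for down jumps the condition is mirrored (top horizontal edges occupied, bottom empty), the compatible column types are $(0,0;0,0)$ below a horizontal-through vertex and a vertical-through vertex below a fully packed one, and the per-column factor is again $(1-\mathsf{s})\delta_2+\mathsf{s}\delta_1$, after which the six seed-pair probabilities (computable from independent Bernoulli$(\mathsf{s})$, Bernoulli$(\varphi(\mathsf{s}))$ inputs at a single column) complete the computation.
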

Using the function $\varphi(\mathsf{s})$, see
\eqref{eq:phi_of_rho_define}, we can also write
$J(\mathsf{s},\varphi(\mathsf{s})) =
-\frac{(1-\mathsf{s})(\varphi(\mathsf{s}))^2 }{\mathsf{s}} =
\frac{\partial}{\partial u}\ssp \varphi(\mathsf{s}) $.
\begin{proof}[Proof of \Cref{thm:current_computation}]
	We use \eqref{eq:J_average_rate_of_change}
	and the description of 
	$\pi(\mathsf{s})$
	as a trajectory of the stationary 
	stochastic six vertex model,
	as discussed in
	\Cref{sub:pure_states}.
	Throughout this proof, we denote $\mathsf{t}\coloneqq \varphi(\mathsf{s})$,
	for short, and use the notation $\delta_1,\delta_2$ for vertex weights,
	see \eqref{eq:delta_1_2_through_t_u}.

	We compute the average change of height at the face $(0,0)$
	by assuming that this change comes from a jump of a horizontal path
	of a specified structure to the left of $(0,0)$.
	Namely, 
	to the left of $(0,0)$
	we find the rightmost pair of vertices of one of the following six 
	kinds (here we use the traditional names for the 
	six vertices, see \Cref{fig:6types}):
	\begin{align*}
		&(c_1,a_1), (b_2,c_2), (c_1,c_2)     \qquad \text{(for up jumps, which contribute $-1$ height change)};\\
		&(b_1,c_1), (c_2,a_2), (b_1,a_2)    \qquad \text{(for down jumps, which contribute $+1$ height change)},
	\end{align*}
	which is at position $((x,0),(x,1))$ for some $x \leq 0$.
	(Note that $(c_1,c_2)$ is not a seed pair, but the others are).
	Our horizontal path starts
	from this initial pair of vertices, 
	crosses $n\ge0$ vertical paths (i.e., formed by three occupied vertical
	edges in a single column),
	and contains $n+1$ uninterrupted horizontal strings of $b_2$ vertices of lengths
	$k_0,k_1,\ldots,k_n \ge0$ in between the vertical paths. 
	The contribution from vertices 
	in the region $\{x+1, x+2,\dots,0 \} \times \{0,1 \}$ to
	the 
	probability of such a two-layer path configuration in $\mathbb{Z} \times \left\{ 0,1 \right\}$
	is
	$(\mathsf{s}\delta_1)^n \left( (1-\mathsf{s})\delta_2 \right)^{|\mathbf{k}|}$,
	where we denote $|\mathbf{k}|=k_0+k_1+\ldots+k_n $.
	Note that this contribution is the same in the two cases
	when the
	horizontal path goes through the bottom or the top layer.
	
	The rate of the height change contains 
	a term involving the vertex weights of the pair at $(x,0),(x,1)$ times the rate of initiating jump
	at this pair (this term is equal to zero for $(c_1,c_2)$ because it is not a 
	seed pair). Moreover, for up jumps, we also need to add 
	a term accounting for $|\mathbf{k}|$ extra seed pairs along the horizontal path 
	where a jump may also be initiated.
	
	Overall, we obtain the following expression
	for the current:
	\begin{align*}
		J(\mathsf{s},\varphi(\mathsf{s}))
		&=
		\sum_{n=0}^\infty\ssp\ssp
		\sum_{\mathbf{k} = (k_0,k_1,\dots,k_n) \in \mathbb{Z}_{\geq 0}^{n+1}}
		(\mathsf{s}\delta_1)^n \left( (1-\mathsf{s})\delta_2 \right)^{|\mathbf{k}|}
		\\
		&\hspace{20pt}\times
		\biggl[
				-\mathsf{s} (1-\mathsf{t})^2 (1-\delta_1)
				(\mathfrak{a}(u) + |\mathbf{k}| \mathfrak{c}(u) ) 
				-
				(1 - \mathsf{s}) \mathsf{t}^2 \delta_2
				(1-\delta_2)(\mathfrak{b}(u) + |\mathbf{k}| \mathfrak{c}(u))   
				\\
				&\hspace{50pt}
				- 
				\mathsf{s} \mathsf{t} (1-\mathsf{t})(1-\delta_1)
				(1-\delta_2) |\mathbf{k}| \mathfrak{c}(u)
				+
				\mathsf{s}
				(1-\mathsf{t})^2 \delta_1 (1-\delta_1) \mathfrak{b} (u)
				\\&\hspace{50pt}+ 
				(1 - \mathsf{s}) \mathsf{t}^2
				(1-\delta_2)\mathfrak{a}(u) 
				+
				\mathsf{s} \mathsf{t} (1-\mathsf{t}) \delta_1 \mathfrak{c}(u)
		\biggr].
	\end{align*}
	First one can compute the sum over $\mathbf{k}$, using
	\begin{equation*}
		\sum_{\mathbf{k} = (k_0,k_1,\dots,k_n) \in \mathbb{Z}_{\geq 0}^{n+1}}
		\xi^{|\mathbf{k}|}=
		\frac{1}{(1-\xi)^{n+1}},
	\end{equation*}
	and
	\begin{equation*}
		\xi\ssp\frac{\partial}{\partial\xi}
		\sum_{\mathbf{k} = (k_0,k_1,\dots,k_n) \in \mathbb{Z}_{\geq 0}^{n+1}}
		\xi^{|\mathbf{k}|}=
		\sum_{\mathbf{k} = (k_0,k_1,\dots,k_n) \in \mathbb{Z}_{\geq 0}^{n+1}}
		|\mathbf{k}|\ssp\xi^{|\mathbf{k}|}=
		\frac{(n+1) \xi}{(1-\xi )^{n+2}}.
	\end{equation*}
	Employing these identities leaves only the summation over $n$,
	which is readily computed. After necessary simplifications, 
	we arrive at the desired formula 
	\eqref{eq:current_final_formula_in_the_text}.
\end{proof}

\subsection{Heuristic hydrodynamics in the quadrant}
\label{sub:hydro_heuristic_text}

This subsection presents a heuristic discussion of 
some hydrodynamic Burgers type equations 
in one and two space dimensions related to the stochastic six vertex model
in the quadrant.

Recall the Markov dynamics $\mathcal{Q}(\tau)$
with the infinitesimal generator
$G^{\mathrm{quad}}_{u,\eta,\tau}$
\eqref{eq:quad_generator}. 
The dynamics $\mathcal{Q}(\tau)$
acts on path configurations
in the quadrant $\mathbb{Z}_{\ge0}\times
\mathbb{Z}_{\ge1}$, with 
step-$\uplambda$ or empty-$\uplambda$
boundary conditions. 
Recall that the subset 
$\uplambda\subset\mathbb{Z}_{\ge0}$
encodes the locations of incoming vertical arrows
along the bottom boundary,
and this subset stays fixed throughout the dynamics. 
By \Cref{thm:quad_process_exists},
$\mathcal{Q}(\tau)$
changes (in distribution)
the Gibbs property of the stochastic six vertex model
by continuously increasing the spectral parameter 
from $u$ to some terminal value $u+\eta\in(0,1)$.
Namely, we have
$\mathbb{P}_u^{\mathrm{s6v,\,hom}}\ssp
\mathcal{Q}(\tau)
=
\mathbb{P}_{u+(1-e^{-\tau})\ssp\eta}^{\mathrm{s6v,\,hom}}$,
where
$\mathbb{P}_u^{\mathrm{s6v,\,hom}}$
denotes the homogeneous
stochastic six vertex model with spectral
parameter $u$ and our fixed 
step-$\uplambda$ or empty-$\uplambda$
boundary conditions.
Denote the evolving spectral parameter by
\begin{equation}
	\label{eq:evolving_spectral_parameter_for_hydrodynamics}
	u(\tau)\coloneqq u+(1-e^{-\tau})\ssp\eta.
\end{equation}

Now consider the 
limit when the lattice coordinates
are $(\lfloor x/\varepsilon \rfloor ,
\lfloor y/\varepsilon \rfloor )$
for some $(x,y)\in \mathbb{R}_{\ge0}^2$,
and $\varepsilon\searrow0$.
Assume that the subset $\uplambda=\uplambda^{(\varepsilon)}$ depends on
$\varepsilon$
and behaves
regularly in the sense that its
height function is
\begin{equation*}
	\#
	\bigl\{ 
		l\in \uplambda^{(\varepsilon)}
		\colon l\le \lfloor x/\varepsilon \rfloor
	\bigr\}=
	\lfloor 
		\varepsilon^{-1}\ssp
		\tilde\uplambda(x)
		\rfloor
\end{equation*}
for all $\varepsilon$ and $x\in \mathbb{R}_{\ge0}$,
where $\tilde \uplambda$ is a fixed 
nondecreasing function with slope $\le 1$.
For each $\tau$, consider the
(random) height function 
$h_\tau(k,l)$, where $k\ge0$, $l\ge1$,
of the stochastic six vertex model
$\mathbb{P}_{u(\tau)}^{\mathrm{s6v,\,hom}}$,
which is defined as the (signed) number of 
up-right paths crossed between $(k,l)$
and $(0,0)$.
The height function increases by crossing a path 
going north or west, and decreases otherwise.
See \Cref{fig:height_fn}
from the Introduction
for an illustration.

From \cite[Theorem 1.1]{aggarwal2020limit} we know that
the random height function $h_\tau(k,l)$ admits a 
limit shape
\begin{equation*}
	\lim_{\varepsilon\to0}
	\varepsilon\ssp
	h_\tau\left( \lfloor x/\varepsilon \rfloor ,\lfloor y/\varepsilon \rfloor  \right)
	=\mathcal{H}(\tau,x,y), \qquad x,y\in \mathbb{R}_{\ge0},
\end{equation*}
with convergence in probability.
Here $\mathcal{H}(\tau,x,y)$ is a nonrandom function
with boundary conditions for all $\tau$:
\begin{equation*}
	\mathcal{H}(\tau,x,0)=\tilde \uplambda(x),\qquad 
	\mathcal{H}(\tau,0,y)=
	\begin{cases}
		0,&\textnormal{empty-$\uplambda$ boundary conditions};\\
		y,&\textnormal{step-$\uplambda$ boundary conditions}.
	\end{cases}
\end{equation*}
Also denote 
\begin{equation}
	\label{eq:rho_via_H_definition}
	\rho(\tau,x,y)\coloneqq -\frac{\partial}{\partial x}\mathcal{H}(\tau,x,y),
\end{equation}
this is the density of the occupied vertical edges near the global location $(x,y)$.
As our discussion in the current \Cref{sub:hydro_heuristic_text}
is heuristic, we assume that the derivative 
\eqref{eq:rho_via_H_definition} exists in a suitable sense (and similarly for all
other derivatives below).

There are two types of differential equations the function $\mathcal{H}(\tau,x,y)$
should satisfy:
\begin{itemize}
	\item 
		For each fixed $\tau$, the density $\rho(\tau,x,y)$
		should satisfy
		a version of the Burgers equation in (1+1) dimensions
		\cite[Theorem 1.1]{aggarwal2020limit}:
		\begin{equation}
			\label{eq:6v_1d_Burgers}
			\frac{\partial}{\partial y}\ssp 
			\rho(\tau,x,y)+
			\frac{\partial}{\partial x}
			\bigl( \varphi(\rho(\tau,x,y)\mid u(\tau)) \bigr)=0.
		\end{equation}
		This equation corresponds to the slice by slice 
		evolution under the transfer matrix
		of the stochastic six vertex model. Here 
		$y$ plays the role of time, and
		$\varphi(\rho\mid u(\tau))$ is the particle current 
		in stationarity on $\mathbb{Z}$ at density $\rho$.
	\item The height function should satisfy:
		\begin{equation}
			\label{eq:6v_2d_Burgers}
			\frac{\partial}{\partial \tau}
			\mathcal{H}
			(\tau,x,y)=e^{-\tau}\eta \ssp y\ssp J\left( -\frac{\partial}{\partial x}\mathcal{H}(\tau,x,y),
			\varphi\Bigl( -\frac{\partial}{\partial x}\mathcal{H}(\tau,x,y)\mid u(\tau) \Bigr)\right).
		\end{equation}
		This equation corresponds to the 
		fact that the $\mathcal{H}$'s are the limit shapes of the random height functions
		$h_\tau(k,l)$. Indeed, the latter are obtained from $h_0(k, l)$
		(by means of increasing $\tau$)
		using the Markov dynamics $\mathcal{Q}(\tau)$.
		Finally, the average velocity of the height function 
		in the bulk around $(x,y)$ under $\mathcal{Q}(\tau)$ is 
		$e^{-\tau} \eta \ssp y\ssp J(\rho,\varphi(\rho\mid u(\tau)))$,
		where $\rho=\rho(\tau,x,y)$ is the density of the occupied vertical edges, and 
		the factor $e^{-\tau}\eta\ssp y$ is due to the inhomogeneity of the edge Poisson 
		clocks, cf. \eqref{eq:quad_generator}.
\end{itemize}

The two equations
\eqref{eq:6v_1d_Burgers}--\eqref{eq:6v_2d_Burgers}
are consistent in the following sense:
\begin{proposition}\label{conj:hydro}
	Let $\mathcal{H}_\tau(x,y)$ be a family of limiting height functions 
	for the stochastic six vertex models
	$\mathbb{P}_{u+(1-e^{-\tau})\ssp\eta}^{\mathrm{s6v,\,hom}}$
	in the quadrant.
	If 
	\begin{equation}
		\label{eq:6v_2d_Burgers_with_tilde}
		\frac{\partial}{\partial\tau}\ssp \mathcal{H}(\tau,x,y)
		=
		e^{-\tau}\eta\ssp y
		\ssp\tilde J( \rho(\tau,x,y) \mid u(\tau))
	\end{equation}
	for some function $\tilde J(\rho\mid u)$
	(and with $\rho$ given by 
	\eqref{eq:rho_via_H_definition}), then we must have
	\begin{equation}
		\label{eq:hydro_theorem}
		\frac{\partial}{\partial \rho}\ssp\tilde J(\rho\mid u)=
		\frac{\partial}{\partial\rho}\frac{\partial}{\partial u}\ssp
		\varphi(\rho\mid u)
	\end{equation}
	for all $\tau,x,y$ for which
	$\frac{\partial}{\partial x}\rho(\tau,x,y)\ne 0$.
\end{proposition}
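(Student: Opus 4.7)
The plan is to exploit the equality of mixed partial derivatives $\partial_\tau \partial_y \mathcal{H} = \partial_y \partial_\tau \mathcal{H}$, combining the two descriptions of the height function afforded by the postulated time evolution \eqref{eq:6v_2d_Burgers_with_tilde} and the spatial Burgers equation \eqref{eq:6v_1d_Burgers} that holds for each fixed $\tau$. All manipulations are formal, in accordance with the heuristic nature of the section.

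First, I would identify $\partial_y \mathcal{H}$ in terms of $\rho$. With $\rho = -\partial_x \mathcal{H}$ as in \eqref{eq:rho_via_H_definition}, the Burgers equation \eqref{eq:6v_1d_Burgers} reads $-\partial_x \partial_y \mathcal{H} + \partial_x\bigl[ \varphi(\rho \mid u(\tau)) \bigr] = 0$. Integrating in $x$ yields $\partial_y \mathcal{H} = \varphi(\rho \mid u(\tau)) + g(\tau, y)$ for some function $g$; comparing with the boundary conditions $\mathcal{H}(\tau,0,y) \in \{0, y\}$ and noting that $\varphi(0 \mid u) = 0$ while $\varphi(1 \mid u) = 1$ forces $g \equiv 0$.

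Next, I would differentiate \eqref{eq:6v_2d_Burgers_with_tilde} in $x$ to extract the $\tau$-derivative of $\rho$, namely
\begin{equation*}
	\partial_\tau \rho = -e^{-\tau}\,\eta\,y\,\tilde J'(\rho\mid u(\tau))\,\partial_x \rho,
\end{equation*}
where $\tilde J'$ denotes $\partial_\rho \tilde J$. Then, differentiating \eqref{eq:6v_2d_Burgers_with_tilde} in $y$, using $\partial_y \mathcal{H} = \varphi(\rho \mid u(\tau))$ on the left and $u'(\tau) = e^{-\tau}\eta$ on the right, I obtain
\begin{equation*}
	\varphi'(\rho\mid u(\tau))\,\partial_\tau \rho + e^{-\tau}\eta\,\partial_u\varphi(\rho\mid u(\tau)) = e^{-\tau}\eta\,\tilde J(\rho\mid u(\tau)) + e^{-\tau}\eta\,y\,\tilde J'(\rho\mid u(\tau))\,\partial_y \rho.
\end{equation*}
Substituting the expression above for $\partial_\tau \rho$ and the Burgers identity $\partial_y \rho = -\varphi'(\rho\mid u(\tau))\,\partial_x \rho$, the two $y$-dependent terms cancel identically, leaving the pointwise identity $\partial_u \varphi(\rho \mid u(\tau)) = \tilde J(\rho \mid u(\tau))$.

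Finally, I would invoke the assumption $\partial_x \rho \neq 0$: by the implicit function theorem, as $(x, y)$ varies in a small neighborhood with $\tau$ fixed, $\rho(\tau, x, y)$ sweeps out an open interval of values, so the identity $\partial_u \varphi = \tilde J$ holds on an open set in $\rho$; differentiating in $\rho$ then yields \eqref{eq:hydro_theorem}. The main step to verify carefully is the cancellation of the $y$-dependent terms, which is what ensures that the obstruction is truly a constraint on $\tilde J$ rather than on the geometry of the characteristics; beyond that, there is no algebraic difficulty, only the standing smoothness assumption flagged in the text.
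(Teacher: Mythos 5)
Your argument is correct (at the same formal level of rigor as the paper's), but it follows a genuinely different route. The paper works entirely at the level of the density $\rho$: it differentiates \eqref{eq:6v_2d_Burgers_with_tilde} in $x$ to get $\rho_\tau=-e^{-\tau}\eta\ssp y\ssp\rho_x\tilde J_\rho$, then demands that the Burgers equation \eqref{eq:6v_1d_Burgers} persist to first order in $\tau$; after cancellations involving the second derivatives $\rho_{xx},\rho_{xy}$ this collapses to $\tilde J_\rho\ssp\rho_x=\varphi_{\rho u}\ssp\rho_x$, and dividing by $\rho_x\neq0$ gives \eqref{eq:hydro_theorem} directly. You instead integrate the Burgers equation once in $x$ and use the boundary data (frozen/empty behavior along the $y$-axis) to kill the integration constant, arriving at $\partial_y\mathcal{H}=\varphi(\rho\mid u(\tau))$; equality of mixed partials $\partial_\tau\partial_y\mathcal{H}=\partial_y\partial_\tau\mathcal{H}$ then produces, after the cancellation of the $y$-dependent terms (which I checked is exact), the \emph{pointwise} identity $\tilde J(\rho\mid u(\tau))=\partial_u\varphi(\rho\mid u(\tau))$, which is strictly stronger than \eqref{eq:hydro_theorem}; the hypothesis $\rho_x\neq0$ enters only at the last step, to guarantee that $\rho$ sweeps an interval of values at fixed $u(\tau)$ so that the identity may be differentiated in $\rho$. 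What each approach buys: yours avoids second derivatives of $\rho$ and yields the sharper conclusion $\tilde J=\partial_u\varphi$ (consistent with the explicit current of \Cref{thm:current_computation}), at the price of the extra input $\partial_y\mathcal{H}=\varphi(\rho\mid u(\tau))$ with vanishing integration constant — a legitimate use of the hypothesis that the $\mathcal{H}(\tau,\cdot,\cdot)$ are the actual quadrant limit shapes with the stated boundary values, and heuristically on the same footing as \eqref{eq:6v_1d_Burgers} itself; the paper's argument uses only the differentiated conservation law and therefore can only pin down $\tilde J$ up to an additive function of $u$, which is exactly why its statement is phrased in terms of $\partial_\rho\tilde J$. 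Both computations rest on the same formal smoothness assumptions flagged in the text, so no gap.
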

In words, if the time-dependent
limiting height functions satisfy any (2+1)-dimensional
hydrodynamic equation of a certain form
(corresponding to inhomogeneous edge rates), 
then the right-hand side is the same as in
\eqref{eq:6v_2d_Burgers}, up to a constant depending on $\rho$.
Clearly, the velocity $J$ given by
\eqref{eq:current_final_formula_in_the_text}
satisfies \eqref{eq:hydro_theorem}.
\begin{proof}[Proof of \Cref{conj:hydro}]
	Throughout the proof we denote derivatives 
	by lower indices like $\rho_\tau$,
	and also sometimes by $\partial_\tau\rho$ when convenient.
	Differentiating \eqref{eq:6v_2d_Burgers_with_tilde} in $x$, we get
	\begin{equation}
		\label{eq:hydro_proof_1}
		\rho_\tau=
		-
		e^{-\tau}\eta\ssp y\ssp
		\rho_x\ssp
		\tilde J_\rho( \rho \mid u(\tau))
		.
	\end{equation}
	Assuming that the Burgers equation \eqref{eq:6v_1d_Burgers}
	holds at time $\tau$, we use
	\eqref{eq:hydro_proof_1}
	to write down the same equation at time $\tau+\Delta\tau$:
	\begin{equation}
		\label{eq:hydro_proof_2}
		\partial_y
		\left( \rho-
		e^{-\tau}\eta\ssp y\ssp
		\rho_x\ssp
		\tilde J_\rho( \rho \mid u(\tau))\Delta \tau
		\right)
		+
		\partial_x
		\left( \varphi
		\left(  
			\rho-
			e^{-\tau}\eta\ssp y\ssp
			\rho_x\ssp
			\tilde J_\rho( \rho \mid u(\tau))\Delta \tau
		\mid u(\tau+\Delta \tau)\right)\right)
		=
		0.
	\end{equation}
	In \eqref{eq:hydro_proof_1}, \eqref{eq:hydro_proof_2} we use the shorthand $\rho=\rho(\tau,x,y)$, so
	this is the quantity at time $\tau$.
	Equating the coefficient by $\Delta\tau$ in \eqref{eq:hydro_proof_2} to zero,
	and multiplying by $e^{\tau}$,
	we obtain 
	\begin{equation}
		\label{eq:hydro_proof_3}
		-\partial_y
		\left( 
		\eta\ssp y\ssp
		\rho_x\ssp
		\tilde J_\rho( \rho \mid u(\tau))
		\right)
		+
		\partial_x
		\left( 
			-\varphi_\rho\ssp
			\eta\ssp y\ssp
			\rho_x\ssp
			\tilde J_\rho( \rho \mid u(\tau))
			+
			e^{\tau}
			\varphi_u u_\tau
		\right)=0.
	\end{equation}
	We have
	$e^{\tau} u_\tau=\eta$.
	Dividing \eqref{eq:hydro_proof_3} by $(-\eta)$, we continue as
	\begin{equation}
		\label{eq:hydro_proof_4}
		0=\tilde J_\rho \rho_x+y \tilde J_{\rho \rho}\rho_x \rho_y+y \tilde J_\rho \rho_{xy}
		+
		y\varphi_{\rho\rho}\rho_x^2 \tilde J_\rho
		+
		y \varphi_\rho \rho_{xx}\tilde J_\rho
		+
		y \varphi_\rho \rho_x^2 \tilde J_{\rho\rho}
		-
		\varphi_{u\rho}\rho_x.
	\end{equation}
	Substituting the expression for $\varphi$
	\eqref{eq:phi_of_rho_define}
	and using the Burgers equation \eqref{eq:6v_1d_Burgers} at time $\tau$
	to express $\rho_y$ through $\rho_x$, 
	we see that \eqref{eq:hydro_proof_4} reduces to
	\begin{equation*}
		\tilde J_\rho\rho_x-\varphi_{\rho u}\rho_x=0,
	\end{equation*}
	as desired.
\end{proof}

\section{Dynamics on the torus}
\label{sec:torus}

In this section 
we define an analogue of the full plane dynamics 
$\mathcal{C}(t)$ constructed in \Cref{sec:Markov_full_plane_process}
which acts on up-right path configurations on the torus.
Our dynamics preserve the six vertex model Gibbs measures
with arbitrary slopes $(\mathsf{s},\mathsf{t})$. 
We present
two proofs.
The first immediately follows from the Yang--Baxter equation and its bijectivisation,
and involves a certain discrete twist of the six vertex graph on the torus.
For simplicity, in the first proof we only consider the stochastic six vertex weights
for which we already have explicit jump rates.
For the second proof, we mimic the 
argument of
\cite{BorodinBufetov2015} involving symmetry of jump rates,
and this 
allows to generalize our torus dynamics
to arbitrary six vertex weights $a_1,a_2,b_1,b_2,c_1,c_2$.

\subsection{Bijectivisation on the torus}

Suppose we have an $M \times N$ torus 
$\mathbb{Z}/M \mathbb{Z} \times \mathbb{Z}/N \mathbb{Z}$
denoted by $\mathbb{T}=\mathbb{T}_{M,N}$. 
Consider the set
$\mathcal{S}_{k_1,k_2}$ of configurations 
of up-right paths on the torus
with fixed
overall height change $k_1$ and $k_2$ in the $x$ and $y$
directions, respectively. 
Let $\mu_{k_1,k_2}$ denote the Gibbs measure
on $\mathcal{S}_{k_1,k_2}$ given by a choice of 
six vertex 
Boltzmann
weights $a_1,a_2,b_1,b_2,c_1,c_2$
(see \Cref{fig:6types} for an illustration of the weights).
We index the vertices by
$(x,y)$ where $x \in \{0, \dots, M-1\}$, $y \in \{0, \dots,
N-1\}$. If $x$ or $y$ is outside of this range, we 
reduce these coordinates modulo $M$ or $N$, respectively.

First, we consider the special stochastic case 
$a_1=a_2=1$, $b_1=1-c_1=\delta_1$, $b_2=1-c_2=\delta_2$,
where $\delta_1,\delta_2$ depend on $q$ and $u$, see \eqref{eq:delta_1_2_through_t_u}.
Let us define a continuous time Markov chain 
on up-right path configurations on the torus.
We employ the notion of seed pairs
(\Cref{def:seed_pair}) and
the jump rates $\mathfrak{R}_{v_1,v_2}(u)$
from \Cref{fig:rates_for_G_quad}.

\begin{definition}[Continuous time Markov dynamics on the torus]
\label{def:torus_jump_rates}
	Each vertical edge  $(v_1,v_2)$
	which is a seed pair has 
	an exponential clock with rate $\mathfrak{R}_{v_1,v_2}(u)$. 
	When the clock at $(v_1,v_2)$ rings,
	the horizontal path passing through this seed pair
	jumps up or down 
	depending on the local path configuration
	around the edge $v_1-v_2$.
	This jump then instantaneously 
	propagates to the right.
	The jump propagation may stop in two ways:
	\begin{itemize}
		\item Either there exists a configuration to the right 
			where the jump may stop 
			in the same way as in the full plane dynamics,
			according to the rules described in 
			\Cref{sub:sample}. See
			\Cref{fig:jump_prop} for an illustration.
		\item Or
			there is no stopping configuration,
			and the
			jump propagation has to make
			a loop around the torus, leading to a jump of a full straight horizontal path.
			See \Cref{fig:torus_jump} for an illustration.
	\end{itemize}
	We denote 
	by $\mathcal{L}(\tau)$
	the continuous time Markov semigroup 
	of thus defined process.
\end{definition}

\begin{figure}[htpb]
	\centering
	\includegraphics[width=.7\textwidth]{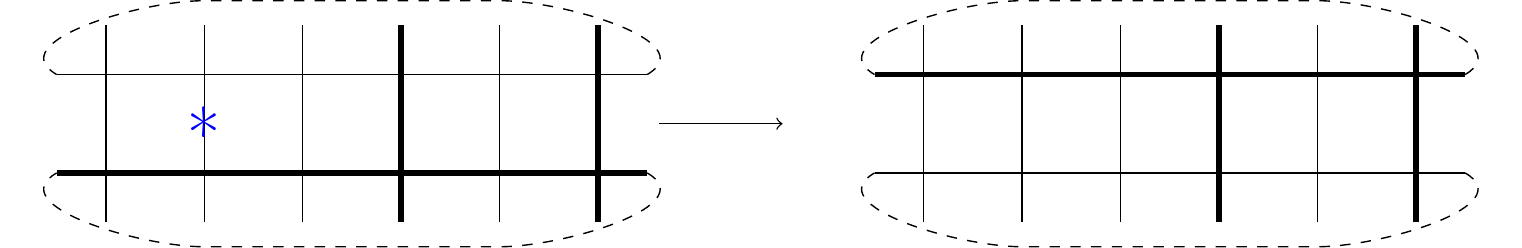}
	\caption{An example of a jump where 
	the propagation goes all the way 
	around the torus, and the full straight horizontal path
	jumps up. The identification of the horizontal edges is also shown.}
	\label{fig:torus_jump}
\end{figure}

We will show that 
in the case of 
the stochastic six vertex weights,
the Markov chain 
$\mathcal{L}(\tau)$ preserves the Gibbs measure
$\mu_{k_1,k_2}$ for any $k_1,k_2$.
We achieve this by constructing $\mathcal{L}(\tau)$ as a
Poisson type continuous time limit of a discrete time Markov chain coming from the bijectivisation
of the Yang--Baxter equation for the stochastic six vertex model 
defined in \Cref{sec:bijectivisation}.
In the torus case it turn out to be very convenient to define the
discrete
Markov chain not on the straight torus 
$\mathbb{T}$,
but on its suitably twisted version.

\begin{definition}
	Let the (\emph{discretely}) \emph{twisted torus}
	$\tilde{\mathbb{T}}=\tilde{\mathbb{T}}_{M,N}$
	be the graph displayed in
	\Cref{fig:twisted_torus}.
	We associate the spectral parameters
	$u,u+\varepsilon,\ldots,u+\varepsilon $ with the 
	horizontal strands as shown in this figure, 
	where $0<u<1$ and $0<\varepsilon<1-u$.
\end{definition}

\begin{figure}[htpb]
	\centering
	\includegraphics[height=.25\textwidth]{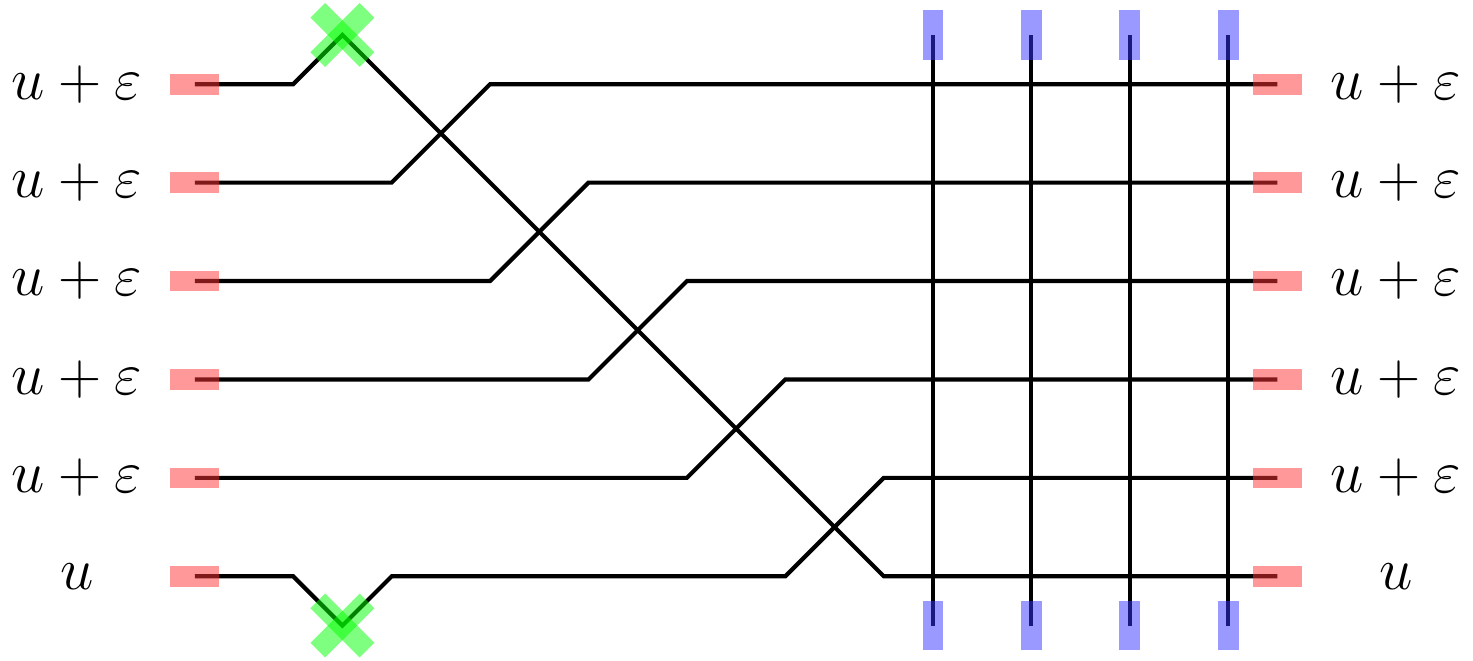}
	\caption{The twisted six vertex graph on the torus with $M=4$ and $N=6$. The identified edges are
	indicated in bold, and note that on the very left we 
	also identify the pieces of a diagonal cross. The spectral parameters associated to the 
	horizontal strands are $u,u+\varepsilon,u+\varepsilon,\ldots,u+\varepsilon$.}
	\label{fig:twisted_torus}
\end{figure}

Since the graph $\tilde{\mathbb{T}}$
is embedded into the usual torus,
on $\tilde{\mathbb{T}}$ the height function is still well-defined
(up to a constant) on the faces of the fundamental domain.
The horizontal and the vertical 
height change
along a generator of each homology class are also well
defined. See \Cref{fig:twisted_torus_hf} for an example of a path configuration and the height function.
Denote by $\mathcal{S}^{\mathrm{twist}}_{k_1,k_2}$
the six vertex configurations on $\tilde{\mathbb{T}}$ where
the horizontal height change is $k_1$, and the vertical
height change is $k_2$. Denote by $\mu_{\varepsilon,k_1,k_2}$
the Gibbs measure restricted to six vertex configurations in
$\mathcal{S}^{\mathrm{twist}}_{k_1,k_2}$, where we use
stochastic vertex weights parameterized by the spectral parameters
$u$ on the bottom row and $u+\varepsilon$ on every other
row. For cross vertices, we take their weights
equal to $X_{u,u+\varepsilon}=w_{u/(u+\varepsilon)}$
\eqref{eq:X_vertex}, which are the weights entering the 
Yang--Baxter equation.
Note that all these vertex weights on $\tilde{\mathbb{T}}$ are nonnegative.

\begin{figure}[h]
	\centering
	\includegraphics[height=.25\textwidth]{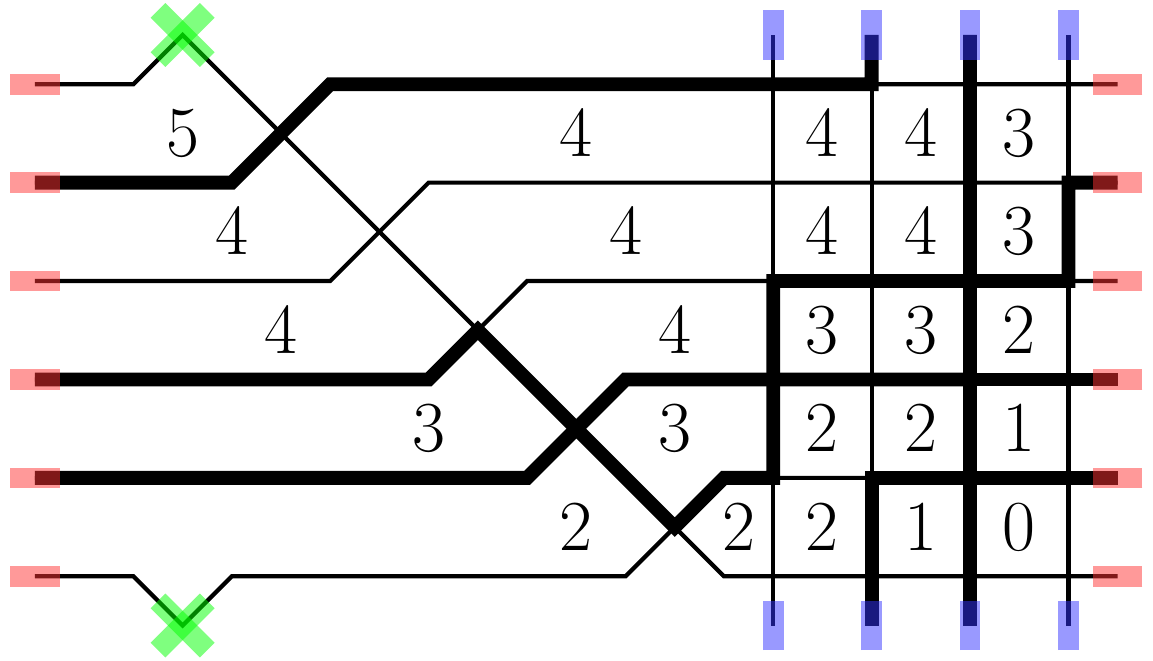}
	\caption{
		An up-right path configuration and the corresponding height function which is well-defined 
		(up to a constant)
		on faces of the fundamental domain.
		Here $k_1=2$, $k_2=3$.}
	\label{fig:twisted_torus_hf}
\end{figure}

\begin{definition}
	\label{def:discrete_chain}
	For $s,s'\in \mathcal{S}^{\mathrm{twist}}_{k_1,k_2}$
	we define the Markov transition probability
	$L_{\varepsilon}(s\to s')$ 
	by starting from $s$ and performing the following sequence of $N$ random updates:
	\begin{itemize}
	\item First, drag the cross vertex between rows $0$ and $1$
		through the lattice until it is to the right of $x =
		M-1$. Each step of dragging the cross is a 
		random update coming from the bijectivisation
		of the Yang--Baxter equation, see \Cref{fig:bijProbs}.
		After this, the spectral parameters on rows
		$0,\dots, N-1$ are $u+\varepsilon, u, u + \varepsilon,
		\dots, u + \varepsilon$.
	\item Then drag the cross 
		between rows $1$ and $2$
		through the lattice 
		using bijectivisation
		until it is to the right of $x =
		M-1$. After this, the spectral parameters are
		$u+\varepsilon, u + \varepsilon,u, u+\varepsilon, \dots,
		u + \varepsilon$.\\
	\vdots
	\item At the last step, drag the cross between rows $N-1$ and $0$ through
		the lattice. After this step, the 
		lattice returns to the original state, and 
		the spectral parameters are back
		to $u,u+\varepsilon,u+\varepsilon,\ldots,u+\varepsilon $.
	\end{itemize}
\end{definition}
 
The following statement ensures that 
$L_{\varepsilon}$ is well-defined:
\begin{lemma}
	The random updates described in \Cref{def:discrete_chain}
	preserve 
	$\mathcal{S}^{\mathrm{twist}}_{k_1,k_2}$.
\end{lemma}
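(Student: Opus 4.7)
The plan is to show that every elementary bijectivisation step appearing in the construction of $L_\varepsilon$ is a strictly \emph{local} modification of the path configuration on $\tilde{\mathbb{T}}$, which therefore cannot alter either winding number $k_1$ or $k_2$.

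First I would unpack \Cref{def:discrete_chain}: the transition probability $L_\varepsilon(s \to s')$ is a composition of $N \cdot M$ elementary bijectivisation steps, each of which pushes the cross past one straight vertex using the forward probabilities $\mathbf{p}^{\mathrm{fwd}}_{u,u+\varepsilon}(\,\cdot \mid I, J)$ from \Cref{prop:YBE_bij_probabilities}. By construction (\Cref{def:bijectivisation} together with the discussion in \Cref{sub:bijectivisation_of_YBE}), for any boundary data $(I, J)$ this measure is supported on pairs of three-vertex configurations in $\mathcal{A}_{I,J} \times \mathcal{B}_{I,J}$ that share exactly the same $6$-edge boundary. Hence an elementary step modifies only the (at most) three edges strictly inside the hexagonal region on which the Yang--Baxter relation is being applied, while leaving every other edge of $\tilde{\mathbb{T}}$ untouched.

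Next I would observe that $k_1$ and $k_2$ can be computed by counting the signed number of up-right path crossings along two closed cycles on $\tilde{\mathbb{T}}$ representing the two generators of $H_1(\tilde{\mathbb{T}}) \cong \mathbb{Z}^2$. Since one can always deform such a representative cycle to avoid the three internal edges of any given hexagonal region while still crossing precisely the same external edges of that region, the crossing count (and hence $k_1, k_2$) is unchanged by a single elementary step. Composing $N \cdot M$ such steps then preserves $k_1$ and $k_2$, so $L_\varepsilon$ maps $\mathcal{S}^{\mathrm{twist}}_{k_1,k_2}$ into itself, as claimed.

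The main point requiring care is the behavior of the cross as it wraps from the right boundary of a row around to the left boundary of $\tilde{\mathbb{T}}$. The twist in the construction of $\tilde{\mathbb{T}}$ (the identification of the ``pieces of a diagonal cross'' on the very left of \Cref{fig:twisted_torus}) is engineered exactly so that at every stage of the dragging the cross remains a legitimate vertex of the twisted torus graph, participating in a well-defined hexagonal neighborhood on which the Yang--Baxter relation (and thus its bijectivisation) can be applied. Once this bookkeeping is verified, the local argument above applies uniformly to each of the $N \cdot M$ elementary steps and yields the lemma.
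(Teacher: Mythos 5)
Your argument is correct and is essentially the paper's own proof, which simply notes that each step of dragging the cross changes the path configuration (and hence the height function) only locally, so the total height change around the torus is preserved. Your elaboration via homology-cycle crossing counts and the wrap-around bookkeeping is a more detailed write-up of that same locality observation.
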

\begin{proof}
	At each step of dragging the cross the path configuration
	and the associated height function only change locally, so
	the height change over the whole torus is preserved.
\end{proof}

\begin{proposition}
	\label{prop:torus_discrete_preserve}
		The Markov chain $L_\varepsilon$ preserves the measure
		$\mu_{\varepsilon,k_1,k_2}$.
\end{proposition}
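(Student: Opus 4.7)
The plan is to view $L_\varepsilon$ as a composition $L_\varepsilon = L_\varepsilon^{(1)} L_\varepsilon^{(2)} \cdots L_\varepsilon^{(N)}$, where $L_\varepsilon^{(k)}$ drags the cross through the $k$-th row of $\tilde{\mathbb{T}}$ by iterating the bijectivisation of the Yang--Baxter equation. For each $0 \le k \le N$, let $\mu^{(k)}_{\varepsilon,k_1,k_2}$ be the Gibbs measure on $\mathcal{S}^{\mathrm{twist}}_{k_1,k_2}$ for the spectral parameter assignment in which row $k \bmod N$ carries $u$ and all other rows carry $u+\varepsilon$, so that $\mu^{(0)} = \mu^{(N)} = \mu_{\varepsilon,k_1,k_2}$. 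The key claim is that for each $k$,
\begin{equation*}
\mu^{(k-1)}_{\varepsilon,k_1,k_2}\, L_\varepsilon^{(k)} = \mu^{(k)}_{\varepsilon,k_1,k_2}.
\end{equation*}
Composing these identities around the cycle yields the desired invariance.

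To prove the claim for a single $k$, I would closely follow the argument of \Cref{prop:U_swap_uv_tworow}. Each sub-step $L_\varepsilon^{(k)}$ consists of $M$ local moves (one per column), and at each local move the forward transition probability $\mathbf{p}^{\mathrm{fwd}}_{u,u+\varepsilon}$ combined with the three vertex weights on the left-hand side of the Yang--Baxter equation \eqref{eq:YBE} reproduces the three vertex weights on the right-hand side, by the reversibility relation \eqref{eq:bij_reversibility} and the sum-to-one property \eqref{eq:bij_sum_to_one}. Iterating this identity across the row converts the Gibbs weight of the initial configuration (with the cross on the left and parameters $u, u+\varepsilon$ on rows $k-1, k$) into a convex mixture of Gibbs weights for configurations in which the cross has moved past the rightmost column and the spectral parameters on the two rows are swapped, exactly as in \Cref{prop:U_swap_uv_tworow}.

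The main obstacle is handling the periodic boundary condition. In \Cref{prop:U_swap_uv_tworow}, the cross eventually exits into an empty region where its weight is trivially $1$. On $\tilde{\mathbb{T}}$ there is no such region: the cross that exits on the right has to be identified with the cross that was present on the left before the sub-step. The twist is designed precisely to allow this: the cross sitting on the left of the fundamental domain before $L_\varepsilon^{(k)}$, with spectral parameters $u$ (lower) and $u+\varepsilon$ (upper) of rows $k-1$ and $k$, is identified via the periodic boundary with the cross that emerges on the right after the drag, which carries the same spectral parameter ratio $u/(u+\varepsilon)$ and thus the same $X_{u,u+\varepsilon}$ factor. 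Consequently the cross weight enters both $\mu^{(k-1)}$ and $\mu^{(k)}$ in the same way and cancels out of the measure-mapping identity. Here one also uses that every local move preserves both height changes $k_1, k_2$, so each $L_\varepsilon^{(k)}$ stays inside $\mathcal{S}^{\mathrm{twist}}_{k_1,k_2}$, and the forward probabilities of \Cref{prop:YBE_bij_probabilities} are nonnegative for $0 < u < u+\varepsilon < 1$, making each sub-step a bona fide Markov operator.

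Combining the $N$ identities $\mu^{(k-1)} L_\varepsilon^{(k)} = \mu^{(k)}$ and using $\mu^{(0)} = \mu^{(N)} = \mu_{\varepsilon,k_1,k_2}$ yields $\mu_{\varepsilon,k_1,k_2} L_\varepsilon = \mu_{\varepsilon,k_1,k_2}$, as desired.
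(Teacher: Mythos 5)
Your proposal is correct and follows essentially the same route as the paper: decompose $L_\varepsilon$ into the $N$ row-drags, use the bijectivisation identity (as in \Cref{prop:U_swap_uv_tworow,prop:action_of_L_k_on_s6v}) to see that each drag maps the current Gibbs measure to the Gibbs measure with the spectral parameters swapped, and note that after all $N$ drags the parameter sequence returns to $u,u+\varepsilon,\ldots,u+\varepsilon$. Your extra discussion of the periodic boundary — that the cross exiting on the right is identified via the twist with the cross weight already built into the target Gibbs measure, so no "weight-$1$ exit" is needed — is exactly the point the paper leaves implicit, and you handle it correctly.
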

\begin{proof}
	This follows from the fact that each 
	step of dragging the cross through the whole torus 
	maps the current Gibbs measure into a Gibbs
	measure with swapped spectral parameters
	(see \Cref{prop:U_swap_uv_tworow,prop:action_of_L_k_on_s6v}).
	After all $N$ steps, the spectral parameters
	are back to the original sequence $u,u+\varepsilon,u+\varepsilon,\ldots,u+\varepsilon $,
	and hence the Gibbs measure is preserved.
\end{proof}

Let us now discuss the limit as $\varepsilon\to 0$.

\begin{proposition}
	In the limit as $\varepsilon\to 0$, the Gibbs measure
	$\mu_{\varepsilon,k_1,k_2}$ 
	on the twisted graph
	can be identified with 
	the measure $\mu_{k_1,k_2}$ on the straight torus $\mathbb{T}$.
	\label{prop:torus_measure_epsilon_to_zero}
\end{proposition}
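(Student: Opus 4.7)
The approach is to analyze the degeneration of the cross vertex weights as $\varepsilon\to 0$ and to set up an explicit identification of configurations on $\tilde{\mathbb{T}}$ with those on $\mathbb{T}$. Recall that each cross vertex in $\tilde{\mathbb{T}}$ has weight $X_{u,u+\varepsilon}=w_{v}$, where $v=u/(u+\varepsilon)$. As $\varepsilon\to 0$ we have $v\to 1$, and from \eqref{eq:w_u_weights} we see that
\begin{equation*}
	w_{1}(0,0;0,0)=w_{1}(1,1;1,1)=w_{1}(1,0;0,1)=w_{1}(0,1;1,0)=1,
\end{equation*}
while $w_{1}(1,0;1,0)=w_{1}(0,1;0,1)=0$. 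Thus in the limit, any cross vertex whose two incoming strands are distinct (one occupied, one empty) is forced to swap the two strands, and ``straight-through'' cross configurations have weight zero.

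First, I would use this degeneration to define a bijection between the support of the $\varepsilon=0$ limit of $\mu_{\varepsilon,k_1,k_2}$ on $\tilde{\mathbb{T}}$ and the configurations on $\mathbb{T}$ of corresponding homology classes. The bijection is obtained by contracting each cross vertex: if the cross carries an unequal pair of incoming strands, the limit forces the swap, and contracting the cross reconnects the horizontal edges on the two sides; if both strands are empty or both are fully occupied, the cross passes the state through trivially. After this contraction, the resulting graph on the left boundary of $\tilde{\mathbb{T}}$ matches the identifications of the straight torus $\mathbb{T}$.

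Next I would verify that the homology classes $(k_1,k_2)$ are preserved by this bijection. Because the contraction of each cross is a local operation that does not change the net number of paths crossing any generator of $H_1(\mathbb{T})$, both the horizontal and vertical height changes match between corresponding configurations. I would then check the convergence of weights: on $\tilde{\mathbb{T}}$ the straight vertices carry spectral parameter $u+\varepsilon$ (or $u$ on the bottom row) and converge to $w_u$, while each surviving cross vertex contributes weight $1$ in the limit. Summing over configurations yields convergence of the unnormalized weights term-by-term, and hence of the normalized probabilities $\mu_{\varepsilon,k_1,k_2}\to \mu_{k_1,k_2}$.

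The main obstacle is the combinatorial bookkeeping required to show that the twist is genuinely trivialized in the limit, i.e.\ that after contracting the crosses one lands on the straight torus $\mathbb{T}$ with its usual identifications, and that this contraction is compatible with the decomposition into homology sectors $\mathcal{S}_{k_1,k_2}$. In particular, because the crosses are arranged diagonally along the left edge of the fundamental domain, one must carefully trace how the horizontal strands of $\tilde{\mathbb{T}}$ are permuted after contraction, and check that this permutation is absorbed into the standard cyclic identification of $\mathbb{T}$ without shifting the value of $k_1$ or $k_2$. Once this combinatorial step is established, the rest of the argument is a routine limit of finite sums of nonnegative rational functions in $\varepsilon$.
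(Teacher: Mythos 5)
Your proposal is correct and takes essentially the same route as the paper: as $\varepsilon\to 0$ the cross weights $X_{u,u+\varepsilon}=w_{u/(u+\varepsilon)}$ degenerate to $w_1$, which assigns zero weight exactly to the types $(1,0;1,0)$ and $(0,1;0,1)$, makes each cross deterministic, and lets one contract the crosses to identify configurations (sector by sector in $(k_1,k_2)$, with matching weights) with those on the straight torus $\mathbb{T}$. The bookkeeping you flag as the main obstacle (the contraction landing on $\mathbb{T}$ with unchanged height changes, plus convergence of normalized weights over the finite state space) is precisely what the paper disposes of via its figure-based identification, so your added detail is consistent with, not divergent from, the paper's argument.
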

\begin{proof}
	As $\varepsilon\to0$, the cross vertex weights
	become $X_{u,u}=w_1$, and place zero weight onto the 
	vertices $(1,0;1,0)$ and $(0,1;0,1)$, see
	\eqref{eq:w_u_weights}.
	With this restriction on the cross vertex types, 
	we may identify path configurations on $\tilde{\mathbb{T}}$
	with those on $\mathbb{T}$, see \Cref{fig:twisted_torus_epsilon}
	for an illustration. Therefore,
	the measure $\mu_{\varepsilon,k_1,k_2}$ for $\varepsilon=0$
	is determined only by the usual vertices and not the cross vertices, 
	and thus coincides with $\mu_{k_1,k_2}$.
\end{proof}

\begin{figure}[htpb]
	\centering
	\includegraphics[height=.25\textwidth]{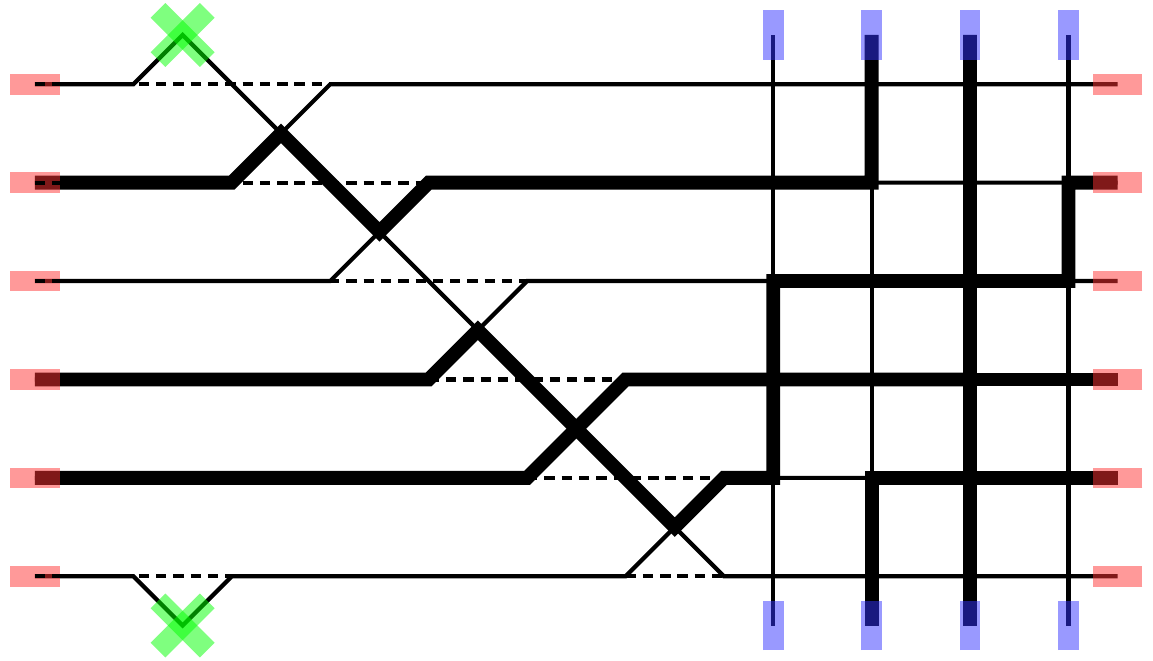}
	\caption{A path configuration under $\mu_{\varepsilon,k_1,k_2}$ for 
	$\varepsilon=0$. In this case, the state of a cross vertex is
	completely determined by the paths to the right of it.
	Cutting the cross vertices out and identifying the strands as shown by the dashed lines
	leads to the straight torus graph $\mathbb{T}$.}
	\label{fig:twisted_torus_epsilon}
\end{figure}

Consider the Poisson type continuous time limit
as $\varepsilon\to0$ of the iterated Markov transition operators
$L_\varepsilon^{\lfloor \tau/\varepsilon \rfloor }$, 
where $\tau\in \mathbb{R}_{\ge0}$.
We know from
\Cref{sec:continuous_time_limit}
that muliple dragging of the crosses should be replaced by jumps
of the horizontal paths initiated by Poisson clocks
placed onto vertical edges.
One readily sees that in this limit we have
\begin{equation*}
	\lim_{\varepsilon\to 0}
	L_\varepsilon^{\lfloor \tau/\varepsilon \rfloor }
	=
	\mathcal{L}(\tau),
\end{equation*}
where $\mathcal{L}(\tau)$ is the Markov transition operator
(over time $\tau$) from \Cref{def:torus_jump_rates}.
This convergence together with 
\Cref{prop:torus_discrete_preserve,prop:torus_measure_epsilon_to_zero}
implies the following result:
\begin{theorem}
	\label{thm:torus_preserves}
	The continuous time Markov process
	$\mathcal{L}(\tau)$
	on stochastic six vertex configurations on the 
	torus $\mathbb{T}$ preserves the measure
	$\mu_{k_1,k_2}$ for arbitrary $k_1,k_2$.
\end{theorem}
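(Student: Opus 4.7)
The plan is to obtain the theorem as a continuous-time Poisson limit of the discrete-time Markov chain $L_\varepsilon$ on the twisted torus $\tilde{\mathbb{T}}$, leveraging the two ingredients already in place: the discrete preservation statement (\Cref{prop:torus_discrete_preserve}) and the identification of measures at $\varepsilon = 0$ (\Cref{prop:torus_measure_epsilon_to_zero}). Since the state space $\mathcal{S}_{k_1,k_2}$ is finite (the torus has finitely many edges), all limits below are limits of probability vectors in a finite-dimensional simplex, so no subtle probabilistic issues like those in \Cref{sec:Markov_full_plane_process} arise.

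First, I would justify the claimed convergence $L_\varepsilon^{\lfloor \tau/\varepsilon\rfloor} \to \mathcal{L}(\tau)$ on $\mathcal{S}_{k_1,k_2}$. By \Cref{lemma:limit_of_probabilities}, the probability of flipping any given vertical edge during a single cross-dragging pass through one row is of order $\varepsilon$, with the coefficient by $\varepsilon$ being exactly one of $\mathfrak{a}(u),\mathfrak{b}(u),\mathfrak{c}(u)$ determined by the local seed-pair configuration, matching the rates in \Cref{fig:rates_for_G_quad}. Propagation to the right (whether stopping at a vertical edge or wrapping around the torus as in \Cref{fig:torus_jump}) occurs deterministically within a single pass, since each local step of propagation corresponds to entries of $\mathbf{p}^{\mathrm{fwd}}_{u,v}$ equal to $1$ in \Cref{fig:bijProbs}. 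Thus each individual pass of $L_\varepsilon$ contributes $\varepsilon$-order jumps at each seed pair of the current configuration, with higher-order corrections of order $\varepsilon^2$. Taking $\lfloor \tau/\varepsilon\rfloor$ iterations and passing to the Poisson limit in the standard way (as in \Cref{sub:u_q_cont}) produces the continuous-time Markov semigroup with precisely the rates of \Cref{def:torus_jump_rates}, i.e.\ $\mathcal{L}(\tau)$.

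Next, I would combine this with the two propositions. By \Cref{prop:torus_discrete_preserve}, for every positive integer $n$ we have $\mu_{\varepsilon,k_1,k_2}\ssp L_\varepsilon^n = \mu_{\varepsilon,k_1,k_2}$. Setting $n = \lfloor \tau/\varepsilon\rfloor$ and letting $\varepsilon \to 0$, the left-hand side converges to $\mu_{k_1,k_2}\ssp \mathcal{L}(\tau)$ by the convergence of $L_\varepsilon^{\lfloor\tau/\varepsilon\rfloor}$ to $\mathcal{L}(\tau)$ combined with \Cref{prop:torus_measure_epsilon_to_zero} (i.e.\ continuity of $\mu_{\varepsilon,k_1,k_2}$ in $\varepsilon$ at $\varepsilon=0$, which is immediate since the weights are continuous rational functions of $\varepsilon$ and the partition function is bounded away from zero on the finite state space). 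The right-hand side converges to $\mu_{k_1,k_2}$ by the same continuity. Therefore $\mu_{k_1,k_2}\ssp \mathcal{L}(\tau) = \mu_{k_1,k_2}$, which is the desired invariance.

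The only step requiring genuine care is the first one: verifying that the bijection between path configurations on $\tilde{\mathbb{T}}$ at $\varepsilon=0$ and on $\mathbb{T}$ (from \Cref{prop:torus_measure_epsilon_to_zero}) is compatible with the jump dynamics in the limit, so that jumps of $L_\varepsilon$ that would wrap across the diagonal cut of $\tilde{\mathbb{T}}$ correspond correctly to jumps of $\mathcal{L}(\tau)$ that wrap around the torus $\mathbb{T}$ (as in \Cref{fig:torus_jump}). This reduces to checking that, at $\varepsilon = 0$, the trivial cross vertex weights $X_{u,u}$ enforce that when a propagation step reaches the diagonal cut, the cross is transferred across it without altering the occupation of any edge on $\mathbb{T}$, so the wrap-around jumps of $\mathcal{L}(\tau)$ are exactly the $\varepsilon\to 0$ images of the cross-dragging passes of $L_\varepsilon$ that make a full loop. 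Everything else is bookkeeping with finite-dimensional probability vectors.
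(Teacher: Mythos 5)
Your proposal follows the paper's own route exactly: the paper also proves the theorem by taking the Poisson-type $\varepsilon\to 0$ limit of $L_\varepsilon^{\lfloor \tau/\varepsilon\rfloor}$ on the twisted torus and combining \Cref{prop:torus_discrete_preserve} with the identification of $\mu_{\varepsilon,k_1,k_2}$ at $\varepsilon=0$ from \Cref{prop:torus_measure_epsilon_to_zero}. Your additional care about matching the wrap-around jumps across the diagonal cut and the finite-state-space continuity argument just fills in details the paper leaves as "one readily sees," so the proof is correct and essentially identical in approach.
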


\begin{remark}[Comparison with a similar process from \cite{BorodinBufetov2015}]
	In \cite{BorodinBufetov2015}, the authors
	introduce a continuous time Markov process (denote it by $\hat {\mathcal{L}}$)
	preserving the measure $\mu_{k_1,k_2}$ on the torus.
	Rotating $\hat{\mathcal{L}}$ by
	$\pi/2$
	counterclockwise and reflecting along the $x$ direction
	makes the jumps in $\hat{\mathcal{L}}$ move the horizontal paths by jumps triggered by vertical edges.
	That is, we may describe both 
	$\mathcal{L}$ and $\hat{\mathcal{L}}$
	in similar terms.
	Moreover, after a scalar time renormalization, 
	some of the jumps and their jump rates exactly
	coincide in
	$\mathcal{L}$ and $\hat{\mathcal{L}}$.
	However, this transformation does not 
	make all the rates in both processes equal,
	which shows that the two processes are not the same. 

	The processes 
	$\mathcal{L}$ and $\hat{\mathcal{L}}$
	share another common feature, namely, that one can prove the 
	preservation of the measure $\mu_{k_1,k_2}$
	under both using a certain symmetry of the jump rates. We 
	present such an argument for our processes $\mathcal{L}$
	in \Cref{sub:torus_general} below.
\end{remark}

\subsection{Dynamics on the torus for general six vertex model}
\label{sub:torus_general}

Up to an overall constant $\upeta>0$, the jump rates 
$\mathfrak{a}(u)$, $\mathfrak{b}(u)$, and $\mathfrak{c}(u)$
\eqref{eq:abc_rates}
in the Markov chain $\mathcal{L}(\tau)$ from \Cref{def:torus_jump_rates}
can be
written in terms of the general six vertex weights 
$a_1,a_2,b_1,b_2,c_1,c_2$
(recall \Cref{fig:6types}) as follows:
\begin{equation}
	\label{eq:abc_general_parameters_rates}
	\mathfrak{c} = \upeta \frac{c_1 c_2}{\sqrt{b_1 b_2 a_1 a_2} },\qquad 
	\mathfrak{a} = \upeta \frac{\sqrt{b_1 b_2}}{\sqrt{a_1 a_2}},\qquad 
	\mathfrak{b} = \upeta \frac{\sqrt{a_1 a_2}}{\sqrt{b_1 b_2}} .
\end{equation}
Extending the definition, let 
$\mathcal{L}(\tau)$ 
denote the Markov process on configurations on the torus
with the jump rates depending on 
the generic six vertex weights
$a_1,a_2,b_1,b_2,c_1,c_2$
as in \eqref{eq:abc_general_parameters_rates}.
Similarly, let $\mu_{k_1,k_2}$ be the
six vertex model on the torus $\mathbb{T}$
determined by these generic vertex weights
(and horizontal and vertical height changes $k_1,k_2$).
We can extend
\Cref{thm:torus_preserves}
to the general weights.
\begin{theorem}
	\label{thm:torus_preserves_generic}
	The process $\mathcal{L}(\tau)$ preserves the measure $\mu_{k_1,k_2}$
	for arbitrary $k_1,k_2$, and for general six vertex weights
	$a_1,a_2,b_1,b_2,c_1,c_2$.
\end{theorem}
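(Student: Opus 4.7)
The plan is to adapt the argument of \cite{BorodinBufetov2015}, which exploits a symmetry of the jump rates \eqref{eq:abc_general_parameters_rates}. Since the state space $\mathcal{S}_{k_1,k_2}$ is finite, preservation of $\mu_{k_1,k_2}$ by $\mathcal{L}(\tau)$ reduces to verifying the global balance equation
\begin{equation*}
	\sum_{\sigma} \mu_{k_1,k_2}(\sigma)\ssp L(\sigma\to\sigma')
	= \mu_{k_1,k_2}(\sigma')\sum_{\sigma''} L(\sigma'\to\sigma'')
\end{equation*}
for each configuration $\sigma'$. The approach is to classify all transitions and match each incoming transition with an outgoing one, so that their contributions cancel in pairs.

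Each transition of $\mathcal{L}(\tau)$ is uniquely indexed by the triple (seed pair location, up/down direction, propagation length $c$), with torus loop jumps handled as a separate boundary case. The central observation is that at each \emph{interior} propagation column, the product of the two vertex weights in the two adjacent rows is invariant under the swap of horizontal edge occupations performed by the jump: either $b_2\ssp a_2 \leftrightarrow a_2\ssp b_2$ when no vertical path passes through, or $a_1\ssp b_1 \leftrightarrow b_1\ssp a_1$ when a vertical path passes through. Consequently, the ratio $\mu_{k_1,k_2}(\sigma')/\mu_{k_1,k_2}(\sigma)$ depends only on the vertex changes at the seed pair column and the terminal column, and is independent of $c$.

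Next I would pair each incoming transition into $\sigma'$ with a canonical outgoing reverse transition, obtained by running the jump rules backwards. For each such pair the balance contribution reduces to a single algebraic identity relating the rates $\mathfrak{a}, \mathfrak{b}, \mathfrak{c}$ to products of seed and terminal vertex weights. Because both the forward and reverse rates are independent of $c$ (see \Cref{fig:rates_for_G_quad}) and, by the propagation-invariance observation, so is the measure ratio, the matched contributions cancel for every $c$ simultaneously. The resulting finite list of identities, indexed by the six seed types crossed with the possible terminal vertex configurations (with or without a continuing vertical column through the terminal), can be checked directly from the explicit form \eqref{eq:abc_general_parameters_rates} of the rates, using the symmetric identities $\mathfrak{a}\ssp \mathfrak{b}=\upeta^2$ and $\mathfrak{c}^2/(\mathfrak{a}\ssp\mathfrak{b}) = c_1^2 c_2^2/(a_1 a_2 b_1 b_2)$.

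The main obstacle will be correctly identifying the canonical reverse transition. A forward up jump $\pcu\to\pcid$ produces a local configuration at the seed column ($\pcid$) that is not itself a seed pair for any down jump, so the matched reverse down jump must be initiated at a different column, typically the terminal column of the forward move, where a new seed pair emerges as a result of the forward jump and whose own leftward-propagation-in-reverse exactly undoes the original. This matching must be carried out case by case for each of the six seed types, and separately for the torus loop transitions of \Cref{fig:torus_jump}. For the loop case there is no terminal column, so the propagation-invariance observation immediately yields $\mu_{k_1,k_2}(\sigma')/\mu_{k_1,k_2}(\sigma)=1$, and the forward/reverse loops at the same seed cancel in the balance equation by equality of their rates, completing the verification.
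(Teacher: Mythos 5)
Your reduction to the global balance equation and your observation that the Boltzmann weight is invariant at every interior propagation column (so that $\mu_{k_1,k_2}(\sigma')/\mu_{k_1,k_2}(\sigma)$ depends only on the seed and terminal columns, not on $c$) are both correct — though note the products are $b_2\ssp a_1\leftrightarrow a_1\ssp b_2$ (no vertical) and $a_2\ssp b_1\leftrightarrow b_1\ssp a_2$ (vertical passing through), not the ones you wrote. However, the core of your plan — matching each incoming transition into $\sigma'$ with a ``canonical reverse'' outgoing transition from $\sigma'$ so that contributions cancel in pairs — fails, and it fails for the very reason the dynamics is interesting: $\mathcal{L}(\tau)$ is irreversible. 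Propagation only goes to the right, so the move that would undo a forward jump (a leftward-propagating jump, or a jump initiated at the terminal column whose effect reaches back to the seed column) is simply not a transition of the chain. Your loop case makes this explicit: after an up loop jump the straight horizontal path sits at the upper level with no occupied vertical edges in that double row, and every down-jump seed in \Cref{fig:jumps_and_rates} requires the middle vertical edge of the seed pair to be occupied, so there is no ``reverse loop'' at all; the forward loop's flux into $\sigma'$ cannot be cancelled by any single outgoing transition from $\sigma'$. In general, for this chain the incoming and outgoing probability fluxes at a fixed configuration do not match transition by transition, so no pairing argument of the kind you propose can close the balance equation.

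The paper's proof instead follows the aggregate argument of \cite{BorodinBufetov2015}: one checks that the jump rates \eqref{eq:abc_general_parameters_rates} enjoy the flip symmetry of Lemma 2 there, which converts the total incoming flux $\sum_s \mu_{k_1,k_2}(s)\ssp\mathrm{Rate}(s\to s_0)$ into $\mu_{k_1,k_2}(s_0)\sum_s \mathrm{Rate}(\overline{s_0}\to\overline{s})$ for a flipped configuration $\overline{s_0}$, and then the residual difference of total exit rates from $\overline{s_0}$ and from $s_0$ is expressed as a linear combination, with coefficients $\mathfrak{a},\mathfrak{b},\mathfrak{c}$, of differences $N(X,Y)-N(Y,X)$ counting vertically adjacent vertex types on the torus; this combination vanishes by the combinatorial conservation law of Lemma 4 of \cite{BorodinBufetov2015}, not term by term. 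So the missing ingredients in your proposal are precisely the flip symmetry (replacing your nonexistent reverse transitions) and the global counting identity on the torus (replacing your pairwise cancellation); the algebraic identities $\mathfrak{a}\ssp\mathfrak{b}=\upeta^2$ and the like are true but cannot substitute for these two steps.
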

\begin{proof}
	For brevity, we will not reproduce here the details
	from
	\cite{BorodinBufetov2015}, and simply follow the argument and notation of that paper.
	The main ingredient is to
	check that \cite[Lemma 2]{BorodinBufetov2015}
	(a symmetry of the jump rates under the flip transformation)
	applies to our Markov process $\mathcal{L}(\tau)$. 
	This is indeed the case, as can be seen by a direct inspection
	of all the jump rates.

	Then the preservation of $\mu_{k_1,k_2}$ under $\mathcal{L}(\tau)$ follows similarly to
	\cite[Theorem 5]{BorodinBufetov2015}. We need to show
	\begin{multline*}
		\sum_s \mu_{k_1,k_2}(s)\ssp 
		\mathrm{Rate}(s \rightarrow s_0)  
		-
		\mu_{k_1,k_2}(s_0) 
		\sum_{s_2} 
		\mathrm{Rate}(s_0 \rightarrow s_2) 
		\\= 
		\mu_{k_1,k_2}(s_0) 
		\left( \sum_s \mathrm{Rate}(\overline{s_0} \rightarrow \overline{s})
		-
		\mathrm{Rate}(s_0 \rightarrow s) \right).
	\end{multline*}
	The sum on the right is equal to
	\begin{multline*}
		\mathfrak{c} (N(a_1, b_2) - N(b_2, a_1)) + \mathfrak{a}(N(a_1, c_2) - N(c_1, a_1)) + 
		\mathfrak{b}(N(c_1, b_2) - N(b_2, c_2))  
		\\+
		\mathfrak{c} (N(a_2, b_2) - N(b_2, a_2)) + \mathfrak{a}(N(a_2, c_1) - N(c_2, a_2)) + 
		\mathfrak{b}(N(c_2, b_1) - N(b_1, c_1)),
	\end{multline*}
	$N(X,Y)$ is the number of vertically adjacent vertices in the torus
	where the lower is of type $X$ and the upper is of type $Y$.
	By \cite[Lemma 4]{BorodinBufetov2015}, we see that this sum vanishes,
	which completes the proof.
\end{proof}

\subsection{Degeneration to five vertex model and lozenge tilings}
\label{sub:lozenges}

Let us set the weight $a_2$ of the vertex $(1,1;1,1)$ to zero.
This turns the six vertex model
into the five vertex model, which may be viewed as a certain 
model of nonintersecting (but interacting) paths,
or, equivalently, lozenge tilings on the triangular lattice (with interacting lozenges).
The five vertex model admits a more detailed asymptotic analysis than the general six 
vertex one by means of the Bethe Ansatz, for example, see the recent work 
\cite{deGierKenyon2021limit}.

Further letting $b_1b_2=c_1c_2$ 
makes
the five vertex model
free fermion
by removing the interaction.
The free fermion five vertex model is equivalent to a dimer model,
and may be analyzed asymptotically
through determinantal point processes, see, for example, \cite{ABPW2021free}.

Let us consider the degeneration of our Markov dynamics on the torus at $a_2=0$.
Under a suitable renormalization,
the rates \eqref{eq:abc_general_parameters_rates}
for $a_2=0$
reduce to
\begin{equation}
	\label{eq:abc_general_parameters_rates_five_vertex}
	\mathfrak{c}=\frac{c_1c_2}{\sqrt{b_1b_2}},\qquad 
	\mathfrak{a}=\sqrt{b_1b_2},\qquad 
	\mathfrak{b}=0.
\end{equation}
Because $a_2=0$ and $\mathfrak{b}=0$, out of six possible seed pairs
in
\Cref{fig:rates_for_G_quad} only two may lead to a jump. Both these
seed pairs correspond to up jumps, so our Markov process
becomes \emph{totally asymmetric}. 
By \Cref{thm:torus_preserves_generic},
the dynamics on the torus with rates 
\eqref{eq:abc_general_parameters_rates_five_vertex}
preserves the five vertex model (in particular, the one considered in \cite{deGierKenyon2021limit}).
Determining the particle
current of the dynamics preserving the five vertex model
could be simpler than in the general six vertex case, but this is outside the 
scope of the present work.

In the free fermion case 
$b_1b_2=c_1c_2$ we see that $\mathfrak{c}=\mathfrak{a}$.
After mapping nonintersecting paths of the free fermion five vertex model
to lozenge tilings, one readily sees that our dynamics
reduces to the totally asymmetric case of the interacting Hammersley processes
studied in \cite{Toninelli2015-Gibbs},
\cite{chhita2017combinatorial}
\cite{CFT2019}.

\begin{bibdiv}
\begin{biblist}

\bib{ABPW2021free}{article}{
      author={Aggarwal, A.},
      author={Borodin, A.},
      author={Petrov, L.},
      author={Wheeler, M.},
       title={{Free Fermion Six Vertex Model: Symmetric Functions and Random
  Domino Tilings}},
        date={2021},
     journal={arXiv preprint},
        note={arXiv:2109.06718 [math.PR]},
}

\bib{aldous1995hammersley}{article}{
      author={Aldous, D.},
      author={Diaconis, P.},
       title={{Hammersley's interacting particle process and longest increasing
  subsequences}},
        date={1995},
     journal={Probab. Theory Relat. Fields},
      volume={103},
      number={2},
       pages={199\ndash 213},
}

\bib{Amol2016Stationary}{article}{
      author={Aggarwal, A.},
       title={{Current Fluctuations of the Stationary ASEP and Six-Vertex
  Model}},
        date={2018},
     journal={{Duke Math J.}},
      volume={167},
      number={2},
       pages={269\ndash 384},
        note={arXiv:1608.04726 [math.PR]},
}

\bib{aggarwal2020limit}{article}{
      author={Aggarwal, A.},
       title={Limit shapes and local statistics for the stochastic six-vertex
  model},
        date={2020},
     journal={Commun. Math. Phys.},
      volume={376},
      number={1},
       pages={681\ndash 746},
        note={arXiv:1902.10867 [math.PR]},
}

\bib{aggarwal2020nonexistence}{article}{
      author={Aggarwal, A.},
       title={Nonexistence and uniqueness for pure states of ferroelectric
  six-vertex models},
        date={2020},
     journal={arXiv preprint},
        note={arXiv:2004.13272 [math.PR]},
}

\bib{baxter2007exactly}{book}{
      author={Baxter, R.},
       title={{Exactly solved models in statistical mechanics}},
   publisher={Courier Dover Publications},
        date={2007},
}

\bib{BorodinBufetov2015}{article}{
      author={Borodin, A.},
      author={Bufetov, A.},
       title={{An irreversible local Markov chain that preserves the six vertex
  model on a torus}},
        date={2017},
     journal={Ann. Inst. H. Poincar\'e B},
      volume={53},
      number={1},
       pages={451\ndash 463},
        note={arXiv:1509.05070 [math-ph]},
}

\bib{BorodinCorwinFerrariVeto2013}{article}{
      author={Borodin, A.},
      author={Corwin, I.},
      author={Ferrari, P.},
      author={Veto, B.},
       title={{Height fluctuations for the stationary KPZ equation}},
        date={2015},
     journal={Mathematical Physics, Analysis and Geometry},
      volume={18},
      number={1},
       pages={1\ndash 95},
        note={arXiv:1407.6977 [math.PR]},
}

\bib{BCG6V}{article}{
      author={Borodin, A.},
      author={Corwin, I.},
      author={Gorin, V.},
       title={Stochastic six-vertex model},
        date={2016},
     journal={Duke J. Math.},
      volume={165},
      number={3},
       pages={563\ndash 624},
        note={arXiv:1407.6729 [math.PR]},
}

\bib{BorFerr2008DF}{article}{
      author={Borodin, A.},
      author={Ferrari, P.},
       title={{Anisotropic growth of random surfaces in 2+1 dimensions}},
        date={2014},
     journal={Commun. Math. Phys.},
      volume={325},
       pages={603\ndash 684},
        note={arXiv:0804.3035 [math-ph]},
}

\bib{borodin2015random}{article}{
      author={Borodin, Alexei},
      author={Ferrari, Patrik~L},
       title={{Random tilings and Markov chains for interlacing particles}},
        date={2015},
     journal={arXiv preprint},
        note={arXiv:1506.03910 [math-ph]},
}

\bib{BenassiFouque1987}{article}{
      author={Benassi, Albert},
      author={Fouque, Jean-Pierre},
       title={Hydrodynamical limit for the asymmetric simple exclusion
  process},
        date={1987},
     journal={The Annals of Probability},
      volume={15},
      number={2},
       pages={546\ndash 560},
}

\bib{bertiniGiacomin1997stochastic}{article}{
      author={Bertini, L.},
      author={Giacomin, G.},
       title={{Stochastic Burgers and KPZ equations from particle systems}},
        date={1997},
     journal={Commun. Math. Phys.},
      volume={183},
      number={3},
       pages={571\ndash 607},
}

\bib{BufetovPetrovYB2017}{article}{
      author={Bufetov, A.},
      author={Petrov, L.},
       title={{Yang-Baxter field for spin Hall-Littlewood symmetric
  functions}},
        date={2019},
     journal={Forum Math. Sigma},
      volume={7},
       pages={e39},
      eprint={1712.04584},
        note={arXiv:1712.04584 [math.PR]},
}

\bib{baik2000limiting_BR_distribution}{article}{
      author={Baik, J.},
      author={Rains, E.},
       title={Limiting distributions for a polynuclear growth model with
  external sources},
        date={2000},
     journal={Jour. Stat. Phys.},
      volume={100},
      number={3},
       pages={523\ndash 541},
        note={arXiv:math/0003130 [math.PR]},
}

\bib{borodin2010gibbs}{article}{
      author={Borodin, A.},
      author={Shlosman, S.},
       title={{Gibbs ensembles of nonintersecting paths}},
        date={2010},
     journal={Commun. Math. Phys.},
      volume={293},
      number={1},
       pages={145\ndash 170},
        note={arXiv:0804.0564 [math-ph]},
}

\bib{bukman1995conical}{article}{
      author={Bukman, D.J.},
      author={Shore, J.D.},
       title={The conical point in the ferroelectric six-vertex model},
        date={1995},
     journal={Jour. Stat. Phys.},
      volume={78},
      number={5-6},
       pages={1277\ndash 1309},
}

\bib{borodin2018two}{article}{
      author={Borodin, A.},
      author={Toninelli, F.L.},
       title={Two-dimensional anisotropic kpz growth and limit shapes},
        date={2018},
     journal={Journal of Statistical Mechanics: Theory and Experiment},
      volume={2018},
      number={8},
       pages={083205},
        note={arXiv:1806.10467 [math-ph]},
}

\bib{cannizzaro2020stationary}{article}{
      author={Cannizzaro, G.},
      author={Erhard, D.},
      author={Toninelli, F.},
       title={{The stationary AKPZ equation: logarithmic superdiffusivity}},
        date={2020},
     journal={arXiv preprint},
        note={arXiv:2007.12203 [math.PR]},
}

\bib{cannizzaro2021weak}{article}{
      author={Cannizzaro, G.},
      author={Erhard, D.},
      author={Toninelli, F.},
       title={Weak coupling limit of the anisotropic kpz equation},
        date={2021},
     journal={arXiv preprint},
        note={arXiv:2108.09046 [math.PR]},
}

\bib{chhita2017combinatorial}{article}{
      author={Chhita, S.},
      author={Ferrari, P.},
       title={{A combinatorial identity for the speed of growth in an
  anisotropic KPZ model}},
        date={2017},
     journal={Annales de l'IHP D},
      volume={4},
      number={4},
       pages={453\ndash 477},
        note={arXiv:1508.01665 [math-ph]},
}

\bib{CFT2019}{article}{
      author={Chhita, S.},
      author={Ferrari, P.},
      author={Toninelli, F.},
       title={Speed and fluctuations for some driven dimer models},
        date={2019},
        ISSN={2308-5827},
     journal={Ann. Inst. H. Poincar\'e D},
      volume={6},
      number={4},
       pages={489\ndash 532},
         url={http://dx.doi.org/10.4171/AIHPD/77},
        note={arXiv:1705.07641 [math.PR]},
}

\bib{corwin2020stochastic}{article}{
      author={Corwin, I.},
      author={Ghosal, P.},
      author={Shen, H.},
      author={Tsai, L.-T.},
       title={Stochastic pde limit of the six vertex model},
        date={2020},
     journal={Commun. Math. Phys},
      number={375},
       pages={1945\ndash 2038},
        note={arXiv:1803.08120 [math.PR]},
}

\bib{CorwinKPZ}{article}{
      author={Corwin, I.},
       title={{The Kardar-Parisi-Zhang equation and universality class}},
        date={2012},
     journal={Random Matrices Theory Appl.},
      volume={1},
       pages={1130001},
        note={arXiv:1106.1596 [math.PR]},
}

\bib{chhita2019}{article}{
      author={Chhita, S.},
      author={Toninelli, F.L.},
       title={{A (2+ 1)-dimensional anisotropic KPZ growth model with a smooth
  phase}},
        date={2019},
     journal={Commun. Math. Phys.},
      volume={367},
      number={2},
       pages={483\ndash 516},
        note={arXiv:1802.05493 [math.PR]},
}

\bib{deGierKenyon2021limit}{article}{
      author={de~Gier, J.},
      author={Kenyon, R.},
      author={Watson, S.},
       title={Limit shapes for the asymmetric five vertex model},
        date={2021},
     journal={Commun. Math. Phys.},
       pages={1\ndash 44},
        note={arXiv:1812.11934 [math.PR]},
}

\bib{DLMF}{misc}{
       title={{\it NIST Digital Library of Mathematical Functions}},
         how={http://dlmf.nist.gov/, Release 1.1.3 of 2021-09-15},
         url={http://dlmf.nist.gov/},
        note={F.~W.~J. Olver, A.~B. {Olde Daalhuis}, D.~W. Lozier, B.~I.
  Schneider, R.~F. Boisvert, C.~W. Clark, B.~R. Miller, B.~V. Saunders, H.~S.
  Cohl, and M.~A. McClain, eds.},
}

\bib{damron2016random}{article}{
      author={Damron, M.},
      author={{Rassoul-Agha}, F.},
      author={Sepp{\"a}l{\"a}inen, T.},
       title={Random growth models},
        date={2016},
     journal={Notices of the AMS},
      volume={63},
      number={9},
       pages={1004\ndash 1008},
}

\bib{elkies1992alternating}{article}{
      author={Elkies, N.},
      author={Kuperberg, G.},
      author={Larsen, M.},
      author={Propp, J.},
       title={Alternating-sign matrices and domino tilings},
        date={1992},
     journal={Jour. Alg. Comb.},
      volume={1},
      number={2-3},
       pages={111\ndash 132 and 219\ndash 234},
}

\bib{ferrari2006domino}{article}{
      author={Ferrari, P.L.},
      author={Spohn, H.},
       title={{Domino tilings and the six-vertex model at its free-fermion
  point}},
        date={2006},
     journal={Jour. Phys. A},
      volume={39},
      number={33},
       pages={10297},
        note={arXiv:cond-mat/0605406 [cond-mat.stat-mech]},
}

\bib{ferrari2006scaling}{article}{
      author={Ferrari, P.L.},
      author={Spohn, H.},
       title={Scaling limit for the space-time covariance of the stationary
  totally asymmetric simple exclusion process},
        date={2006},
     journal={Commun. Math. Phys.},
      volume={265},
      number={1},
       pages={1\ndash 44},
        note={arXiv:math-ph/0504041},
}

\bib{Giuliani_Mastropietro_Toninelli2020nonintegrable}{article}{
      author={Giuliani, A.},
      author={Mastropietro, V.},
      author={Toninelli, F.},
       title={{Non-integrable dimers: Universal fluctuations of tilted height
  profiles}},
        date={2020},
     journal={Commun. Math. Phys.},
      volume={377},
      number={3},
       pages={1883\ndash 1959},
        note={arXiv:1904.07526 [math-ph]},
}

\bib{GwaSpohn1992}{article}{
      author={Gwa, L.-H.},
      author={Spohn, H.},
       title={Six-vertex model, roughened surfaces, and an asymmetric spin
  {H}amiltonian},
        date={1992},
     journal={Phys. Rev. Lett.},
      volume={68},
      number={6},
       pages={725\ndash 728},
}

\bib{hammersley1972few}{incollection}{
      author={Hammersley, J.M.},
       title={{A few seedlings of research}},
        date={1972},
   booktitle={{Proc. Sixth Berkeley Symp. Math. Statist. and Probability}},
      volume={1},
       pages={345\ndash 394},
}

\bib{Harris1978}{article}{
      author={Harris, T.~E.},
       title={Additive set-valued {M}arkov processes and graphical methods},
        date={1978},
     journal={Ann. Probab.},
      volume={6},
      number={3},
       pages={355\ndash 378},
}

\bib{halpin2015kpzCocktail}{article}{
      author={{Halpin-Healy}, T.},
      author={Takeuchi, K.},
       title={{A KPZ cocktail-shaken, not stirred...}},
        date={2015},
     journal={J. Stat. Phys},
      volume={160},
      number={4},
       pages={794\ndash 814},
        note={arXiv:1505.01910 [cond-mat.stat-mech]},
}

\bib{imamura2004fluctuations}{article}{
      author={Imamura, T.},
      author={Sasamoto, T.},
       title={Fluctuations of the one-dimensional polynuclear growth model with
  external sources},
        date={2004},
     journal={Nuclear Physics B},
      volume={699},
      number={3},
       pages={503\ndash 544},
        note={arXiv:math-ph/0406001},
}

\bib{ImamuraSasamoto2011current}{article}{
      author={Imamura, T.},
      author={Sasamoto, T.},
       title={{Current moments of 1D ASEP by duality}},
        date={2011},
     journal={J. Stat. Phys},
      volume={142},
      number={5},
       pages={919\ndash 930},
        note={arXiv:1011.4588 [cond-mat.stat-mech]},
}

\bib{imamura2017fluctuations}{article}{
      author={Imamura, T.},
      author={Sasamoto, T.},
       title={{Fluctuations for stationary q-TASEP}},
        date={2019},
     journal={Probab. Theory Relat. Fields},
      volume={174},
       pages={647\ndash 730},
        note={arXiv:1701.05991 [math-ph]},
}

\bib{janson2018tail}{article}{
      author={Janson, S.},
       title={Tail bounds for sums of geometric and exponential variables},
        date={2018},
     journal={Statistics \& Probability Letters},
      volume={135},
       pages={1\ndash 6},
        note={arXiv:1709.08157 [math.PR]},
}

\bib{Kenyon2004Height}{article}{
      author={Kenyon, R.},
       title={Height fluctuations in the honeycomb dimer model},
        date={2008},
     journal={Commun. Math. Phys.},
      volume={281},
      number={3},
       pages={675\ndash 709},
        note={arXiv:math-ph/0405052},
}

\bib{Kenyon2007Lecture}{article}{
      author={Kenyon, R.},
       title={Lectures on dimers},
        date={2009},
        note={arXiv:0910.3129 [math.PR]},
}

\bib{KMG57BDClassif}{article}{
      author={Karlin, S.},
      author={McGregor, J.},
       title={The classification of birth and death processes},
        date={1957},
     journal={Trans. AMS},
      volume={86},
       pages={366\ndash 400},
}

\bib{KOS2006}{article}{
      author={Kenyon, R.},
      author={Okounkov, A.},
      author={Sheffield, S.},
       title={Dimers and amoebae},
        date={2006},
     journal={Ann. Math.},
      volume={163},
       pages={1019\ndash 1056},
        note={arXiv:math-ph/0311005},
}

\bib{KPZ1986}{article}{
      author={Kardar, M.},
      author={Parisi, G.},
      author={Zhang, Y.},
       title={Dynamic scaling of growing interfaces},
        date={1986},
     journal={Physical Review Letters},
      volume={56},
      number={9},
       pages={889},
}

\bib{Macdonald1995}{book}{
      author={Macdonald, I.G.},
       title={Symmetric functions and {H}all polynomials},
     edition={2},
   publisher={Oxford University Press},
        date={1995},
}

\bib{Petrov2012GFF}{article}{
      author={Petrov, L.},
       title={{Asymptotics of Uniformly Random Lozenge Tilings of Polygons.
  Gaussian Free Field}},
        date={2015},
     journal={{Ann. Probab.}},
      volume={43},
      number={1},
       pages={1\ndash 43},
      eprint={1206.5123},
        note={arXiv:1206.5123 [math.PR].},
}

\bib{propp2003generalized}{article}{
      author={Propp, James},
       title={Generalized domino-shuffling},
        date={2003},
     journal={Theoretical Computer Science},
      volume={303},
      number={2-3},
       pages={267\ndash 301},
        note={arXiv:math/0111034 [math.CO]},
}

\bib{PetrovSaenz2019backTASEP}{article}{
      author={Petrov, L.},
      author={Saenz, A.},
       title={{Mapping TASEP back in time}},
        date={2021},
     journal={Probab. Theory Relat. Fields},
        note={arXiv:1907.09155 [math.PR]. Online first},
}

\bib{QuastelSpohnKPZ2015}{article}{
      author={Quastel, J.},
      author={Spohn, H.},
       title={{The one-dimensional KPZ equation and its universality class}},
        date={2015},
     journal={J. Stat. Phys},
      volume={160},
      number={4},
       pages={965\ndash 984},
        note={arXiv:1503.06185 [math-ph]},
}

\bib{reshetikhin2010lectures}{incollection}{
      author={Reshetikhin, N.},
       title={Lectures on the integrability of the 6-vertex model},
        date={2010},
   booktitle={{Exact Methods in Low-dimensional Statistical Physics and Quantum
  Computing}},
   publisher={Oxford Univ. Press},
       pages={197\ndash 266},
        note={arXiv:1010.5031 [math-ph]},
}

\bib{Reshetikhin2018LimitSO}{article}{
      author={Reshetikhin, N.},
      author={Sridhar, A.},
       title={{Limit Shapes of the Stochastic Six Vertex Model}},
        date={2018},
     journal={Commun. Math. Phys.},
      volume={363},
       pages={741\ndash 765},
        note={arXiv:1609.01756 [math-ph]},
}

\bib{shore1994coexistence}{article}{
      author={Shore, J.D.},
      author={Bukman, D.J.},
       title={Coexistence point in the six-vertex model and the crystal shape
  of fcc materials},
        date={1994},
     journal={Phys. Rev. Lett.},
      volume={72},
      number={5},
       pages={604\ndash 607},
}

\bib{SeppStickprocess}{article}{
      author={Sepp\"{a}l\"{a}inen, T.},
       title={Perturbation of the equilibrium for a totally asymmetric stick
  process in one dimension},
        date={2001},
     journal={Ann. Probab.},
      number={29},
       pages={176\ndash 204},
        note={arXiv:math/9909011 [math.PR]},
}

\bib{Spohn2012}{article}{
      author={Spohn, H.},
       title={{KPZ Scaling Theory and the Semi-discrete Directed Polymer
  Model}},
        date={2014},
     journal={Random Matrix Theory, Interacting Particle Systems and Integrable
  Systems},
      volume={65},
      number={4.3},
        note={arXiv:1201.0645 [cond-mat.stat-mech]},
}

\bib{Toninelli2015-Gibbs}{article}{
      author={Toninelli, F.},
       title={A $(2+ 1) $-dimensional growth process with explicit stationary
  measures},
        date={2017},
     journal={{Ann. Probab.}},
      volume={45},
      number={5},
       pages={2899\ndash 2940},
        note={arXiv:1503.05339 [math.PR]},
}

\bib{zinn2000six}{article}{
      author={Zinn-Justin, Paul},
       title={{Six-vertex model with domain wall boundary conditions and
  one-matrix model}},
        date={2000},
     journal={Phys. Rev. E},
      volume={62},
      number={3},
       pages={3411},
        note={arXiv:math-ph/0005008},
}

\end{biblist}
\end{bibdiv}

\medskip

\textsc{M. Nicoletti, Massachusetts Institute of Technology, Cambridge, MA}

E-mail: \texttt{lenia.petrov@gmail.com}

\medskip

\textsc{L. Petrov, Mathematical Sciences Research Institute, Berkeley, CA, University of Virginia, Charlottesville, VA, and Institute for Information Transmission Problems, Moscow, Russia}

E-mail: \texttt{lenia.petrov@gmail.com}

\end{document}